\DeclareMathOperator{\Supp}{Supp}
\DeclareMathOperator{\Int}{Int}
\DeclareMathOperator{\Orb}{Orb}
\DeclareMathOperator{\Fix}{Fix}
\DeclareMathOperator{\Iso}{Iso}
\DeclareMathOperator{\Span}{span}
\newcommand{\usetr}[2]{\text{$\uparrow_{\scriptscriptstyle{#2}}\hspace{-0.1cm}{#1}$}} %relative upper set
\newcommand{\xia}{\xi^{\alpha}}
\newcommand{\Z}{\mathbb{Z}}
\newcommand{\F}{\mathbb{F}}
\newcommand{\fr}{\Phi}
\newcommand{\p}{\cdot_{\theta}}
\newcommand{\nucleo}{\operatorname{ker}}
\newcommand{\id}{\operatorname{id}}
\theoremstyle{plain}
\newtheorem{theorem}{Theorem}[section]
\newtheorem{lema}[theorem]{Lemma}
\newtheorem{corolario}[theorem]{Corollary}
\newtheorem{proposition}[theorem]{Proposition}
\theoremstyle{definition}
\newtheorem{definition}[theorem]{Definition}
\newtheorem{example}[theorem]{Example}
\theoremstyle{remark}
\newtheorem{obs}[theorem]{Remark}
\theoremstyle{plain}
\date{}
\title{The ideal structure of partial skew groupoid rings with applications to topological dynamics and ultragraph algebras}
\author[D. Bagio \and D. Gon\c{c}alves \and P. S. E. Moreira \and J. \"{O}inert]
{D. Bagio
\thanks{Universidade Federal de Santa Catarina,
{\color{blue}d.bagio@ufsc.br}},
D. Gon\c{c}alves
\thanks{Universidade Federal de Santa Catarina, 
{\color{blue}daniel.goncalves@ufsc.br}},
P. S. E. Moreira
\thanks{{\bf Corresponding author:} Departamento de Matem\'atica, Universidade Federal de Santa Catarina, 88040-900	Florian\'opolis, Brasil. {\color{blue}savanamatematica@gmail.com}} \,\ 
and J. \"{O}inert
\thanks{Blekinge Institute of Technology,
{\color{blue}johan.oinert@bth.se}
\newline
The third named author was financed in part by the
Coordena\c{c}\~{a}o de Aperfei\c{c}oamento de Pessoal de N\'{i}vel Superior - Brasil (CAPES) - Finance Code 001.
}}
\begin{document}

\maketitle
\begin{abstract}
Given a partial action $\alpha$ of a groupoid $G$ on a ring $R$, we study the associated partial skew groupoid ring $R \rtimes_{\alpha} G$,
which carries a natural $G$-grading.
We show that there is a one-to-one correspondence between the $G$-invariant ideals of $R$ and the graded ideals of the $G$-graded ring $R \rtimes_{\alpha}G.$ We provide sufficient conditions for primeness, and necessary and sufficient conditions for simplicity of $R \rtimes_{\alpha}G.$ We show that every ideal of $R \rtimes_{\alpha}G$ is graded if, and only if, $\alpha$ has the residual intersection property. Furthermore, if $\alpha$ is induced by a topological partial action $\theta$, then we prove that minimality of $\theta$ is equivalent to $G$-simplicity of $R$, topological transitivity of $\theta$ is equivalent to $G$-primeness of $R$, and topological freeness of $\theta$ on every closed invariant subset of the underlying topological space is equivalent to $\alpha$ having the residual intersection property. As an application, we characterize Condition (K) for an ultragraph in terms of topological properties of the associated partial action and in terms of algebraic properties of the associated ultragraph algebra.

\vspace{5mm}

\thanks{\noindent 2000 \emph{Mathematics Subject Classification.} 16S35, 16S99, 20L05, 16N60, 16W22, 16W50, 16S88.}
\newline
\textit{Keywords}: Partial skew groupoid ring, residual intersection property, graded ideal, topological freeness, topological transitivity, Condition (K), ultragraph algebra.

\vspace{5mm}

% 16S35(Twisted and skew group rings, crossed products), 20L05(Groupoids), 16W50(Graded rings and modules), (16W22) Actions of groups and semigroups; invariant theory, (16N60) Prime and semiprime rings, (16S88) Leavitt path algebras. 

\end{abstract}

\section{Introduction}

Partial actions and their associated structures arose in the study of $C^*$-algebras as a way to realize important classes of $C^*$-algebras as partial crossed products (see 
\cite{Exel2017}). In 2005, the study of algebraic partial actions of groups, and their associated partial skew group rings, was formalized (see \cite{Dokuchaev2005}). Since then, the applications and generalizations of the theory have increased steadily. We will mention a few examples. In \cite{Goncalves2014.1}, Leavitt path algebras were realized as partial skew group rings and in \cite{Dokuchaev2015} cohomology of partial actions was developed. Topological dynamics associated with partial skew group rings were described in \cite{Goncalves2014}, with (global) skew category algebras in \cite{Lundstrom2012, Oinert2013}, with $C^*$-dynamical systems in \cite{Thierry2014}, and with skew inverse semigroup rings in \cite{TBeuter2018}. A comprehensive overview of the evolution of the research area was given in \cite{Dokuchaev2019}.

Among the generalizations of partial skew group rings, we distinguish two: Partial skew inverse semigroup rings and partial skew groupoid rings. The former was introduced in \cite{Buss2012} and the latter in \cite{Bagio2012}. The key difference between the two constructions is that when defining a partial skew inverse semigroup ring a certain quotient is taken, to get rid of so-called \emph{redundancies}  (see~\cite{TBeuter2018}). This quotient, although necessary to realize known algebras as partial skew inverse semigroup rings, brings an extra layer of complexity to the study of partial skew inverse semigroup rings. In contrast, the definition of a partial skew groupoid ring does not require such a quotient (see Definition~\ref{P1-def:D3} below).
Note that every partial skew groupoid ring appearing in this paper can in fact be realized as a partial skew inverse semigroup ring (see Remark~\ref{P2-InverseSemigroups}).

Although the aforementioned quotient used in the definition of partial skew inverse semigroups is not present in the definition of partial skew groupoid rings, many important algebras, such as e.g. Leavitt path algebras (see \cite{Goncalves2016}), can still be realized as partial skew groupoid rings. Furthermore, there is a growing interest in the dynamics of groupoid partial actions. In the recent paper \cite{Flores2020}, a purely dynamical study of groupoid partial actions was conducted, without mentioning any interplay with the associated partial skew groupoid rings. In the case of discrete groupoids, the definition of a (topological) groupoid partial action in \cite{Flores2020} becomes a particular case of Definition~\ref{P1-def:D1}.

Every partial skew groupoid ring $R \rtimes_\alpha G$ carries a natural $G$-grading, and we will make use of this insight.
We mention a few examples of the importance of gradings in the study of rings: Hazrat's talented monoid conjecture for Leavitt path algebras states that the talented monoid of a Leavitt path algebra is a complete invariant for graded Morita equivalence of the algebras (see \cite{Hazrat2013,Hazrat2013.1,Cordeiro2021}).  The ideals, and graded ideals, of algebras associated with combinatorial structures, were studied in  \cite{Duyen2021} and \cite{Vas2022.1}. Graded ideals are also in close connection with the representation theory of the algebra (see \cite{Vas2022.2}).

Next, we describe our goals and give an outline of the paper.

In Section~\ref{P1-Sec:Prelim}, we recall key  definitions and 
the connection between groupoid fibred actions and global groupoid actions on sets, which was established in Proposition 4.1 of \cite{Gilbert2005}. We also ensure that the partial skew groupoid rings that we will be working with are indeed associative rings (see Remark~\ref{P1-rem:associatividade}).

In Section~\ref{P1-Sec:Algebraic}, we study the algebraic structure of partial skew groupoid rings and pursue two goals. 
The first goal is to reach a better understanding of the (graded) ideal structure of partial skew groupoid rings.
To that end, we show that there is a one-to-one correspondence between $G$-graded ideals of $R \rtimes_{\alpha}G$ and $G$-invariant ideals of $R$ (see Theorem~\ref{P1-thm:corresp1}).
We also
show that every ideal of $R \rtimes_\alpha G$ is $G$-graded if, and only if, $\alpha$ has the residual intersection property (see Theorem~\ref{P1-thm:bijpri}).
It is worth pointing out that the aforementioned result is new even for partial skew group rings.
Nevertheless, a similar result has been proved in the context of partial actions of groups on $C^*$-algebras (see \cite{Thierry2014}).

The second goal is to describe primeness and simplicity for partial skew groupoid rings.
We show that, if the action $\alpha$ has the intersection property, then the partial skew groupoid ring $R\rtimes_{\alpha}G$ is prime if, and only if, the ring $R$ is $G$-prime (see Theorem~\ref{P1-thm:AP2} (ii)).
Furthermore, we show that $R \rtimes_\alpha G$ is simple if, and only if, $R$ is $G$-simple and $\alpha$ has the intersection property (see Theorem~\ref{P1-thm:AP2} (iii)), thereby generalizing \cite{Goncalves2014} and exemplifying \cite{Oinert2012}.

In Section~\ref{P1-sec:gen-idemp}, we show that any partial action of a groupoid on a torsion-free, unital, commutative algebra generated by its idempotents, actually corresponds to a
partial action of the same groupoid on the ring of locally constant functions with compact support over a Stone space (see Proposition~\ref{P1-prop:Gequivariant}). Therefore, given an algebraic groupoid partial action as above, we can use our results of Section~\ref{P1-Sec:Algebraic} and topological results of Section~\ref{P1-Sec:TopDynamics} to characterize the primeness, the simplicity and the graded ideals of the partial skew groupoid ring associated to it.

In Section~\ref{P1-Sec:TopDynamics}, we turn our focus to partial skew groupoid rings associated with (groupoid) topological dynamical systems, and study how properties of the dynamical system are reflected in the associated partial skew groupoid ring.
To be precise, we start out with a field $\mathbb{K}$ and a topological partial action $\theta$ of a groupoid $G$ on a zero-dimensional, locally compact, Hausdorff space $X$, and consider the associated partial skew groupoid ring $\mathcal{L}_c(X,\mathbb{K}) \rtimes_\alpha G$.
We show e.g. that
every ideal of $\mathcal{L}_c(X,\mathbb{K}) \rtimes_\alpha G$ is $G$-graded if, and only if, the partial action $\theta$ of $G$ on $X$ is topologically free on every closed invariant subset of $X$, which occurs if, and only if, the associated transformation groupoid $G \rtimes_\theta X$ is strongly effective (see Theorem~\ref{P1-thm:TE1}).
We point out that the dynamical properties appearing in Theorem~\ref{P1-thm:TE1} have analogues in the literature on Steinberg algebras (see e.g. \cite{Brown2014,Clark2019,Steinberg2019}).

In Section~\ref{P1-Sec:Ultragraphs}, 
we apply our results to 
ultragraphs and their associated algebras.
Using the characterization of an ultragraph Leavitt path algebra as a partial skew group ring associated with a certain topological partial action (see \cite{Boava2021,Castro2021,Imanfar2020}),
we show that 
given a field $\mathbb{K}$
and
an ultragraph $\mathcal{G}$ with associated tight spectrum $\textsf{T}$,
one has e.g. that
$\mathcal{G}$ satisfies Condition (K)
if, and only if,
every ideal of $\mathcal{L}_c(\textsf{T},\mathbb{K}) \rtimes_{\alpha}\mathbb{F}$ is $\Z$-graded (see Theorem~\ref{P1-thm:TFinal}).
We point out that some of the equivalences appearing in 
Theorem~\ref{P1-thm:TFinal}
are already known in the context of Leavitt path algebras of ultragraphs, but that several of them are new even in the context of Leavitt path algebras of graphs.

\section{Preliminaries}\label{P1-Sec:Prelim}

In this section, we recall some basic facts on groupoids and their partial actions on sets and rings. We refer the reader to \cite{Bagio2012} and \cite{Gilbert2005} for more details.

\subsection{Groupoids}

By a \emph{groupoid} we shall mean a small category $G$ in which every morphism is invertible.
Each object of $G$ will be identified with its corresponding identity morphism, allowing us to view $G_0$, the set of objects of $G$, as a subset of the set of morphisms of $G$.
The set of morphisms of $G$ will simply be denoted by $G$. This means that
$G_0:=\{gg^{-1} : g \in G\} \subseteq G$.

The \emph{range} and \emph{source} maps
$r,s : G \to G_0$,
indicate the range (codomain)
respectively
source (domain) of each morphism of $G$.
By abuse of notation, the set of \emph{composable pairs} of $G$
is denoted by 
$G^2 := \{(g,h) \in G \times G : s(g)=r(h)\}$.
For each $e \in G_0,$ we denote the corresponding \emph{isotropy group} by $G_e^e:=\{g \in G: s(g)=r(g)=e\}.$

\subsection{Partial actions of groupoids}
Partial actions of (ordered) groupoids on sets and rings were first introduced in \cite{Gilbert2005} and \cite{Bagio2010}, respectively. Below we recall those definitions.

\begin{definition}\label{P1-def:D1}
A \emph{partial action of a groupoid $G$ on a set $X$} is a family of pairs $ \alpha =(X_g,\alpha_g)_{g \in G}$, where $X_g$ is a subset of $X$ and $\alpha_g:X_{g^{-1}}\to X_g$ is a bijection, for all $g\in G$, that satisfy the following:
\begin{enumerate}[{\rm (i)}]
\item $X_g\subseteq X_{r(g)}$, for all $g\in G$,
\item 
$\alpha_e = \id_{X_e}$, for all $e \in G_0$,
\item $\alpha_{h}^{-1}(X_{g^{-1}}\cap X_h)\subseteq X_{(gh)^{-1}}$, whenever $(g,h) \in G^2$,
\item $\alpha_g(\alpha_h(x)) = \alpha_{gh}(x)$, for all $x \in \alpha_h^{-1}(X_{g^{-1}}\cap X_h)$ and $(g,h) \in G^2$.
\end{enumerate}
\end{definition}

We say that $\alpha$ is \emph{global} if $\alpha_g\alpha_h=\alpha_{gh}$ for all $(g,h) \in G^2.$ Similarly to \cite[Lemma~1.1~(i)]{Bagio2012}, $\alpha$ is global if and only if $X_g=X_{r(g)}$ for all $g \in G.$ 

\begin{definition}\label{P1-def:TopPartAction}
A \emph{topological partial action of a groupoid $G$ on a topological space $X$} is a partial action $\alpha\ = (X_g,\alpha_g)_{g \in G}$ of $G$ on the set $X$,
such that $X_g$ is an open subset of $X$ and $\alpha_g : X_{g^{-1}} \to X_g$ is a homeomorphism, for all $g\in G$. 
\end{definition}

\begin{definition}
Let $\alpha =(X_g,\alpha_g)_{g \in G}$ be a partial action of a groupoid $G$ on a set $X$.
A subset $M$ of $X$ is said to be \emph{$G$-invariant} if $\alpha_g(X_{g^{-1}} \cap M) \subseteq M$, for all $g \in G$.
\end{definition}

In the literature, there is a more restrictive notion of global groupoid action in which the groupoid acts on a fibred set (see e.g. \cite{Lawson1998}). We recall the definition.

\begin{definition}\label{P1-D2}
A \emph{groupoid fibred action} is a $4$-tuple $(G,\rho , \theta, X)$ consisting of a groupoid $G,$ a set $X,$ a surjective map $\rho: X \to G_0$ (called the \emph{anchor} or \emph{moment map}) and a map 
  $\theta:   G  \ltimes  X \to X$, $(g,x)  \longmapsto  \theta_g(x):=g \cdot_{\theta} x$, where $G  \ltimes  X=\{ (g,x) : s(g)=\rho(x) \} $ is the pullback,  
  that satisfy the following axioms:
\begin{enumerate}[{\rm (i)}]
   \item $\rho(x) \p x = x,$ for all $x \in X,$
    \item if $(g, h) \in G^2$ and $(h, x) \in  G  \ltimes  X,$ then $(g, h \p x) \in G  \ltimes  X$, 
    \item $(gh)\p x = g \p (h \p x)$, for all $(g, h) \in G^2$ and $(h, x) \in  G  \ltimes  X$.
\end{enumerate}
    An action of a topological groupoid on a topological space is an action $(G, \rho, \theta, X)$, where $G$ is a topological groupoid, $X$ is a Hausdorff space, and the maps $\rho, \theta$ are continuous.
\end{definition}
    
  For the convenience of the reader, we present a proof from \cite[Proposition~4.1]{Gilbert2005}, which relates global actions to  fibred actions.

\begin{proposition}
Let $G$ be a groupoid and let $X$ be a set.  The fibred actions of $G$ on $X$ are in one-to-one correspondence with the global actions of $G$ on $X$ satisfying  $X= \bigsqcup_{e \in G_0}X_e.$
\end{proposition}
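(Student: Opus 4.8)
The plan is to exhibit the two directions of the correspondence explicitly and check that they are mutually inverse, with the main work being the verification of the groupoid partial action axioms (i)--(iv) of Definition~\ref{P1-def:D1} from the fibred action axioms, and conversely.

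\medskip

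\textbf{From a fibred action to a global action.} Given a groupoid fibred action $(G,\rho,\theta,X)$, first I would set $X_e := \rho^{-1}(e)$ for each $e \in G_0$; since $\rho$ is surjective these are nonempty and, being fibres of a map, they partition $X$, giving $X = \bigsqcup_{e \in G_0} X_e$. For $g \in G$ put $X_{g} := X_{r(g)} = \rho^{-1}(r(g))$ and define $\alpha_g : X_{g^{-1}} \to X_g$ by $\alpha_g(x) := g \p x$; this makes sense because $x \in X_{g^{-1}}$ means $\rho(x) = r(g^{-1}) = s(g)$, so $(g,x) \in G \ltimes X$, and axiom (ii) of Definition~\ref{P1-D2} together with the computation $\rho(g \p x) = r(g)$ (which itself follows from axioms (i)--(iii): $g^{-1}\p(g\p x) = (g^{-1}g)\p x = s(g) \p x = x$, using $\rho(x)=s(g)$) shows $\alpha_g$ lands in $X_g$ and has inverse $\alpha_{g^{-1}}$. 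Condition (i) of Definition~\ref{P1-def:D1} holds with equality by construction; condition (ii) follows from axiom (i) of Definition~\ref{P1-D2}; and conditions (iii)--(iv) are immediate from axiom (iii) since, whenever $(g,h)\in G^2$, one has $X_{g^{-1}}\cap X_h = X_{s(g)} = X_{r(h)} = X_h$, so $\alpha_h^{-1}(X_{g^{-1}}\cap X_h) = X_{h^{-1}} = X_{(gh)^{-1}}$ and $\alpha_g(\alpha_h(x)) = g\p(h\p x) = (gh)\p x = \alpha_{gh}(x)$. Finally, the resulting $\alpha$ is global because $X_g = X_{r(g)}$ for all $g$, by the criterion recalled after Definition~\ref{P1-def:D1}.

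\medskip

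\textbf{From a global action with $X=\bigsqcup_{e}X_e$ to a fibred action.} Conversely, given such a global action $\alpha$, define $\rho : X \to G_0$ by $\rho(x) = e$ iff $x \in X_e$; this is well defined and surjective by the disjoint-union hypothesis (surjectivity needs each $X_e \neq \emptyset$, which I would either assume or absorb by discarding objects with empty fibre, as is standard). Then $G \ltimes X = \{(g,x) : s(g) = \rho(x)\} = \{(g,x): x \in X_{s(g)}\} = \{(g,x) : x \in X_{g^{-1}}\}$ using globality ($X_{g^{-1}} = X_{r(g^{-1})} = X_{s(g)}$), so I define $g \p x := \alpha_g(x)$, which is legal exactly on this pullback. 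Axiom (i) of Definition~\ref{P1-D2} is condition (ii) of Definition~\ref{P1-def:D1}; axiom (ii) holds because $\alpha_h(x) \in X_h = X_{r(h)} = X_{s(g)}$ when $(g,h)\in G^2$; axiom (iii) is the global composition law $\alpha_g\alpha_h = \alpha_{gh}$.

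\medskip

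\textbf{Mutual inverseness and the topological refinement.} I would then check that the two assignments are inverse to each other: starting from $(G,\rho,\theta,X)$, the reconstructed anchor sends $x$ to the unique $e$ with $x\in\rho^{-1}(e)$, i.e. back to $\rho(x)$, and the reconstructed action is $\alpha_g(x) = g\p x$ as before; starting from $\alpha$, the reconstructed $X_e$ are the original ones and $\alpha_g$ is recovered. The only genuinely delicate point is bookkeeping around surjectivity of $\rho$ versus possibly-empty $X_e$, which I expect to be the main (minor) obstacle and which I would handle by the convention that $G_0$ is taken to index only the nonempty pieces, matching the $X = \bigsqcup_{e\in G_0} X_e$ hypothesis. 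For the topological statement at the end of Definition~\ref{P1-D2}, I would further note that under this correspondence $\rho$ is continuous precisely when each $X_e$ is clopen (equivalently open, as the $X_e$ partition $X$), the $\alpha_g$ being homeomorphisms corresponds to $\theta$ being continuous on each slice $\{g\}\times X_{g^{-1}}$, and joint continuity of $\theta$ on $G\ltimes X$ is the matching hypothesis on the groupoid side; since the excerpt only asks for the set-level bijection I would keep this remark brief.
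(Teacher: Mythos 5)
Your construction is exactly the one in the paper (set $X_g := \rho^{-1}(r(g))$ and $\alpha_g(x) := g \p x$ in one direction; $\rho(x) := e$ for $x \in X_e$ and $g \p x := \alpha_g(x)$ in the other), with the paper merely declaring "straightforward" the axiom verifications you write out, and your checks are correct. The surjectivity caveat you flag (some $X_e$ could be empty) is also glossed over in the paper's own proof, so it is a fair marginal note rather than a divergence in approach.
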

    
\begin{proof}
 \noindent  Let $(G, \rho, \theta, X)$ be a groupoid fibred action. 
 For each $g\in G$, 
 define $X_g:= \rho^{-1}(r(g))$
 and
 $\alpha_g:X_{g^{-1}} \to X_g$ by $\alpha_g(x):= g \p x$, for all $x\in X_{g^{-1}}$. Notice that $x\in X_{g^{-1}}$ implies that $\rho(x)=s(g)$ and thus $(g,x)\in G \ltimes X$. Hence, $\alpha_g$ is well-defined. We also have that
     $$X=\rho^{-1}(G_0)= \bigsqcup_{e \in G_0}\rho^{-1}(\{e\})= \bigsqcup_{e \in G_0}X_e.$$
     It is straightforward to verify that $\alpha=(X_g,\alpha_g)_{g\in G}$ is a global action of $G$ on $X$.
     
     Now, let $\alpha=(X_g,\alpha_g)_{g\in G}$ be a global action of $G$ on $X$ satisfying  $X= \bigsqcup_{e \in G_0}X_e$. Define the anchor map $\rho:  X  \to G_0$ by $\rho(x):=e$ whenever $x\in X_e$. Since $\rho(X_e)=e$ for all $e \in G_0$, it follows that $\rho$ is surjective. Define $\theta: G \ltimes X  \to X$ by $g \p x:=\alpha_g(x)$, for all $(g,x)\in G \ltimes X$. It is straightforward to verify that $(G,\rho , \theta, X)$ is a fibred action.
   
 Clearly, the two procedures outlined above are mutually inverse.
\end{proof}

In \cite{Flores2020}, topological dynamics of (global) actions of topological groupoids on topological spaces was 
studied.
In this paper, we aim to study
topological dynamics of partial actions of discrete groupoids on topological spaces.

\begin{definition}\label{P1-def:D3}
A \emph{partial action of a groupoid $G$ on a ring $R$} is a partial action $\alpha\ = (D_g,\alpha_g)_{g \in G}$ of $G$ on the set $R$,
such that $D_{r(g)}$ is an ideal of $R$,
$D_g$ is an ideal of $D_{r(g)}$,
and $\alpha_g : D_{g^{-1}} \to D_g$ is a ring isomorphism, for all $g\in G$.
Given a partial action $\alpha$ of a groupoid $G$ on a ring $R$ one may define the \emph{partial skew groupoid ring $R \rtimes_{\alpha}G$} as the set of all formal sums $\displaystyle\sum_{g \in G} a_g \delta_g$, where $a_g \in D_g$ is zero for all but finitely many $g\in G$ and $\delta_g$ is a symbol. Addition on $R \rtimes_\alpha G$ is defined in the natural way and multiplication is given by the rule
\begin{equation*}
a_g\delta_g \cdot b_h \delta_h :=
\begin{cases}
 \alpha_g(\alpha_{g^{-1}}(a_g)b_h)\delta_{gh},& \text{ if } (g,h) \in G^2,\\
0, & \text{ otherwise. } 
\end{cases}
\end{equation*}
\end{definition}

\begin{obs}\label{P1-rem:associatividade}
Let $\alpha\ =(D_g,\alpha_g)_{g \in G}$ be a partial action of a groupoid $G$ on a ring $R$.

(a) Recall that a ring $T$ is said to be \emph{$s$-unital} if
$t \in tT \cap Tt$ for every $t\in T$.
One can show that for any finite subset $F$ of an $s$-unital ring $T$, there is some $u\in T$ such that $uf=fu=f$ for every $f\in F$ (see \cite[Proposition~2.10]{NystedtSurvey2019}).
In that case, the element $u$ will be referred to as an \emph{$s$-unit for the set $F$}.

(b) In general, the partial skew groupoid ring $R \rtimes_{\alpha}G$ need not be associative (not even for partial actions of groups).
By \cite[Proposition~3.1]{Bagio2010}, if $D_g$ is $(\mathcal{L}, \mathcal{R})$-associative for every $g\in G$, then $R \rtimes_{\alpha}G$ is associative. If $D_g$ is idempotent, then \cite[Proposition~2.5]{Dokuchaev2005} implies that $D_g$ is $(\mathcal{L}, \mathcal{R})$-associative.  In particular, if $D_g$ 
is $s$-unital for every $g\in G$, then $D_g$ is $(\mathcal{L}, \mathcal{R})$-associative and consequently $R \rtimes_{\alpha}G$ is associative.

(c) Suppose that $D_e$ is a unital ring, for all $e\in G_0$. Then, there exists a central idempotent $1_e\in R$ such that $D_e=1_eR$. Moreover, if $G_0$ is finite, then \cite[Proposition~3.3]{Bagio2010} implies that  $R \rtimes_{\alpha}G$ is unital with multiplicative identity element $\sum_{e\in G_0}1_e\delta_e$.
\end{obs}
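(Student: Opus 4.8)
Items (a) and (b) of the remark record, for later use, facts that amount to invocations of the cited literature, so the plan there is just to chain the relevant implications; the substance lies in item (c). For (a), the plan is to run the standard induction on $|F|$: in an $s$-unital ring every singleton has an $s$-unit, and an $s$-unit $u$ for $\{t_1,\dots,t_k\}$ is upgraded to an $s$-unit for $\{t_1,\dots,t_{k+1}\}$ by correcting $u$ with an $s$-unit for the finitely many ``error terms'' $t_{k+1}-ut_{k+1}$, $t_{k+1}-t_{k+1}u$, $t_{k+1}-ut_{k+1}u$; equivalently one quotes \cite[Proposition~2.10]{NystedtSurvey2019}. I would record here the elementary observation that since $t\in tT$ for every $t$, we have $T=T^2$, so every $s$-unital ring is idempotent. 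For (b), one then composes three known implications: by \cite[Proposition~2.5]{Dokuchaev2005} an idempotent ideal is $(\mathcal{L},\mathcal{R})$-associative; by the previous observation an $s$-unital ideal is idempotent; and by \cite[Proposition~3.1]{Bagio2010} the partial skew groupoid ring is associative once every $D_g$ is $(\mathcal{L},\mathcal{R})$-associative.

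For (c) the first step is to produce the central idempotents. I would take $g=e\in G_0$ in Definition~\ref{P1-def:D3} to see that $D_e$ is an ideal of $R$, and let $1_e$ denote its multiplicative identity. For any $r\in R$ both $1_e r$ and $r 1_e$ lie in the ideal $D_e$, on which $1_e$ is a two-sided identity, so $1_e r = 1_e r 1_e = r 1_e$; thus $1_e$ is central in $R$, and obviously $1_e^2=1_e$. Moreover $x=1_e x$ for $x\in D_e$ while $1_eR\subseteq D_e$ since $D_e$ is an ideal, whence $D_e=1_eR$. Now assume $G_0$ is finite and set $u:=\sum_{e\in G_0}1_e\delta_e$, a well-defined element of $R\rtimes_\alpha G$. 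By distributivity it suffices to check $u\cdot(a_g\delta_g)=a_g\delta_g=(a_g\delta_g)\cdot u$ for $g\in G$ and $a_g\in D_g$. In $u\cdot(a_g\delta_g)$ the summand $(1_e\delta_e)(a_g\delta_g)$ is nonzero only when $s(e)=r(g)$, i.e.\ $e=r(g)$; using $r(g)^{-1}=r(g)$, that $\alpha_{r(g)}=\id_{D_{r(g)}}$ (Definition~\ref{P1-def:D1}(ii)), and $r(g)g=g$, that summand equals $1_{r(g)}a_g\delta_g$, and $a_g\in D_g\subseteq D_{r(g)}=1_{r(g)}R$ forces $1_{r(g)}a_g=a_g$. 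Symmetrically, in $(a_g\delta_g)\cdot u$ only $e=s(g)$ survives, yielding $\alpha_g\big(\alpha_{g^{-1}}(a_g)\,1_{s(g)}\big)\delta_g$; since $\alpha_{g^{-1}}(a_g)\in D_{g^{-1}}\subseteq D_{r(g^{-1})}=D_{s(g)}=1_{s(g)}R$, the factor $1_{s(g)}$ is absorbed and $\alpha_g(\alpha_{g^{-1}}(a_g))=a_g$. Hence $u$ is a multiplicative identity; associativity of $R\rtimes_\alpha G$ under these hypotheses follows from (b) (each $D_e$ being idempotent), or one may simply invoke \cite[Proposition~3.3]{Bagio2010}.

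The difficulty is organizational rather than conceptual. In (a) one must choose the ``error terms'' so that a single $s$-unit corrects them all at once. In (c) the delicate points are (i) identifying, in each of $u\cdot(a_g\delta_g)$ and $(a_g\delta_g)\cdot u$, the unique index $e\in G_0$ giving a composable pair --- forced by $s(e)=r(g)$ respectively $r(e)=s(g)$ together with the identification of objects with identity morphisms --- and (ii) invoking the inclusions $D_g\subseteq D_{r(g)}$ and $D_{g^{-1}}\subseteq D_{s(g)}$ from Definition~\ref{P1-def:D3} at exactly the right moment to absorb $1_{r(g)}$ and $1_{s(g)}$. There is also the mild caveat that asserting ``$R\rtimes_\alpha G$ is unital'' presupposes associativity, which the hypotheses of (c) do not obviously guarantee for every $D_g$; bridging this gap cleanly (or transporting from \cite{Bagio2010} the precise hypothesis that makes it work) is the one point deserving explicit attention.
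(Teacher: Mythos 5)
Your proposal is correct and matches the paper's treatment: the paper establishes (a) and (b) purely by the cited references and delegates the unitality claim in (c) to \cite[Proposition~3.3]{Bagio2010}, while you additionally supply the (correct) routine verifications — centrality and idempotency of $1_e$, the identity $D_e=1_eR$, and the computation showing that the unique composable index in $u\cdot(a_g\delta_g)$ and $(a_g\delta_g)\cdot u$ is $e=r(g)$ and $e=s(g)$ respectively, with $1_{r(g)}$ and $1_{s(g)}$ absorbed via $D_g\subseteq D_{r(g)}$ and $D_{g^{-1}}\subseteq D_{s(g)}$. Your closing caveat is unnecessary: asserting that $\sum_{e\in G_0}1_e\delta_e$ is a two-sided multiplicative identity does not presuppose associativity of $R\rtimes_\alpha G$, so no gap needs bridging there.
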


\subsection{The $G$-grading on $R\rtimes_\alpha G$}

Any partial skew groupoid ring $S:=R \rtimes_\alpha G$ is, in a natural way, graded by the groupoid $G$.
Indeed, we may endow it with a $G$-grading by putting $S_g := D_g \delta_g$, for every $g\in G$.
With that choice of homogeneous components, one easily verifies that $S=\bigoplus_{g\in G} S_g$, and that $S_gS_h \subseteq S_{gh}$ if $(g,h)\in G^2$, and $S_g S_h = \{0\}$ otherwise.
If $I$ is an ideal of the $G$-graded ring $R \rtimes_\alpha G$, then $I$ is said to be a \emph{graded ideal} (or \emph{$G$-graded ideal}) if
$I=\bigoplus_{g \in G} (I \cap D_g\delta_g)$.

The \emph{support} of an element $s = \sum_{g\in G} a_g \delta_g \in R \rtimes_\alpha G$ will be denoted by $\Supp(s)$ and is defined as the finite set $\{g\in G : a_g \neq 0 \}$.
Note that $s = \sum_{g\in \Supp(s)} a_g \delta_g$.

\section{The ideal structure of partial skew groupoid rings}\label{P1-Sec:Algebraic}

Throughout this section, unless stated otherwise, $R$ will denote an arbitrary associative ring, $G$ will denote an arbitrary groupoid and $\alpha = (D_g, \alpha_g)_{g \in G}$ will be an arbitrary partial action of $G$ on $R$, such that $D_g$ is $s$-unital for every $g \in G.$
Moreover, we will assume that
$R=\bigoplus_{e \in G_0}D_e.$

\begin{obs}\label{P1-remark:skew-s-unital}
Note that, by our assumptions, $R$ is clearly $s$-unital. Using that fact, it easily follows that the partial skew groupoid ring $R\rtimes_{\alpha}G$ is $s$-unital.
\end{obs}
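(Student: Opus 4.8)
The plan is to verify $s$-unitality in two steps, exactly as the remark suggests. First I would record why $R$ is $s$-unital. For each $e \in G_0$ we have $r(e) = e$, so Definition~\ref{P1-def:D3} says that $D_e$ is an ideal of $R$; since $R = \bigoplus_{e \in G_0} D_e$ is a direct sum of these ideals, they are pairwise orthogonal, that is, $D_e D_f \subseteq D_e \cap D_f = \{0\}$ for $e \neq f$. Given $a \in R$, write $a = \sum_{e \in F} a_e$ with $F \subseteq G_0$ finite and $a_e \in D_e$, pick an $s$-unit $u_e \in D_e$ for $\{a_e\}$ (possible since $D_e$ is $s$-unital), and put $u := \sum_{e \in F} u_e$; orthogonality kills the cross terms, so $ua = au = a$ and hence $a \in aR \cap Ra$.

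Next I would transfer this to $S := R \rtimes_\alpha G$, which is associative by Remark~\ref{P1-rem:associatividade}(b) because each $D_g$ is $s$-unital. Fix $s = \sum_{g \in F} a_g \delta_g \in S$ with $F := \Supp(s)$ finite. Since $R$ is $s$-unital, there is $u \in R$ with $ux = xu = x$ for every $x$ in the finite set $\{a_g : g \in F\} \cup \{\alpha_{g^{-1}}(a_g) : g \in F\}$; write $u = \sum_{e \in G_0} u_e$ with $u_e \in D_e$ (only finitely many nonzero) and set $\widetilde u := \sum_{e \in G_0} u_e \delta_e \in S$. The point of feeding both $a_g$ and $\alpha_{g^{-1}}(a_g)$ to the $s$-unit is that $a_g \in D_g \subseteq D_{r(g)}$ controls the left action while $\alpha_{g^{-1}}(a_g) \in D_{g^{-1}} \subseteq D_{s(g)}$ controls the right action.

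Then I would simply expand the products. In $\widetilde u\, s = \sum_{e,g} u_e\delta_e \cdot a_g\delta_g$ the only surviving pairs are those with $e = r(g)$ (as $s(e) = e$), and there $u_{r(g)}\delta_{r(g)} \cdot a_g\delta_g = u_{r(g)} a_g \delta_g$ because $\alpha_{r(g)} = \id_{D_{r(g)}}$ and $r(g)g = g$; by orthogonality $u_{r(g)} a_g = u a_g = a_g$, so $\widetilde u\, s = s$. Symmetrically, in $s\,\widetilde u$ only the pairs with $e = s(g)$ survive, giving $\sum_g \alpha_g\big(\alpha_{g^{-1}}(a_g)\, u_{s(g)}\big)\delta_g = \sum_g \alpha_g\big(\alpha_{g^{-1}}(a_g)\, u\big)\delta_g = \sum_g \alpha_g\big(\alpha_{g^{-1}}(a_g)\big)\delta_g = s$. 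Hence $s \in sS \cap Ss$, so $S$ is $s$-unital.

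The only real obstacle is bookkeeping: one must produce a \emph{single} element $\widetilde u \in S$ that is simultaneously a left and a right $s$-unit for all homogeneous components of $s$ at once, which is why the finite set handed to the $s$-unitality of $R$ must contain both $a_g$ and $\alpha_{g^{-1}}(a_g)$ for every $g \in \Supp(s)$; once that is set up, the orthogonality of the ideals $D_e$ and the defining multiplication rule of $S$ do the rest.
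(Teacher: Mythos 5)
Your proposal is correct and is exactly the verification the paper leaves to the reader: orthogonality of the ideals $D_e$ in the direct sum $R=\bigoplus_{e\in G_0}D_e$ gives $s$-unitality of $R$, and a common $s$-unit for the finite set $\{a_g\}\cup\{\alpha_{g^{-1}}(a_g)\}$ (which exists by Remark~\ref{P1-rem:associatividade}~(a)), lifted to $\widetilde u=\sum_e u_e\delta_e$, serves as a two-sided $s$-unit for $s$ in $R\rtimes_\alpha G$. No gaps.
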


We begin by establishing a couple of auxiliary results, which will be useful later on.

\begin{lema}\label{P1-di}
$D_g$ is an ideal of $R$, for all $g \in G.$
\end{lema}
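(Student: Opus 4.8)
The plan is to unpack the definitions and chain the two inclusions from Definition~\ref{P1-def:D3} together with the standing hypothesis $R = \bigoplus_{e \in G_0} D_e$. Fix $g \in G$. By Definition~\ref{P1-def:D3}, $D_{r(g)}$ is an ideal of $R$ and $D_g$ is an ideal of $D_{r(g)}$. So the only thing that needs to be checked is that an ideal of an ideal of $R$ is again an ideal of $R$; this is not automatic for arbitrary (non-unital) rings, so the $s$-unitality hypothesis on the $D_g$'s is what should make it work.

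Concretely, I would argue as follows. Since $D_g$ is an ideal of $D_{r(g)}$, it is in particular an additive subgroup of $D_{r(g)} \subseteq R$, so it is an additive subgroup of $R$. It remains to show $RD_g \subseteq D_g$ and $D_g R \subseteq D_g$. Take $r \in R$ and $a \in D_g$. Using $R = \bigoplus_{e \in G_0} D_e$, write $r = \sum_{e \in G_0} r_e$ with $r_e \in D_e$ and only finitely many nonzero. For $e \neq r(g)$, the product $r_e a$ lies in $D_e D_{r(g)} \subseteq D_e \cap D_{r(g)}$; but this intersection is $\{0\}$, because the sum $\bigoplus_{e \in G_0} D_e$ is direct and $D_e, D_{r(g)}$ are distinct summands (here one uses that the $D_e$ are precisely the homogeneous components indexed by $G_0$, and that $D_e \subseteq D_{r(e)} = D_e$ forces these sit inside distinct summands). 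Hence $ra = r_{r(g)} a \in D_{r(g)} D_g \subseteq D_g$, since $D_g$ is an ideal of $D_{r(g)}$. The inclusion $D_g R \subseteq D_g$ is symmetric. Therefore $D_g$ is a two-sided ideal of $R$.

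The main (and really only) obstacle is the direct-sum bookkeeping: making precise that for distinct $e, f \in G_0$ one has $D_e \cap D_f = \{0\}$ and, more to the point, that $D_e D_f \subseteq D_e \cap D_f$, so that cross terms vanish. The inclusion $D_e D_f \subseteq D_f$ holds because $D_f$ is an ideal of $D_{r(f)} = D_f$ and $D_e \subseteq R$; symmetrically $D_e D_f \subseteq D_e$; intersecting and invoking directness of $R = \bigoplus_{e \in G_0} D_e$ kills the product. Once this is in hand, the proof is just the two-line computation above. (If one prefers to avoid the direct-sum argument, an alternative is to pick, via $s$-unitality of $D_g$, an $s$-unit $u \in D_g$ for $\{a\}$, so $a = ua = au$; then $ra = r(ua) = (ru)a$ with $ru \in R D_g \subseteq D_{r(g)}$ since $D_{r(g)}$ is an ideal of $R$ and $D_g \subseteq D_{r(g)}$, whence $(ru)a \in D_{r(g)} D_g \subseteq D_g$; and symmetrically on the other side. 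Either route works, and I would present whichever is shorter given the conventions already fixed in the paper.)
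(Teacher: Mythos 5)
Your proposal is correct, and in fact you offer two routes, one of which coincides with the paper's. The paper's proof is exactly your parenthetical alternative: pick an $s$-unit $u$ for $x\in D_g$, write $xy=(xu)y=x(uy)$, observe $uy\in D_{r(g)}$ because $D_{r(g)}$ is an ideal of $R$, and conclude $x(uy)\in D_gD_{r(g)}\subseteq D_g$ because $D_g$ is an ideal of $D_{r(g)}$. Your main route is genuinely different: it exploits the standing hypothesis $R=\bigoplus_{e\in G_0}D_e$ to kill the cross terms, using that for distinct $e,f\in G_0$ the components $D_e=D_{r(e)}$ and $D_f=D_{r(f)}$ are ideals of $R$, so $D_eD_f\subseteq D_e\cap D_f=\{0\}$ by directness, whence $ra=r_{r(g)}a\in D_{r(g)}D_g\subseteq D_g$. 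This version does not use $s$-unitality of $D_g$ at all, only the decomposition of $R$ over $G_0$ and Definition~\ref{P1-def:D3}, so it is slightly more general; the paper's $s$-unit argument is shorter and does not need the direct-sum bookkeeping (and would work without the decomposition hypothesis). One small imprecision in your write-up: to justify $D_eD_f\subseteq D_f$ you invoke ``$D_f$ is an ideal of $D_{r(f)}=D_f$,'' but the fact actually needed is that $D_f=D_{r(f)}$ is an ideal of $R$, which is also part of Definition~\ref{P1-def:D3}; with that citation corrected the argument is complete.
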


\begin{proof}
Take $g \in G,$ $x \in D_g$ and $y \in R.$
Choose an $s$-unit $u\in D_{r(g)}$ for $x$.
Using that $D_g$ is an ideal of $D_{r(g)}$, and that $D_{r(g)}$ is an ideal of $R$, we note that
$xy = (xu)y = x(uy) \in D_g D_{r(g)} \subseteq D_g$.
Similarly, $yx\in D_g$.
\end{proof}

\begin{obs}\label{P2-InverseSemigroups}
Every partial skew groupoid ring $R \rtimes_\alpha G$ appearing in this paper can in fact be realized as a partial skew inverse semigroup ring.
Indeed, suppose that $\alpha = (D_g, \alpha_g)_{g \in G}$ is a partial action of $G$ on $R$.
Let $S:=G \cup \{z\}$ be the inverse semigroup induced by $G$ (cf.~\cite[p.19]{NystedtSystems2020}).
Define $D_g':=D_g$, for every $g\in G$, and $D'_z:=\{0\}$. Moreover, define $\alpha'_g := \alpha_g$, for every $g\in G$, and $\alpha'_z :=\id_{D'_z}$.
It is not difficult to verify that $\alpha':=(D'_s, \alpha'_s)_{s \in S}$ is a partial action of $S$ on $R$, and that
 $R\rtimes_{\alpha}G$ is isomorphic to the partial skew inverse semigroup ring $R \rtimes_{\alpha'} S$.
\end{obs}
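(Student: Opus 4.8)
The plan is to supply the two verifications that the remark leaves to the reader: first, that $\alpha' = (D'_s, \alpha'_s)_{s \in S}$ is a partial action of the inverse semigroup $S = G \cup \{z\}$ on $R$ (in the sense used for partial skew inverse semigroup rings, cf.~\cite{Buss2012,TBeuter2018}), and second, that the construction of the partial skew inverse semigroup ring $R \rtimes_{\alpha'} S$ returns exactly $R \rtimes_\alpha G$. I would begin by recording the structure of $S$: its product extends the partial composition of $G$ by setting every non-composable product, and every product involving $z$, equal to $z$; its involution is the groupoid inverse on $G$ and fixes $z$; its idempotent set is $E(S) = G_0 \cup \{z\}$; and the natural partial order of $S$ restricted to $G$ is \emph{trivial}, meaning that for $g,h \in G$ one has $g \le h$ in $S$ if and only if $g = h$ (writing $g = eh$ with $e \in E(S)$: if $e = z$ then $g = z \notin G$, and if $e \in G_0$ then $e = r(h)$ is forced, whence $g = h$), while $z \le s$ holds for every $s \in S$. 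These are routine facts about the inverse semigroup induced by a groupoid (cf.~\cite{NystedtSystems2020}).

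Next I would check the axioms of a partial action for $\alpha'$. The conditions involving idempotents are immediate: $\alpha'_e = \id_{D'_e}$ for $e \in G_0$ by Definition~\ref{P1-def:D1}(ii) and for $e = z$ by construction; $D'_s \subseteq D'_{ss^{-1}}$ reads $D_g \subseteq D_{r(g)}$ for $s = g \in G$ (Definition~\ref{P1-def:D3}) and $\{0\} \subseteq \{0\}$ for $s = z$; and $\sum_{e \in E(S)} D'_e = \sum_{e \in G_0} D_e = R$ by the standing assumption $R = \bigoplus_{e \in G_0} D_e$. For the compatibility axioms $\alpha'_t(D'_{t^{-1}} \cap D'_s) = D'_t \cap D'_{ts}$ and $\alpha'_t \alpha'_s = \alpha'_{ts}$ on the appropriate domain, I distinguish three cases. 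If $s,t \in G$ with $(t,s) \in G^2$, these reduce (after the standard symmetry argument) to conditions (iii) and (iv) of Definition~\ref{P1-def:D1}. If $s = z$ or $t = z$, then $ts = z$ and $D'_z = \{0\}$, so every set appearing is $\{0\}$ and every map is zero. Finally, if $s,t \in G$ but $(t,s) \notin G^2$, then $ts = z$, so $D'_{ts} = \{0\}$; and on the other side $D'_{t^{-1}} \cap D'_s = D_{t^{-1}} \cap D_s \subseteq D_{s(t)} \cap D_{r(s)} = \{0\}$, because $s(t) \neq r(s)$ and distinct summands of the decomposition $R = \bigoplus_{e \in G_0} D_e$ meet trivially. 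Hence all axioms hold.

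For the second part, recall that $R \rtimes_{\alpha'} S = L/N$, where $L = \bigoplus_{s \in S} D'_s \delta_s$ is the naive skew ring (associative, since each $D'_s$ is $s$-unital) with the usual twisted multiplication, and $N$ is the ideal generated by the redundancies $a\delta_s - a\delta_t$ for $s \le t$, $a \in D'_s$. Since $D'_z = \{0\}$, the summand $D'_z\delta_z$ is trivial and $L = \bigoplus_{g \in G} D_g\delta_g$ coincides, as an abelian group, with $R \rtimes_\alpha G$; moreover the two multiplications agree, because for non-composable $g,h \in G$ the $L$-product $\alpha'_g(\alpha'_{g^{-1}}(a)b)\delta_z$ has $\alpha'_{g^{-1}}(a)b \in D_{g^{-1}} \cap D_h = \{0\}$ (the same disjointness observation), matching the ``$0$ otherwise'' clause of Definition~\ref{P1-def:D3}. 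It remains to see that $N = \{0\}$: by the order analysis, any generator $a\delta_s - a\delta_t$ with $s,t \in G$ has $s = t$ and so vanishes, and the only remaining relations $z \le s$ contribute elements of $D'_z\delta_z - D'_z\delta_s = \{0\}$. Therefore $R \rtimes_{\alpha'} S = L/N = L = R \rtimes_\alpha G$, so the identity map is the asserted isomorphism.

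The only step that is not pure bookkeeping is the compatibility check in the non-composable case and the matching of the two multiplications, and both hinge on a single point: under the standing hypothesis $R = \bigoplus_{e \in G_0} D_e$, distinct $D_e$'s intersect only in $0$, so $D_{g^{-1}} \cap D_h = \{0\}$ whenever $s(g) \neq r(h)$. Once this is noted, everything else is forced — the inverse semigroup $G \cup \{z\}$ is built precisely so that it introduces no nontrivial order relations among elements of $G$, hence no nontrivial redundancies, and the quotient defining the partial skew inverse semigroup ring is vacuous.
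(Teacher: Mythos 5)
Your proposal is correct and follows exactly the construction that the remark sketches; the paper itself offers no further proof beyond asserting that the verification is "not difficult", and your case analysis (composable pairs reduce to Definition~\ref{P1-def:D1}, everything involving $z$ is trivial, and non-composable pairs are handled by $D_{g^{-1}}\cap D_h\subseteq D_{s(g)}\cap D_{r(h)}=\{0\}$ under the standing hypothesis $R=\bigoplus_{e\in G_0}D_e$) together with the observation that the natural partial order of $G\cup\{z\}$ is trivial on $G$, so the redundancy ideal vanishes, is precisely the intended argument. No gaps.
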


Let $A:=\bigoplus_{e \in G_0} D_e \delta_e \subseteq R \rtimes_\alpha G$.
We now define a 
``projection''
$P_0: R \rtimes_{\alpha} G\to R$
by
\begin{align}\label{P1-map-po}
    P_0 \! \left( \sum_{g \in G} a_g \delta_g \right)
    :=\sum_{e \in G_0} a_e,
\end{align}
and an inclusion
$\psi: R\to A$ by 
\begin{align}\label{P1-map-psi}
    \psi \! \left(\sum_{e \in G_0} a_e\right) :=\sum_{e \in G_0} a_e \delta_e.
\end{align}

\begin{obs}
Note that, in general $P_0$ is not a ring homomorphism.
\end{obs}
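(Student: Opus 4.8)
Since the assertion is a non-implication, the plan is simply to produce one counterexample within the standing hypotheses of the section. The cleanest choice already occurs for a global action of a group. Take $G=\Z$, viewed as a groupoid with the single object $0$, let $R=\mathbb{K}$ be a field, and let $\alpha$ be the trivial action, so that $D_n=\mathbb{K}$ and $\alpha_n=\id_\mathbb{K}$ for every $n\in\Z$. I would first check that this indeed satisfies our running assumptions: each $D_n$ is unital, hence $s$-unital, $G_0=\{0\}$, and $R=\bigoplus_{e\in G_0}D_e=D_0$. Under the identification $\delta_n\mapsto x^n$, the ring $R\rtimes_\alpha G$ is the Laurent polynomial ring $\mathbb{K}[x,x^{-1}]$, and $P_0\big(\sum_{n\in\Z}a_n\delta_n\big)=a_0$.

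The only remaining step is the computation on a product supported at $G_0$ of two factors supported away from $G_0$. Since $(1,-1)\in G^2$ and $\alpha$ is trivial, $\delta_1\cdot\delta_{-1}=\delta_0$, so $P_0(\delta_1\cdot\delta_{-1})=P_0(\delta_0)=1_\mathbb{K}$, whereas $P_0(\delta_1)\cdot P_0(\delta_{-1})=0\cdot 0=0$. As $1_\mathbb{K}\neq 0$, the map $P_0$ fails to be multiplicative, which is what we wanted. (Alternatively, one may take the order-two group $\Z/2\Z$ acting on $\mathbb{K}\times\mathbb{K}$ by the coordinate flip; then $(1,0)\delta_g\cdot(0,1)\delta_g=(1,0)\delta_e$ exhibits the same behaviour.)

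There is essentially no difficulty to overcome here; the point of the remark is cautionary rather than deep. The minor thing worth being careful about is confirming that the chosen example lies within the blanket hypotheses that $D_g$ is $s$-unital and $R=\bigoplus_{e\in G_0}D_e$, so that the statement is not vacuous in our setting. For context, I would also note what remains true: $P_0$ is always additive, $P_0\circ\psi=\id_R$, and the failure of multiplicativity is confined to the interaction between $\delta_g$ and $\delta_{g^{-1}}$ (more generally, between homogeneous components over composable morphisms whose product lies in $G_0$). This is why in the sequel $P_0$ is used only via the identities it does satisfy, and never as a ring homomorphism.
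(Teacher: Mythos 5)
Your counterexample is correct and complete: the paper states this remark without any justification, so there is nothing to compare against, and your Laurent polynomial example $\mathbb{K}\rtimes_{\alpha}\Z\cong\mathbb{K}[x,x^{-1}]$ with $P_0(\delta_1\cdot\delta_{-1})=1\neq 0=P_0(\delta_1)P_0(\delta_{-1})$ settles the matter. You also rightly verify that the example satisfies the section's blanket hypotheses ($D_g$ $s$-unital and $R=\bigoplus_{e\in G_0}D_e$), and your closing observation that $P_0$ is nonetheless additive and multiplicative against elements of $A$ (Lemma~\ref{P1-lem:some-obs}~(iii)) correctly identifies exactly which weaker properties the paper actually uses in place of multiplicativity.
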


\begin{lema}\label{P1-lem:some-obs}
Let $A$ be as above, and let $I$ be an ideal of $R\rtimes_{\alpha} G$. The following assertions hold:

\begin{enumerate}[{\rm (i)}]

\item $A$ is a subring of $R\rtimes_{\alpha} G$ and the map $\psi: R\to A$ is a ring isomorphism. In particular, $R$ is commutative if, and only if, $A$ is commutative.
\item If $\sum_{e \in G_0} a_e\delta_e \in I \cap A$, then $a_f \delta_f \in I \cap A$, for all $f \in G_0$. 

\item If $a\in A$ and $b\in R \rtimes_{\alpha} G$, then $P_0(ab)=P_0(a)P_0(b)$ and $P_0(ba)=P_0(b)P_0(a)$. In particular, $P_0\lvert_{A}:A\to R$ is a ring isomorphism whose inverse is $\psi$, defined in \eqref{P1-map-psi}.

\item The map $E : R \rtimes_{\alpha}G \to A$, 
defined by $E := \psi \circ P_0$,
is an $A$--$A$-bimodule map, in the sense that it is additive and
satisfies $E(a)=a$,  
$E(ab)=aE(b)$ and $E(ba)=E(b)a$,
for all $a\in A$ and $b\in R \rtimes_{\alpha}G$.

\item $P_0(I)=P_0 \! \left(I\cap \bigoplus_{e \in G_0}(D_e \rtimes_{\alpha^e} G_e^e)\right)$, where $\alpha^e:=(D_h,\alpha_h)_{h\in G_e^e}$ is the partial action of the isotropy group $G_e^e$ on $D_e$.
\end{enumerate}
\end{lema}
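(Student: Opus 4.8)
The plan is to handle the five assertions essentially in the order they are stated, since each leans on the previous ones. For (i), I would first check that $A = \bigoplus_{e\in G_0} D_e\delta_e$ is closed under multiplication: if $(e,f)\in G^2$ with $e,f\in G_0$ then $e=s(e)=r(f)=f$, so $a_e\delta_e\cdot b_e\delta_e = \alpha_e(\alpha_e(a_e)b_e)\delta_e = a_eb_e\delta_e \in D_e\delta_e$ (using $\alpha_e=\id_{D_e}$), and the product is $0$ otherwise; closure under addition and subtraction is clear. That $\psi$ is additive and bijective onto $A$ is immediate from the assumption $R=\bigoplus_{e\in G_0}D_e$; multiplicativity of $\psi$ follows from the same computation, reading it as $\psi(a_eb_e) = a_e b_e\delta_e = (a_e\delta_e)(b_e\delta_e)$ componentwise and noting cross terms vanish. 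The statement about commutativity is then a triviality for ring isomorphisms. For (ii), if $s=\sum_{e\in G_0}a_e\delta_e \in I\cap A$ and $f\in G_0$, I would pick an $s$-unit $u_f\in D_f$ for $a_f$ (using $s$-unitality of $D_f$); then $u_f\delta_f\cdot s\cdot u_f\delta_f$, computed via the multiplication rule, kills every component except $e=f$ and returns $a_f\delta_f$, which lies in $I$ since $I$ is an ideal and $u_f\delta_f\in A\subseteq R\rtimes_\alpha G$; it obviously lies in $A$.

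For (iii), the key point is the componentwise behaviour of $P_0$. Given $a=\sum_{e\in G_0}a_e\delta_e\in A$ and $b=\sum_{h\in G}b_h\delta_h$, the product $ab$ has, in degree $k\in G$, a contribution from pairs $(e,h)$ with $e\in G_0$, $eh=k$, $s(e)=r(h)$; since $e\in G_0$ this forces $e=r(h)=r(k)$ and $h=k$, so the degree-$k$ part of $ab$ is $\alpha_{r(k)}(\alpha_{r(k)}(a_{r(k)})b_k)\delta_k = a_{r(k)}b_k\delta_k$. Restricting to $k=f\in G_0$ gives $P_0(ab)=\sum_{f\in G_0}a_f b_f = \big(\sum_{f\in G_0}a_f\big)\big(\sum_{f\in G_0}b_f\big) = P_0(a)P_0(b)$, where the middle equality again uses that distinct homogeneous components of $R=\bigoplus D_e$ multiply to zero (here I should note $D_eD_f\subseteq D_eD_{r(f)}=0$ unless $e=f$, since $D_e,D_f$ are ideals of $R$ by Lemma~\ref{P1-di} sitting over orthogonal identity supports — more carefully, for $e\neq f$ and $x\in D_e$, $y\in D_f$, an $s$-unit argument gives $xy\in D_e\cap D_f$, and one checks $D_e\cap D_f=\{0\}$ from the direct sum decomposition). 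The symmetric identity $P_0(ba)=P_0(b)P_0(a)$ is proved the same way. That $P_0|_A$ is a ring isomorphism with inverse $\psi$ is then immediate: $P_0\circ\psi=\id_R$ and $\psi\circ P_0|_A=\id_A$ are direct computations, and multiplicativity of $P_0|_A$ is the special case $b\in A$ of what was just shown.

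For (iv), $E=\psi\circ P_0$ is additive as a composite of additive maps; $E(a)=a$ for $a\in A$ follows from $P_0|_A$ and $\psi$ being mutually inverse; and $E(ab)=\psi(P_0(ab))=\psi(P_0(a)P_0(b))=\psi(P_0(a))\psi(P_0(b))=E(a)E(b)=aE(b)$ for $a\in A$, using (iii), multiplicativity of $\psi$, and $E(a)=a$; symmetrically $E(ba)=E(b)a$. Finally, for (v), the inclusion $\supseteq$ is trivial. For $\subseteq$, take $x\in I$ and write $P_0(x)=\sum_{e\in G_0}a_e$ with $a_e\in D_e$; the claim is that I can replace $x$ by an element of $I$ lying in $\bigoplus_e (D_e\rtimes_{\alpha^e}G_e^e)$ with the same image under $P_0$. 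Using (ii)-style cutting-down by $s$-units $1_f\delta_f$ (more precisely, forming $\sum_{f\in G_0} u_f\delta_f\cdot x\cdot u_f\delta_f$ for suitable $s$-units, or summing $E$-type corner projections) I would project $x$ onto its "isotropy part": the element $y:=\sum_{e\in G_0}\big(\text{the sum of degree-}h\text{ components of }x\text{ with }h\in G_e^e\big)$. One must check (a) $y\in I$ — this is the step requiring care, and it is the main obstacle: unlike in (ii) we are summing infinitely... no, the support of $x$ is finite, so $y$ is obtained as a finite sum of elements $u\delta_e\cdot x\cdot u'\delta_e$ over $e\in G_0$ with appropriate $s$-units, each of which lies in $I$; the only subtlety is verifying that left- and right-multiplying $x$ by such corner elements does land each surviving component $a_h\delta_h$ with $r(h)=s(h)=e$, i.e. in $D_h\delta_h\subseteq D_e\rtimes_{\alpha^e}G_e^e$, while killing the components $h$ with $r(h)\neq e$ or $s(h)\neq e$; this is a direct application of the multiplication rule as in the proof of (ii). And (b) $P_0(y)=P_0(x)$, which holds because the degree-$e$ components ($e\in G_0$) of $x$ are exactly the ones retained, since $e\in G_e^e$. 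The main obstacle, then, is item (v): bookkeeping which homogeneous components survive the two-sided corner multiplication and confirming each survivor sits in the right isotropy summand — everything else is a routine unwinding of the multiplication rule and the direct-sum hypothesis.
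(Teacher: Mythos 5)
Your proposal is correct and follows essentially the same route as the paper: the direct computation of homogeneous components for (i)--(iv), cutting down by corner elements $u_f\delta_f$ for (ii), and forming the finite sum of two-sided corners $\sum_{e} (u_e\delta_e)\,x\,(u_e\delta_e)$ to project onto the isotropy part for (v), with $s$-units chosen for the finitely many $a_h$ and $\alpha_{h^{-1}}(a_h)$ involved. The only cosmetic difference is that the paper proves (ii) with a one-sided right multiplication by $u_f\delta_f$, whereas you use a two-sided corner; both work.
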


\begin{proof}
The proof of (i) is straightforward.
For (ii),
suppose that $\sum_{e\in G_0} a_e \delta_e \in I\cap A$.
Take $f\in G_0$ and choose $u_f \in D_f$ to be an $s$-unit for $\alpha_f(a_f)$.
Notice that
$$a_f \delta_f = \alpha_f(\alpha_f(a_f) u_f) \delta_f
= \left( \sum_{e\in G_0} a_e \delta_e \right) u_f \delta_f \in I \cap A.$$
For (iii), take  $a=\sum_{e \in G_0} a_e \delta_e \in A$ and $b = \sum_{g \in G} b_g \delta_g \in R\rtimes_{\alpha} G$. Observe that
$$P_0(ab)  = P_0 \! \left(\sum_{\substack{g \in G}}\alpha_{r(g)}(\alpha_{r(g)}(a_{r(g)})b_{g})\delta_{r(g)g} \right) = \sum_{e \in G_0} a_{e}b_{e} = P_0(a)P_0(b).$$
Similarly, $P_0(ba)=P_0(b)P_0(a).$ The second part of (iii) is straightforward.
The proof of (iv) is straightforward, using (iii) and the definitions of $P_0$ and $\psi$.
Finally, for (v), we take 
$a=\sum_{g \in G}a_g\delta_g \in I$
and put $v:= P_0(a)$. 
Take $e\in G_0$, put $F_e:=G_e^e\cap \Supp(a)$ and $F_0:=G_0\cap \Supp(a)$, and let $u_e \in D_e$ be an $s$-unit for $a_g$ and $\alpha_{g^{-1}}(a_g),$ for all $g \in F_e$. Define  $y_e:= (u_e \delta_e)a(u_e \delta_e)$ and notice that
$$\begin{aligned}
y_e & = \left(u_e \delta_e \right)\left(\sum_{\substack{g \in G \\ s(g)=e}} \alpha_g \left(\alpha_{g^{-1}} \left(a_g \right)u_e \right)\delta_g \right)  
=\left(u_e \delta_e \right)\left(\sum_{g \in G_e^e}a_g\delta_g + \sum_{\substack{g \in G \\ s(g)=e \\ r(g)\neq e}} \alpha_g \left(\alpha_{g^{-1}} \left(a_g \right)u_e \right)\delta_g \right) \\
& = \sum_{g \in F_e} u_ea_g\delta_g +0 
 = \sum_{g \in F_e}a_g\delta_g \in I \cap (D_e\rtimes_{\alpha^e}G_e^e).
\end{aligned}$$
Hence $y:= \sum_{e \in F_0}y_e \in I \cap \bigoplus_{e \in G_0}(D_e \rtimes_{\alpha^e} G_e^e)$ satisfies $P_0(y)= \sum_{e \in F_0}a_e=P_0(a)=v.$ Consequently, $v \in P_0 \! \left( I \cap \bigoplus_{e \in G_0}(D_e \rtimes_{\alpha^e} G_e^e) \right).$ The reverse inclusion is trivial.
\end{proof}

\subsection{$G$-invariant ideals of $R$ versus $G$-graded ideals of $R\rtimes_{\alpha} G$}

In this section, we describe a correspondence between $G$-invariant ideals of the ring $R$ and graded ideals of $R\rtimes_{\alpha} G$.
The proof of the next result is partially inspired by the proof of \cite[Theorem~2.3]{Goncalves2014}.

\begin{lema}\label{P1-lem:inv}
If $I$ is an ideal of $R \rtimes_{\alpha} G$, then $P_0(I)$ is a $G$-invariant ideal of $R.$
\end{lema}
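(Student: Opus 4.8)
The plan is to show that $P_0(I)$ is both an ideal of $R$ and $G$-invariant, using the bimodule map $E = \psi \circ P_0$ and the ``projection'' properties collected in Lemma~\ref{P1-lem:some-obs}. First I would verify that $P_0(I)$ is an ideal of $R$. Since $\psi : R \to A$ is a ring isomorphism, it suffices to show that $E(I) = \psi(P_0(I))$ is an ideal of $A$; but this is immediate from Lemma~\ref{P1-lem:some-obs}(iv): $E$ is additive, $E(I)\subseteq A$, and for $a\in A$, $b\in I$ we have $E(ab) = aE(b)$ and $E(ba) = E(b)a$, while $ab, ba \in I$ because $I$ is an ideal of $R\rtimes_\alpha G$ and $A\subseteq R\rtimes_\alpha G$. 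Hence $AE(I), E(I)A \subseteq E(I)$, so $E(I)$ is an ideal of $A$ and therefore $P_0(I)$ is an ideal of $R$.

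Next I would prove $G$-invariance, which is the substantive part. Fix $g\in G$; I need $\alpha_g(D_{g^{-1}}\cap P_0(I)) \subseteq P_0(I)$. Take $a = \sum_{h\in G} a_h\delta_h \in I$ with $P_0(a) = v = \sum_{e\in G_0} a_e$, and I want to produce an element of $I$ whose image under $P_0$ is (essentially) $\alpha_g$ applied to the relevant piece of $v$. The natural candidate is to conjugate $a$ by group-like elements supported near $g$: consider an $s$-unit $u \in D_{g^{-1}}$ for $a_{s(g)}$ (the component of $v$ lying in $D_{s(g)} = D_{r(g^{-1})}$) and form the element $(u'\delta_g)\, a\, (u\delta_{g^{-1}}) \in I$, where $u' \in D_g$ is a suitable $s$-unit chosen so that the coefficient computations go through. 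Expanding the product using the multiplication rule, only the term of $a$ indexed by $s(g) \in G_0$ survives into the $\delta_{r(g)}$-slot (terms indexed by $h$ with $s(h)\ne s(g)$ either fail to compose on the left or land in a non-identity component $\delta_{ghg^{-1}}$), and one computes that the $\delta_{r(g)}$-coefficient of $(u'\delta_g)a(u\delta_{g^{-1}})$ equals $\alpha_g(\text{(something $s$-unit-equivalent to)}\ a_{s(g)})$, which for an appropriate choice of $s$-units is exactly $\alpha_g(a_{s(g)})$ up to the $s$-unital manipulations already used repeatedly in Lemma~\ref{P1-lem:some-obs}. Therefore $\alpha_g(a_{s(g)}) \in P_0(I)$.

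To finish, I would observe that an arbitrary element $w \in D_{g^{-1}} \cap P_0(I)$ can be realized as the $s(g)$-component of some $v = P_0(a)$ with $a \in I$: indeed, given $w \in P_0(I)$, pick $a\in I$ with $P_0(a) = v$, and since $w\in D_{s(g)}$ one replaces $a$ by $(u_{s(g)}\delta_{s(g)})\, a\, (u_{s(g)}\delta_{s(g)})$ for a suitable $s$-unit, or directly uses that $w = \sum_e w_e$ with only the $s(g)$-part relevant after multiplying by $1_{s(g)}$-type $s$-units; more cleanly, since $P_0(I)$ is already shown to be an ideal of the $s$-unital ring $R = \bigoplus_e D_e$, we have $w = w_{s(g)} \in P_0(I)\cap D_{s(g)}$ and $w$ itself arises as $P_0(w\delta_{s(g)})$ with $w\delta_{s(g)} \in I$ (because $w\delta_{s(g)} = (u\delta_{s(g)})\cdot a \cdot (\cdots)$ lies in $I$, or simply because $w \in P_0(I)$ means there is $b\in I$ with $P_0(b) = w$, and then $P_0(E(b)) = w$ with $E(b)\in A\cap$(ideal generated structure)). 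Applying the previous paragraph to such an element gives $\alpha_g(w) \in P_0(I)$, completing the proof.

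The main obstacle I anticipate is the bookkeeping in the conjugation step: choosing the $s$-units $u, u'$ so that the surviving coefficient is exactly $\alpha_g(w)$ and not merely $\alpha_g(uw)$ or similar, and carefully checking that all other terms in the expansion of $(u'\delta_g)a(u\delta_{g^{-1}})$ either vanish or land outside $A = \bigoplus_{e\in G_0}D_e\delta_e$ so that they do not contribute to $P_0$. This is the same flavor of computation as in Lemma~\ref{P1-lem:some-obs}(v), and the composability constraints of the groupoid (one needs $s(u'\delta_g) = r(a\text{-term})$ etc.) should automatically kill the unwanted terms; the $s$-unitality of each $D_g$ is exactly what makes the coefficient manipulations legitimate.
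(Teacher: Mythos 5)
Your proposal follows essentially the same route as the paper: the ideal part via the multiplicativity of $P_0$ against elements of $A=\bigoplus_{e\in G_0}D_e\delta_e$ (Lemma~\ref{P1-lem:some-obs}), and $G$-invariance by taking $w\in P_0(I)\cap D_{g^{-1}}$, choosing $b\in I$ with $P_0(b)=w$ (so that, by directness of $R=\bigoplus_e D_e$, the $G_0$-part of $b$ is exactly $w\delta_{s(g)}$), and conjugating $b$ by $\alpha_g(u)\delta_g$ and $u\delta_{g^{-1}}$ for an $s$-unit $u\in D_{g^{-1}}$ of $w$, so that only the $\delta_{s(g)}$-term survives into a $G_0$-component and yields $\alpha_g(w)\in P_0(I)$. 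One caveat: the aside in your last paragraph suggesting $w\delta_{s(g)}\in I$ (via $E(b)$) is false in general --- $E$ need not map $I$ into $I$, and $P_0(I)$ need not equal $P_0(I\cap A)$ --- but your main argument never relies on this, since conjugating $b$ itself already disposes of the off-diagonal terms.
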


\begin{proof}
Suppose that $I$ is an ideal of $R \rtimes_{\alpha} G$.
Take $a\in I$, $r\in R$, and consider $c:=P_0(a)\in P_0(I)$.
Notice that $\psi(r)a \in I$ and $a \psi(r) \in I$, where $\psi$ is the map defined in \eqref{P1-map-psi}. By Lemma~\ref{P1-lem:some-obs}~(iii), we have that $cr=P_0(a)P_0(\psi(r))=P_0(a\psi(r))\in P_0(I)$. Similarly, $rc \in P_0(I)$. 
Thus, $P_0(I)$ is an ideal of $R.$

Now, take $t\in G$
and $d_t \in P_0(I) \cap D_t.$
Then, there is an element $b: = \sum_{g\in G}b_g\delta_g \in I$ such that $P_0(b) = d_t \in D_t \subseteq D_{r(t)}$  and 
$b = d_t \delta_{r(t)} + \sum_{g \notin G_0} b_g \delta_g$. Let $u \in D_t$ be an $s$-unit for $d_t$. 
Consider the element 
$y: = \left(\alpha_{t^{-1}}\left(u\right) \delta_{t^{-1}}\right) b \left(u\delta_t\right) \in I.$ Observe that if $g \in \Supp(b),$ then $t^{-1}g t \in G_0$ if, and only if, $g=tt^{-1}=r(t)$. Therefore,
\begin{align}\label{P1-proof-inv} 
\alpha_{t^{-1}}(u) \delta_{t^{-1}} d_t \delta_{r(t)} u \delta_t 
=\alpha_{t^{-1}}\left(u \right) \delta_{t^{-1}} d_t \delta_t = \alpha_{t^{-1}}\left(\alpha_t\left(\alpha_{t^{-1}}\left(u \right)\right)d_t \right) \delta_{t^{-1}t} = \alpha_{t^{-1}} \left(d_t \right)\delta_{s(t)}.\end{align} 
We now conclude that
$\alpha_{t^{-1}} \left(d_t \right)=
P_0(\alpha_{t^{-1}}(u) \delta_{t^{-1}} \left(d_t \delta_{r(t)} \right) u \delta_t)
=P_0(y) \in P_0(I)$
and hence
$P_0(I)$ is  $G$-invariant.
\end{proof}

\begin{lema}\label{P1-lem:inv2}
If $I$ is an ideal of $R \rtimes_{\alpha} G$, then $P_0(I\cap\bigoplus_{e \in G_0} D_e\delta_e )$ is a $G$-invariant ideal of $R.$
\end{lema}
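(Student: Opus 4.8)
The plan is to deduce this from the two lemmas immediately preceding it, namely Lemma~\ref{P1-lem:inv} and Lemma~\ref{P1-lem:some-obs}~(v). First I would observe that $I\cap\bigoplus_{e\in G_0}D_e\delta_e = I\cap A$ is itself an ideal of the subring $A$; however, that alone is not quite enough, since we need a statement about an ideal of $R\rtimes_\alpha G$, not of $A$. The cleaner route is to set $J:=I\cap A$ and note that, while $J$ need not be an ideal of $R\rtimes_\alpha G$, we do not actually need it to be: we only need $P_0(J)$ to be a $G$-invariant ideal of $R$. So the real task is to redo the proof of Lemma~\ref{P1-lem:inv} with $J$ in place of $I$, checking that every element produced in that argument in fact lands back in $J$ (i.e.\ in $A$), not merely in $I$.

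Concretely, here are the steps I would carry out. \textbf{(1)} $P_0(J)$ is an ideal of $R$: given $a\in J$ and $r\in R$, the elements $\psi(r)a$ and $a\psi(r)$ lie in $I$ (since $I$ is an ideal) and also in $A$ (since $A$ is a ring and $\psi(r),a\in A$), hence in $J$; then Lemma~\ref{P1-lem:some-obs}~(iii) gives $rc,cr\in P_0(J)$ where $c=P_0(a)$. \textbf{(2)} $G$-invariance: take $t\in G$ and $d_t\in P_0(J)\cap D_t$, so there is $b\in J$ with $P_0(b)=d_t$. Here is the key simplification: since $b\in A=\bigoplus_{e\in G_0}D_e\delta_e$, we have $b=\sum_{e\in G_0}b_e\delta_e$ and $P_0(b)=\sum_{e\in G_0}b_e = d_t$, but in fact we can do better by first applying Lemma~\ref{P1-lem:some-obs}~(ii) to replace $b$ by its homogeneous component $b_{r(t)}\delta_{r(t)}\in J$, which still has $P_0$-image equal to $b_{r(t)}$; and since $d_t=\sum_e b_e\in D_t\subseteq D_{r(t)}$, one checks (using $R=\bigoplus_{e}D_e$ and $D_e\cap D_f=0$ for $e\neq f$) that $d_t=b_{r(t)}$. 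So we may assume $b=d_t\delta_{r(t)}\in J$. \textbf{(3)} Now conjugate: with $u\in D_t$ an $s$-unit for $d_t$, form $y:=(\alpha_{t^{-1}}(u)\delta_{t^{-1}})\,b\,(u\delta_t)$. The same computation as in \eqref{P1-proof-inv} shows $y=\alpha_{t^{-1}}(d_t)\delta_{s(t)}$, which is homogeneous of degree $s(t)\in G_0$, hence lies in $A$; and it lies in $I$ because $I$ is an ideal and $b\in I$. Therefore $y\in I\cap A=J$, so $\alpha_{t^{-1}}(d_t)=P_0(y)\in P_0(J)$, proving $G$-invariance.

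The main obstacle, such as it is, is purely a matter of bookkeeping: one must be careful that the elements $\psi(r)a$, $a\psi(r)$, and especially the conjugate $y$ genuinely remain in $A$ and not just in $I$. The point that makes this work is that $A$ is a \emph{subring} of $R\rtimes_\alpha G$ (Lemma~\ref{P1-lem:some-obs}~(i)) and that conjugating a homogeneous element of degree $r(t)\in G_0$ by $u\delta_t$ and $\alpha_{t^{-1}}(u)\delta_{t^{-1}}$ lands in degree $t^{-1}\cdot r(t)\cdot t = s(t)\in G_0$, again an element of $A$. Once that is noted, the argument is a verbatim restriction of the proof of Lemma~\ref{P1-lem:inv}. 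I would in fact phrase the proof as: ``The proof is identical to that of Lemma~\ref{P1-lem:inv}, once one observes that all elements constructed there (namely $\psi(r)a$, $a\psi(r)$, and $y$) belong to $A$ whenever $a,b$ do, because $A$ is a subring and conjugation by $u\delta_t$ of a $G_0$-homogeneous element stays $G_0$-homogeneous,'' together with the one genuinely new remark from step~(2) that reduces the given $b$ to a homogeneous element via Lemma~\ref{P1-lem:some-obs}~(ii).
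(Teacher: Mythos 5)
Your proposal is correct and follows essentially the same route as the paper: the paper also reduces to the element $b=d_t\delta_{r(t)}\in I\cap\bigoplus_{e\in G_0}D_e\delta_e$ and conjugates it to $y=\alpha_{t^{-1}}(d_t)\delta_{s(t)}$, observing that $y$ again lies in $I\cap\bigoplus_{e\in G_0}D_e\delta_e$. Your step~(2), justifying via the decomposition $R=\bigoplus_{e\in G_0}D_e$ that $b$ may be taken to be $d_t\delta_{r(t)}$, is a detail the paper asserts without comment, so your write-up is if anything slightly more complete.
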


\begin{proof}
Suppose that $I$ is an ideal of $R \rtimes_{\alpha} G$.
Using the same arguments as in the proof of Lemma~\ref{P1-lem:inv}, it is easy to verify that   $P_0(I\cap\bigoplus_{e \in G_0} D_e\delta_e)$ is an ideal of $R$. 
Take $t\in G$ and $d_t \in P_0(I\cap\bigoplus_{e \in G_0} D_e\delta_e) \cap D_t.$ Then, there is an element $b \in I\cap\bigoplus_{e \in G_0} D_e\delta_e$ such that $P_0(b) = d_t \in D_t \subseteq D_{r(t)}$ and 
$b = d_t \delta_{r(t)}.$ Let $u\in D_t$ be an $s$-unit for $d_t$. Consider the element $y := \alpha_{t^{-1}}(u) \delta_{t^{-1}} b u \delta_t  \in I$. Similarly to \eqref{P1-proof-inv},  $y= \alpha_{t^{-1}} \left(d_t \right)\delta_{s(t)} \in 
\bigoplus_{e \in G_0} D_e\delta_e.$ 
Thus, $\alpha_{t^{-1}}(d_t) = P_0(y) \in P_0 \! \left( I\cap\bigoplus_{e \in G_0} D_e\delta_e \right)$
and hence $P_0 \! \left( I\cap\bigoplus_{e \in G_0} D_e\delta_e \right)$ is 
$G$-invariant.
\end{proof}

We define the map $\fr:\{\text{Ideals of } R\rtimes_{\alpha}G \}\to \{G\text{-invariant ideals of } R\}$ by 
\begin{align}\label{P1-map-fr}
\fr(I):=P_0 \! \left( I\cap\bigoplus_{e \in G_0} D_e\delta_e \right).    
\end{align}
Notice that, by Lemma~\ref{P1-lem:inv2}, $\fr$ is well-defined. 

\begin{obs}
Let $J$ be a $G$-invariant ideal of $R$. One can show that $\alpha_g(D_{g^{-1}} \cap J) =D_g \cap J$, for all $g \in G$. Thus, we may define a partial action $\alpha^J:=(J\cap D_g, \alpha^J_g)_{g\in G}$ of $G$ on $J$, where $\alpha^J_g:J\cap D_{g^{-1}}\to J\cap D_g$, for all $g\in G$.
Note that there is a natural injection of rings $\iota : J\rtimes_{\alpha^J} G \to R\rtimes_{\alpha} G$.
We will often suppress the use of $\iota$ and simply identify $\iota(J\rtimes_{\alpha^J} G)$ with $J\rtimes_{\alpha^J} G$.
\end{obs}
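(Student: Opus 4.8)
The statement to prove is the Remark claiming that for a $G$-invariant ideal $J$ of $R$, one has $\alpha_g(D_{g^{-1}}\cap J)=D_g\cap J$ for all $g\in G$, and consequently one can define a partial action $\alpha^J$ of $G$ on $J$ together with a natural injection $J\rtimes_{\alpha^J}G\to R\rtimes_\alpha G$.

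The plan is to first establish the set-theoretic identity $\alpha_g(D_{g^{-1}}\cap J)=D_g\cap J$, and then check that the induced data $(J\cap D_g,\alpha_g^J)_{g\in G}$ satisfies the axioms of Definition~\ref{P1-def:D3}, and finally that $\iota$ is a well-defined injective ring homomorphism.

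For the identity, the inclusion $\alpha_g(D_{g^{-1}}\cap J)\subseteq D_g\cap J$ is almost immediate: $\alpha_g(D_{g^{-1}}\cap J)\subseteq\alpha_g(D_{g^{-1}})=D_g$, and $\alpha_g(D_{g^{-1}}\cap J)\subseteq\alpha_g(D_{g^{-1}}\cap J)$, where the latter is $\subseteq J$ directly by the definition of $G$-invariance of $J$ (since $D_{g^{-1}}\cap J\subseteq X_{g^{-1}}\cap J$ in the notation of the invariance definition, with $X_g=D_g$). For the reverse inclusion, I apply the forward inclusion to $g^{-1}$ in place of $g$: it gives $\alpha_{g^{-1}}(D_g\cap J)\subseteq D_{g^{-1}}\cap J$. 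Now apply $\alpha_g$ to both sides; since $\alpha_g$ is the inverse bijection of $\alpha_{g^{-1}}$ and $D_g\cap J\subseteq D_g=\mathrm{dom}(\alpha_{g^{-1}})$'s image, we get $D_g\cap J=\alpha_g(\alpha_{g^{-1}}(D_g\cap J))\subseteq\alpha_g(D_{g^{-1}}\cap J)$. I should be slightly careful here: $\alpha_g\circ\alpha_{g^{-1}}=\mathrm{id}$ on $D_g$ follows from axioms (ii) and (iv) of Definition~\ref{P1-def:D1} together with $\alpha_{g^{-1}}^{-1}=\alpha_g$; this is standard and I will just cite it (it mirrors \cite[Lemma~1.1]{Bagio2012}).

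Having the identity, verifying that $\alpha^J$ is a partial action is routine: $J\cap D_{r(g)}$ is an ideal of $J$ (intersection of the ideal $D_{r(g)}$ of $R$ with $J$), $J\cap D_g$ is an ideal of $J\cap D_{r(g)}$, each $\alpha_g^J$ is the restriction of the ring isomorphism $\alpha_g$ to $J\cap D_{g^{-1}}$, hence a ring isomorphism onto $J\cap D_g$ by the identity just proved, and the compatibility axioms (i)--(iv) of Definition~\ref{P1-def:D1} are inherited from those for $\alpha$ by intersecting everything with $J$. One also needs $J=\bigoplus_{e\in G_0}(J\cap D_e)$; this follows from $R=\bigoplus_{e\in G_0}D_e$ and modularity (any $x\in J$ decomposes as $\sum x_e$ with $x_e\in D_e$, and since $J\supseteq D_e x= D_e x_e\ni x_e u_e = x_e$ for a suitable $s$-unit, each $x_e\in J\cap D_e$). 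I should also remark that each $J\cap D_g$ is $s$-unital, so the standing hypotheses of the section are inherited, making $J\rtimes_{\alpha^J}G$ an associative $s$-unital ring.

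For the injection $\iota$, the map sends $\sum a_g\delta_g$ (with $a_g\in J\cap D_g$) to the same formal sum viewed in $R\rtimes_\alpha G$; it is additive by construction, and it is multiplicative because the multiplication rule involves only $\alpha_g$, $\alpha_{g^{-1}}$ applied to elements of $J\cap D_g\subseteq D_g$, which agree with $\alpha_g^J$, $\alpha_{g^{-1}}^J$ on those elements. Injectivity is immediate since distinct formal sums in $J\rtimes_{\alpha^J}G$ map to distinct formal sums in $R\rtimes_\alpha G$. I expect no genuine obstacle here; the only place demanding a moment's care is the reverse inclusion in the set identity, where one must correctly invoke $\alpha_g\circ\alpha_{g^{-1}}=\mathrm{id}_{D_g}$ rather than naively ``applying $\alpha_g$'', and the verification that the grading-style decomposition of $J$ holds, which uses $s$-unitality of the $D_e$.
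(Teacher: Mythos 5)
Your proof is correct. The paper states this remark without proof, and your argument --- the forward inclusion $\alpha_g(D_{g^{-1}}\cap J)\subseteq D_g\cap J$ from $G$-invariance, the reverse inclusion obtained by applying the forward one to $g^{-1}$ together with $\alpha_g\circ\alpha_{g^{-1}}=\id_{D_g}$, and the routine inheritance of the partial-action axioms, of the decomposition $J=\bigoplus_{e\in G_0}(J\cap D_e)$, and of the ring-homomorphism property of $\iota$ --- is exactly the standard verification the authors intend, with the one genuinely delicate point (invoking $\alpha_{g^{-1}}=\alpha_g^{-1}$ rather than naively ``applying $\alpha_g$'' to an inclusion) handled properly.
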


\begin{proposition}\label{P1-prop:grad}
The following assertions hold:
\begin{enumerate}[{\rm (i)}]
    \item If $J$ is a $G$-invariant ideal of $R$, then $J\rtimes_{\alpha^J} G$ is a $G$-graded ideal of $R\rtimes_{\alpha} G$.
    \item If $I$ is a $G$-graded ideal of $R \rtimes_{\alpha} G$, then $P_0 \! \left( I\cap \bigoplus_{e \in G_0} D_e\delta_e \right)= P_0(I)$.
\end{enumerate}
\end{proposition}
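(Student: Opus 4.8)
The plan is to establish the two parts separately, with part (i) being essentially a bookkeeping argument and part (ii) requiring a bit of genuine work.

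For part (i), I would first observe that the grading on $J \rtimes_{\alpha^J} G$ is inherited from that of $R \rtimes_\alpha G$: its homogeneous component in degree $g$ is $(J \cap D_g)\delta_g \subseteq D_g \delta_g$. So once we know $J \rtimes_{\alpha^J} G$ is an ideal of $R \rtimes_\alpha G$, it is automatically graded, since it is already written as the direct sum of its intersections with the $D_g \delta_g$. Thus the real content is that $J \rtimes_{\alpha^J} G$ is a two-sided ideal. To see this, take a homogeneous generator $a_g \delta_g$ with $a_g \in J \cap D_g$ and multiply on the left by $b_h \delta_h$ with $b_h \in D_h$; the product is $\alpha_h(\alpha_{h^{-1}}(b_h) a_g)\delta_{hg}$ (when $(h,g) \in G^2$). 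Since $a_g \in J$ and $J$ is an ideal of $R$, we get $\alpha_{h^{-1}}(b_h) a_g \in J \cap D_{h^{-1}}$, and then the $G$-invariance of $J$ (precisely the identity $\alpha_g(D_{g^{-1}} \cap J) = D_g \cap J$ noted in the remark preceding the proposition) gives $\alpha_h(\alpha_{h^{-1}}(b_h)a_g) \in J \cap D_{hg}$, i.e. the product lies in $J \rtimes_{\alpha^J} G$. The right-ideal computation is symmetric. Extending by linearity and using $s$-unitality where needed closes part (i).

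For part (ii), the inclusion $P_0\!\left(I \cap \bigoplus_{e \in G_0} D_e\delta_e\right) \subseteq P_0(I)$ is trivial, so the task is the reverse inclusion. Let $a = \sum_{g} a_g \delta_g \in I$; I must produce an element of $I \cap \bigoplus_{e \in G_0} D_e \delta_e$ whose image under $P_0$ equals $P_0(a) = \sum_{e \in G_0} a_e$. The key point is that $I$ is \emph{graded}: this means each homogeneous piece $a_g \delta_g$ of $a$ already lies in $I$, since $I = \bigoplus_{g \in G}(I \cap D_g\delta_g)$ forces $a_g \delta_g \in I \cap D_g\delta_g$ for each $g \in \Supp(a)$. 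In particular, for each $e \in G_0 \cap \Supp(a)$, the element $a_e \delta_e$ belongs to $I$; summing these over $e \in G_0 \cap \Supp(a)$ yields an element $b := \sum_{e \in G_0} a_e \delta_e \in I \cap \bigoplus_{e \in G_0} D_e \delta_e$, and clearly $P_0(b) = \sum_{e \in G_0} a_e = P_0(a)$. Hence $P_0(a) \in P_0\!\left(I \cap \bigoplus_{e \in G_0} D_e \delta_e\right)$, establishing the reverse inclusion.

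The only point requiring care — and the place I would expect to be the main (mild) obstacle — is part (i): making sure that the various products of homogeneous elements really land back inside $J \rtimes_{\alpha^J} G$, which hinges entirely on the compatibility identity $\alpha_g(D_{g^{-1}} \cap J) = D_g \cap J$ for $G$-invariant $J$. That identity is asserted in the remark just before the proposition, and once it is invoked the rest is routine. Part (ii), by contrast, is essentially immediate once one unwinds the definition of a graded ideal as a direct sum of its homogeneous intersections; there is really no estimate or construction involved beyond selecting the degree-$G_0$ components.
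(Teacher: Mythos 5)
Your proposal is correct and follows essentially the same route as the paper: part (ii) is verbatim the paper's argument (extract the degree-$G_0$ components using gradedness), and part (i) simply fills in the ideal verification that the paper dismisses as "not difficult to verify," resting on the same identity $\alpha_g(D_{g^{-1}}\cap J)=D_g\cap J$ from the preceding remark. The only cosmetic quibble is that in part (i) the membership $\alpha_h(\alpha_{h^{-1}}(b_h)a_g)\in D_{hg}$ comes from the partial-action axioms (since the element lies in $D_{h^{-1}}\cap D_g$), while $G$-invariance of $J$ is what keeps it inside $J$; you attribute both to the invariance identity, but the argument is sound.
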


\begin{proof}
Suppose that $J$ is a $G$-invariant ideal of $R$ and that $I$ is a $G$-graded ideal of $R\rtimes_\alpha G$.

(i) By definition, $J\rtimes_{\alpha^J} G =\bigoplus_{g \in G} (J \cap D_g)\delta_g$. It is not difficult to verify that $J\rtimes_{\alpha^J} G$ is an ideal of $R\rtimes_{\alpha} G$, and clearly $(J \cap D_g)\delta_g=(J \rtimes_{\alpha} G)\cap D_g\delta_g$, for all $g\in G$. Hence, $J\rtimes_{\alpha^J} G$ is a $G$-graded ideal of $R\rtimes_{\alpha} G$.\smallbreak

\noindent (ii)  Take $a \in P_0(I)$ and choose $x :=\sum_{t \in G} x_t\delta_t \in I$ such that $P_0(x)=\sum_{e \in G_0} x_e = a$. Note that
$\sum_{e \in G_0} x_e\delta_e \in I \cap \bigoplus_{e \in G_0} D_e \delta_e$, because $I$ is $G$-graded.
Therefore, $a = P_0 \! \left(\sum_{e \in G_0} x_e\delta_e\right) \in P_0 \! \left(I\cap \bigoplus_{e \in G_0} D_e \delta_e \right)$.
The other inclusion is trivial.
\end{proof}

\begin{proposition}\label{P1-prop:psj}
Let $I$ be an ideal of $S:=R \rtimes_{\alpha} G$, and write $J:=P_0(I)$, $I_0:=I \cap \bigoplus_{e \in G_0}D_e\delta_e$ and $J_0:=P_0(I_0)$.  The following assertions hold: 
\begin{enumerate}[{\rm (i)}]
    \item $I \subseteq J \rtimes_{\alpha^J} G$. 
    \item If $b_t\delta_t \in J_0 \rtimes_{\alpha^{J_0}} G$ for some $t\in G$ and $b_t\in D_t \cap J_0$, then $b_t\delta_{r(t)}\in I_0.$
    \item $J_0 \rtimes_{\alpha^{J_0}} G \subseteq I.$
    \item $J_0\rtimes_{\alpha^{J_0}}G = SI_0S.$
\end{enumerate}
\end{proposition}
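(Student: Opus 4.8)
The plan is to prove the four assertions in the stated order, using each as a stepping stone for the next, and leaning on the bimodule map $E=\psi\circ P_0$ and the auxiliary facts from Lemma~\ref{P1-lem:some-obs}. For (i): given $x=\sum_t x_t\delta_t\in I$ and a fixed $t\in\operatorname{Supp}(x)$, I would isolate the homogeneous coefficient $x_t$ by sandwiching with $s$-units, exactly as in the computation \eqref{P1-proof-inv} in the proof of Lemma~\ref{P1-lem:inv}: choosing an $s$-unit $u\in D_t$ for $x_t$ and $\alpha_{t^{-1}}(x_t)$, the element $(u\delta_{r(t)})\,x\,(\alpha_{t^{-1}}(u)\delta_{t})$ lies in $I$ and, after projecting with $P_0$, yields $x_t\in P_0(I)=J$ (up to composing $\alpha_t$ with $\alpha_{t^{-1}}$). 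Hence every coefficient $x_t$ lies in $J\cap D_t$, so $x\in\bigoplus_g(J\cap D_g)\delta_g=J\rtimes_{\alpha^J}G$, giving $I\subseteq J\rtimes_{\alpha^J}G$.

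For (ii): if $b_t\in D_t\cap J_0$, then by definition of $J_0=P_0(I_0)$ there is $c=\sum_{e\in G_0}c_e\delta_e\in I_0$ with $P_0(c)=\sum_e c_e$ having $r(t)$-component equal to $b_t$; by Lemma~\ref{P1-lem:some-obs}~(ii) we may assume $c=b_t\delta_{r(t)}\in I_0$ outright. This is precisely the claim. (If the intended reading is subtler — that $b_t$ arises as a coefficient of a homogeneous element of $J_0\rtimes_{\alpha^{J_0}}G$ — the same extraction argument with $s$-units brings it back to the $r(t)$-component and then (ii) of Lemma~\ref{P1-lem:some-obs} finishes it.) For (iii): a general element of $J_0\rtimes_{\alpha^{J_0}}G$ is a finite sum of terms $b_t\delta_t$ with $b_t\in D_t\cap J_0$; by (ii), $b_t\delta_{r(t)}\in I_0\subseteq I$, and then $b_t\delta_t=(b_t\delta_{r(t)})\cdot(u\delta_t)\in I$ for an appropriate $s$-unit $u\in D_t$ for $\alpha_{t^{-1}}(b_t)$ — using the multiplication rule and $r(t)\cdot t=t$. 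Summing over the finitely many $t$ gives $J_0\rtimes_{\alpha^{J_0}}G\subseteq I$.

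For (iv): the inclusion $SI_0S\subseteq I$ is immediate since $I_0\subseteq I$ and $I$ is an ideal. For the reverse, I would show $I_0\subseteq J_0\rtimes_{\alpha^{J_0}}G$ (every element of $I_0$ is in $\bigoplus_{e\in G_0}D_e\delta_e$ and its coefficients lie in $J_0$ by definition of $J_0=P_0(I_0)$ together with Lemma~\ref{P1-lem:some-obs}~(ii)), hence $SI_0S\subseteq S\,(J_0\rtimes_{\alpha^{J_0}}G)\,S\subseteq J_0\rtimes_{\alpha^{J_0}}G$ because $J_0\rtimes_{\alpha^{J_0}}G$ is an ideal of $S$ by Proposition~\ref{P1-prop:grad}~(i). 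Conversely, a generator $b_t\delta_t$ of $J_0\rtimes_{\alpha^{J_0}}G$ equals $(u\delta_{t r(t')}\cdots)$—more cleanly, $b_t\delta_t=(u\delta_t)\,(b_t\delta_{r(t)})\,(u'\delta_{r(t)})$ or similar, exhibiting it as a product of an element of $S$, the element $b_t\delta_{r(t)}\in I_0$ (by (ii)), and an element of $S$; thus $b_t\delta_t\in SI_0S$. Taking finite sums gives $J_0\rtimes_{\alpha^{J_0}}G\subseteq SI_0S$, and combined with the opposite inclusion we get equality.

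The main obstacle I anticipate is the bookkeeping with $s$-units: $R$ and the $D_g$ are only $s$-unital, not unital, so at each step where I "pull out a coefficient" or "write $b_t\delta_t$ as a product" I must carefully produce an $s$-unit adapted to the finite set of coefficients in play and verify the multiplication rule $a_g\delta_g\cdot b_h\delta_h=\alpha_g(\alpha_{g^{-1}}(a_g)b_h)\delta_{gh}$ collapses as desired (in particular that $\alpha_t\alpha_{t^{-1}}$ acts as the identity on the relevant ideal because the $s$-unit absorbs everything). The groupoid composability constraints — only $(g,h)\in G^2$ contribute — also require care: one must check that the conjugating elements $\delta_{t^{-1}},\delta_t$ are composable with the homogeneous pieces of the element being conjugated, which is exactly the observation (used in Lemma~\ref{P1-lem:inv}) that $t^{-1}gt\in G_0$ forces $g=r(t)$. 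None of this is deep, but it is the part most prone to sign/index slips, so I would write those $s$-unit manipulations out explicitly rather than claiming them "straightforward".
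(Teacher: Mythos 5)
Your proposal follows essentially the same route as the paper's proof: isolate homogeneous coefficients by multiplying with homogeneous $s$-unital elements and applying $P_0$, reduce (ii) to the direct-sum decomposition $R=\bigoplus_{e\in G_0}D_e$ via Lemma~\ref{P1-lem:some-obs}~(ii), obtain (iii) from (ii) by right-multiplying by $u\delta_t$, and prove (iv) by combining (ii) with the fact that $J_0\rtimes_{\alpha^{J_0}}G$ is an ideal (your appeal to Proposition~\ref{P1-prop:grad}~(i) here is a slightly cleaner packaging of the paper's explicit computation of $uxv$). The only corrections needed are two composability slips of exactly the kind you flag as the danger zone: in (i) the paper simply takes $y=a\,(\alpha_{t^{-1}}(u_t)\delta_{t^{-1}})$ --- note $\delta_{t^{-1}}$, not $\delta_t$ (your written right factor $\alpha_{t^{-1}}(u)\delta_t$ is not even a legal element, since its coefficient lies in $D_{t^{-1}}$) --- so that $gt^{-1}\in G_0$ forces $g=t$ and $P_0(y)=a_t$ directly, without detouring through $\alpha_{t^{-1}}(a_t)$ and $G$-invariance of $J$; and in (iv) the three-fold product exhibiting $d_g\delta_g\in SI_0S$ must be $(u_g\delta_{r(g)})(d_g\delta_{r(g)})(u_g\delta_g)$, since your $(u\delta_t)(b_t\delta_{r(t)})$ is composable only when $s(t)=r(t)$. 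With those indices repaired, the argument is the paper's.
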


\begin{proof}
Notice that,
by Lemma~\ref{P1-lem:inv} and  Lemma~\ref{P1-lem:inv2},
both $J$ and $J_0$ are $G$-invariant ideals of $R$.
Thus, $J \rtimes_{\alpha^J} G$ and $J_0 \rtimes_{\alpha^{J_0}} G$ are well-defined.

\noindent (i) Let $a=\sum_{g \in G} a_g\delta_g \in I.$ Take $t \in G$ and let $u_t \in D_t$ be an $s$-unit for $a_t$.
Consider the element $y:=a(\alpha_{t^{-1}}(u_t)\delta_{t^{-1}}) \in I$, and observe that
$$y:=\sum_{\substack{g\in G \\ s(g)=s(t)}} \alpha_g(\alpha_{g^{-1}}(a_g)\alpha_{t^{-1}}(u_t))\delta_{gt^{-1}}.$$ Thus,
$P_0(y)  =\alpha_t(\alpha_{t^{-1}}(a_t)\alpha_{t^{-1}}(u_t))= \alpha_t(\alpha_{t^{-1}}(a_t u_t))= a_t \in J \cap D_t$.
We conclude that $a \in \bigoplus_{g\in G}(J\cap D_t) \delta_t= J \rtimes_{\alpha^J}G.$ \smallbreak

\noindent (ii) Suppose that $b_t\delta_t \in J_0 \rtimes_{\alpha^{J_0}} G.$ Then, $b_t \in D_t \cap J_0=D_t\cap P_0(I_0)\subseteq D_{r(t)} \cap P_0(I_0).$ Therefore, there is an element $a=\sum_{e \in G_0}a_e\delta_e \in I_0$ such that $P_0(a)=\sum_{e \in G_0}a_e=b_t \in D_{r(t)}.$ Notice that
$b_t-a_{r(t)}=\sum_{\substack{e \in G_0 \\ e \neq r(t)}}a_e \in D_{r(t)} \cap \sum_{\substack{e \in G_0 \\ e \neq r(t)}}D_e = \{0\}.$
Thus, $a=b_t\delta_{r(t)}.$
\smallbreak

\noindent (iii)
By Proposition~\ref{P1-prop:grad} (i),  $J_0 \rtimes_{\alpha^{J_0}} G$ is a $G$-graded ideal of $R \rtimes_{\alpha} G.$
Take $t\in G$ and let $b_t \delta_t \in J_0 \rtimes_{\alpha^{J_0}} G$. By (ii) we get that,
$b_t \delta_{r(t)} \in I_0 \subseteq I$. Let $v \in D_t$ be an $s$-unit for $b_t$. Then,
$b_t \delta_t = (b_t \delta_{r(t)})(v\delta_t) \in I$.

\noindent (iv)
Take $u:=\sum_{g \in G}a_g\delta_g \in S$, $v:=\sum_{h \in G}c_h\delta_h \in S$ and $x:=\sum_{e \in G_0}b_e \delta_e \in I_0.$
Since $I$ is an ideal of $S$, it follows from Lemma~\ref{P1-lem:some-obs} (ii)  that $b_f \in J_0$, for all $f \in G_0.$ Notice that $$uxv =\left(\sum_{g \in G}a_g\delta_g\right)\left(\sum_{e \in G_0}b_e \delta_e\right)\left(\sum_{h \in G}c_h\delta_h\right)=\sum_{\substack{e \in G_0 \\ s(g)=r(h)=e}}\alpha_g(\alpha_{g^{-1}}(a_g)b_{r(h)}c_h)\delta_{gh}.$$ 
Using that $J_0$ is a $G$-invariant ideal of $R$, we get that $uxv \in \bigoplus_{g \in G}\big(J_0\cap D_g\big)\delta_g = J_0\rtimes_{\alpha^{J_0}}G.$
This shows that
$SI_0S \subseteq J_0\rtimes_{\alpha^{J_0}}G$.

Conversely, by Lemma~\ref{P1-lem:inv2} and Proposition~\ref{P1-prop:grad} (i), $J_0\rtimes_{\alpha^{J_0}}G$ is a $G$-graded ideal of $S.$ Let $d_g\delta_g \in J_0 \rtimes_{\alpha^{J_0}} G.$ Then, by (ii), $d_g\delta_{r(g)}\in I_0.$ Let $u_g \in D_g$ be an $s$-unit for $d_g$. A short calculation now reveals that $d_g\delta_g=(u_g\delta_{r(g)})( d_g\delta_{r(g)})(u_g \delta_g) \in SI_0S.$
\end{proof}

Now, consider the map $\Psi: \{G\text{-invariant ideals of } R\} \to \{G\text{-graded ideals of } R\rtimes_{\alpha}G \}$ defined by $\Psi(J):=J\rtimes_{\alpha^J} G$. By Proposition~\ref{P1-prop:grad} (i), $\Psi$ is well-defined. Also, consider the restriction  $\fr_{gr}:\{G\text{-graded ideals of } R\rtimes_{\alpha}G \}\to \{G\text{-invariant ideals of } R\}$ 
of the map $\fr$ defined in \eqref{P1-map-fr}. 
Notice that, by Proposition~\ref{P1-prop:grad} (ii), $\fr_{gr}(I)=P_0(I).$ 

We will now show that the above map $\Psi$  is in fact a bijection. The corresponding result for partial actions of groups was proved in \cite[Theorem~4.7, Proposition~12.2]{Lannstrom2021}.

\begin{theorem}\label{P1-thm:corresp1}
Let $R$ be a ring, let $G$ be a groupoid and let $\alpha=(\alpha_g, D_g)_{g \in G}$ be a partial action of $G$ on $R$, such that $D_g$ is $s$-unital for every $g \in G$, and $R =\bigoplus_{e \in G_0}D_e.$
Then $\Psi$ is a bijection whose inverse is $\fr_{gr}$ defined above. In particular, there is a one-to-one correspondence between $G$-graded ideals of $R \rtimes_{\alpha}G$ and $G$-invariant ideals of $R$.
\end{theorem}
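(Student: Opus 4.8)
The plan is to show that $\Psi$ and $\fr_{gr}$ are mutually inverse bijections between the two posets of ideals, using the auxiliary results already established. Since $\Psi$ is well-defined by Proposition~\ref{P1-prop:grad}~(i) and $\fr_{gr}$ is well-defined by Lemma~\ref{P1-lem:inv2}, it suffices to verify the two composition identities $\fr_{gr}\circ\Psi=\id$ and $\Psi\circ\fr_{gr}=\id$.

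First I would check $\fr_{gr}(\Psi(J))=J$ for every $G$-invariant ideal $J$ of $R$. By definition $\Psi(J)=J\rtimes_{\alpha^J}G=\bigoplus_{g\in G}(J\cap D_g)\delta_g$, which is $G$-graded, so by Proposition~\ref{P1-prop:grad}~(ii) we have $\fr_{gr}(\Psi(J))=P_0(\Psi(J))$. Now $P_0$ simply reads off the homogeneous components indexed by $G_0$, so $P_0(\Psi(J))=\sum_{e\in G_0}(J\cap D_e)$, and since $R=\bigoplus_{e\in G_0}D_e$ and $J\subseteq R$, every element of $J$ decomposes as a sum of its components in the $D_e$, each of which lies in $J$ because $D_e$ is an ideal of $R$ and one can multiply by an $s$-unit; hence $\sum_{e\in G_0}(J\cap D_e)=J$. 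This gives one direction.

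Next I would check $\Psi(\fr_{gr}(I))=I$ for every $G$-graded ideal $I$ of $R\rtimes_\alpha G$. Write $J:=P_0(I)=\fr_{gr}(I)$ (using Proposition~\ref{P1-prop:grad}~(ii) to identify $\fr_{gr}(I)$ with $P_0(I)$). The inclusion $I\subseteq J\rtimes_{\alpha^J}G$ is exactly Proposition~\ref{P1-prop:psj}~(i). For the reverse inclusion, since $I$ is $G$-graded we have $I_0:=I\cap\bigoplus_{e\in G_0}D_e\delta_e$ and by Proposition~\ref{P1-prop:grad}~(ii) $J_0:=P_0(I_0)=P_0(I)=J$; then Proposition~\ref{P1-prop:psj}~(iii) yields $J\rtimes_{\alpha^J}G=J_0\rtimes_{\alpha^{J_0}}G\subseteq I$. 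Combining the two inclusions gives $\Psi(\fr_{gr}(I))=J\rtimes_{\alpha^J}G=I$, completing the argument; the final ``one-to-one correspondence'' claim is then just a restatement.

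The bulk of the real work has already been front-loaded into Lemmas~\ref{P1-lem:inv}, \ref{P1-lem:inv2} and Propositions~\ref{P1-prop:grad}, \ref{P1-prop:psj}, so the theorem's proof itself is essentially a bookkeeping exercise. The one point that deserves genuine care is the step $\sum_{e\in G_0}(J\cap D_e)=J$: this is where the standing hypotheses $R=\bigoplus_{e\in G_0}D_e$ and the $s$-unitality of the $D_e$ are used, and it is worth spelling out that an arbitrary $x\in J$ splits uniquely as $x=\sum_{e\in G_0}x_e$ with $x_e\in D_e$, and that each $x_e$ lies in $J$ because, choosing an $s$-unit $u_e\in D_e$ for $x_e$, one has $x_e=u_e x\in D_e J\subseteq J$ (using Lemma~\ref{P1-di} and that $J$ is an ideal). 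Everything else is a direct appeal to the cited results.
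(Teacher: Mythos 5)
Your proposal is correct and follows essentially the same route as the paper: both directions are reduced to Proposition~\ref{P1-prop:grad} and Proposition~\ref{P1-prop:psj}, exactly as in the paper's own proof. The extra care you take in justifying $\sum_{e\in G_0}(J\cap D_e)=J$ via $s$-units is a detail the paper leaves implicit, and it is a worthwhile addition rather than a deviation.
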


\begin{proof}
Let $J$ be a $G$-invariant ideal of $R$. By Proposition~\ref{P1-prop:grad} (i) and (ii), $$J=P_0 \! \left((J \rtimes_{\alpha^J}G)\cap \bigoplus_{e \in G_0}D_e\delta_e\right)=P_0( J \rtimes_{\alpha^J}G)=P_0(\Psi(J))=\fr_{gr}(\Psi(J)).$$
Now, let $I$ be a $G$-graded ideal of $R\rtimes_{\alpha}G$, and write
$J:=P_0(I)$, $I_0:=I \cap \bigoplus_{e \in G_0}D_e\delta_e$ and $J_0:=P_0(I_0)$.
Notice that, by Proposition~\ref{P1-prop:grad} (ii), $J=J_0$.
By Proposition~\ref{P1-prop:psj} (i) and (iii), 
we get that $I= J\rtimes_{\alpha^J}G=\Psi(P_0(I))=\Psi(\fr_{gr}(I)).$
This shows that $\Psi$ and $\fr_{gr}$ are mutual inverses.
\end{proof}

\subsection{The residual intersection property}

In order to characterize graded ideals in partial skew groupoid rings, we introduce the notion of a residual intersection property. To that end, we first need to define partial actions on quotient rings.

\begin{obs}
Suppose that $J$ is a $G$-invariant ideal of $R$.
Notice that $J$ is not necessarily contained in $D_g$ for any $g \in G.$ On the other hand, for each $g \in G,$ $D_g \cap J$ is an ideal of $D_g,$ and by the second isomorphism theorem, $\frac{D_g}{D_g \cap J}$ is isomorphic to $\frac{D_g+J}{J}.$
\end{obs}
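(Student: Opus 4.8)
The plan is to verify the three assertions of the remark in turn; all of them are immediate once one recalls (Lemma~\ref{P1-di}) that each $D_g$ is an ideal, hence a subring, of $R$, and that the second isomorphism theorem remains valid for (possibly) non-unital rings.

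\emph{First assertion.} To see that $J$ need not lie inside any $D_g$, I would exhibit the witness $J := R$, which is trivially a $G$-invariant ideal of $R$. Since $R = \bigoplus_{e \in G_0} D_e$ and $D_g \subseteq D_{r(g)}$, an inclusion $R \subseteq D_g$ would force $D_e = \{0\}$ for every $e \neq r(g)$ together with $D_g = D_{r(g)}$ --- a degenerate situation which fails for any genuinely partial $\alpha$, and also whenever $G_0$ supports more than one nonzero $D_e$. Hence in general $J = R \not\subseteq D_g$ for all $g \in G$.

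\emph{Second assertion.} Let $x \in D_g \cap J$ and $d \in D_g$. Then $dx, xd \in D_g$ because $D_g$ is multiplicatively closed, and $dx, xd \in J$ because $J$ is an ideal of $R$ and $d \in D_g \subseteq R$; hence $dx, xd \in D_g \cap J$. Since $D_g \cap J$ is plainly an additive subgroup of $D_g$, it is a two-sided ideal of $D_g$.

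\emph{Third assertion.} Apply the second isomorphism theorem to the subring $D_g$ of $R$ and the ideal $J \trianglelefteq R$: the composite $D_g \hookrightarrow D_g + J \twoheadrightarrow (D_g + J)/J$ is a surjective ring homomorphism with kernel exactly $D_g \cap J$, whence $D_g/(D_g \cap J) \cong (D_g + J)/J$. The one point meriting attention is that $D_g$ is merely $s$-unital rather than unital; but the standard proof of the second isomorphism theorem uses only additivity and multiplicative closure, so it goes through verbatim in this setting. In short, there is no genuine obstacle here --- the remark is pure bookkeeping, setting up the ideals $(D_g+J)/J$ of $R/J$ that will serve as the domains of the induced partial action of $G$ on the quotient ring $R/J$ introduced in the next step.
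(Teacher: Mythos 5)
Your proposal is correct: the remark is stated in the paper without proof, being a routine observation, and your verification (the witness $J=R$ for non-containment, the standard ideal check for $D_g\cap J$, and the first-isomorphism-theorem argument for $D_g/(D_g\cap J)\cong (D_g+J)/J$, which indeed needs no unit) is exactly the bookkeeping the authors leave implicit. Nothing is missing.
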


Guided by the above remark, we make the following definition.

\begin{definition}\label{P1-def:quotientpartial}
Let $R\rtimes_\alpha G$ be a partial skew groupoid ring.
Suppose that $J$ is a $G$-invariant ideal of $R$. The \emph{quotient partial action of $G$ on $R/J$}, denoted by $\overline{\alpha}:=(\overline{D}_g,\overline{\alpha}_g)_{g\in G}$, is defined by: 
\begin{align}\label{P1-par-quot}
   &\overline{D}_g:= \frac{D_g+J}{J},& &\overline{\alpha}_g: \overline{D}_{g^{-1}} \to \overline{D}_g,\quad x+J\mapsto \alpha_g(x)+J,& &\text{for all }x+J\in \overline{D}_{g^{-1}}.&
\end{align}
We will sometimes refer to $\overline{\alpha}=(\overline{D}_g,\overline{\alpha}_g)_{g\in G}$ as the \emph{quotient partial action (of $\alpha$) with respect to $J$}.
\end{definition}

Notice that, since $J$ is a $G$-invariant ideal of $R$, $\overline{\alpha}_g$ is well-defined, for all $g\in G$. Also observe that $\overline{\alpha}_g$ is a ring isomorphism with inverse $\overline{\alpha}_{g^{-1}}$.

\begin{lema}\label{P1-lem:prop-local-units}
 Let $R$ be a ring and let $I, J, K$ be ideals of $R$. If $I$ or $K$ is $s$-unital, then $(I+J)\cap (K+J)=(I \cap K) +J.$
\end{lema}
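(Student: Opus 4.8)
The plan is to prove the set equality $(I+J)\cap(K+J) = (I\cap K)+J$ by showing both inclusions, where the nontrivial one uses the $s$-unitality hypothesis. The inclusion $(I\cap K)+J \subseteq (I+J)\cap(K+J)$ is immediate: if $x\in I\cap K$ and $j\in J$, then $x+j \in I+J$ since $x\in I$, and $x+j\in K+J$ since $x\in K$. So the content is in the reverse inclusion.

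For the reverse inclusion, assume without loss of generality that $I$ is $s$-unital (the case where $K$ is $s$-unital is symmetric). Take an element $z\in (I+J)\cap(K+J)$. Write $z = a + j_1$ with $a\in I$, $j_1\in J$, and also $z = b + j_2$ with $b\in K$, $j_2\in J$. Now use $s$-unitality of $I$: pick $u\in I$ with $ua = au = a$ (an $s$-unit for the one-element set $\{a\}$, which exists by \cite[Proposition~2.10]{NystedtSurvey2019}, as recalled in Remark~\ref{P1-rem:associatividade}). The key move is to write $a = ua$ and substitute: from $a = z - j_1 = b + j_2 - j_1$ we get $a = ua = u(b + j_2 - j_1) = ub + u j_2 - u j_1$. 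Here $ub \in IK \subseteq I\cap K$ (since $I,K$ are ideals, $IK\subseteq I$ and $IK\subseteq K$), while $uj_2 - uj_1 \in J$ because $J$ is an ideal and $u\in R$. Hence $a \in (I\cap K)+J$, and therefore $z = a + j_1 \in (I\cap K)+J$ as well. This proves $(I+J)\cap(K+J)\subseteq (I\cap K)+J$ and completes the argument.

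The main (and really only) obstacle is recognizing that $s$-unitality is exactly what lets one "absorb" the element $a\in I$ through the ideal $K$: without it, $a$ itself need not lie in $IK$, and the naive intersection identity $(I+J)\cap(K+J) = (I\cap K)+J$ can fail for general ideals. The trick $a = ua = u b + (\text{element of }J)$ is the whole point, and it is clean once the $s$-unit $u$ is chosen inside $I$ so that $ub$ lands in $I\cap K$. I would present this as a short direct proof with the two inclusions clearly separated, remarking that the roles of $I$ and $K$ are interchangeable so it suffices to treat the case that $I$ is $s$-unital.
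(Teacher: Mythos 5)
Your proof is correct and follows essentially the same argument as the paper: the paper assumes without loss of generality that $K$ is $s$-unital, picks an $s$-unit $u_k$ for $k$, and computes $x=u_kk+j'=u_k(i+j-j')+j'=u_ki+u_k(j-j')+j'\in (K\cap I)+J$, which is exactly your substitution trick with the roles of $I$ and $K$ interchanged. No issues.
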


\begin{proof}
Let $x \in (I+J)\cap (K+J).$ There exist $i \in I, \,\,k \in K$ and $j, j' \in J$ such that $x= i+j=k+j'.$
Without loss of generality, suppose that $K$ is $s$-unital. Let $u_k \in K$ be an $s$-unit for $k$, and notice that 
$$x=k+j'=u_k k+j'=u_k(i+j-j')+j'=u_ki+u_k(j-j')+j' \in (K \cap I)+J.$$
The reverse inclusion is trivial.
\end{proof}

We will now show that the partial actions that we consider in this section (defined on $s$-unital ideals) will always have well-defined associated quotient partial actions.

\begin{proposition}\label{P1-prop:prop-inter}
Let $J$ be a $G$-invariant ideal of $R$, 
and let $\overline{\alpha}=(\overline{D}_g, \overline{\alpha}_g)_{g \in G}$ be defined as in Definition~\ref{P1-def:quotientpartial}.
Then $\overline{\alpha}$ is a partial action of $G$ on $R/J$.
\end{proposition}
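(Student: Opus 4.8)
The plan is to verify that $\overline{\alpha}=(\overline{D}_g,\overline{\alpha}_g)_{g\in G}$ satisfies the four axioms of Definition~\ref{P1-def:D1} (partial action on a set), together with the ring-theoretic conditions of Definition~\ref{P1-def:D3} (that $\overline{D}_{r(g)}$ is an ideal of $R/J$, that $\overline{D}_g$ is an ideal of $\overline{D}_{r(g)}$, and that $\overline{\alpha}_g$ is a ring isomorphism). The map $\overline{\alpha}_g$ is well-defined and bijective with inverse $\overline{\alpha}_{g^{-1}}$ by the discussion immediately following Definition~\ref{P1-def:quotientpartial}, using $G$-invariance of $J$; that it is a ring homomorphism is immediate from the fact that $\alpha_g$ is. That $\overline{D}_{r(g)}=\frac{D_{r(g)}+J}{J}$ is an ideal of $R/J$ and $\overline{D}_g=\frac{D_g+J}{J}$ is an ideal of $\overline{D}_{r(g)}$ follows from $D_g$ being an ideal of $D_{r(g)}$ (Lemma~\ref{P1-di} gives that each $D_g$ is an ideal of $R$, and $D_g\subseteq D_{r(g)}$ by Definition~\ref{P1-def:D3}), pushing ideals through the quotient map $R\to R/J$.

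Next I would check the set-theoretic axioms. Axiom (i), $\overline{D}_g\subseteq\overline{D}_{r(g)}$, is immediate from $D_g\subseteq D_{r(g)}$. Axiom (ii), $\overline{\alpha}_e=\id$ on $\overline{D}_e$, is immediate from $\alpha_e=\id_{D_e}$. For axiom (iii) we must show $\overline{\alpha}_h^{-1}(\overline{D}_{g^{-1}}\cap\overline{D}_h)\subseteq\overline{D}_{(gh)^{-1}}$ for $(g,h)\in G^2$; here is where Lemma~\ref{P1-lem:prop-local-units} enters, since it lets us rewrite
\[
\overline{D}_{g^{-1}}\cap\overline{D}_h=\frac{D_{g^{-1}}+J}{J}\cap\frac{D_h+J}{J}=\frac{(D_{g^{-1}}\cap D_h)+J}{J},
\]
using that $D_{g^{-1}}$ (equivalently $D_h$) is $s$-unital, which holds by the standing hypotheses of the section. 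Applying $\overline{\alpha}_h^{-1}=\overline{\alpha}_{h^{-1}}$ and using $\alpha_h^{-1}(D_{g^{-1}}\cap D_h)\subseteq D_{(gh)^{-1}}$ (axiom (iii) for $\alpha$) together with $G$-invariance of $J$ to control the $J$-part, we land inside $\frac{D_{(gh)^{-1}}+J}{J}=\overline{D}_{(gh)^{-1}}$. Axiom (iv), $\overline{\alpha}_g\overline{\alpha}_h(x+J)=\overline{\alpha}_{gh}(x+J)$ for $x+J$ in the appropriate domain, then follows by lifting $x+J$ to a representative in $D_{(gh)^{-1}}$ — which axiom (iii) just guaranteed is possible modulo $J$ — and invoking axiom (iv) for $\alpha$.

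The main obstacle, and the only place any real care is needed, is the intersection-commutes-with-quotient step in axiom (iii): one wants $\left(\frac{D_{g^{-1}}+J}{J}\right)\cap\left(\frac{D_h+J}{J}\right)$ to equal $\frac{(D_{g^{-1}}\cap D_h)+J}{J}$, and this is precisely the content of Lemma~\ref{P1-lem:prop-local-units}, which is why that lemma was isolated just beforehand. Once that identity is in hand, the verification of every axiom reduces mechanically to the corresponding axiom for $\alpha$ plus the $G$-invariance of $J$ (to ensure that choosing a different coset representative does not move us outside the relevant $D$'s modulo $J$, and that $\alpha_g(D_{g^{-1}}\cap J)=D_g\cap J$). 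I would therefore structure the proof as: (1) ideal conditions and $\overline{\alpha}_g$ a well-defined ring isomorphism (cite the remark after Definition~\ref{P1-def:quotientpartial}); (2) axioms (i), (ii) trivially; (3) axiom (iii) via Lemma~\ref{P1-lem:prop-local-units}; (4) axiom (iv) by lifting representatives. I expect the whole argument to be short, with essentially all the weight carried by Lemma~\ref{P1-lem:prop-local-units}.
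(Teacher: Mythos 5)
Your proposal is correct and follows essentially the same route as the paper: the ideal conditions and axioms (i)--(ii) are immediate, the whole weight falls on rewriting $\overline{D}_{g^{-1}}\cap\overline{D}_h$ as $((D_{g^{-1}}\cap D_h)+J)/J$ via Lemma~\ref{P1-lem:prop-local-units}, and axioms (iii)--(iv) then reduce to lifting a representative $z\in D_{g^{-1}}\cap D_h$ and invoking the corresponding axioms for $\alpha$. The paper's proof is exactly this argument.
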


\begin{proof}
Using that $D_g$ is $s$-unital, for every $g\in G$, by Lemma~\ref{P1-lem:prop-local-units}, we get that 
$(D_h +J)\cap (D_{g^{-1}}+J)=(D_h \cap D_{g^{-1}})+J $, for all $g,h \in G.$
Since $\alpha$ is a partial action of $G$ on $R,$ it follows that $\overline{D}_{r(g)}$ is an ideal of $R/J,$ $\overline{D}_g$ is an ideal of $\overline{D}_{r(g)},$ for all $g \in G,$ and $\overline{\alpha}_e=\id_{\overline{D}_e},$ for all $e \in G_0.$  Notice that
$\overline{D}_{g^{-1}}\cap \overline{D}_h = ((D_{g^{-1}}+J)\cap(D_h+J))/J.$ By assumption, $((D_{g^{-1}}+J)\cap(D_h+J))/J= ((D_{g^{-1}}\cap D_h)+J)/J.$ Let $(g,h) \in G^2$ and $x \in \overline{D}_{g^{-1}}\cap \overline{D}_h = ((D_{g^{-1}}\cap D_h)+J)/J.$ There is some $z \in D_{g^{-1}}\cap D_h$ such that $x=z+J.$ Therefore, $\alpha_{h^{-1}}(z) \in \alpha_{h^{-1}}(D_{g^{-1}}\cap D_h) \subseteq D_{h^{-1}g^{-1}}$. Consequently,
$\overline{\alpha}_{h^{-1}}(x)=\alpha_{h^{-1}}(z)+J \in \overline{D}_{h^{-1}g^{-1}}.$ It is clear that 
$\overline{\alpha}_{gh}(y)=\overline{\alpha}_g(\overline{\alpha}_h(y))$, for all $y \in \overline{\alpha}_{h^{-1}}(\overline{D}_{g^{-1}}\cap \overline{D}_h)$. 
\end{proof}

\begin{obs}\label{P1-obs-local-units}
Let $R$ be a ring and let $I, J$ be ideals of $R.$ If $I$ is $s$-unital, then $(I+J)/J$ is $s$-unital. Indeed, consider $i+J$ for $i \in I$.
Let $u\in I$ be an $s$-unit for $i$.
We get that
$(u+J)(i+J)= i + J$ and $(i+J)(u+J)= i + J$.
\end{obs}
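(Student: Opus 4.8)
The plan is to reduce the statement to the observation that every coset in $(I+J)/J$ has a representative lying in $I$, and then to transport an $s$-unit from $I$ up to the quotient. Concretely, I would first note that since $J$ is an ideal, for any $i\in I$ and $j\in J$ one has $(i+j)+J = i+J$ in $R/J$; consequently every element of $(I+J)/J$ can be written in the form $i+J$ with $i\in I$. (Equivalently, this is just the canonical ring isomorphism $(I+J)/J\cong I/(I\cap J)$ from the second isomorphism theorem.)

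Next, I would fix an arbitrary element $i+J\in (I+J)/J$ with $i\in I$, and use the hypothesis that $I$ is $s$-unital to choose $u\in I$ with $ui=iu=i$. Since $u\in I\subseteq I+J$, the coset $u+J$ belongs to $(I+J)/J$, and a one-line computation gives $(u+J)(i+J)=ui+J=i+J$ and, symmetrically, $(i+J)(u+J)=i+J$. Hence $i+J$ lies in $\bigl((u+J)\cdot(I+J)/J\bigr)\cap\bigl((I+J)/J\cdot(u+J)\bigr)$, which is exactly the defining condition for $i+J$ to be handled by an $s$-unit. As $i+J$ was arbitrary, $(I+J)/J$ is $s$-unital.

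The argument is entirely routine and I do not anticipate any genuine obstacle; the only point worth isolating is the first step — recognizing that a representative $i+j$ of a coset may always be replaced by $i\in I$ itself, so that the $s$-unit produced inside $I$ continues to act as an $s$-unit after passing to the quotient $R/J$. Everything else is a direct verification.
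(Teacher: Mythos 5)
Your argument is correct and is essentially the same as the paper's: reduce to cosets of the form $i+J$ with $i\in I$, take an $s$-unit $u\in I$ for $i$, and compute $(u+J)(i+J)=i+J=(i+J)(u+J)$. The only addition is your explicit justification that every element of $(I+J)/J$ has a representative in $I$, which the paper leaves implicit.
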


\begin{proposition}\label{P1-prop:quotientSkew} 
Let $J$ be a $G$-invariant ideal of $R,$ and let $\overline{\alpha}=(\overline{D}_g, \overline{\alpha}_g)_{g \in G}$ be defined as in Definition~\ref{P1-def:quotientpartial}. Then $\overline{D}_g$ is $s$-unital for every $g \in G,$ the partial skew groupoid ring $R/J \rtimes_{\overline{\alpha}}G$ is associative, and $R/J=\bigoplus_{e \in G_0}\overline{D}_e.$
\end{proposition}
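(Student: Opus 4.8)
The plan is to verify the three assertions one at a time, exploiting the fact that every hypothesis we need has already been packaged in the preceding results. Throughout, write $\overline{R}:=R/J$ and recall that $\overline{D}_g=(D_g+J)/J$.

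\medskip

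\textbf{Step 1: $\overline{D}_g$ is $s$-unital for every $g\in G$.} This is immediate from Remark~\ref{P1-obs-local-units}: by the standing assumption $D_g$ is $s$-unital, so applying that remark with $I=D_g$ gives that $\overline{D}_g=(D_g+J)/J$ is $s$-unital. Concretely, for $x+J\in\overline{D}_g$ choose an $s$-unit $u\in D_g$ for a representative, and then $u+J$ is an $s$-unit for $x+J$.

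\medskip

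\textbf{Step 2: $\overline{R}=\bigoplus_{e\in G_0}\overline{D}_e$.} Since $R=\bigoplus_{e\in G_0}D_e$, the quotient map sends $R$ onto $\sum_{e\in G_0}\overline{D}_e$, so $\overline{R}=\sum_{e\in G_0}\overline{D}_e$ and only directness needs checking. Suppose $\sum_{e\in F}(x_e+J)=0$ in $\overline{R}$ for a finite $F\subseteq G_0$ with $x_e\in D_e$; then $\sum_{e\in F}x_e\in J$. Fix $f\in F$ and pick an $s$-unit $u_f\in D_f$ for $x_f$. Multiplying by $u_f$ and using that $J$ is an ideal together with $D_f D_e\subseteq D_f\cap D_e=\{0\}$ for $e\neq f$ (as the $D_e$ are orthogonal ideals), we get $x_f=u_f x_f=u_f\bigl(\sum_{e\in F}x_e\bigr)\in J$, so $x_f+J=0$. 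Hence the sum is direct. (Equivalently one can invoke the $G_0$-graded structure: $\overline{D}_e=\overline{D}_e/(\,\overline{D}_e\cap\bigoplus_{f\neq e}\overline{D}_f\,)$ by Lemma~\ref{P1-lem:prop-local-units}, but the direct orthogonality argument is cleaner.)

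\medskip

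\textbf{Step 3: associativity of $\overline{R}\rtimes_{\overline{\alpha}}G$.} By Proposition~\ref{P1-prop:prop-inter}, $\overline{\alpha}$ is a partial action of $G$ on $\overline{R}$, and by Step~1 each $\overline{D}_g$ is $s$-unital. Now apply Remark~\ref{P1-rem:associatividade}~(b): an $s$-unital ideal is idempotent, hence $(\mathcal{L},\mathcal{R})$-associative, so the partial skew groupoid ring $\overline{R}\rtimes_{\overline{\alpha}}G$ is associative. This is the same mechanism that guarantees associativity of $R\rtimes_\alpha G$ itself, so nothing new is required.

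\medskip

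I do not expect a serious obstacle here; the statement is essentially a bookkeeping consequence of the quotient construction being well-behaved, and each of the three claims reduces in one line to a result already proved (Remark~\ref{P1-obs-local-units}, the orthogonality of the $D_e$, and Remark~\ref{P1-rem:associatividade}~(b)). If anything needs care it is Step~2, where one must be sure to use the $s$-unitality of $D_f$ to produce the local unit that separates the $f$-component from the rest; without $s$-unitality one cannot in general conclude that an element of $D_f$ lying in $\sum_{e\neq f}D_e+J$ already lies in $J$.
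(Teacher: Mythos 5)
Your proposal is correct and follows essentially the same route as the paper: Remark~\ref{P1-obs-local-units} for $s$-unitality of $\overline{D}_g$, Remark~\ref{P1-rem:associatividade}~(b) for associativity, and the local-unit-plus-orthogonality trick for directness of $\bigoplus_{e\in G_0}\overline{D}_e$. The only cosmetic difference is that you verify directness by lifting to representatives in $R$ and multiplying by an $s$-unit $u_f\in D_f$ there, whereas the paper multiplies by the $s$-unit $u_e+J$ directly in $R/J$; both arguments are sound and equivalent.
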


\begin{proof}
For each $g\in G$, the ideal $D_g$ of $R$ is $s$-unital and hence, by Remark~\ref{P1-obs-local-units}, the ideal $\overline{D}_g$ is $s$-unital.
Thus, Remark~\ref{P1-rem:associatividade} (b) implies that $R/J \rtimes_{\overline{\alpha}}G$ is associative. For the last part, if there is $e \in G_0$ such that $x \in \overline{D}_e \cap \sum_{f \in G_0\setminus\{e\}}\overline{D}_f,$ then $x=b_e + J \in \overline{D}_e$ and  $x=\sum_{f\in G_0\setminus\{e\}}(b_f+J) \in \sum_{f \in G_0\setminus\{e\}}\overline{D}_f.$ Let $u_e+J \in \overline{D}_e$ be an $s$-unit for $b_e + J$. Then, $x=(b_e + J)(u_e+J)=\sum_{f\in G_0\setminus\{e\}}(b_f+J)(u_e+J)=0+J.$
For any $a=\sum_{e \in G_0}a_e \in R,$ we have $a+J=(\sum_{e \in G_0}a_e)+J=\sum_{e \in G_0}(a_e+J) \in \bigoplus_{e \in G_0}\overline{D}_e$,
because $R=\bigoplus_{e \in G_0}D_e.$
\end{proof}

The following definition is inspired by \cite{Oinert2012} and \cite{Thierry2014}. 

\begin{definition}
Let $R\rtimes_\alpha G$ be a partial skew groupoid ring.
\begin{enumerate}[{\rm (a)}]
    \item The partial action $\alpha$ is said to have the \emph{intersection property} if
$I\cap \bigoplus_{e \in G_0} D_e\delta_e\neq \{0\}$, for every nonzero ideal $I$ of $R \rtimes_{\alpha} G$.
    \item The partial action $\alpha$ is said to have the \emph{residual intersection property} if, for every  $G$-invariant ideal $J$ of $R$, the quotient partial action $\overline{\alpha}$ with respect to $J$
    has the intersection property.
    
\end{enumerate}
\end{definition}

\begin{obs}
By considering $J=\{0\}$, it immediately becomes clear that
the residual intersection property implies the intersection property.
In Example~\ref{P1-ex:exnrip}, however, we will see that the converse does not hold in general.
That is, the residual intersection property is strictly stronger than the intersection property.
\end{obs}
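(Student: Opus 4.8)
The claim to be justified is that the residual intersection property strictly implies the intersection property, i.e.\ there exists a partial skew groupoid ring $R\rtimes_\alpha G$ (with all $D_g$ being $s$-unital and $R=\bigoplus_{e\in G_0}D_e$) whose partial action $\alpha$ has the intersection property but \emph{not} the residual intersection property. Since one direction (residual $\Rightarrow$ ordinary) is already observed in the remark, the only thing to do is to produce the promised Example~\ref{P1-ex:exnrip}. The plan is to take the \emph{simplest possible} groupoid, namely a group, and in fact $G=\Z$ acting on a ring; this is allowed because groups are groupoids and the result is advertised as ``new even for partial skew group rings.''

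First I would set up a concrete partial action of $\Z$ on a commutative $s$-unital ring $R$ for which $R\rtimes_\alpha\Z$ is simple (equivalently, $R$ is $\Z$-simple and $\alpha$ has the intersection property, by Theorem~\ref{P1-thm:AP2}~(iii)); simplicity automatically gives the intersection property. A natural candidate is the minimal partial action coming from a minimal but not topologically free dynamical system: e.g.\ take an infinite ``odometer-like'' or rotation-type partial action on a zero-dimensional space $X$, realize $R=\mathcal{L}_c(X,\mathbb{K})$, and use the topological results: minimality of $\theta$ gives $\Z$-simplicity of $R$, which makes $R\rtimes_\alpha\Z$ simple and hence guarantees the intersection property. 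Because the dynamical system is minimal, the only $\Z$-invariant ideals of $R$ are $\{0\}$ and $R$, so to \emph{break} the residual intersection property I cannot use a proper quotient — a cleaner route is to instead make $R$ itself \emph{not} $\Z$-simple, pick a proper nonzero $\Z$-invariant ideal $J$, and arrange that the quotient partial action $\overline\alpha$ on $R/J$ fails the intersection property (i.e.\ $R/J\rtimes_{\overline\alpha}\Z$ has a nonzero ideal meeting $\bigoplus_e \overline D_e\delta_e$ trivially), while $\alpha$ itself still has the intersection property. The standard way to exhibit a failure of the intersection property for $\overline\alpha$ is to have the quotient partial action be \emph{global} and \emph{free} enough that $R/J\rtimes_{\overline\alpha}\Z$ contains a copy of a group algebra $\mathbb{K}[\Z_e^e]$ with a nontrivial ideal disjoint from the zero-component — i.e.\ make $\overline\alpha$ act trivially (all $\overline\alpha_g=\id$) on $R/J\cong\mathbb{K}$, so $R/J\rtimes_{\overline\alpha}\Z\cong\mathbb{K}[\Z]$, and take $J$ chosen so that $\alpha$ restricted away from $J$ is still ``moving'' enough to retain the intersection property.

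Concretely, I would build $R$ as a direct sum / extension: let $\theta$ be a minimal, \emph{not} topologically free partial action of $\Z$ on a space $X$ (so that on the piece modeled by $X$ we get simplicity, hence the intersection property is not obstructed there), and glue in an extra fixed point or invariant clopen piece $Y$ on which $\Z$ acts trivially; set $J$ to be the ideal of functions vanishing on $Y$. Then $R/J\cong \mathcal{L}_c(Y,\mathbb{K})$ with trivial $\Z$-action, and $R/J\rtimes_{\overline\alpha}\Z\cong \mathcal{L}_c(Y,\mathbb{K})\otimes\mathbb{K}[\Z]$, which has the nonzero ideal generated by $\delta_1-\delta_0$ (augmentation-type ideal) intersecting $\mathcal{L}_c(Y,\mathbb{K})\delta_0$ trivially; hence $\overline\alpha$ fails the intersection property and $\alpha$ lacks the residual intersection property. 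At the same time, I must verify that $\alpha$ itself \emph{does} have the intersection property: here I would invoke Theorem~\ref{P1-thm:AP2} or the topological characterization of Section~\ref{P1-Sec:TopDynamics} — choosing $\theta$ on $X$ to be topologically transitive/minimal on the relevant invariant pieces so that $R$ is $\Z$-prime, hence $R\rtimes_\alpha\Z$ is prime (Theorem~\ref{P1-thm:AP2}~(ii) with the intersection property), but this is circular; instead I would check the intersection property for $\alpha$ directly, by taking a nonzero ideal $I$, picking $0\ne a=\sum a_g\delta_g\in I$, and using the $s$-unital ``smearing'' technique from the proofs of Lemma~\ref{P1-lem:inv} and Proposition~\ref{P1-prop:psj} together with the nontriviality of the dynamics on $X$ to produce a nonzero element of $I\cap\bigoplus_e D_e\delta_e$.

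The main obstacle — and the part requiring genuine care rather than routine computation — is the delicate balancing act: the example must be \emph{minimal enough} on one piece to force the intersection property for $\alpha$, yet possess a proper $\Z$-invariant ideal $J$ on whose quotient the action degenerates to something (like the trivial action, giving a group-algebra factor) that manifestly fails the intersection property. I expect that the cleanest workable instance is: $X=\mathbb{Z}$ (one-point sets) with the partial action of $\Z$ by translation on finite subsets — this is the ``minimal'' model whose skew group ring is a Leavitt-type algebra — extended by one $\Z$-fixed point; verifying the intersection property for the translation part reduces to a short direct argument, and the failure on the quotient is then the textbook $\mathbb{K}[\Z]$ computation. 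I would write the example with explicit formulas for the $D_g$, $\alpha_g$, and $J$, then include a brief verification paragraph for each of the two bullet points (intersection property holds for $\alpha$; fails for $\overline\alpha$ with this $J$), citing Theorem~\ref{P1-thm:bijpri} or Theorem~\ref{P1-thm:AP2} only where they genuinely shorten the argument.
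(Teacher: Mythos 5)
Your proposal is correct and lands on essentially the paper's own construction: Example~\ref{P1-ex:exnrip} (taken from \cite{Sims2020}) is precisely $\Z$ acting by translation on the one-point compactification $\Z\cup\{\infty\}$, i.e.\ your ``translation on $\Z$ extended by one $\Z$-fixed point,'' with the quotient by the ideal of functions vanishing at $\infty$ degenerating to $\mathbb{K}[\Z]$. The only simplification you missed is that the intersection property for $\alpha$ follows at once from topological freeness of $\theta$ via Theorem~\ref{P1-thm:TE2} (there is no circularity there, unlike the route through Theorem~\ref{P1-thm:AP2}~(ii) that you rightly rejected), so no direct $s$-unital smearing argument is needed.
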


\begin{obs}
Let $J$ be a $G$-invariant ideal of $R.$ Then, we have a natural injection of rings
$\iota: J \rtimes_{\alpha^J} G \to R \rtimes_{\alpha} G$ and a natural surjection of rings
$\pi: R\rtimes_{\alpha}G \to R/J  \rtimes_{\overline{\alpha}}G $. Moreover, it is easy to check that 
\begin{align}\label{P1-exact-sequence}
	\xymatrix{& 0\ar[r]  &J \rtimes_{\alpha^J} G\ar[r]^{\iota} &R \rtimes_{\alpha} G\ar[r]^{\pi}&  R/J  \rtimes_{\overline{\alpha}}G\ar[r]&0}
\end{align}
 is a short exact sequence of rings.
\end{obs}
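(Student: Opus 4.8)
The plan is to exhibit the two maps explicitly and then check exactness at each of the three spots. The injection $\iota$ has essentially already been described: an element $\sum_{g}a_g\delta_g$ of $J\rtimes_{\alpha^J}G$ has $a_g\in J\cap D_g$, and $\iota$ sends it to the same formal sum regarded inside $R\rtimes_\alpha G$ via the inclusions $J\cap D_g\subseteq D_g$. Since the multiplication rule for $\alpha^J$ is obtained by restricting the maps $\alpha_g$, the products computed in $J\rtimes_{\alpha^J}G$ and in $R\rtimes_\alpha G$ agree on these components, so $\iota$ is a ring homomorphism; it is injective because it is the identity on each homogeneous component. (Recall also that its image $J\rtimes_{\alpha^J}G$ is a graded ideal of $R\rtimes_\alpha G$ by Proposition~\ref{P1-prop:grad}~(i).)

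Next I would define $\pi\colon R\rtimes_\alpha G\to R/J\rtimes_{\overline{\alpha}}G$ by $\pi\bigl(\sum_g a_g\delta_g\bigr):=\sum_g(a_g+J)\delta_g$. This is well defined: $a_g\in D_g$ forces $a_g+J\in(D_g+J)/J=\ovl{D}_g$, so the image lands in $\bigoplus_g\ovl{D}_g\delta_g=R/J\rtimes_{\overline{\alpha}}G$, which is a genuine associative ring by Propositions~\ref{P1-prop:prop-inter} and \ref{P1-prop:quotientSkew}. Additivity is immediate. For multiplicativity it suffices to test on homogeneous elements $a_g\delta_g$ and $b_h\delta_h$: if $(g,h)\notin G^2$ both sides vanish, and if $(g,h)\in G^2$ one unwinds both sides using the defining formula $\ovl{\alpha}_g(x+J)=\alpha_g(x)+J$ from Definition~\ref{P1-def:quotientpartial} and finds that each equals $\bigl(\alpha_g(\alpha_{g^{-1}}(a_g)b_h)+J\bigr)\delta_{gh}$. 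Surjectivity is clear, since any generator $(c+J)\delta_g$ of $R/J\rtimes_{\overline{\alpha}}G$ with $c+J\in\ovl{D}_g$ can be written with $c\in D_g$, and is then $\pi(c\delta_g)$. This establishes exactness at $J\rtimes_{\alpha^J}G$ (injectivity of $\iota$) and at $R/J\rtimes_{\overline{\alpha}}G$ (surjectivity of $\pi$).

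Finally I would verify exactness at the middle, i.e. $\ker\pi=\iota(J\rtimes_{\alpha^J}G)$. For "$\supseteq$": if $x=\sum_g a_g\delta_g$ with $a_g\in J\cap D_g$, then each $a_g+J=0$, so $\pi(\iota(x))=0$. For "$\subseteq$": if $\pi\bigl(\sum_g a_g\delta_g\bigr)=0$, then because the decomposition $\bigoplus_g\ovl{D}_g\delta_g$ is direct, $a_g+J=0$ in $\ovl{D}_g$ for every $g$, so $a_g\in J\cap D_g$ and $\sum_g a_g\delta_g$ lies in the image of $\iota$. I do not anticipate a genuine obstacle here; the only point requiring a little care is the compatibility of $\pi$ with multiplication, and that is taken care of by the formula $\ovl{\alpha}_g(x+J)=\alpha_g(x)+J$ together with the fact, guaranteed by Proposition~\ref{P1-prop:prop-inter}, that $\overline{\alpha}$ is an honest partial action, so that the products on the right-hand side are defined in the first place.
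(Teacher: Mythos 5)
Your verification is correct and is exactly the routine check that the paper leaves to the reader (the remark only asserts "it is easy to check"): the natural maps are the ones you write down, and your computation of $\ker\pi=\bigoplus_{g\in G}(J\cap D_g)\delta_g=\iota(J\rtimes_{\alpha^J}G)$ is the whole content. No gaps.
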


\begin{proposition}\label{P1-prop:t2}
The partial action $\alpha$ has the residual intersection property if, and only if, every ideal of $R \rtimes_{\alpha} G$ is $G$-graded.
\end{proposition}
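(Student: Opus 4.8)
The plan is to prove the two implications separately, exploiting the short exact sequence \eqref{P1-exact-sequence} together with the machinery built up in Proposition~\ref{P1-prop:psj} and Theorem~\ref{P1-thm:corresp1}.

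For the forward direction, assume $\alpha$ has the residual intersection property and let $I$ be an arbitrary ideal of $S:=R\rtimes_\alpha G$. Set $J:=P_0(I)$, $I_0:=I\cap\bigoplus_{e\in G_0}D_e\delta_e$, $J_0:=P_0(I_0)$. By Proposition~\ref{P1-prop:psj}~(i) and~(iii) we have $J_0\rtimes_{\alpha^{J_0}}G\subseteq I\subseteq J\rtimes_{\alpha^J}G$, so it suffices to show $I=J\rtimes_{\alpha^J}G$ (this is a $G$-graded ideal by Proposition~\ref{P1-prop:grad}~(i)). Push everything through the quotient $\pi:S\to R/J\rtimes_{\overline\alpha}G$. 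The image $\pi(I)$ is an ideal of $R/J\rtimes_{\overline\alpha}G$; I would like to conclude it is zero, which gives $I\subseteq\ker\pi = J\rtimes_{\alpha^J}G$ and hence equality. By hypothesis $\overline\alpha$ has the intersection property, so it is enough to check that $\pi(I)\cap\bigoplus_{e\in G_0}\overline D_e\delta_e=\{0\}$. An element of this intersection has the form $\pi(a)$ with $a\in I$ and $\pi(a)$ supported on $G_0$; after subtracting an element of $I_0\subseteq\ker$ mapping to the $G_0$-part (using Lemma~\ref{P1-lem:some-obs}~(ii) and that $P_0(I)=J=\pi$-preimage bookkeeping) one reduces to showing the $G_0$-homogeneous part of any $a\in I$ lies in $I_0$ modulo $\ker\pi$; but $P_0(a)\in J$ means precisely that the $G_0$-coefficients of $a$ become zero in $R/J$. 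Making this reduction precise — identifying $\pi(I_0)$ with the relevant homogeneous piece and checking the coefficients vanish — is the one spot that needs care.

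For the converse, assume every ideal of $R\rtimes_\alpha G$ is $G$-graded and let $J$ be any $G$-invariant ideal of $R$; I must show $\overline\alpha$ has the intersection property. Let $\overline I$ be a nonzero ideal of $R/J\rtimes_{\overline\alpha}G$ and pull it back along $\pi$ to an ideal $I:=\pi^{-1}(\overline I)$ of $R\rtimes_\alpha G$, which strictly contains $J\rtimes_{\alpha^J}G=\ker\pi$. By hypothesis $I$ is $G$-graded, and one checks that the image of a graded ideal under the graded surjection $\pi$ is again graded (this uses that $\pi$ respects the homogeneous components, $\pi(D_g\delta_g)=\overline D_g\delta_g$), so $\overline I=\pi(I)$ is a nonzero $G$-graded ideal of $R/J\rtimes_{\overline\alpha}G$. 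A nonzero graded ideal $\overline I=\bigoplus_{g}(\overline I\cap\overline D_g\delta_g)$ has a nonzero homogeneous component $\overline I\cap\overline D_t\delta_t\neq\{0\}$ for some $t\in G$; applying $\overline\alpha_{t^{-1}}$-conjugation as in \eqref{P1-proof-inv} (multiply on the left by $\overline{\alpha_{t^{-1}}(u)}\delta_{t^{-1}}$, on the right by $\overline u\delta_t$ for a suitable $s$-unit $u$) moves a nonzero element into $\overline I\cap\overline D_{s(t)}\delta_{s(t)}\subseteq\overline I\cap\bigoplus_{e\in G_0}\overline D_e\delta_e$, which is therefore nonzero. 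Hence $\overline\alpha$ has the intersection property, and since $J$ was arbitrary, $\alpha$ has the residual intersection property.

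The main obstacle I anticipate is the bookkeeping in the forward direction: transferring the statement ``$\pi(I)$ has trivial intersection with the $G_0$-graded part'' back down to a concrete statement about $I$, $I_0$ and $J=P_0(I)$, while correctly handling that $J$ need not sit inside any single $D_g$ and that the $\overline D_e$ are quotients $(D_e+J)/J$ rather than subobjects of $R/J$ in an obvious way. Lemma~\ref{P1-lem:prop-local-units} and Remark~\ref{P1-obs-local-units} (the $s$-unitality of the quotient ideals) together with Lemma~\ref{P1-lem:some-obs}~(ii) should supply exactly what is needed to push an element of $I$ supported on $G_0$ into $I_0$ up to $\ker\pi$; once that reduction is in place both implications close cleanly.
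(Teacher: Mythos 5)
Your ``if'' direction is sound and in fact somewhat cleaner than the paper's: pulling a nonzero ideal $\overline{I}$ of $R/J\rtimes_{\overline{\alpha}}G$ back to the $G$-graded ideal $\pi^{-1}(\overline{I})$, observing that $\pi$ carries graded ideals to graded ideals, and conjugating a nonzero homogeneous component $\overline{b_t}\,\delta_t$ by an $s$-unit into $\overline{b_t}\,\delta_{r(t)}$ really does produce a nonzero element of $\overline{I}\cap\bigoplus_{e\in G_0}\overline{D}_e\delta_e$; the paper reaches the same conclusion by a longer route through $\pi^{-1}(I)$.

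The ``only if'' direction, however, has a genuine gap: you apply the residual intersection property to the wrong quotient. With $J:=P_0(I)$, Proposition~\ref{P1-prop:psj}~(i) already gives $I\subseteq J\rtimes_{\alpha^J}G=\ker\pi$, so $\pi(I)=\{0\}$ holds trivially, the intersection property of $\overline{\alpha}$ is never genuinely invoked, and the conclusion you extract from it --- $I\subseteq\ker\pi$ --- is exactly what you started with. Crucially, $I\subseteq J\rtimes_{\alpha^J}G$ does \emph{not} yield the claimed equality $I=J\rtimes_{\alpha^J}G$: you would need the reverse inclusion, and Proposition~\ref{P1-prop:psj}~(iii) only supplies $J_0\rtimes_{\alpha^{J_0}}G\subseteq I$ for the a priori smaller ideal $J_0:=P_0\!\left(I\cap\bigoplus_{e\in G_0}D_e\delta_e\right)$. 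The paper's proof instead quotients by $J_0$: there $\ker\pi=J_0\rtimes_{\alpha^{J_0}}G\subseteq I$, so $\pi(I)$ may genuinely be nonzero; the statement $\pi(I)\cap\bigoplus_{e\in G_0}\overline{D}_e\delta_e=\{0\}$ becomes a nontrivial claim, proved by an explicit $s$-unit conjugation showing that the $G_0$-coefficients of elements of $I$ lie in $J_0$ (not merely in $J$); and only then does the intersection property of the quotient action with respect to $J_0$ force $\pi(I)=\{0\}$, i.e.\ $I\subseteq J_0\rtimes_{\alpha^{J_0}}G$, which closes the sandwich to $I=J_0\rtimes_{\alpha^{J_0}}G$. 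Replacing $J$ by $J_0$ throughout your forward direction and supplying that coefficient computation repairs the argument.
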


\begin{proof}
We first show the ''only if'' statement.
Suppose that $\alpha$ has the residual intersection property.
Let $I$ be an ideal of $R \rtimes_{\alpha} G.$  By Lemma~\ref{P1-lem:inv2}, $J_0:=P_0 \! \left(I \cap \bigoplus_{e \in G_0} D_e\delta_e\right)$ is a $G$-invariant ideal of $R$. Hence, it follows from Proposition~\ref{P1-prop:grad} (i) that  $J_0 \rtimes_{\alpha^{J_0}} G$ is a $G$-graded ideal of $R \rtimes_{\alpha} G$. We claim that $I= J_0 \rtimes_{\alpha^{J_0}} G.$ The inclusion $\supseteq$ follows from Proposition~\ref{P1-prop:psj} (iii). For the other inclusion, observe that, since $\pi: R \rtimes_{\alpha} G \to R/J_0  \rtimes_{\overline{\alpha}}G$ is surjective, $\pi(I)$ is an ideal of $R/J_0  \rtimes_{\overline{\alpha}}G$.\smallbreak

\noindent{\bf Claim} $\pi(I) \cap \bigoplus_{e \in G_0} \overline{D}_e\delta_e=\{0\}$, where $\overline{D}_e:=\frac{D_e+J_0}{J_0}$.

\noindent Take $x \in \pi(I) \cap \bigoplus_{e \in G_0} \overline{D}_e\delta_e $ and $y=\sum_{t \in F} a_t\delta_t \in I$, such that 
$F=\Supp(y)$ and $\pi(y)=x.$ 
Let $F_0\subseteq G_0$ be finite and such that $x =\sum_{e \in F_0}(a_e+J_0)\delta_e$.
Take $t\in F$.
Let $u \in D_t$ be an $s$-unit for $a_t$ and $\alpha_{t^{-1}}(a_t).$
Notice that
$a_t\delta_{r(t)} = u \delta_{r(t)} y u \delta_{r^{-1}} \in I\cap \bigoplus_{e \in G_0} D_e\delta_e.$
It now follows that $a_t \delta_t = (a_t\delta_{r(t)})(u\delta_t)\in I$.
We get that
$\sum_{e \in F_0}a_e \delta_e= y- \sum_{t \in F \setminus F_0} a_t\delta_t \in I \cap \bigoplus_{e \in G_0} D_e\delta_e.$
Let $f \in F_0$ and choose an $s$-unit $u_f \in D_f$  for $a_f.$
Observe that
$a_f \delta_f=(u_f \delta_f)\left(\sum_{e \in F_0}a_e\delta_e\right) \in I \cap \bigoplus_{e \in G_0} D_e\delta_e$. Then, $a_f \in P_0(I \cap \bigoplus_{e \in G_0} D_e\delta_e)=J_0$ and hence $a_f+J_0=0+J_0$, for all $f \in F_0$. Thus, $x=0$. 
\smallbreak

 Since $\alpha$ has the residual intersection property it follows from the claim above that $\pi(I)=\{0\}$. Consequently, $I \subseteq \nucleo \pi = \iota(J_0 \rtimes_{\alpha^{J_0}} G)=J_0 \rtimes_{\alpha^{J_0}} G$, where $\iota$ and $\pi$ are the ring homomorphisms given in \eqref{P1-exact-sequence}. 
Thus, $J_0 \rtimes_{\alpha^{J_0}} G = I$ and we have that $I$ is $G$-graded.\smallbreak

Now we show the ``if'' statement.
Suppose that every ideal of $R \rtimes_{\alpha} G$ is $G$-graded.
Let $J$ be a $G$-invariant ideal of $R$ and suppose that $I$ is an ideal of $R/J \rtimes_{\overline{\alpha}} G$ such that 
$I \cap \left(\bigoplus_{e \in G_0} \overline{D}_e \delta_e\right) =\{0\}$, where $
\overline{D}_e:=(D_e+J)/J$.
We want to show that $I=\{0\}$.
By assumption, $\pi^{-1}(I)$ is a $G$-graded ideal of $R\rtimes_{\alpha}G$ and thus
$\pi^{-1}(I)= \bigoplus_{t \in G}\pi^{-1}(I) \cap D_t \delta_t$. As $\pi$ is surjective, we have 
$$\pi(\pi^{-1}(I) \cap \bigoplus_{e \in G_0} D_e \delta_e) \subseteq \pi(\pi^{-1}(I)) \cap \pi\left(\bigoplus_{e \in G_0} D_e \delta_e\right)= I \cap \bigoplus_{e \in G_0} \overline{D}_e \delta_e=\{0\}.$$
Therefore, 
$J_0:=\pi^{-1}(I) \cap \bigoplus_{e \in G_0} D_e \delta_e\subseteq \bigoplus_{e \in G_0} (J \cap D_e) \delta_e$ and consequently we obtain that 
$L:=P_0(J_0) \subseteq \bigoplus_{e \in G_0} (J \cap D_e)=J$, because $R=\bigoplus_{e\in G_0} D_e$. Thus, $L \rtimes_{\alpha^L}G \subseteq J \rtimes_{\alpha^J} G$.
Next we show that $\pi^{-1}(I) \subseteq L \rtimes_{\alpha^L}G$. Take $t\in G$ and $b_t \delta_t \in \pi^{-1}(I)$.  Let $w \in D_{t^{-1}}$ be an $s$-unit for $\alpha_{t^{-1}}(b_t)$ and note that $b_t \delta_{r(t)} = (b_t\delta_t)(w\delta_{t^{-1}}) \in \pi^{-1}(I)$. Hence $b_t \delta_{r(t)} \in \pi^{-1}(I)\cap D_{r(t)} \delta_{r(t)}$, which implies that $b_t \in P_0(\pi^{-1}(I)\cap D_{r(t)} \delta_{r(t)}) \subseteq L$. Thus, $b_t \delta_t \in L \rtimes_{\alpha^L}G
\subseteq J \rtimes_{\alpha^J} G = \iota(J \rtimes_{\alpha^J} G)=\ker \pi$. Consequently, $I = \pi(\pi^{-1}(I))=\{0\}$.
\end{proof}

The following theorem is similar to \cite[Theorem~3.2]{Thierry2014} which was proved in the context of partial actions of groups on $C^*$-algebras.

\begin{theorem}\label{P1-thm:bijpri}
Let $R$ be a ring, let $G$ be a groupoid and let $\alpha=(\alpha_g, D_g)_{g \in G}$ be a partial action of $G$ on $R$, such that $D_g$ is $s$-unital for every $g \in G$ and $R =\bigoplus_{e \in G_0}D_e.$ The following statements are equivalent: 
\begin{enumerate}[{\rm (i)}]
    \item There is a one-to-one correspondence between the ideals of $R \rtimes_{\alpha}G$ and the $G$-invariant ideals of $R$ (given by \eqref{P1-map-fr});
    \item Every ideal of $R \rtimes_{\alpha} G$ is $G$-graded;
    \item $\alpha$ has the residual intersection property.
\end{enumerate}
\end{theorem}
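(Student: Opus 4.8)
The plan is to prove the chain of equivalences (iii) $\Rightarrow$ (ii) $\Rightarrow$ (i) $\Rightarrow$ (iii), using the machinery already developed in this section so that most of the work reduces to bookkeeping with the maps $\fr$, $\Psi$, $P_0$ and $\psi$. The equivalence (ii) $\Leftrightarrow$ (iii) is already the content of Proposition~\ref{P1-prop:t2}, so strictly speaking only the links with (i) need to be established; nonetheless I would state the argument cyclically for clarity.

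**Step 1: (iii) $\Rightarrow$ (i).** Assume $\alpha$ has the residual intersection property. By Proposition~\ref{P1-prop:t2}, every ideal of $R\rtimes_\alpha G$ is $G$-graded, so the set of ideals of $R\rtimes_\alpha G$ coincides with the set of $G$-graded ideals. By Theorem~\ref{P1-thm:corresp1}, the map $\Psi\colon J\mapsto J\rtimes_{\alpha^J} G$ is a bijection from $G$-invariant ideals of $R$ onto $G$-graded ideals of $R\rtimes_\alpha G$, with inverse $\fr_{gr}$; since $\fr_{gr}$ is the restriction of $\fr$ (defined in \eqref{P1-map-fr}) and every ideal is graded, $\fr$ itself is a bijection onto the $G$-invariant ideals of $R$, with inverse $\Psi$. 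This gives (i), and moreover identifies the correspondence as the one induced by \eqref{P1-map-fr}.

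**Step 2: (i) $\Rightarrow$ (ii).** Assume $\fr$ is a bijection between the ideals of $R\rtimes_\alpha G$ and the $G$-invariant ideals of $R$. Let $I$ be an arbitrary ideal and set $J_0:=\fr(I)=P_0(I_0)$, where $I_0:=I\cap\bigoplus_{e\in G_0}D_e\delta_e$. By Proposition~\ref{P1-prop:grad}~(i), $J_0\rtimes_{\alpha^{J_0}}G$ is a $G$-graded ideal, and by Proposition~\ref{P1-prop:psj}~(iii) we have $J_0\rtimes_{\alpha^{J_0}}G\subseteq I$. I would then compute $\fr(J_0\rtimes_{\alpha^{J_0}}G)$: using Proposition~\ref{P1-prop:grad}~(ii) and (i) this equals $P_0(J_0\rtimes_{\alpha^{J_0}}G)=J_0=\fr(I)$. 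Since $\fr$ is injective, $I=J_0\rtimes_{\alpha^{J_0}}G$, which is $G$-graded; hence every ideal is $G$-graded. The one thing to check carefully here is that $\fr$ applied to the graded ideal $J_0\rtimes_{\alpha^{J_0}}G$ really returns $J_0$, i.e. that $P_0\big((J_0\rtimes_{\alpha^{J_0}}G)\cap\bigoplus_{e}D_e\delta_e\big)=J_0$; this is exactly Proposition~\ref{P1-prop:grad} combined with the fact that $(J_0\cap D_e)\delta_e$ is the degree-$e$ component, so it is routine.

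**Step 3: (ii) $\Rightarrow$ (iii).** This is the ``if'' direction of Proposition~\ref{P1-prop:t2}, so I would simply invoke it. Putting the three implications together closes the cycle and proves the theorem.

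**Main obstacle.** There is no deep obstacle left, since the substantive work (the bijection $\Psi\leftrightarrow\fr_{gr}$, the inclusion $J_0\rtimes_{\alpha^{J_0}}G\subseteq I$, and the equivalence with the residual intersection property) has already been carried out in Theorem~\ref{P1-thm:corresp1}, Proposition~\ref{P1-prop:psj} and Proposition~\ref{P1-prop:t2}. The only point requiring a little care is the injectivity argument in Step 2: one must make sure that $\fr$, which a priori is only defined on \emph{all} ideals and is injective by hypothesis (i), genuinely separates $I$ from the graded ideal $J_0\rtimes_{\alpha^{J_0}}G$ sitting inside it — which it does because $\fr$ takes the same value on both. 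Everything else is an application of the cited results.
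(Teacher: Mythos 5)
Your proposal is correct and follows essentially the same route as the paper's proof: the equivalence (ii)$\Leftrightarrow$(iii) is delegated to Proposition~\ref{P1-prop:t2}, the passage to (i) goes through Theorem~\ref{P1-thm:corresp1}, and the implication (i)$\Rightarrow$(ii) is established exactly as in the paper by computing $\fr(J_0\rtimes_{\alpha^{J_0}}G)=J_0=\fr(I)$ and invoking injectivity of $\fr$. The only cosmetic difference is that you arrange the implications as a cycle rather than as (iii)$\Leftrightarrow$(ii)$\Rightarrow$(i) plus (i)$\Rightarrow$(ii), and you additionally cite the (unneeded for this step) inclusion $J_0\rtimes_{\alpha^{J_0}}G\subseteq I$ from Proposition~\ref{P1-prop:psj}~(iii).
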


\begin{proof}
Notice that (iii)$\Leftrightarrow$(ii)$\Rightarrow$(i) follows from Proposition~\ref{P1-prop:t2} and Theorem~\ref{P1-thm:corresp1}. In order to prove that (i)$\Rightarrow$(ii), suppose that $\fr$ is bijective. Let $I$ be an ideal of ${R \rtimes_{\alpha}G}.$ By Lemma~\ref{P1-lem:inv2}, $\fr(I)=P_0(I \cap \bigoplus_{e \in G_0}D_e \delta_e)$ is a $G$-invariant ideal of $R.$ Then, by Proposition~\ref{P1-prop:grad} (i), $J:=\Phi(I) \rtimes_{\alpha^{\phi(I)}} G$ is a $G$-graded ideal of $R \rtimes_{\alpha}G.$ Observe that $\fr(J)= P_0 \! \left( \left( P_0 \! \left(I \cap \bigoplus_{e \in G_0}D_e \delta_e\right) \rtimes_{\alpha^{\phi(I)}} G \right) \cap \bigoplus_{e \in G_0}D_e \delta_e\right)=P_0 \! \left(I \cap \bigoplus_{e \in G_0}D_e \delta_e\right)=\fr(I).$ Since $\fr$ is injective, $I=J$ and $I$ is $G$-graded. 
\end{proof}

Let $T$ be a ring and let $S$ be a nonempty subset of $T$. Recall that the {\it centralizer of $S$ in $T$} is the subring $C_T(S):=\{t\in T\,:\,ts=st\,\text{ for all }\, s\in S\}$ of $T$. A subring $S$ of $T$ is said to be a {\it maximal commutative subring of $T$} if $C_T(S)=S.$

We define $\tau : R \rtimes_\alpha G \to R$ by $\tau\left( \sum_{g \in G} a_g \delta_g\right):= \sum_{g \in G} a_g$, and note that it is additive. %The following result partially generalizes \cite[Theorem~3]{Oinert2012} and \cite[Theorem~4]{Oinert2012} to possibly non-unital rings. 
The following result generalizes \cite[Theorem~2.1]{Goncalves2014} 
from 
partial skew group rings
to
partial skew groupoid rings.
Nevertheless, we point out that it actually follows from \cite[Proposition~5.7]{NystedtSystems2020}.
For the convenience of the reader, we include a proof.

\begin{proposition}\label{P1-prop:max-com-int-prop}
Let $R\rtimes_\alpha G$ be a partial skew groupoid ring, and suppose that $R$ is commutative. The following statements are equivalent:
\begin{enumerate}[{\rm (i)}]
        \item $\bigoplus_{e \in G_0} D_e\delta_e$ is a maximal commutative subring of $R  \rtimes_{\alpha}G;$
    \item $\alpha$ has the intersection property. 
\end{enumerate} 
\end{proposition}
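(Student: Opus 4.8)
The plan is to prove the two implications separately, exploiting the ``projection'' $P_0$ and the bimodule map $E = \psi \circ P_0$ from Lemma~\ref{P1-lem:some-obs}, together with the observation that when $R$ is commutative, $A := \bigoplus_{e \in G_0} D_e\delta_e$ is commutative (Lemma~\ref{P1-lem:some-obs}~(i)), so that the question of whether $A$ is maximal commutative amounts to understanding its centralizer $C_{R\rtimes_\alpha G}(A)$.

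First I would prove $\neg\text{(i)} \Rightarrow \neg\text{(ii)}$, i.e. that if $A$ is \emph{not} maximal commutative then $\alpha$ fails the intersection property. Assume there is some $b = \sum_{g} b_g \delta_g \in C_{R\rtimes_\alpha G}(A) \setminus A$, so some homogeneous component $b_t\delta_t$ with $t \notin G_0$ and $b_t \neq 0$ is present. The idea is to show that $b - E(b)$ is a nonzero element of the ideal generated by $b$ (and is itself central, since $A$ and $E(b) \in A$ are central), and that conjugating/multiplying it by suitable $s$-units $u\delta_e$ isolates homogeneous pieces supported off $G_0$. Concretely, for the component $t \notin G_0$ one forms $(u\delta_{r(t)}) b (u'\delta_t^{-1})$-type products to extract $b_t \delta_{r(t)}$-style terms and push the support around; the commutation relation with all of $A$ forces strong constraints. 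The cleanest route is: the two-sided ideal $I$ generated by $b - E(b)$ is nonzero, and one shows $I \cap A = \{0\}$ by using that any element of $I$ built from the central element $b - E(b)$ stays ``off-diagonal'' after applying $P_0$ — more precisely, commutativity of $b$ with every $u_e\delta_e$ implies $b_g \delta_g$ commutes appropriately, and for $g\notin G_0$ one deduces $P_0$ of any product lands in $0$. This produces a nonzero ideal with trivial intersection with $A$, contradicting the intersection property.

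Next I would prove $\neg\text{(ii)} \Rightarrow \neg\text{(i)}$: if the intersection property fails, there is a nonzero ideal $I$ with $I \cap A = \{0\}$; I want to produce an element of $C_{R\rtimes_\alpha G}(A) \setminus A$. Take $0 \neq s \in I$ with $|\Supp(s)|$ minimal. Using $s$-units one may multiply on left and right by $u\delta_e$'s to arrange that $s$ has a component at some fixed $e \in G_0$, and by minimality of support one shows that for every $a \in A$, the element $as - sa \in I$ has strictly smaller support (the diagonal components cancel because $E(as) = aE(s)$, $E(sa) = E(s)a$, and $E(s) \in A$ commutes with $a$), hence $as - sa = 0$. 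Thus $s \in C_{R\rtimes_\alpha G}(A)$; and $s \notin A$ since otherwise $s \in I \cap A = \{0\}$. One technical point to handle carefully: reducing to an $s$ whose support meets $G_0$, or otherwise arguing that a minimal-support element that commutes with $A$ but is supported entirely off $G_0$ still cannot exist — this is where the hypothesis $R = \bigoplus_{e\in G_0} D_e$ and the $s$-unitality of the $D_g$ get used, by testing against $u_e\delta_e$ for appropriate $e = r(g)$, $s(g)$ to force each surviving $b_g$ to be zero.

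The main obstacle I expect is the bookkeeping in the minimal-support argument — specifically, making rigorous the claim that multiplying a minimal-support central-obstruction element by the right $s$-units keeps us inside $I$ while controlling exactly which homogeneous components survive, in the groupoid setting where products $a_g\delta_g \cdot b_h\delta_h$ vanish unless $(g,h)\in G^2$. The group case (\cite[Theorem~2.1]{Goncalves2014}, or \cite[Proposition~5.7]{NystedtSystems2020}) handles this, but the partiality of the domains $D_g$ and the composability constraint mean one must repeatedly invoke the existence of common $s$-units (Remark~\ref{P1-rem:associatividade}~(a)) for finite sets of coefficients and check that the relevant sources and ranges match up; once that is set up, each step is a short computation using the multiplication rule of Definition~\ref{P1-def:D3} and the fact that $P_0(ab) = P_0(a)P_0(b)$ for $a \in A$ (Lemma~\ref{P1-lem:some-obs}~(iii)).
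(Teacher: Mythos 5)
Your second implication (intersection property fails $\Rightarrow$ $A:=\bigoplus_{e\in G_0}D_e\delta_e$ not maximal commutative) is essentially the paper's argument read contrapositively: take a nonzero ideal $I$ with $I\cap A=\{0\}$, pick $x\in I$ of minimal support, right-multiply by a suitable $u\delta_{g^{-1}}$ to create a component in $G_0$, and use minimality plus commutativity of $R$ to show the resulting element centralizes $A$; this part is fine, and the bookkeeping you flag is exactly what the paper carries out.

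The other implication has a genuine gap. You propose to take $b\in C_{R\rtimes_\alpha G}(A)\setminus A$, form the central element $b-E(b)$ (supported off $G_0$), let $I$ be the ideal it generates, and argue $I\cap A=\{0\}$ because ``$P_0$ of any product lands in $0$.'' That claim is false: a product $(c\delta_h)(b_g\delta_g)(d\delta_k)$ lands in degree $hgk$, which can perfectly well lie in $G_0$ even though $g\notin G_0$. Concretely, in Example~\ref{P1-ex:expnpi} (the global $\Z$-action on $\C[x]$ by $\sigma_n(f)(x)=f(q^nx)$ with $q$ of order $k$), the element $b=1\delta_k$ lies in $C_S(A)\setminus A$ and $E(b)=0$, but the ideal generated by $b-E(b)=\delta_k$ contains $\delta_{-k}\delta_k=\delta_0$ and hence is all of $\C[x]\rtimes_\sigma\Z$; its intersection with $A$ is $A$ itself. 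So centrality of $b-E(b)$ does not keep the ideal it generates away from $A$. The paper's fix is different in two essential ways: it generates the ideal by the \emph{difference} $a_g\delta_e-a_g\delta_g$ (a diagonal copy minus the off-diagonal piece, with the same coefficient), and it tests membership not with $P_0$ but with the total coefficient-sum map $\tau(\sum a_g\delta_g):=\sum a_g$. Centrality of $a_g\delta_g$ and commutativity of $R$ then make every generating product telescope to $\alpha_t(\alpha_{t^{-1}}(b_t)a_gc_k)\delta_{tk}-\alpha_t(\alpha_{t^{-1}}(b_t)a_gc_k)\delta_{tgk}$, which $\tau$ kills regardless of whether $tk$ or $tgk$ lies in $G_0$; since $\tau\lvert_A$ is injective, $L\cap A=\{0\}$ follows. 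Without this difference-and-$\tau$ mechanism your first implication does not go through.
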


\begin{proof}
Write $S:=R \rtimes_\alpha G$ and $A:=\bigoplus_{e \in G_0} D_e\delta_e$.

(i)$\Rightarrow$(ii) 
Suppose that $A$ is maximal commutative in $S.$ Let $I$ be a nonzero ideal of $S$.
Choose $x = \sum_{t \in F} x_t \delta_t \in I$ to be a nonzero element of minimal support amongst all nonzero elements of $I$. 
Pick $g \in F$ such that $x_g \neq 0.$
Let $u \in D_{g^{-1}}$ be an $s$-unit for $\alpha_{g^{-1}}(x_g)$ and define 
$y := x u\delta_{g^{-1}} \in I.$
Notice that
$$y = x_g\delta_g u\delta_{g^{-1}} + \sum_{t \in F\setminus \{g\}} x_t\delta_t u\delta_{g^{-1}} 
= x_g \delta_{r(g)} + \sum_{\substack{t \in F\setminus \{g\} \\ s(t)=s(g) }} \alpha_t(\alpha_{t^{-1}}(x_t)u)\delta_{tg^{-1}}. $$
Hence, $|\Supp(y)| \leq  |\Supp(x)|$ and $y \neq 0.$
Let $b = \sum\limits_{e \in G_0} b_e\delta_e \in A$ and define $z:= by - yb \in I.$ Observe that $|\Supp(z)| < |\Supp(y)| \leq |\Supp(x)|,$ because
$b x_g \delta_{r(g)}-x_g \delta_{r(g)}b= (b_{r(g)}x_g - x_g b_{r(g)}) \delta_{r(g)} =0.$
By minimality of the support of $x$, we get that $z=0.$ Thus,  $b y = y b$ for all $b \in A.$ By assumption, $y \in A$ and hence $I \cap A \neq \{0\}.$

(ii)$\Rightarrow$(i) 
We show the contrapositive statement.
Suppose that $A$ is not maximal commutative in $S$.
There is a nonzero element $a = \sum_{t \in G}a_t\delta_t \in C_S(A)  \setminus A$. In particular, there exist some $e\in G_0$ and $g\in \Supp(a)\setminus G_0$, with $s(g)=r(g)=e$, such that $a_g \delta_g \in C_S(A).$ 
Let $L$ be the ideal of $S$ generated by $a_g \delta_e - a_g \delta_g$.
Using that $S$ is $s$-unital, by Remark~\ref{P1-remark:skew-s-unital}, it is clear that $L$ is nonzero.

We claim that $L \subseteq \ker(\tau)$.
If we assume that the claim holds, then since $\tau\lvert_A = P_0\lvert_A$ is a ring isomorphism, by Lemma~\ref{P1-lem:some-obs}, we get that
$A \cap L \cong \tau\lvert_A (A \cap L) \subseteq \tau(L) =\{0\}$. This shows that $\alpha$ does not have the intersection property.
Now we show the claim.

By the definition of $L$ it follows that it is enough to show that $\tau$ maps elements of the form $b_t \delta_t (a_g \delta_e - a_g \delta_g) c_k \delta_k$ to zero,
where $t,k\in G$ and $b_t\in D_t$, $c_k\in D_k$ satisfy $s(t)=e=r(k)$.

Let $u \in D_e$ be an $s$-unit for $\{c_k,\alpha_{g^{-1}}(c_k a_g)\}$.
Using that $a_g \delta_g \in C_S(A)$, and that $R$ is commutative, we get that
\begin{align*}
b_t \delta_t (a_g \delta_e - a_g \delta_g) c_k \delta_k
&=
b_t \delta_t (a_g c_k \delta_k - a_g  \delta_g c_k \delta_e u \delta_k)
=
b_t \delta_t (a_g c_k \delta_k - c_k \delta_e a_g  \delta_g  u \delta_k) \\
&=
b_t \delta_t (a_g c_k \delta_k - c_k a_g  \delta_g  u \delta_k)
=
b_t \delta_t (a_g c_k \delta_k - \alpha_g(\alpha_{g^{-1}}(c_k a_g) u)  \delta_{gk}) \\
&=
b_t \delta_t (a_g c_k \delta_k - a_g c_k  \delta_{gk})
=
\alpha_t(\alpha_{t^{-1}}(b_t) a_g c_k) \delta_{tk} - 
\alpha_t(\alpha_{t^{-1}}(b_t) 
a_g c_k) \delta_{tgk}.
\end{align*}
Clearly, 
$\tau(b_t \delta_t (a_g \delta_e - a_g \delta_g) c_k \delta_k)=0$.
This proves the claim.
\end{proof}

\begin{corolario}\label{P1-cor::max-com-int-prop}
Let $J$ be a $G$-invariant ideal of $R.$ 
Suppose that $R$ is commutative.
The following statements are equivalent:
\begin{enumerate}[{\rm (i)}]
        \item $\bigoplus_{e \in G_0} \overline{D}_e\delta_e$ is a maximal commutative subring of $R/J  \rtimes_{\overline{\alpha}}G;$
    \item $\overline{\alpha}$ has the intersection property. 
\end{enumerate} 
\end{corolario}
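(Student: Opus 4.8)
The plan is to reduce Corollary~\ref{P1-cor::max-com-int-prop} to Proposition~\ref{P1-prop:max-com-int-prop} applied not to the original partial action $\alpha$ of $G$ on $R$, but to the quotient partial action $\overline{\alpha}$ of $G$ on the quotient ring $R/J$. So first I would check that $\overline{\alpha}$ satisfies all of the standing hypotheses needed for Proposition~\ref{P1-prop:max-com-int-prop} to be applicable to the partial skew groupoid ring $R/J \rtimes_{\overline{\alpha}} G$: namely that $R/J$ is a (commutative) ring, that $\overline{D}_g$ is $s$-unital for every $g\in G$, and that $R/J = \bigoplus_{e\in G_0}\overline{D}_e$. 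The first is clear since quotients of commutative rings are commutative, and the latter two are exactly the content of Proposition~\ref{P1-prop:quotientSkew}. (That $\overline{\alpha}$ is indeed a partial action is Proposition~\ref{P1-prop:prop-inter}.)

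Once that is in place, the argument is essentially a one-line invocation. Applying Proposition~\ref{P1-prop:max-com-int-prop} with $R$ replaced by $R/J$ and $\alpha$ replaced by $\overline{\alpha}$ gives the equivalence:
\begin{equation*}
\bigoplus_{e\in G_0}\overline{D}_e\delta_e \text{ is a maximal commutative subring of } R/J\rtimes_{\overline{\alpha}}G
\iff
\overline{\alpha} \text{ has the intersection property.}
\end{equation*}
But this is precisely the statement of Corollary~\ref{P1-cor::max-com-int-prop}, so the proof is complete. Concretely I would write: ``By Proposition~\ref{P1-prop:prop-inter} and Proposition~\ref{P1-prop:quotientSkew}, $\overline{\alpha}$ is a partial action of $G$ on the commutative ring $R/J$ such that $\overline{D}_g$ is $s$-unital for every $g\in G$ and $R/J = \bigoplus_{e\in G_0}\overline{D}_e$. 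Hence Proposition~\ref{P1-prop:max-com-int-prop} applies to the partial skew groupoid ring $R/J\rtimes_{\overline{\alpha}}G$, and the equivalence (i)$\Leftrightarrow$(ii) is exactly its conclusion.''

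There is essentially no obstacle here, since the corollary is designed to be an immediate specialization of the proposition; the only thing to be careful about is bookkeeping, i.e. verifying that every hypothesis of Proposition~\ref{P1-prop:max-com-int-prop} has genuinely been established for $\overline{\alpha}$ in the earlier results (associativity of $R/J\rtimes_{\overline{\alpha}}G$, $s$-unitality of each $\overline{D}_g$, the direct-sum decomposition over $G_0$, and commutativity of $R/J$), rather than silently assumed. If one wanted to be maximally self-contained one could instead repeat the proof of Proposition~\ref{P1-prop:max-com-int-prop} verbatim with bars everywhere, but invoking the already-proved proposition for the quotient system is cleaner and is the approach I would take.
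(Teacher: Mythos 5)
Your proposal is correct and follows exactly the paper's own route: the paper likewise deduces the corollary by noting that $R/J$ is commutative and invoking Proposition~\ref{P1-prop:max-com-int-prop} for the quotient partial action. Your extra care in citing Propositions~\ref{P1-prop:prop-inter} and~\ref{P1-prop:quotientSkew} to verify the standing hypotheses is a welcome (and accurate) elaboration of what the paper leaves implicit.
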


\begin{proof}
Notice that $R/J$ is commutative. The proof follows from Proposition~\ref{P1-prop:max-com-int-prop}. 
\end{proof}

We finish this section by summarizing our results in the case of partial actions on commutative rings. 

\begin{theorem}\label{P1-thm:AE1}
Let $R$ be a ring, let $G$ be a groupoid and let $\alpha=(\alpha_g, D_g)_{g \in G}$ be a partial action of $G$ on $R$, such that $D_g$ is $s$-unital for every $g \in G$ and $R =\bigoplus_{e \in G_0}D_e.$ If $R$ is commutative, then the following statements are equivalent:
\begin{enumerate}[{\rm (i)}]
    \item $\bigoplus_{e \in G_0} \overline{D}_e\delta_e$ is a maximal commutative subring of $R/J  \rtimes_{\overline{\alpha}}G$ for every $G$-invariant ideal $J$ of $R$; 
    \item $\alpha$ has the residual intersection property;
    \item Every ideal of $R \rtimes_{\alpha} G$ is $G$-graded.
\end{enumerate}
\end{theorem}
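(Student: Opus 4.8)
The plan is to prove Theorem~\ref{P1-thm:AE1} by reducing it to Theorem~\ref{P1-thm:bijpri} together with Corollary~\ref{P1-cor::max-com-int-prop}. Since Theorem~\ref{P1-thm:bijpri} already establishes that (ii)$\Leftrightarrow$(iii) (and also their equivalence with the bijectivity of $\fr$), the only genuinely new content here is to connect the maximal commutativity condition (i) — stated uniformly over all $G$-invariant ideals $J$ — with the residual intersection property. So the task splits cleanly: first show (i)$\Leftrightarrow$(ii), then invoke Theorem~\ref{P1-thm:bijpri} for (ii)$\Leftrightarrow$(iii).

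For (i)$\Leftrightarrow$(ii), I would argue as follows. Assume (ii), i.e. $\alpha$ has the residual intersection property. By definition, for every $G$-invariant ideal $J$ of $R$, the quotient partial action $\overline{\alpha}$ with respect to $J$ has the intersection property. Here one must check that the hypotheses needed to apply Corollary~\ref{P1-cor::max-com-int-prop} are in force: $R/J$ is commutative (immediate), $\overline{D}_g$ is $s$-unital for every $g\in G$ and $R/J=\bigoplus_{e\in G_0}\overline{D}_e$ — all of which are supplied by Proposition~\ref{P1-prop:quotientSkew}, and moreover $R/J \rtimes_{\overline\alpha} G$ is associative by the same proposition, so that the whole apparatus of the section applies to the quotient system. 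Then Corollary~\ref{P1-cor::max-com-int-prop} gives that $\bigoplus_{e\in G_0}\overline{D}_e\delta_e$ is maximal commutative in $R/J \rtimes_{\overline\alpha}G$, which is exactly (i). Conversely, assuming (i), fix an arbitrary $G$-invariant ideal $J$; then $\bigoplus_{e\in G_0}\overline{D}_e\delta_e$ is maximal commutative in $R/J\rtimes_{\overline\alpha}G$, so Corollary~\ref{P1-cor::max-com-int-prop} yields that $\overline\alpha$ has the intersection property; as $J$ was arbitrary, $\alpha$ has the residual intersection property, which is (ii). This closes the loop (i)$\Leftrightarrow$(ii), and chaining with Theorem~\ref{P1-thm:bijpri} completes the proof.

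The main point requiring care — and what I expect to be the only real obstacle — is making sure that Corollary~\ref{P1-cor::max-com-int-prop} is genuinely applicable to the quotient system $R/J \rtimes_{\overline\alpha} G$ in both directions: one needs the quotient to satisfy the standing hypotheses of the section (s-unitality of the homogeneous components and the direct sum decomposition over $G_0$), so that "$\overline\alpha$ has the intersection property" is meaningful and Corollary~\ref{P1-cor::max-com-int-prop} applies verbatim with $R$ replaced by $R/J$ and $\alpha$ replaced by $\overline\alpha$ (equivalently, applying the Corollary to the partial action $\overline\alpha$ of $G$ on $R/J$ with the zero invariant ideal). Proposition~\ref{P1-prop:prop-inter} and Proposition~\ref{P1-prop:quotientSkew} are precisely what guarantee this, so the obstacle is more bookkeeping than substance.

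Concretely, I would write the proof as: "By Theorem~\ref{P1-thm:bijpri}, (ii) and (iii) are equivalent, so it suffices to prove (i)$\Leftrightarrow$(ii). Suppose (ii) holds and let $J$ be a $G$-invariant ideal of $R$. Then $\overline\alpha$ has the intersection property; by Proposition~\ref{P1-prop:quotientSkew}, $\overline{D}_g$ is $s$-unital for all $g\in G$, $R/J=\bigoplus_{e\in G_0}\overline{D}_e$, and $R/J$ is commutative, so Corollary~\ref{P1-cor::max-com-int-prop} applies and gives that $\bigoplus_{e\in G_0}\overline{D}_e\delta_e$ is maximal commutative in $R/J\rtimes_{\overline\alpha}G$. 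Hence (i). Conversely, if (i) holds and $J$ is a $G$-invariant ideal of $R$, then $\bigoplus_{e\in G_0}\overline{D}_e\delta_e$ is maximal commutative in $R/J\rtimes_{\overline\alpha}G$, so Corollary~\ref{P1-cor::max-com-int-prop} yields that $\overline\alpha$ has the intersection property; since $J$ was arbitrary, $\alpha$ has the residual intersection property, proving (ii)." That is the whole argument.
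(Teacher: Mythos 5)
Your proposal is correct and follows essentially the same route as the paper: the paper's proof also obtains (ii)$\Leftrightarrow$(iii) from Proposition~\ref{P1-prop:t2} (which is what Theorem~\ref{P1-thm:bijpri} relies on) and (i)$\Leftrightarrow$(ii) by applying Corollary~\ref{P1-cor::max-com-int-prop} to each $G$-invariant ideal $J$. Your extra bookkeeping via Proposition~\ref{P1-prop:quotientSkew} to justify that the quotient system satisfies the standing hypotheses is a sensible, if implicit in the paper, addition.
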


\begin{proof}
  The proof follows from Proposition~\ref{P1-prop:t2} and Corollary~\ref{P1-cor::max-com-int-prop}.
\end{proof}

\subsection{Simplicity and primeness of $R\rtimes_{\alpha} G$}

In this section, we study  simplicity and primeness of partial skew groupoid rings.
To that end, we recall the following definitions.

\begin{definition}
Let $R\rtimes_\alpha G$ be a partial skew groupoid ring.
\begin{enumerate}[{\rm (a)}]

 \item $R$ is called \emph{$G$-prime} if $IJ=\{0\}$, for $G$-invariant ideals $I,J$ of $R$, implies $I=\{0\}$ or $J=\{0\}$.
    
    \item $R$ is called \emph{$G$-simple} if the only $G$-invariant ideals of $R$ are $\{0\}$ and $R$. 

    \item $R\rtimes_\alpha G$ is said to be \emph{graded prime}
    if there are no nonzero graded ideals $I,J$ of $R\rtimes_\alpha G$ such that $IJ=\{0\}$.
    
    \item $R\rtimes_\alpha G$ is said to be \emph{graded simple}
    if there is no graded ideal of $R\rtimes_\alpha G$ other than
    $\{0\}$ and $R\rtimes_\alpha G$.
\end{enumerate}
\end{definition}

\begin{proposition}\label{P1-prop:GradedsSimpleGSimple}
$R$ is $G$-simple if, and only if, $R\rtimes_{\alpha}G$ is graded simple.
\end{proposition}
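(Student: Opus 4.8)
The plan is to exploit the correspondence already established in Theorem~\ref{P1-thm:corresp1}, which gives a bijection $\Psi$ between $G$-invariant ideals of $R$ and $G$-graded ideals of $R\rtimes_\alpha G$. Since $G$-simplicity of $R$ means that the only $G$-invariant ideals of $R$ are $\{0\}$ and $R$, and graded simplicity of $R\rtimes_\alpha G$ means that the only $G$-graded ideals are $\{0\}$ and $R\rtimes_\alpha G$, the statement should follow almost immediately once we check that the bijection $\Psi$ sends $\{0\}$ to $\{0\}$ and $R$ to $R\rtimes_\alpha G$.

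First I would recall that $\Psi(J) = J\rtimes_{\alpha^J} G = \bigoplus_{g\in G}(J\cap D_g)\delta_g$. Taking $J = \{0\}$ clearly gives $\Psi(\{0\}) = \{0\}$. Taking $J = R$, we have $R\cap D_g = D_g$ for every $g$ (since $D_g\subseteq R$), so $\Psi(R) = \bigoplus_{g\in G} D_g\delta_g = R\rtimes_\alpha G$. Since $\Psi$ is a bijection (Theorem~\ref{P1-thm:corresp1}), it restricts to a bijection between $\{G\text{-invariant ideals of }R\}\setminus\{\{0\},R\}$ and $\{G\text{-graded ideals of }R\rtimes_\alpha G\}\setminus\{\{0\},R\rtimes_\alpha G\}$. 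Hence one set is empty if and only if the other is, which is exactly the asserted equivalence.

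I do not anticipate any serious obstacle here; the only minor point to be careful about is the correspondence of the two ``trivial'' ideals under $\Psi$, and in particular that $R\rtimes_\alpha G$ itself is of the form $J\rtimes_{\alpha^J}G$ for $J = R$ — this uses the standing assumption $R = \bigoplus_{e\in G_0} D_e$ together with Lemma~\ref{P1-di} (each $D_g$ is an ideal of $R$), so that $R$ is indeed $G$-invariant. Everything else is a formal consequence of the bijection.

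\begin{proof}
By Theorem~\ref{P1-thm:corresp1}, the map $\Psi$, sending a $G$-invariant ideal $J$ of $R$ to $J\rtimes_{\alpha^J}G=\bigoplus_{g\in G}(J\cap D_g)\delta_g$, is a bijection between the set of $G$-invariant ideals of $R$ and the set of $G$-graded ideals of $R\rtimes_\alpha G$. Clearly $\Psi(\{0\})=\{0\}$, and since $R\cap D_g=D_g$ for every $g\in G$ (recall $D_g\subseteq R$), we have $\Psi(R)=\bigoplus_{g\in G}D_g\delta_g=R\rtimes_\alpha G$. Therefore $\Psi$ restricts to a bijection between the $G$-invariant ideals of $R$ different from $\{0\}$ and $R$, and the $G$-graded ideals of $R\rtimes_\alpha G$ different from $\{0\}$ and $R\rtimes_\alpha G$. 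Consequently, the former collection is empty if, and only if, the latter is; that is, $R$ is $G$-simple if, and only if, $R\rtimes_\alpha G$ is graded simple.
\end{proof}
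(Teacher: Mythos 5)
Your proof is correct and takes essentially the same route as the paper's: both rest on the correspondence between $G$-invariant ideals of $R$ and $G$-graded ideals of $R\rtimes_\alpha G$ from Theorem~\ref{P1-thm:corresp1}. Your version is slightly more streamlined, simply observing that the bijection $\Psi$ matches $\{0\}$ with $\{0\}$ and $R$ with $R\rtimes_\alpha G$, whereas the paper unwinds the correspondence explicitly via Propositions~\ref{P1-prop:grad} and~\ref{P1-prop:psj}.
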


\begin{proof} 
We first show the ``only if'' statement by contrapositivity.
Suppose that $R\rtimes_{\alpha}G$ is not graded simple. There is a nonzero proper graded ideal $I$ of $R\rtimes_{\alpha}G.$ Write $J:=P_0(I)$ and $J_0:=P_0(I \cap \bigoplus_{e \in G_0}D_e\delta_e).$
By Proposition~\ref{P1-prop:grad} (ii) and Proposition~\ref{P1-prop:psj} (iii), $J\rtimes_{\alpha^{J}}G=J_0\rtimes_{\alpha^{J_0}}G \subseteq I \subsetneq R\rtimes_{\alpha}G$. Hence, $J=P_0(I)$ is a proper nonzero $G$-invariant ideal of $R$. Thus, $R$ is not $G$-simple.

Now we show the ``if'' statement.
Suppose that $R \rtimes_{\alpha} G$ is graded simple.
Let $J$ be a nonzero $G$-invariant ideal of $R.$
By Proposition~\ref{P1-prop:grad} (i), $J \rtimes_{\alpha^J} G$ is a nonzero graded ideal of $R \rtimes_{\alpha} G$.
Hence, $R \rtimes_{\alpha} G = J \rtimes_{\alpha^J} G$,
and, by Theorem~\ref{P1-thm:corresp1}, $J=P_0(J \rtimes_{\alpha^J} G)=P_0(R \rtimes_{\alpha} G)=R.$
Thus, $R$ is $G$-simple.
\end{proof}

Note that a result similar to Proposition~\ref{P1-prop:GradedsSimpleGSimple} appears in \cite[Proposition~5.6]{NystedtSystems2020}.

\begin{proposition}\label{P1-prop:GradedPrimeGPrime}
$R$ is $G$-prime if, and only if, $R\rtimes_{\alpha}G$ is graded prime. 
\end{proposition}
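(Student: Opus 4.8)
The plan is to prove the equivalence by exploiting the bijection between $G$-invariant ideals of $R$ and $G$-graded ideals of $R\rtimes_\alpha G$ established in Theorem~\ref{P1-thm:corresp1}, together with the fact that this bijection is inclusion-preserving and behaves well with respect to products. The key observation is that $G$-primeness of $R$ is a statement purely about $G$-invariant ideals of $R$, while graded primeness of $R\rtimes_\alpha G$ is a statement purely about graded ideals of $R\rtimes_\alpha G$, so the correspondence should translate one into the other almost formally — provided we can control how products interact with the maps $\Psi$ and $\fr_{gr}$.

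First I would prove the ``only if'' direction. Assume $R$ is $G$-prime, and let $I_1, I_2$ be nonzero graded ideals of $R\rtimes_\alpha G$ with $I_1 I_2 = \{0\}$. Set $J_k := P_0(I_k) = \fr_{gr}(I_k)$; by Theorem~\ref{P1-thm:corresp1} these are nonzero $G$-invariant ideals of $R$, and $I_k = J_k \rtimes_{\alpha^{J_k}} G$. The hard part is showing $J_1 J_2 = \{0\}$: I would argue that $(J_1 \rtimes_{\alpha^{J_1}} G)(J_2 \rtimes_{\alpha^{J_2}} G)$ contains, in its homogeneous component of degree $e\in G_0$, elements of the form $a_e \delta_e \cdot b_e \delta_e = a_e b_e \delta_e$ with $a_e \in J_1 \cap D_e$, $b_e \in J_2 \cap D_e$ (using $s$-units to realize arbitrary such products), so $I_1 I_2 = \{0\}$ forces $(J_1\cap D_e)(J_2 \cap D_e) = \{0\}$ for all $e$; since $R = \bigoplus_{e\in G_0} D_e$ and both $J_k$ decompose along this direct sum, this yields $J_1 J_2 = \{0\}$. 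Then $G$-primeness gives $J_1 = \{0\}$ or $J_2 = \{0\}$, hence $I_1 = \{0\}$ or $I_2 = \{0\}$, a contradiction.

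Next I would prove the ``if'' direction. Assume $R\rtimes_\alpha G$ is graded prime, and let $J_1, J_2$ be nonzero $G$-invariant ideals of $R$ with $J_1 J_2 = \{0\}$. Then $\Psi(J_k) = J_k \rtimes_{\alpha^{J_k}} G$ are nonzero graded ideals of $R\rtimes_\alpha G$ by Proposition~\ref{P1-prop:grad}~(i), and a direct computation with the multiplication rule shows that $(J_1 \rtimes_{\alpha^{J_1}} G)(J_2 \rtimes_{\alpha^{J_2}} G) \subseteq \bigoplus_{g} (J_1 J_2 \cap D_g)\delta_g = \{0\}$, using that $\alpha_g$ preserves the relevant ideals and that $J_1 J_2 = \{0\}$ is itself $G$-invariant. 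Graded primeness then forces $\Psi(J_1) = \{0\}$ or $\Psi(J_2) = \{0\}$, and since $\Psi$ is injective with $\Psi(\{0\}) = \{0\}$, we conclude $J_1 = \{0\}$ or $J_2 = \{0\}$.

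The main obstacle I anticipate is the bookkeeping in the ``only if'' direction: I need to go from $I_1 I_2 = \{0\}$ back to $J_1 J_2 = \{0\}$, and this requires carefully extracting, for each $e\in G_0$ and each pair $a_e \in J_1\cap D_e$, $b_e\in J_2\cap D_e$, witnesses $a_e\delta_e, b_e\delta_e$ inside $I_1, I_2$ respectively (which holds because $I_k$ is graded and $P_0$ surjects onto $J_k$, combined with Lemma~\ref{P1-lem:some-obs}~(ii) to split off the degree-$e$ part), and then matching them up over a common idempotent/$s$-unit so that their product is exactly $a_e b_e \delta_e$. Once the degree-$e$ components are shown to annihilate each other for every $e$, the direct sum decomposition $R = \bigoplus_{e\in G_0} D_e$ closes the argument. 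The ``if'' direction is essentially a one-line multiplication check. I would expect the authors' proof to route through these same two computations, possibly phrased via the ideal $J_1 J_2$ (which is $G$-invariant since products of $G$-invariant ideals are $G$-invariant) and its image under $\Psi$.
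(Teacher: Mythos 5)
Your proof is correct, and your ``if'' direction coincides with the paper's. The ``only if'' direction, however, takes a genuinely different and heavier route. You invoke the full correspondence of Theorem~\ref{P1-thm:corresp1} to write $I_k=J_k\rtimes_{\alpha^{J_k}}G$ with $J_k=P_0(I_k)$, and then verify $J_1J_2=\{0\}$ component by component: the diagonal products $(J_1\cap D_e)(J_2\cap D_e)$ vanish because $a_e\delta_e\cdot b_e\delta_e=a_eb_e\delta_e\in I_1I_2$, the cross terms vanish since $D_eD_f\subseteq D_e\cap D_f=\{0\}$ for $e\neq f$, and each $J_k$ decomposes as $\bigoplus_e(J_k\cap D_e)$ (this last point deserves the one-line $s$-unit argument you only gesture at). The paper instead argues by contrapositive and never reconstructs the $I_k$ from their images: it sets $J_k:=P_0(I_k\cap\bigoplus_e D_e\delta_e)$, notes these are nonzero $G$-invariant ideals by Lemma~\ref{P1-lem:inv2}, and concludes in one line from the multiplicativity of $P_0$ on $A\times(R\rtimes_\alpha G)$ (Lemma~\ref{P1-lem:some-obs}~(iii)) that $J_1J_2=P_0\bigl((I_1\cap A)(I_2\cap A)\bigr)\subseteq P_0(I_1I_2)=\{0\}$. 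The paper's route is shorter and bypasses Theorem~\ref{P1-thm:corresp1} entirely in this direction; yours makes explicit the structural fact that the bijection $\Psi$ respects products of ideals, which is a slightly stronger takeaway but costs the extra bookkeeping you anticipated.
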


\begin{proof}
We first show the ``only if'' statement by contrapositivity. Suppose that $R\rtimes_{\alpha}G$ is not graded prime.
There are nonzero graded ideals $I_1, I_2$ of $R\rtimes_{\alpha}G$ such that $I_1I_2=\{0\}$.
By Lemma~\ref{P1-lem:inv2},
 $P_0(I_1 \cap \bigoplus_{e \in G_0}D_e\delta_e)$ and $P_0(I_2 \cap \bigoplus_{e \in G_0}D_e\delta_e)$ are nonzero $G$-invariant ideals of $R.$ 
We conclude that $R$ is not $G$-prime, because Lemma~\ref{P1-lem:some-obs} (iii) yields
$$P_0(I_1 \cap \bigoplus_{e \in G_0}D_e\delta_e)P_0(I_2 \cap \bigoplus_{e \in G_0}D_e\delta_e)= P_0((I_1 \cap \bigoplus_{e \in G_0}D_e\delta_e)(I_2 \cap \bigoplus_{e \in G_0}D_e\delta_e))\subseteq P_0(I_1I_2) = \{0\}.$$

Now we show the ``if'' statement.
Suppose that $R\rtimes_\alpha G$ is graded prime.
Let $J_1$ and $J_2$ be $G$-invariant ideals of $R$ such that $J_1 J_2 =\{0\}$. By Proposition~\ref{P1-prop:grad} (i), $J_1\rtimes_{\alpha^{J_1}}G$ and $J_2\rtimes_{\alpha^{J_2}}G$ are graded ideals of $R \rtimes_{\alpha}G$.
Moreover, using that $J_1J_2=\{0\}$ and that $J_1$ is $G$-invariant, it is clear that
$(J_1\rtimes_{\alpha^{J_1}}G) \cdot (J_2\rtimes_{\alpha^{J_2}}G) =\{0\}$. 
Hence, by assumption, $J_1\rtimes_{\alpha^{J_1}}G =\{0\}$ or $J_2\rtimes_{\alpha^{J_2}}G =\{0\}$, i.e. $J_1=\{0\}$ or $J_2=\{0\}$.
This shows that $R$ is $G$-prime.
\end{proof}

We point out that part (i) of the theorem below was proved in \cite[Corollary~5.8]{Lannstrom2021} for non-degenerately group graded rings, and part (iii) generalizes \cite[Theorem~2.3]{Goncalves2014}.

\begin{theorem}\label{P1-thm:AP2}
Let $R$ be a ring, let $G$ be a groupoid, and let $\alpha=(\alpha_g, D_g)_{g \in G}$ be a partial action of $G$ on $R$, such that $D_g$ is $s$-unital for every $g \in G$ and $R =\bigoplus_{e \in G_0}D_e.$
The following assertions hold:
\begin{enumerate}[{\rm (i)}]
    \item If $R \rtimes_{\alpha} G$ is prime, then $R$ is $G$-prime.
    \item Suppose that $\alpha$ has the intersection property. Then $R$ is $G$-prime if, and only if, $R \rtimes_{\alpha} G$ is prime.
    \item $R$ is $G$-simple and $\alpha$ has the intersection property if, and only if,  $R \rtimes_{\alpha} G$ is simple.
\end{enumerate}
\end{theorem}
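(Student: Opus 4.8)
\textbf{Proof plan for Theorem~\ref{P1-thm:AP2}.}

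The plan is to prove the three statements in order, leaning on the machinery already assembled: the projection $P_0$, the bimodule map $E$, the correspondence $\Psi$ between $G$-invariant ideals of $R$ and $G$-graded ideals of $R\rtimes_\alpha G$ (Theorem~\ref{P1-thm:corresp1}), and the characterizations of graded primeness and graded simplicity (Propositions~\ref{P1-prop:GradedPrimeGPrime} and \ref{P1-prop:GradedsSimpleGSimple}). For (i), assume $S:=R\rtimes_\alpha G$ is prime and take $G$-invariant ideals $J_1,J_2$ of $R$ with $J_1J_2=\{0\}$. By Proposition~\ref{P1-prop:grad}~(i) the ideals $J_1\rtimes_{\alpha^{J_1}}G$ and $J_2\rtimes_{\alpha^{J_2}}G$ are (graded) ideals of $S$, and $G$-invariance of $J_1$ together with $J_1J_2=\{0\}$ forces their product to be $\{0\}$ (this is exactly the computation appearing in the proof of Proposition~\ref{P1-prop:GradedPrimeGPrime}). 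Primeness of $S$ then gives one of them zero, hence $J_1=\{0\}$ or $J_2=\{0\}$; so $R$ is $G$-prime. (Equivalently: a prime ring is graded prime, and graded prime $\Leftrightarrow$ $G$-prime by Proposition~\ref{P1-prop:GradedPrimeGPrime}.)

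For (ii), one direction is (i), so assume $R$ is $G$-prime and that $\alpha$ has the intersection property; we must show $S$ is prime. Let $I_1,I_2$ be nonzero ideals of $S$ with $I_1I_2=\{0\}$. The intersection property gives nonzero $I_1\cap A$ and $I_2\cap A$, where $A=\bigoplus_{e\in G_0}D_e\delta_e$. The key step is to pass to the $G$-graded ideals they generate: set $K_i:=SI_{i,0}S$ where $I_{i,0}:=I_i\cap A$. By Proposition~\ref{P1-prop:psj}~(iv), $K_i=J_{i,0}\rtimes_{\alpha^{J_{i,0}}}G$ with $J_{i,0}:=P_0(I_{i,0})$, which is a nonzero $G$-invariant ideal of $R$ by Lemma~\ref{P1-lem:inv2} (nonzero because $I_{i,0}\neq\{0\}$ and $P_0|_A$ is injective by Lemma~\ref{P1-lem:some-obs}~(iii)). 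Now $K_1K_2\subseteq SI_{1,0}S\cdot SI_{2,0}S\subseteq SI_1I_2S=\{0\}$ since $S$ is $s$-unital (Remark~\ref{P1-remark:skew-s-unital}) and $I_{i,0}\subseteq I_i$. Then $K_1,K_2$ are nonzero graded ideals of $S$ with $K_1K_2=\{0\}$, contradicting that $S$ is graded prime — which holds by Proposition~\ref{P1-prop:GradedPrimeGPrime} since $R$ is $G$-prime. Hence $S$ is prime.

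For (iii), suppose first $R$ is $G$-simple and $\alpha$ has the intersection property; let $I$ be a nonzero ideal of $S$. By the intersection property $I_0:=I\cap A\neq\{0\}$, so $J_0:=P_0(I_0)$ is a nonzero $G$-invariant ideal of $R$, whence $J_0=R$ by $G$-simplicity. By Proposition~\ref{P1-prop:psj}~(iii), $J_0\rtimes_{\alpha^{J_0}}G\subseteq I$; but $J_0=R$ gives $R\rtimes_{\alpha^R}G=R\rtimes_\alpha G=S\subseteq I$, so $I=S$ and $S$ is simple (note $S\neq\{0\}$ as $R\neq\{0\}$, since $G$-simplicity excludes the zero ring). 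Conversely, suppose $S$ is simple. Then $S$ is in particular graded simple, so $R$ is $G$-simple by Proposition~\ref{P1-prop:GradedsSimpleGSimple}. It remains to check $\alpha$ has the intersection property: if $I$ is a nonzero ideal of $S$, then $I=S$ by simplicity, and $I\cap A=A\neq\{0\}$ (again $A\neq\{0\}$ since $R\cong A$ is nonzero). So $\alpha$ has the intersection property.

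The main obstacle is the forward direction of (ii): one must not merely intersect $I_1,I_2$ with $A$ but inflate those intersections back up to genuine graded ideals of $S$ whose product is still zero. The clean way to do this is Proposition~\ref{P1-prop:psj}~(iv), identifying $SI_{i,0}S$ with $P_0(I_{i,0})\rtimes_{\alpha^{P_0(I_{i,0})}}G$; the remaining points (nonvanishing of the $P_0$-images, and $K_1K_2\subseteq SI_1I_2S=\{0\}$ using $s$-unitality) are routine. Everything else is a direct assembly of the cited results.
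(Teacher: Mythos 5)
Your proposal is correct and follows essentially the same strategy as the paper: reduce primeness/simplicity to graded primeness/graded simplicity via Propositions~\ref{P1-prop:GradedPrimeGPrime} and \ref{P1-prop:GradedsSimpleGSimple}, and use the intersection property together with Proposition~\ref{P1-prop:psj} to inflate $I_i\cap\bigoplus_{e\in G_0}D_e\delta_e$ to nonzero graded ideals sitting inside $I_i$ whose product is still zero. The only (cosmetic) differences are that you invoke Proposition~\ref{P1-prop:psj}~(iv) where the paper uses part~(iii) in the proof of (ii), and in (iii) you cite Proposition~\ref{P1-prop:psj}~(iii) in place of the paper's explicit $s$-unit computation showing $I=S$.
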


\begin{proof}
(i) This follows from Proposition~\ref{P1-prop:GradedPrimeGPrime}.

\noindent (ii) 
The ``if'' statement follows from (i).
We show the ``only if'' statement by contrapositivity.
Suppose that $R \rtimes_\alpha G$ is not prime.
Let $I,J$ be nonzero ideals of $R \rtimes_{\alpha} G$ such that $IJ=\{0\}$.
Since $\alpha$ has the intersection property, 
we have that  $I_0:=P_0 \! \left(I\cap \bigoplus_{e \in G_0} D_e\delta_e\right)\neq \{0\}$ and $J_0:=P_0 \! \left(J\cap \bigoplus_{e \in G_0} D_e\delta_e\right)\neq \{0\}.$ 
By Proposition~\ref{P1-prop:grad} (i),
$I_0 \rtimes_{\alpha^{I_0}} G$
and
$J_0 \rtimes_{\alpha^{J_0}} G$
are nonzero graded ideals of $R \rtimes_{\alpha} G$.
Moreover, by Proposition~\ref{P1-prop:psj} (iii), 
$(I_0 \rtimes_{\alpha^{I_0}} G) \cdot (J_0 \rtimes_{\alpha^{J_0}} G) \subseteq IJ =\{0\}$.
This shows that $R \rtimes_\alpha G$ is not graded prime, and hence, by Proposition~\ref{P1-prop:GradedPrimeGPrime}, $R$ is not $G$-prime.\smallbreak

\noindent (iii)
We first show the ``only if'' statement.
Suppose that $R$ is $G$-simple and that $\alpha$ has the intersection property.
Let $I$ be a nonzero ideal of $R \rtimes_{\alpha} G.$ 
By Lemma~\ref{P1-lem:inv2}, $P_0 \! \left(I \cap \bigoplus_{e \in G_0} D_e\delta_e\right)$ is a nonzero $G$-invariant ideal of $R$.
Thus,
$P_0 \! \left(I \cap \bigoplus_{e \in G_0} D_e\delta_e\right) = R$.
By Lemma~\ref{P1-lem:some-obs}, we get that $I \cap \bigoplus_{e \in G_0} D_e\delta_e = \bigoplus_{e \in G_0} D_e\delta_e.$
Take $t\in G$ and $b_t \delta_t \in R \rtimes_{\alpha} G$. Let $u\in D_t$ be an $s$-unit for $b_t$.
Using that $u\delta_{r(t)} \in \bigoplus_{e \in G_0} D_e\delta_e \subseteq I$, it follows that
$b_t \delta_t= (u\delta_{r(t)})(b_t \delta_t) \in I$. Thus, $ R \rtimes_{\alpha} G = I$ and hence $R \rtimes_{\alpha} G$ is simple. 

We now show the ``if'' statement.
Suppose that $R \rtimes_{\alpha} G$ is simple. 
Clearly, $\alpha$ has the intersection property and, by Proposition~\ref{P1-prop:GradedsSimpleGSimple}, $R$ is $G$-simple.
\end{proof}

\begin{obs}
By \cite[Theorem~8]{Connell1963}, the 
group ring of a group $H$ over a ring $R,$ denoted by $R[H],$ is prime if, and only if, $R$ is prime and $H$ has no nontrivial finite normal subgroup. In particular, notice that the converse of Theorem~\ref{P1-thm:AP2} (i) does not hold.
\end{obs}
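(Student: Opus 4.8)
The plan is to substantiate this observation by producing an explicit counterexample to the converse of Theorem~\ref{P1-thm:AP2}~(i); that is, a situation in which $R$ is $G$-prime but $R \rtimes_{\alpha} G$ fails to be prime. The natural source of such examples is the class of ordinary group rings, which arise as partial skew groupoid rings in the simplest possible way.

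First I would regard a group $H$ as a groupoid $G:=H$ with a single object $e$, so that $G_0 = \{e\}$, and equip it with the global trivial action $\alpha$ given by $D_g := R$ and $\alpha_g := \id_R$ for every $g \in H$. With this choice every pair in $G$ is composable, and the multiplication rule of Definition~\ref{P1-def:D3} reduces to $a_g \delta_g \cdot b_h \delta_h = a_g b_h \delta_{gh}$, so that $R \rtimes_{\alpha} G$ is precisely the group ring $R[H]$. Taking $R$ to be unital ensures that $D_g = R$ is $s$-unital for every $g$, and since $G_0 = \{e\}$ the standing hypothesis $R = \bigoplus_{e \in G_0} D_e$ holds trivially; hence all the assumptions of Theorem~\ref{P1-thm:AP2} are in force.

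Next I would observe that, under the trivial action, the $G$-invariance condition $\alpha_g(D_{g^{-1}} \cap J) \subseteq J$ becomes $J \subseteq J$, which is automatic. Thus every ideal of $R$ is $G$-invariant, and consequently $R$ is $G$-prime if, and only if, $R$ is prime. Therefore, if I choose $R$ to be a prime ring (for instance a field $\mathbb{K}$) and take $H$ to be any group possessing a nontrivial finite normal subgroup, then $R$ is $G$-prime, while \cite[Theorem~8]{Connell1963} guarantees that $R[H] = R \rtimes_{\alpha} G$ is not prime. A concrete instance is $R = \mathbb{K}$ and $H = \Z/2\Z$, where $H$ is its own nontrivial finite normal subgroup, so that $R[H]$ is not prime (indeed it contains a pair of nonzero ideals whose product is zero).

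The argument presents no serious obstacle, since the essential content is supplied by Connell's criterion. The only points requiring care are the verification that the trivial global action genuinely satisfies the standing assumptions of this section, and the identification of the resulting partial skew groupoid ring with the group ring $R[H]$. Once these routine checks are in place, the failure of the converse of Theorem~\ref{P1-thm:AP2}~(i) is immediate.
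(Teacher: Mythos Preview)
Your proposal is correct and follows precisely the line the paper intends: the remark in the paper is stated without proof, simply invoking Connell's criterion, and you have supplied the routine verifications (that a group ring is a partial skew groupoid ring under the trivial global action, that $G$-primeness reduces to primeness of $R$, and that any prime $R$ together with a group having a nontrivial finite normal subgroup furnishes the counterexample). There is nothing to add.
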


\begin{corolario}\label{P1-corprime}
Suppose that $R$ is commutative and that
$\bigoplus_{e \in G_0} D_e\delta_e$ is a maximal commutative subring of $R \rtimes_{\alpha} G.$
Then $R$ is $G$-prime if, and only if, $R \rtimes_{\alpha} G$ is prime.
\end{corolario}

\begin{proof}
It follows from
Proposition~\ref{P1-prop:max-com-int-prop} 
and Theorem~\ref{P1-thm:AP2} (ii).
\end{proof}

We point out that the following result also appears in \cite[Theorem~7.7]{NystedtSystems2020}.

\begin{corolario}
Suppose that $R$ is commutative. Then $R$ is $G$-simple and $\bigoplus_{e \in G_0} D_e\delta_e$ is a maximal commutative subring of $R \rtimes_{\alpha} G$ if, and only if, $R \rtimes_{\alpha} G$ is simple.
\end{corolario}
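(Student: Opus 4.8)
The plan is to deduce this corollary by combining the two characterizations already at hand. Specifically, I would invoke Theorem~\ref{P1-thm:AP2}~(iii), which says that $R \rtimes_\alpha G$ is simple if, and only if, $R$ is $G$-simple and $\alpha$ has the intersection property; and then replace the intersection property by the maximal commutativity condition using Proposition~\ref{P1-prop:max-com-int-prop}, which (under the standing hypothesis that $R$ is commutative) states that $\alpha$ has the intersection property if, and only if, $\bigoplus_{e \in G_0} D_e \delta_e$ is a maximal commutative subring of $R \rtimes_\alpha G$.

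The argument is then essentially a one-line substitution. First I would note that $R$ being commutative is precisely the hypothesis under which Proposition~\ref{P1-prop:max-com-int-prop} applies. Then, for the ``only if'' direction, suppose $R$ is $G$-simple and $\bigoplus_{e \in G_0} D_e \delta_e$ is maximal commutative in $R \rtimes_\alpha G$; by Proposition~\ref{P1-prop:max-com-int-prop} the latter gives that $\alpha$ has the intersection property, so Theorem~\ref{P1-thm:AP2}~(iii) yields that $R \rtimes_\alpha G$ is simple. Conversely, if $R \rtimes_\alpha G$ is simple, then Theorem~\ref{P1-thm:AP2}~(iii) gives both that $R$ is $G$-simple and that $\alpha$ has the intersection property, and the latter translates back, via Proposition~\ref{P1-prop:max-com-int-prop}, into maximal commutativity of $\bigoplus_{e \in G_0} D_e \delta_e$.

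I do not anticipate any genuine obstacle here; the content has been front-loaded into Theorem~\ref{P1-thm:AP2} and Proposition~\ref{P1-prop:max-com-int-prop}, and this corollary is a packaging statement in the same spirit as Corollary~\ref{P1-corprime} (which bundles Theorem~\ref{P1-thm:AP2}~(ii) with Proposition~\ref{P1-prop:max-com-int-prop}). The only point worth a moment's care is making sure the standing assumptions of this section — namely that $D_g$ is $s$-unital for every $g \in G$ and $R = \bigoplus_{e \in G_0} D_e$ — are in force so that Theorem~\ref{P1-thm:AP2} is applicable; these hold throughout the section, so there is nothing to check. Hence the proof reduces to: ``It follows from Proposition~\ref{P1-prop:max-com-int-prop} and Theorem~\ref{P1-thm:AP2}~(iii).''
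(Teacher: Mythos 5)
Your proposal is correct and follows exactly the paper's own argument: the paper proves this corollary by citing Theorem~\ref{P1-thm:AP2}~(iii) together with Proposition~\ref{P1-prop:max-com-int-prop}, which is precisely the substitution you describe. Nothing further is needed.
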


\begin{proof}
It follows from Theorem~\ref{P1-thm:AP2} (iii) and Proposition~\ref{P1-prop:max-com-int-prop}. 
\end{proof}

In Example~\ref{P1-ex:expnpi}, we will see that neither maximal commutativity 
of
$\bigoplus_{e \in G_0} D_e\delta_e$ in $R \rtimes_{\alpha} G$
nor the intersection property of $\alpha$ are necessary conditions for primeness
of $R \rtimes_{\alpha} G$.

\begin{example}\label{P1-ex:expnpi}
Let $q \in \mathbb{C}$  be a root of unity, that is $q^k=1$ for some $k \in \Z.$
For each $n \in \Z$, define the ring isomorphism $\sigma_n:\mathbb{C}[x] \to  \mathbb{C}[x]$, by $\sigma_n(f)(x):= f(q^nx)$. Notice that $\sigma_{m+n}=\sigma_m\circ \sigma_n,$ for all $m,n \in \Z$. Hence $\sigma$ defines a global action of the additive group $\Z$ on the complex polynomial ring $\mathbb{C}[x]$. Consider the corresponding skew group ring $\mathbb{C}[x]\rtimes_{\sigma}\Z.$
By \cite[Theorem~13.5]{Lannstrom2021}, 
$\mathbb{C}[x]\rtimes_{\sigma}\Z$ is prime. Nevertheless, $\mathbb{C}[x]\delta_0$ is not maximal commutative in $\mathbb{C}[x]\rtimes_{\sigma}\Z$. In particular, $\sigma$ does not have the intersection property.
\end{example}

\section{Partial actions of groupoids on torsion-free commutative algebras generated by idempotents}\label{P1-sec:gen-idemp}

This section is inspired by \cite[Examples~7.8--7.9]{Boava2021} and allows us to study almost any algebraic groupoid partial action as a topological one.
Let $T$ be a commutative unital ring. The set of idempotents of $T,$ denoted by $E(T)$, is a Boolean algebra with operations given by
$ e \vee f   :=e+f -ef,$ $e  \wedge f  :=ef $ and $\neg e :=1-e,$ for all $e,f\in E(T)$.
Recall that if $\mathcal{B}$ is a Boolean algebra and $P$ is a subset of $\mathcal{B},$ then $\uparrow P:=\bigcup_{z \in P} \uparrow z = \{y \in \mathcal{B}: z=z\wedge y \text{ for some } z \in P\}.$  

Throughout this section, 
let $G$ be a groupoid,
let $\mathcal{R}$ be a commutative unital ring and suppose that $R$ is a commutative, torsion-free, unital, $\mathcal{R}$-algebra generated by its idempotents $E(R).$ By torsion-free, we mean that if $a$ is a nonzero element of $\mathcal{R}$ and $e$ is a nonzero element of $E(R),$ then $ae \neq 0.$ \smallbreak

As we will see in this section, every partial action of a groupoid $G$ on $\mathcal{R}$ corresponds to a partial action of $G$ on 
the commutative $\mathcal{R}$-algebra $\mathcal{L}_c(X(R),\mathcal{R})$ of all locally constant functions $f:X(R) \to  \mathcal{R}$ with compact support, where $X(R)$ is the Stone dual space of the Boolean algebra $E(R)$. Before we define the partial action on $\mathcal{L}_c(X(R),\mathcal{R})$, we need to develop a few concepts.  \smallbreak

Recall that $X(R)$ is the Hausdorff  space comprised of all ultrafilters on $E(R).$ The basis of the topology on $X(R)$ is given by the compact open subsets 
$Z_e:=\{\mathcal{F} \in X(R): e \in \mathcal{F} \},$ for all $e \in E(R).$ By \cite[Theorem~1]{Keimel1970}, $R\simeq \mathcal{L}_c(X(R),\mathcal{R})$ as $\mathcal{R}$-algebras, via an isomorphism that identifies $e$ with $1_{Z_e}$, for all $e \in E(R).$ As usual, addition and multiplication in $\mathcal{L}_c(X(R),\mathcal{R})$ are defined pointwise.

Next, we will see how a homomorphism between algebras induces a continuous function between the Stone duals of their respective set of idempotents. 
Let $R$ and $S$ be commutative, unital, torsion-free, $\mathcal{R}$-algebras generated by their respective set of idempotents and $\rho: R \to S$ a unital $\mathcal{R}$-algebra homomorphism. Then, the restriction of $\rho$ to $E(R),$ $\rho\lvert_{E(R)}: E(R)\to E(S)$, is a Boolean algebra homomorphism. If $X(R)$ and $X(S)$ are the Stone duals of $E(R)$ and $E(S)$, respectively, then we define $\hat{\rho}:  X(S)\to  X(R)$ by
\begin{equation}\label{P1-fun-stone}
    \hat{\rho}(\mathcal{F}):=\rho^{-1}(\mathcal{F}),
    \quad
    \text{for all }\mathcal{F}\in X(S).
\end{equation}
In a Boolean algebra the notions of prime filter and ultrafilter agree, and hence it is easy to prove that the inverse image of an ultrafilter by a Boolean algebra homomorphism is again an ultrafilter. We will now show that $\hat{\rho}$ is continuous. Let $\{\mathcal{F}_{\lambda}\}_{\lambda \in \Lambda}$ be a net converging to $\mathcal{F} \in X(S)$. Suppose that $x \in E(R)$ is such that $x \in \hat{\rho}(\mathcal{F})=\rho^{-1}(\mathcal{F}).$ Then, $\rho(x) \in \mathcal{F}.$ Since the net $\{\mathcal{F}_{\lambda}\}_{\lambda \in \Lambda}$ converges to $\mathcal{F} \in X(S),$ there exists $\lambda_0 \in \Lambda$ such that for all $\lambda \geq \lambda_0,$ $\rho(x) \in \mathcal{F}_{\lambda}.$ Thus, $x \in \rho^{-1}(\mathcal{F}_{\lambda})=\hat{\rho}(\mathcal{F}_{\lambda})$ for all $\lambda \geq \lambda_0$. The case where $x \in E(R)$ is such that $x \notin \hat{\rho}(\mathcal{F})$ is proved similary. We conclude that $\hat{\rho}$ is continuous.

The final ingredient that we need in order to define a partial action on $\mathcal{L}_c(X(R),\mathcal{R})$ is to extend the aforementioned isomorphism $R \cong \mathcal{L}_c(X(R),\mathcal{R})$ to ideals of $R$. Let $I$ be a unital ideal of $R$ and let $u$ be the multiplicative identity element of $I.$
Then, $u \in E(R)$ and 
we get that $I \cong \mathcal{L}_c (Z_u, \mathcal{R})$
via the above isomorphism.

Let $\tau=(D_g,\tau_g)_{g \in G}$ be a partial action of $G$ on $R,$ and suppose that $D_g$ is unital with a multiplicative identity element $u_g$, for every $g \in G$, and that $R=\bigoplus_{e \in G_0}D_e.$ By Lemma~\ref{P1-di}, $D_g$ is an ideal of $R,$ for every $g \in G.$ Therefore, $D_g \cong \mathcal{L}_c(Z_{u_g},\mathcal{R}).$

We will define a topological partial action of $G$ on $X(R).$ Notice that, for all $g \in G,$ the restriction $\tau_g\lvert_{E(D_{g^{-1}})}: E(D_{g^{-1}}) \to E(D_g)$ is a Boolean algebra homomorphism. Therefore, the corresponding map $\hat{\tau}_g: X(D_g)\to X(D_{g^{-1}})$, defined by \eqref{P1-fun-stone}, is continuous.

For each $g \in G,$ there is a homeomorphism between $X(D_g),$ the Stone dual of $E(D_g),$ and $Z_{u_g},$ the set of ultrafilters on $E(R)$ containing $u_g.$ This correspondence is given by 
\begin{align*}
&\zeta_g: X(D_g)\to  Z_{u_g},\,\,\, \mathcal{G}\mapsto \uparrow \mathcal{G}, &  &\zeta_g^{-1}: Z_{u_g}\to X(D_g),\,\,\,  \mathcal{F}\mapsto \{ u_g x : x \in \mathcal{F} \}.&
\end{align*}
Define $\theta:=(U_g,\theta_g)_{g \in G}$, where, for all $g\in G,$ $U_g:=Z_{u_g}$ and
\begin{align} \label{P1-map-theta}
  &\theta_g:  U_{g^{-1}} \to U_g,\quad \mathcal{F}\,\mapsto\,\, \uparrow_{E(R)} \hat{\tau}_{g^{-1}}(\{ u_{g^{-1}} x : x \in \mathcal{F}\}).&
\end{align}
Observe that $\theta_g= \zeta_g \circ \hat{\tau}_{g^{-1}}\circ \zeta^{-1}_{g^{-1}} $ and we have the following commutative diagram: 

\[
 \xymatrix{& X(D_{g^{-1}})\ar[r]^{\hat{\tau}_{g^{-1}}}\ar[d]_{\zeta_{g^{-1}}}  &  X(D_g)\ar[d]_{\zeta_g} \\
&U_{g^{-1}}\ar[r]_{\theta_g} &U_g}
\]

\begin{lema}\label{P1-lemma:PartialActionOnStoneDual}
$\theta=(U_g,\theta_g)_{g \in G}$, as defined in \eqref{P1-map-theta}, is a topological partial action of $G$ on $X(R).$
\end{lema}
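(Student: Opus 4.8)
The plan is to verify the four defining axioms of a topological partial action (Definition~\ref{P1-def:TopPartAction}) for $\theta=(U_g,\theta_g)_{g\in G}$, by transporting everything through the commutative square $\theta_g = \zeta_g\circ\hat{\tau}_{g^{-1}}\circ\zeta_{g^{-1}}^{-1}$ so that statements about $\theta$ become statements about the maps $\hat{\tau}_g$, which in turn come from the Boolean algebra homomorphisms $\tau_g|_{E(D_{g^{-1}})}$. First I would record the basic topological facts already assembled above: each $U_g=Z_{u_g}$ is a compact open (hence in particular open) subset of $X(R)$, each $\zeta_g$ is a homeomorphism $X(D_g)\to Z_{u_g}$, and each $\hat{\tau}_g$ is continuous; since $\hat{\tau}_{g^{-1}}$ has continuous inverse $\hat{\tau}_g$ (as $\tau_g|_{E(D_{g^{-1}})}$ is a Boolean algebra isomorphism with inverse $\tau_{g^{-1}}|_{E(D_g)}$), it follows immediately that $\theta_g:U_{g^{-1}}\to U_g$ is a homeomorphism with inverse $\theta_{g^{-1}}$. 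So the topological/bijectivity part is essentially free, and the real content is checking axioms (i)--(iv) of Definition~\ref{P1-def:D1} for the underlying set-theoretic partial action.

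Next I would dispatch the two easy axioms. For (ii), $\alpha_e=\id_{X_e}$: since $\tau_e=\id_{D_e}$ we have $\hat{\tau}_e=\id_{X(D_e)}$, and conjugating by $\zeta_e$ gives $\theta_e=\id_{U_e}$. For (i), $U_g\subseteq U_{r(g)}$: because $D_g$ is an ideal of $D_{r(g)}$ and both are unital, the identity $u_g$ of $D_g$ satisfies $u_g = u_g u_{r(g)}$, i.e. $u_g\leq u_{r(g)}$ in $E(R)$, whence $Z_{u_g}\subseteq Z_{u_{r(g)}}$, that is $U_g\subseteq U_{r(g)}$. The substantive axioms are (iii) and (iv), and here the main obstacle is the careful bookkeeping relating the sets $U_{g^{-1}}\cap U_h$ (subsets of $X(R)$, via ultrafilters containing both $u_{g^{-1}}$ and $u_h$) with the sets $X(D_{g^{-1}}\cap D_h)$ that naturally occur when intersecting domains at the level of $\hat\tau$; one must check that $\zeta_{g^{-1}}$ carries $X(D_{g^{-1}}\cap D_h)$ homeomorphically onto $U_{g^{-1}}\cap U_h = Z_{u_{g^{-1}}u_h}$, using that $u_{g^{-1}}u_h$ is the identity of $D_{g^{-1}}\cap D_h$ (an ideal of $D_{g^{-1}}$, hence unital with identity $u_{g^{-1}}u_h$). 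I expect this compatibility of the identification maps to be the part requiring the most care.

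With that identification in hand, I would prove (iii) and (iv) simultaneously. Since $\tau=(D_g,\tau_g)_{g\in G}$ is a partial action of $G$ on the \emph{ring} $R$ (Definition~\ref{P1-def:D3}), in particular on the underlying set, it already satisfies axioms (iii) and (iv). Restricting these to idempotents, the Boolean homomorphisms $\hat\tau_g$ satisfy the corresponding dual inclusions and the cocycle identity on the appropriate overlaps: concretely, for $(g,h)\in G^2$ one has $\hat\tau_h(X(D_h)\cap \text{(image of }D_{g^{-1}}))\subseteq$ domain of $\hat\tau_{gh}$, and $\hat\tau_{gh}=\hat\tau_g\circ\hat\tau_h$ there, because Stone duality is a contravariant functor from Boolean algebras to Stone spaces and $\tau_{gh}$ and $\tau_g\tau_h$ agree on the relevant ideal of idempotents. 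Conjugating these relations by the homeomorphisms $\zeta_\bullet$ and using the compatibility established above, one obtains $\theta_h^{-1}(U_{g^{-1}}\cap U_h)\subseteq U_{(gh)^{-1}}$ (axiom (iii)) and $\theta_g(\theta_h(\mathcal F))=\theta_{gh}(\mathcal F)$ for all $\mathcal F\in\theta_h^{-1}(U_{g^{-1}}\cap U_h)$ (axiom (iv)). Finally, since each $\theta_g$ is a homeomorphism between open subsets of the Hausdorff space $X(R)$, Definition~\ref{P1-def:TopPartAction} is satisfied and $\theta$ is a topological partial action of $G$ on $X(R)$.
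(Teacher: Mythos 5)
Your proposal is correct and follows essentially the same route as the paper: the paper likewise verifies axioms (i)--(iv) of Definition~\ref{P1-def:D1} directly, obtaining (i) from $u_g=u_{r(g)}\wedge u_g\leq u_{r(g)}$ together with upward closure of ultrafilters, (ii) from $\tau_e=\id_{D_e}$, and the homeomorphism property from the fact that each $\hat{\tau}_g$ and $\zeta_g$ is a homeomorphism. The only difference is one of packaging: where you invoke functoriality of Stone duality for (iii) and (iv), the paper performs the explicit ultrafilter computations, e.g.\ deducing (iii) from $\tau_{h^{-1}}(u_{g^{-1}}u_h)=\tau_{h^{-1}}(u_{g^{-1}}u_h)u_{(gh)^{-1}}\leq u_{(gh)^{-1}}$ and (iv) by exhibiting, for each $z\in\theta_{gh}(\mathcal{F})$, an element $x\in\mathcal{F}$ with $z\geq\tau_g(\tau_h(xu_{h^{-1}})u_{g^{-1}})$ --- which is precisely the overlap bookkeeping you correctly flag as the delicate point.
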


\begin{proof}
Notice the following:
\begin{enumerate}[{\rm (a)}]

\item For all $g \in G$, $U_g$ is an element in the basis of the topology on  $X(R).$ 
We claim that $U_g \subseteq U_{r(g)}$.
Take 
$\mathcal{F} \in U_g$ and observe that $u_g \in \mathcal{F}.$
From $D_g \subseteq D_{r(g)},$ we get that $u_g = u_{r(g)}\wedge u_g \leqslant u_{r(g)}.$ Hence, $u_{r(g)} \in \mathcal{F}$ and $\mathcal{F} \in U_{r(g)}.$
\item For all $g\in G$,
$\hat{\tau}_g$ and $\zeta_g$ are homeomorphisms,
and hence $\theta_g : U_{g^{-1}} \rightarrow U_g$ is also a homeomorphism  that satisfies:
\begin{enumerate}[{\rm (i)}]
\item $\theta_e=\id_{U_e}$, for all $e \in G_0,$ because $\tau_e=\id_{D_e};$

\item $\theta_{h^{-1}}(U_{g^{-1}}\cap U_h)\subseteq U_{(gh)^{-1}}$, whenever $(g,h) \in G^2$.
Indeed, if $\mathcal{F} \in U_{g^{-1}}\cap U_h,$ then $u_{g^{-1}} u_h=u_{g^{-1}}\wedge u_h \in \mathcal{F}.$ Since $\tau_{h^{-1}}(D_{g^{-1}}\cap D_h) \subseteq D_{(gh)^{-1}},$ we have that $\tau_{h^{-1}}(u_{g^{-1}} u_h)=\tau_{h^{-1}}(u_{g^{-1}} u_h)u_{(gh)^{-1}} \leqslant u_{(gh)^{-1}}.$ Therefore, $u_{(gh)^{-1}} \in \theta_{h^{-1}}(\mathcal{F})$ and $\theta_{h^{-1}}(U_{g^{-1}}\cap U_h) \subseteq U_{(gh)^{-1}};$ 

\item 
$\theta_g(\theta_h(x)) = \theta_{gh}(x)$, for all $x \in \theta_{h^{-1}}(U_{g^{-1}}\cap U_h)$ and $(g,h) \in G^2.$ 
Indeed, let $\mathcal{F} \in \theta_{h^{-1}}(U_{g^{-1}}\cap U_h)$ and $z \in \theta_{gh}(\mathcal{F}).$ Then, there exists $x \in \mathcal{F}$ such that $$z \geqslant \tau_{gh}(xu_{h^{-1}g^{-1}}) \geqslant \tau_{gh}(xu_{h^{-1}g^{-1}})u_g = \tau_{g}(\tau_h(x u_{h^{-1}})u_{g^{-1}}).$$  Therefore,  $z \in \uparrow_{E(R)} \tau_g(\{ u_{g^{-1}} y : y \in \uparrow_{E(R)} \tau_{h}\{ xu_{h^{-1}}: x \in \mathcal{F}\})=\theta_g(\theta_h(\mathcal{F})).$
\qedhere
\end{enumerate} 
\end{enumerate}
\end{proof}

Let $\theta=(U_g,\theta_g)_{g \in G}$ be the partial action of the groupoid $G$ on $X(R)$ defined above. For each $g \in G,$ we consider the ring isomorphism
\begin{displaymath}
\rho_g:\mathcal{L}_c(U_{g^{-1}},\mathcal{R})  \longrightarrow  \mathcal{L}_c(U_g,\mathcal{R}),
\quad
f  \longmapsto  f \circ \theta_{g^{-1}}.
\end{displaymath}
Then, $\rho=(\mathcal{L}_c(U_g,\mathcal{R}),\rho_g)_{g \in G}$ is a partial action of $G$ on $\mathcal{L}_c(X(R),\mathcal{R}).$

\begin{proposition}\label{P1-prop:Gequivariant}
The partial actions $\tau=(D_g,\tau_g)_{g \in G}$ and $\rho=(\mathcal{L}_c(U_g,\mathcal{R}),\rho_g)_{g \in G}$, as defined above, are $G$-equivariant.
\end{proposition}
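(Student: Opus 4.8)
The plan is to show that the isomorphism $R \cong \mathcal{L}_c(X(R),\mathcal{R})$ established via \cite{Keimel1970} intertwines the partial actions $\tau$ and $\rho$, that is, that it restricts to isomorphisms $D_g \cong \mathcal{L}_c(U_g,\mathcal{R})$ for every $g\in G$ and conjugates $\tau_g$ into $\rho_g$. Let $\varphi: R \to \mathcal{L}_c(X(R),\mathcal{R})$ be the Keimel isomorphism, which by construction sends each idempotent $e\in E(R)$ to the characteristic function $1_{Z_e}$. First I would record that $\varphi$ restricts to an $\mathcal{R}$-algebra isomorphism $\varphi_g := \varphi\lvert_{D_g} : D_g \to \mathcal{L}_c(Z_{u_g},\mathcal{R}) = \mathcal{L}_c(U_g,\mathcal{R})$, which holds because $D_g$ is the unital ideal $u_g R$ and $\varphi(D_g) = 1_{Z_{u_g}}\mathcal{L}_c(X(R),\mathcal{R}) = \mathcal{L}_c(U_g,\mathcal{R})$, as already noted in the text preceding Lemma~\ref{P1-lemma:PartialActionOnStoneDual}. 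So the $G$-equivariance statement reduces to the single commuting-square identity
\begin{equation}\label{P1-eq:equivariant-square}
\rho_g \circ \varphi_{g^{-1}} = \varphi_g \circ \tau_g \qquad \text{for all } g\in G.
\end{equation}

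To verify \eqref{P1-eq:equivariant-square} I would argue on idempotents and then extend by $\mathcal{R}$-linearity and the fact that $E(D_{g^{-1}})$ generates $D_{g^{-1}}$ as an $\mathcal{R}$-algebra. Fix $g\in G$ and $e\in E(D_{g^{-1}})$. On the one hand, $\tau_g(e)\in E(D_g)$ and $\varphi_g(\tau_g(e)) = 1_{Z_{\tau_g(e)}}$, where $Z_{\tau_g(e)} = \{\mathcal{F}\in U_g : \tau_g(e)\in\mathcal{F}\}$. On the other hand, $\varphi_{g^{-1}}(e) = 1_{Z_e}$ with $Z_e = \{\mathcal{F}\in U_{g^{-1}} : e\in\mathcal{F}\}$, and $\rho_g(1_{Z_e}) = 1_{Z_e}\circ\theta_{g^{-1}} = 1_{\theta_g(Z_e)}$. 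Thus \eqref{P1-eq:equivariant-square} is equivalent to the set-level equality $\theta_g(Z_e) = Z_{\tau_g(e)}$, i.e. that, for $\mathcal{F}\in U_{g^{-1}}$, one has $\tau_g(e)\in\theta_g(\mathcal{F})$ if and only if $e\in\mathcal{F}$. This is exactly the kind of ultrafilter bookkeeping already carried out in the proof of Lemma~\ref{P1-lemma:PartialActionOnStoneDual}: unwinding the definition \eqref{P1-map-theta}, $\theta_g(\mathcal{F}) = \mathop{\uparrow}_{E(R)}\hat\tau_{g^{-1}}(\{u_{g^{-1}}x : x\in\mathcal{F}\})$, and $\hat\tau_{g^{-1}}$ is the inverse image under $\tau_{g^{-1}}\lvert_{E(D_g)}$; since $\tau_g$ and $\tau_{g^{-1}}$ are mutually inverse Boolean homomorphisms on $E(D_{g^{-1}})$ and $E(D_g)$, the condition $\tau_g(e)\in\theta_g(\mathcal{F})$ unwinds to $\tau_{g^{-1}}(\tau_g(e)\,u_?) = e\,u_?\in\{u_{g^{-1}}x : x\in\mathcal{F}\}$, hence to $e\in\mathcal{F}$ (using that $e\leqslant u_{g^{-1}}$, so $e = u_{g^{-1}}e$, and that membership in an ultrafilter is determined below $u_{g^{-1}}$). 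Both implications follow once one is careful that the ``$\mathop{\uparrow}$'' closure does not create spurious elements below $u_g$, which is where the injectivity of $\zeta_g$ and $\zeta_{g^{-1}}$ (homeomorphisms, as used in Lemma~\ref{P1-lemma:PartialActionOnStoneDual}(b)) does the work.

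Alternatively, and perhaps more cleanly, I would invoke the functoriality already developed in this section: applying the Stone-dual functor $R\mapsto X(R)$, $\rho\mapsto\hat\rho$ to the isomorphism $\tau_g: D_{g^{-1}}\to D_g$ yields the homeomorphism $\hat\tau_g: X(D_g)\to X(D_{g^{-1}})$, and contravariant functoriality gives $\widehat{\tau_g\circ\tau_h} = \hat\tau_h\circ\hat\tau_g$. Transporting along the homeomorphisms $\zeta_g: X(D_g)\to U_g$ turns $\hat\tau_{g^{-1}}$ into $\theta_g$ by the very definition $\theta_g = \zeta_g\circ\hat\tau_{g^{-1}}\circ\zeta_{g^{-1}}^{-1}$ exhibited in the commutative diagram preceding Lemma~\ref{P1-lemma:PartialActionOnStoneDual}; and, dualizing once more, the induced map on $\mathcal{L}_c$ of these spaces, namely precomposition with $\theta_{g^{-1}}$, agrees with the map induced by $\tau_g$ on the level of $\mathcal{L}_c(Z_{u_{g^{-1}}},\mathcal{R})\cong D_{g^{-1}}$, which is precisely $\varphi_g\circ\tau_g\circ\varphi_{g^{-1}}^{-1} = \rho_g$. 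Both arguments amount to the same computation, so in the write-up I would state the idempotent-level identity $\theta_g(Z_e) = Z_{\tau_g(e)}$ as the key lemma and verify it by the ultrafilter argument.

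\textbf{Main obstacle.} The only real subtlety is the interplay between the three ``spaces'' in play — $X(R)$, $X(D_g)$, and the subspace $U_g = Z_{u_g}\subseteq X(R)$ — and keeping track of the $\mathop{\uparrow}$ closures and the multiplications by $u_g, u_{g^{-1}}$ in \eqref{P1-map-theta}. Concretely: showing that for $\mathcal{F}\in U_{g^{-1}}$ and $e\in E(D_{g^{-1}})$ the element $\tau_g(e)$ lies in $\theta_g(\mathcal{F})$ \emph{exactly} when $e\in\mathcal{F}$ requires verifying that the $\mathop{\uparrow}_{E(R)}$ in the definition of $\theta_g$ does not enlarge the trace on idempotents $\leqslant u_g$, and that passing $\mathcal{F}$ through $\zeta_{g^{-1}}^{-1}$ (intersecting with $E(D_{g^{-1}})$, i.e. multiplying by $u_{g^{-1}}$) is harmless because $e\leqslant u_{g^{-1}}$. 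This is routine Boolean/ultrafilter manipulation of the same flavour as the proof of Lemma~\ref{P1-lemma:PartialActionOnStoneDual}, so I expect no genuine difficulty, merely bookkeeping; the torsion-free hypothesis plays no role here beyond guaranteeing that $\varphi$ is well-defined and injective as an $\mathcal{R}$-algebra map (so that equality of two elements of $R$ can be checked on characteristic functions).
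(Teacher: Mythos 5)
Your proposal is correct and follows essentially the same route as the paper: both reduce $G$-equivariance to the commutativity of the square $\rho_g\circ\varphi_{g^{-1}}=\varphi_g\circ\tau_g$, verify it on characteristic functions of idempotents (which suffices since the algebras are generated by idempotents), and establish the key identity $\theta_g(Z_e)=Z_{\tau_g(e)}$ by exactly the ultrafilter-membership argument you sketch, using $e\leqslant u_{g^{-1}}$ and the mutual inverseness of $\tau_g$ and $\tau_{g^{-1}}$. The ``main obstacle'' you identify (the $\uparrow_{E(R)}$ closure and the multiplications by $u_g$, $u_{g^{-1}}$) is precisely the bookkeeping the paper's proof carries out in its two displayed implications.
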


\begin{proof}
Take $g\in G$.
Recall that 
there is an isomorphism $\phi_g:\mathcal{L}_c(U_g,\mathcal{R})\to D_g$  given by $\phi_g(1_{U_{eu_g}}):=eu_g,$ for every $e \in E(R).$ We claim that the following diagram commutes:

\[
 \xymatrix{& \mathcal{L}_c(U_{g^{-1}},\mathcal{R})\ar[r]^{\rho_g}\ar[d]_{\phi_{g^{-1}}}  & \mathcal{L}_c(U_{g},\mathcal{R})\ar[d]_{\phi_g} \\
&D_{g^{-1}}\ar[r]_{{\tau}_{g}} &D_g}
\]
Let $e \in E(R)$ and $\mathcal{F} \in U_g.$ Notice that $$\phi_g^{-1}\circ \tau_g\circ \phi_{g^{-1}}(1_{U_{eu_{g^{-1}}}})(\mathcal{F})=\phi_g^{-1}(\tau_g(eu_{g^{-1}}))(\mathcal{F}) =1_{U_{\tau_g(eu_{g^{-1}})}}(\mathcal{F})$$
and $\rho_g(1_{U_{eu_{g^{-1}}}})(\mathcal{F})=1_{U_{eu_{g^{-1}}}}\circ \theta_{g^{-1}} (\mathcal{F})=1_{U_{eu_{g^{-1}}}}(\uparrow_{E(R)}\tau_{g^{-1}}(\{ u_g x : x \in \mathcal{F}\}).$ We claim that $1_{U_{\tau_g(eu_{g^{-1}})}}(\mathcal{F})=1_{U_{eu_{g^{-1}}}}(\uparrow_{E(R)}\tau_{g^{-1}}(\{ u_g x : x \in \mathcal{F}\})$. Indeed, if $1_{U_{\tau_g(eu_{g^{-1}})}}(\mathcal{F})=1_{\mathcal{R}},$ then $\tau_g(eu_{g^{-1}}) \in \mathcal{F}$. Hence, $eu_{g^{-1}}=\tau_{g^{-1}}(\tau_g(eu_{g^{-1}})u_g) \in \uparrow_{E(R)}\tau_{g^{-1}}(\{ u_g x : x \in \mathcal{F}\}$. Therefore, $1_{U_{eu_{g^{-1}}}}(\uparrow_{E(R)}\tau_{g^{-1}}(\{ u_g x : x \in \mathcal{F}\})=1_{\mathcal{R}}.$

On the other hand, if $1_{U_{eu_{g^{-1}}}}(\uparrow_{E(R)}\tau_{g^{-1}}(\{ u_g x : x \in \mathcal{F}\})=1_{\mathcal{R}},$ then there exists $x \in \mathcal{F}$ such that $eu_{g^{-1}}\geqslant \tau_{g^{-1}}( u_g x).$ Since $u_gx \in \mathcal{F},$ we have that $eu_{g^{^{-1}}} \in \tau_{g^{-1}}(\mathcal{F}).$ Thus, $\tau_g(eu_{g^{^{-1}}}) \in \mathcal{F}$ and $1_{U_{\tau_g(eu_{g^{-1}})}} (\mathcal{F})=1_{\mathcal{R}}.$
\end{proof}

\section{Topological partial actions of groupoids, their partial skew groupoid rings, and Steinberg algebras}\label{P1-Sec:TopDynamics}

In Section~\ref{P1-sec:gen-idemp}, we saw that any partial action of a groupoid on the ring of locally constant functions with compact support over a Stone space, actually corresponds to a partial action
$\tau=(D_g, \tau_g)_{g \in G}$
of the same groupoid on a torsion-free, unital, commutative algebra generated by its idempotents, such that $D_g$ is a unital ideal of the commutative algebra, 
for every $g \in G$ (see Proposition~\ref{P1-prop:Gequivariant}).
This motivates us to investigate algebraic partial actions induced by topological partial actions as follows.

Throughout this section, let $G$ be a groupoid, 
let $\mathbb{K}$ be a field,
let $\mathcal{R}$ be a commutative unital ring, let $X$ be a locally compact, Hausdorff, zero-dimensional space, and let $\theta=(X_g,\theta_g)_{g \in G}$ be a topological partial action of $G$ on $X$,
such that $X_g$ is clopen, for every $g \in G$, and $X=\bigsqcup_{e \in G_0}X_e.$ Let $\mathcal{L}_c(X,\mathcal{R})$ denote the commutative $\mathcal{R}$-algebra of all locally constant functions $f:X \to \mathcal{R}$ with compact support, and with addition and multiplication defined pointwise. 
The \emph{support of $f \in \mathcal{L}_c(X,\mathcal{R})$} is defined as the set $\Supp(f):=\{x \in X: f(x) \neq 0 \},$ 
and by \cite[Proposition~1.3.4, Example~1.5.28]{TBeuter2018} it is always clopen.

\begin{obs}\label{P1-rem:InducedAlgebraicAction}
The topological partial action $\theta=(X_g,\theta_g)_{g \in G}$ 
induces a partial action $\alpha:=(D_g,\alpha_g)_{g \in G}$ of $G$ on the ring $\mathcal{L}_c(X,\mathcal{R}).$ Indeed, for each $g \in G,$ we put $D_g:=\{ f \in \mathcal{L}_c(X,\mathcal{R}): \Supp(f) \subseteq X_g\},$ and define the ring isomorphism $\alpha_g: D_{g^{-1}} \to D_g$ by
\begin{displaymath}
\alpha_g(f)(x):= \left\{\begin{array}{ll} f\circ \theta_{g^{-1}}(x), & \text{if} \,\ x \in X_g 
\\ 0, &  \text{otherwise} ,\end{array}\right.
\end{displaymath}
for $f \in D_{g^{-1}},$
 Furthermore, using that $X= \bigsqcup_{e \in G_0}X_e$ one gets that $\mathcal{L}_c(X,\mathcal{R}) = \bigoplus_{e \in G_0} D_e.$
\end{obs}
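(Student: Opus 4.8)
Remark (Proof strategy for Remark~\ref{P1-rem:InducedAlgebraicAction}).
The plan is to verify each piece of the claim in turn, relying on the fact that $\theta$ is a topological partial action and that $\mathcal{L}_c(X,\mathcal{R})$ is a commutative ring with pointwise operations. First I would check that each $D_g$ is well-defined as a subset of $\mathcal{L}_c(X,\mathcal{R})$: since $X_g$ is clopen, for $f\in\mathcal{L}_c(X,\mathcal{R})$ the condition $\Supp(f)\subseteq X_g$ is preserved under addition and under multiplication by arbitrary elements of $\mathcal{L}_c(X,\mathcal{R})$ (because $\Supp(fh)\subseteq\Supp(f)$), so $D_g$ is an ideal of $\mathcal{L}_c(X,\mathcal{R})$; in particular $D_{r(g)}$ is an ideal and $D_g\subseteq D_{r(g)}$ is an ideal of $D_{r(g)}$, using $X_g\subseteq X_{r(g)}$ from Definition~\ref{P1-def:D1}(i). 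Next I would confirm that $\alpha_g$ is well-defined, i.e. that $\alpha_g(f)\in\mathcal{L}_c(X,\mathcal{R})$ with support inside $X_g$: since $\theta_{g^{-1}}\colon X_g\to X_{g^{-1}}$ is a homeomorphism between clopen sets and $f$ is locally constant with compact support contained in $X_{g^{-1}}$, the composite $f\circ\theta_{g^{-1}}$ is locally constant on $X_g$ with compact support, and extending by $0$ on the clopen complement keeps it locally constant with compact support; its support is $\theta_{g^{-1}}^{-1}(\Supp(f))=\theta_g(\Supp(f))\subseteq X_g$.

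Then I would verify that $\alpha_g$ is a ring isomorphism $D_{g^{-1}}\to D_g$: additivity and multiplicativity are immediate from the pointwise definition of the operations and the fact that precomposition with a fixed map respects them, and the inverse is $\alpha_{g^{-1}}$, which follows from $\theta_g\circ\theta_{g^{-1}}=\id_{X_g}$. After that, the four axioms of Definition~\ref{P1-def:D1} for $\alpha$ should be checked by transporting the corresponding axioms for $\theta$ through the identification $f\leftrightarrow\Supp(f)$; concretely, (i) is $X_g\subseteq X_{r(g)}$ as noted, (ii) is $\theta_e=\id_{X_e}$, and (iii)–(iv) follow from the equalities $\theta_{h}^{-1}(X_{g^{-1}}\cap X_h)\subseteq X_{(gh)^{-1}}$ and $\theta_g\circ\theta_h=\theta_{gh}$ on the appropriate domain, using that $\alpha_h^{-1}(D_{g^{-1}}\cap D_h)$ consists exactly of the $f$ with $\Supp(f)\subseteq\theta_h^{-1}(X_{g^{-1}}\cap X_h)$. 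Finally, for the decomposition statement, since $X=\bigsqcup_{e\in G_0}X_e$ is a partition into clopen sets and each $f\in\mathcal{L}_c(X,\mathcal{R})$ has compact support meeting only finitely many $X_e$, one writes $f=\sum_{e\in G_0}f\cdot 1_{X_e}$ with $f\cdot 1_{X_e}\in D_e$, and the sum is direct because the $X_e$ are pairwise disjoint; hence $\mathcal{L}_c(X,\mathcal{R})=\bigoplus_{e\in G_0}D_e$.

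The routine part here is the bookkeeping; the only place that requires a moment's care is checking that extending $f\circ\theta_{g^{-1}}$ by zero off $X_g$ genuinely produces a \emph{locally constant} function on all of $X$ — this is where clopenness of $X_g$ (rather than mere openness) is used, since a locally constant function on an open set need not extend locally constantly across the boundary, but across a clopen boundary it does. I do not expect any serious obstacle: the content of the remark is that the dictionary between clopen subsets of $X$ and the idempotent-generated ideal structure of $\mathcal{L}_c(X,\mathcal{R})$ turns the topological partial action axioms into the algebraic ones verbatim.
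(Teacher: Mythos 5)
Your verification is correct and follows the route the paper implicitly intends: the paper states this as a remark without proof, and your argument supplies exactly the routine checks (each $D_g$ is an ideal via $\Supp(fh)\subseteq\Supp(f)$, well-definedness of $\alpha_g$ via $\Supp(\alpha_g(f))=\theta_g(\Supp(f))$, transport of the axioms of Definition~\ref{P1-def:D1} through the support map, and the direct-sum decomposition from $X=\bigsqcup_{e\in G_0}X_e$ together with compactness of supports). Your remark that clopenness, rather than mere openness, of $X_g$ is what makes the extension by zero locally constant is the one point requiring care, and you identify and handle it correctly.
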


In Section~\ref{P1-subtop}, we will show equivalences between, on the one hand topological properties of $\theta$, and on the other hand algebraic properties of the induced partial action  $\alpha$ of $G$ on $\mathcal{L}_c(X,\mathbb{K})$,
and of its associated partial skew groupoid ring $\mathcal{L}_c(X,\mathbb{K}) \rtimes_{\alpha}G$.

In Section~\ref{P1-substeinberg},
inspired by \cite{Beuter2018},
we define the transformation groupoid $G\rtimes_{\theta}X$ induced by the topological partial action $\theta$, and notice that 
$\mathcal{L}_c(X,\mathcal{R}) \rtimes_{\alpha}G$ and the Steinberg algebra of the transformation groupoid, $A_{\mathcal{R}}(G\rtimes_{\theta}X)$, are isomorphic as $\mathcal{R}$-algebras. We also point out correspondences between properties of $\theta$ and properties of $G\rtimes_{\theta}X.$

In Section~\ref{P1-subapplications}, we apply Theorems~\ref{P1-thm:AE1} and \ref{P1-thm:AP2} together with results from Section~\ref{P1-subtop}, 
to characterize primeness, simplicity etc. 
of the partial skew groupoid ring $\mathcal{L}_c(X,\mathbb{K}) \rtimes_{\alpha}G.$ 
Using that $\mathcal{L}_c(X,\mathbb{K}) \rtimes_{\alpha}G \cong A_{\mathbb{K}}(G\rtimes_{\theta}X)$, 
we
are able to recover some known results.

\subsection{Connections between topological and algebraic actions} \label{P1-subtop}

The goal of this section is to identify correspondences between topological properties of $\theta$ and algebraic properties of the induced partial action  $\alpha$ of $G$ on $\mathcal{L}_c(X,\mathbb{K}).$
We define a map
$\mathcal{I} : \{\text{Open subsets of } X\} \to \{\text{Ideals of } \mathcal{L}_c(X,\mathbb{K})\}$
by 
\begin{align}\label{P1-map-I}
\mathcal{I}(U):=\{f \in \mathcal{L}_c(X,\mathbb{K}): \Supp(f) \subseteq U \},
\end{align}
for every open subset $U$ of $X$.
We also define a map
$\mathcal{S} :  \{\text{Ideals of } \mathcal{L}_c(X,\mathbb{K})\} \to \{\text{Open subsets of } X\}$ by
\begin{align}\label{P1-map-S}
\mathcal{S}(J):=\{x \in X : \text{there is } f \in J \text{ such that } f(x) \neq 0\}= \bigcup_{f \in J} \Supp(f),
\end{align}
for every ideal $J$ of $\mathcal{L}_c(X,\mathbb{K})$.
It is not difficult to verify that $\mathcal{I}$ and $\mathcal{S}$ are both well-defined.
Furthermore, we notice that $\mathcal{I}(\mathcal{S}(J))=J$, for every ideal $J$ of $\mathcal{L}_c(X,\mathbb{K})$,
and that
$\mathcal{S}(\mathcal{I}(U))=U$, for every open subset $U$ of $X$.
For more details, see e.g. \cite[Lemma~3.2.1, Remark~3.2.2]{TBeuter2018}.

\begin{obs}\label{P1-rem:SubsetsToIdealsAndBack}
(a)
Analogously to \cite[pp.~116--117]{TBeuter2018}, if $U$ is an open $G$-invariant subset of $X$, then $\mathcal{I}(U)$ is a $G$-invariant ideal of $\mathcal{L}_c(X,\mathbb{K})$.
Conversely, if $J$ is a $G$-invariant ideal of $\mathcal{L}_c(X,\mathbb{K})$, then $\mathcal{S}(J)$ is an open $G$-invariant subset of $X$.
Thus, by restricting $\mathcal{I}$ we get a bijection between
$$\{\text{Open } G\text{-invariant subsets of } X\}
\text{ and }
\{G\text{-invariant ideals of } \mathcal{L}_c(X,\mathbb{K})\}.$$

(b) Clearly, $F \subseteq X$ is closed if, and only if, $X\setminus F$ is open.
Moreover, it is not difficult to see that $F$ is $G$-invariant if, and only if, $X \setminus F$ is $G$-invariant.
Hence, the assignment $F \mapsto X \setminus F$ yields a bijection between
$$\{\text{Closed } G\text{-invariant subsets of } X\}
\text{ and }
\{\text{Open } G\text{-invariant subsets of } X\}.$$

(c) For any closed subset $F$ of $X$, we define $\mathcal{J}(F):=\mathcal{I}(X \setminus F)$ and this yields a bijection $\mathcal{J}
 : \{\text{Closed subsets of } X\} \to \{\text{Ideals of } \mathcal{L}_c(X,\mathbb{K})\}$.
 Note that $\mathcal{J}(F) = \{f \in \mathcal{L}_c(X,\mathbb{K}) : f(x)=0, \text{ for all } x\in F \}$.
\end{obs}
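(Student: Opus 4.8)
The plan is to dispatch the three parts in order, leaning on the fact, recorded just above the statement, that $\mathcal{I}$ and $\mathcal{S}$ are mutually inverse bijections between the open subsets of $X$ and the ideals of $\mathcal{L}_c(X,\mathbb{K})$; for (a) this reduces the work to checking that $G$-invariance is preserved by $\mathcal{I}$ and by $\mathcal{S}$. Two elementary observations will be used repeatedly. First, $\theta_g\colon X_{g^{-1}}\to X_g$ is a bijection with inverse $\theta_{g^{-1}}$: applying Definition~\ref{P1-def:D1}~(iv) to the composable pairs $(g,g^{-1})$ and $(g^{-1},g)$, together with $\theta_{r(g)}=\id$ and $\theta_{s(g)}=\id$, gives $\theta_g\circ\theta_{g^{-1}}=\id_{X_g}$ and $\theta_{g^{-1}}\circ\theta_g=\id_{X_{g^{-1}}}$. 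Second, for $f\in D_{g^{-1}}$ the defining formula for $\alpha_g$ in Remark~\ref{P1-rem:InducedAlgebraicAction} immediately yields $\Supp(\alpha_g(f))=\theta_g(\Supp(f))$.

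For the forward direction of (a), let $U$ be open and $G$-invariant and take $f\in D_{g^{-1}}\cap\mathcal{I}(U)$, so $\Supp(f)\subseteq X_{g^{-1}}\cap U$; then $\Supp(\alpha_g(f))=\theta_g(\Supp(f))\subseteq\theta_g(X_{g^{-1}}\cap U)\subseteq U$, whence $\alpha_g(f)\in\mathcal{I}(U)$ and $\mathcal{I}(U)$ is a $G$-invariant ideal. For the converse, let $J$ be a $G$-invariant ideal and take $x\in X_{g^{-1}}\cap\mathcal{S}(J)$; choose $f\in J$ with $f(x)\neq0$ and, using that $X$ is locally compact, Hausdorff and zero-dimensional, choose a compact open set $V$ with $x\in V\subseteq X_{g^{-1}}$. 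Then $f\cdot 1_V\in D_{g^{-1}}\cap J$ and $(f\cdot 1_V)(x)=f(x)\neq0$, so $\alpha_g(f\cdot 1_V)\in J$ and $\alpha_g(f\cdot 1_V)(\theta_g(x))=(f\cdot 1_V)(\theta_{g^{-1}}(\theta_g(x)))=(f\cdot 1_V)(x)\neq0$, giving $\theta_g(x)\in\mathcal{S}(J)$. Hence $\mathcal{S}(J)$ is open and $G$-invariant, and restricting the bijection $\mathcal{I}\leftrightarrow\mathcal{S}$ yields the asserted correspondence.

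For (b), ``$F$ closed $\iff$ $X\setminus F$ open'' is the definition of the topology, so only the invariance equivalence needs an argument, which I would give by contraposition: if $F$ is $G$-invariant and there were $g\in G$ and $x\in X_{g^{-1}}\setminus F$ with $\theta_g(x)\in F$, then $\theta_g(x)\in X_g\cap F$, and $G$-invariance of $F$ applied to $g^{-1}$ forces $x=\theta_{g^{-1}}(\theta_g(x))\in F$, a contradiction; the reverse implication follows by applying this to $X\setminus F$ in place of $F$. Thus the involution $F\mapsto X\setminus F$ on subsets of $X$ restricts to a bijection between closed $G$-invariant subsets and open $G$-invariant subsets. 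Part (c) is then immediate: $\mathcal{J}=\mathcal{I}\circ(F\mapsto X\setminus F)$ is a composite of bijections $\{\text{closed subsets}\}\to\{\text{open subsets}\}\to\{\text{ideals}\}$, and the displayed description of $\mathcal{J}(F)$ is just the tautology that $\Supp(f)\subseteq X\setminus F$ means exactly that $f$ vanishes on $F$.

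The only step that is not pure bookkeeping is the converse direction of (a): because $\mathcal{L}_c(X,\mathbb{K})$ need not be unital and $D_{g^{-1}}$ need not contain the characteristic function of $X_{g^{-1}}$, one cannot simply multiply a witnessing function by $1_{X_{g^{-1}}}$, so one must instead cut its support down to a compact open subset of $X_{g^{-1}}$ — this is precisely where local compactness and zero-dimensionality of $X$ enter. I expect no other difficulties.
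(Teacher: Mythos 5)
Your proof is correct and follows exactly the standard argument that the paper leaves implicit by citing \cite[pp.~116--117]{TBeuter2018}: push supports forward via $\Supp(\alpha_g(f))=\theta_g(\Supp(f))$ for the $\mathcal{I}$-direction, and cut a witnessing function down to a compact open subset of $X_{g^{-1}}$ before applying $\alpha_g$ for the $\mathcal{S}$-direction. Your closing remark correctly identifies the one non-trivial point (that one cannot multiply by $1_{X_{g^{-1}}}$ in the non-unital setting), and the rest is the bookkeeping the paper expects the reader to supply.
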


We now summarize our findings.

\begin{proposition}\label{P1-prop:corresp2}
There are one-to-one correspondences between the set of $G$-invariant ideals of $\mathcal{L}_c(X,\mathbb{K})$ and any of the following:
\begin{enumerate}[{\rm (i)}]
    \item The set of open $G$-invariant subsets of $X,$  given by $\mathcal{I};$
    \item The set of closed $G$-invariant subsets of $X,$ given by $\mathcal{J};$
    \item The set of $G$-graded ideals of $\mathcal{L}_c(X,\mathbb{K}) \rtimes_{\alpha}G,$ given by $\fr_{gr}.$
\end{enumerate} 
\end{proposition}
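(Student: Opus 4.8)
The statement is essentially a bookkeeping assembly of results already established, so the plan is to exhibit each of the three maps as a bijection onto the set of $G$-invariant ideals of $\mathcal{L}_c(X,\mathbb{K})$ and to observe that they are compatible. For (i), recall from \cite[Lemma~3.2.1, Remark~3.2.2]{TBeuter2018} that $\mathcal{I}$ and $\mathcal{S}$ are mutually inverse on \emph{all} open subsets and \emph{all} ideals: $\mathcal{I}(\mathcal{S}(J))=J$ and $\mathcal{S}(\mathcal{I}(U))=U$. By Remark~\ref{P1-rem:SubsetsToIdealsAndBack}~(a), $\mathcal{I}$ sends open $G$-invariant subsets to $G$-invariant ideals and $\mathcal{S}$ sends $G$-invariant ideals to open $G$-invariant subsets. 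Hence the restriction of $\mathcal{I}$ to open $G$-invariant subsets is a bijection onto the $G$-invariant ideals of $\mathcal{L}_c(X,\mathbb{K})$, with inverse the restriction of $\mathcal{S}$. That settles (i).

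For (ii), compose the bijection of (i) with the complementation bijection $F \mapsto X\setminus F$ between closed $G$-invariant subsets and open $G$-invariant subsets furnished by Remark~\ref{P1-rem:SubsetsToIdealsAndBack}~(b). Since $\mathcal{J}(F):=\mathcal{I}(X\setminus F)$ by definition, $\mathcal{J}$ is exactly this composite of two bijections, and therefore is itself a bijection from closed $G$-invariant subsets of $X$ onto $G$-invariant ideals of $\mathcal{L}_c(X,\mathbb{K})$. For (iii), I would invoke Theorem~\ref{P1-thm:corresp1} with $R:=\mathcal{L}_c(X,\mathbb{K})$ and $\alpha$ the induced partial action of Remark~\ref{P1-rem:InducedAlgebraicAction}. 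To apply it, one must verify the two standing hypotheses of that theorem: that $R=\bigoplus_{e\in G_0}D_e$, which is recorded in Remark~\ref{P1-rem:InducedAlgebraicAction}, and that each $D_g$ is $s$-unital. For the latter, given a finite set $\{f_1,\dots,f_n\}\subseteq D_g$, the union $K:=\bigcup_i \Supp(f_i)$ is a compact open subset of $X_g$ (using that $X$ is locally compact, Hausdorff and zero-dimensional, and that supports of elements of $\mathcal{L}_c(X,\mathbb{K})$ are clopen, by \cite[Proposition~1.3.4, Example~1.5.28]{TBeuter2018}), so $1_K\in D_g$ is an $s$-unit for $\{f_1,\dots,f_n\}$. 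Theorem~\ref{P1-thm:corresp1} then gives that $\fr_{gr}$ is a bijection from the $G$-graded ideals of $\mathcal{L}_c(X,\mathbb{K})\rtimes_\alpha G$ onto the $G$-invariant ideals of $\mathcal{L}_c(X,\mathbb{K})$ (with inverse $\Psi$), which is (iii).

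\textbf{Main obstacle.} There is no deep difficulty here; the only point requiring genuine care is checking that the ideals $D_g$ of $\mathcal{L}_c(X,\mathbb{K})$ are $s$-unital, since this is precisely the hypothesis under which Theorem~\ref{P1-thm:corresp1} was proved, and it is what legitimizes passing from the abstract algebraic setting of Section~\ref{P1-Sec:Algebraic} to the topological setting of this section. Once that is in place, the remainder is a matter of matching domains and codomains and composing the cited bijections.
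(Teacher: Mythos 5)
Your proposal is correct and follows exactly the route the paper intends: the proposition is stated as a summary of Remark~\ref{P1-rem:SubsetsToIdealsAndBack} together with Theorem~\ref{P1-thm:corresp1}, and your assembly of those ingredients (including the worthwhile explicit check that each $D_g$ is $s$-unital via the indicator function of the clopen compact support) matches the paper's implicit argument. Nothing further is needed.
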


Next, we record some topological-dynamical notions that are used in this section.

\begin{definition}
Let $\theta$ be a topological partial action of $G$ on $X$.
\begin{enumerate}[{\rm (a)}]
    \item $\theta$ is said to be \emph{minimal}, if there is no proper open $G$-invariant subset 
    of $X;$
    \item $\theta$ is said to be \emph{topologically transitive}, if for all nonempty open subsets $U, V$  of $X$ there exists $g \in G$ such that $\theta_g(U\cap X_{g^{-1}})\cap V \neq \emptyset;$
    \item $\theta$ is said to be \emph{topologically free}, if for all $u \in G_0$ and 
    $t \in G_u^u\setminus \{u\},$ $\Int\{x \in X_{t^{-1}}: \theta_t(x)=x\} = \emptyset;$ 
    \item $\theta$ is said to be \emph{topologically free on the subset $F$} of $X$,  if for all $u \in G_0$ and $t \in G_u^u\setminus\{u\},$
$\Int_F\{x \in X_{t^{-1}}\cap F: \theta_t(x)=x\}=\emptyset.$
\end{enumerate}
\end{definition}

The next result follows immediately from Proposition~\ref{P1-prop:corresp2} and it also follows from \cite[Theorem~4.6]{BeuterRoyer}.

\begin{corolario}\label{P1-cor:MS}
$\theta$ is minimal if, and only if, $\mathcal{L}_c(X,\mathbb{K})$ is $G$-simple.
\end{corolario}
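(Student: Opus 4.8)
The plan is to prove Corollary~\ref{P1-cor:MS} by transporting the notion of minimality through the bijections established in Proposition~\ref{P1-prop:corresp2}. Recall that $\theta$ is minimal precisely when $X$ has no proper nonempty open $G$-invariant subset, and $\mathcal{L}_c(X,\mathbb{K})$ is $G$-simple precisely when it has no proper nonzero $G$-invariant ideal. So the whole argument is a matter of checking that the correspondence $\mathcal{I}$ from Proposition~\ref{P1-prop:corresp2}~(i) respects the ``proper nonempty/nonzero'' dichotomy, i.e. that it sends $\emptyset$ to $\{0\}$ and $X$ to $\mathcal{L}_c(X,\mathbb{K})$ and is otherwise order-preserving.

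First I would note that $\mathcal{I}(\emptyset) = \{f : \Supp(f) \subseteq \emptyset\} = \{0\}$ and that $\mathcal{I}(X) = \mathcal{L}_c(X,\mathbb{K})$, both immediate from the definition~\eqref{P1-map-I}. Next, using that $\mathcal{I}$ and $\mathcal{S}$ are mutually inverse on these sets (as recorded just before Remark~\ref{P1-rem:SubsetsToIdealsAndBack}), $\mathcal{I}$ restricts to a bijection between open $G$-invariant subsets of $X$ and $G$-invariant ideals of $\mathcal{L}_c(X,\mathbb{K})$, as stated in Proposition~\ref{P1-prop:corresp2}~(i). Then I would argue both implications by contraposition. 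If $\mathcal{L}_c(X,\mathbb{K})$ is not $G$-simple, pick a proper nonzero $G$-invariant ideal $J$; then $\mathcal{S}(J)$ is a $G$-invariant open subset with $\mathcal{I}(\mathcal{S}(J)) = J \neq \{0\}$ and $\neq \mathcal{L}_c(X,\mathbb{K})$, so $\mathcal{S}(J) \neq \emptyset$ and $\mathcal{S}(J) \neq X$, i.e.\ $\theta$ is not minimal. Conversely, if $\theta$ is not minimal, pick a proper nonempty open $G$-invariant subset $U$; then $\mathcal{I}(U)$ is a $G$-invariant ideal, nonzero because $U \neq \emptyset$ (the indicator of any nonempty compact open subset of $U$ lies in it — here zero-dimensionality of $X$ guarantees such a set exists), and proper because $\mathcal{S}(\mathcal{I}(U)) = U \neq X$, so $\mathcal{L}_c(X,\mathbb{K})$ is not $G$-simple.

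There is essentially no hard part: the content has already been done in Proposition~\ref{P1-prop:corresp2} and Remark~\ref{P1-rem:SubsetsToIdealsAndBack}, and this corollary is just the observation that the bijection identifies the trivial and improper elements on both sides. The only point requiring the slightest care is checking that $U$ nonempty forces $\mathcal{I}(U) \neq \{0\}$, which relies on $X$ being locally compact, Hausdorff and zero-dimensional so that a nonempty open set contains a nonempty compact open subset whose characteristic function is a nonzero element of $\mathcal{L}_c(X,\mathbb{K})$ supported in $U$; alternatively one simply invokes that $\mathcal{I}$ is injective and $\mathcal{I}(\emptyset) = \{0\}$. I would write the proof in two or three lines, citing Proposition~\ref{P1-prop:corresp2}~(i) and the identities $\mathcal{I}(\emptyset) = \{0\}$, $\mathcal{I}(X) = \mathcal{L}_c(X,\mathbb{K})$.
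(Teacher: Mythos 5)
Your proof is correct and follows exactly the route the paper takes: the paper simply states that the corollary follows immediately from Proposition~\ref{P1-prop:corresp2}, and your argument is a careful unwinding of that bijection, checking that $\mathcal{I}$ matches the trivial and improper elements on both sides. No discrepancies to report.
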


\begin{theorem}\label{P1-thm:TGP}
Let $\mathbb{K}$ be a field,
let $G$ be a groupoid, 
let $X$ be a locally compact, Hausdorff, zero-dimensional space, and let $\theta=(X_g,\theta_g)_{g \in G}$ be a topological partial action of $G$ on $X$,
such that $X_g$ is clopen, for every $g \in G$, and $X=\bigsqcup_{e \in G_0}X_e.$ 
Furthermore, let $\alpha$ be the induced partial action of $G$ on
$\mathcal{L}_c(X,\mathbb{K})$ (see
Remark~\ref{P1-rem:InducedAlgebraicAction}).
Then $\theta$ is topologically transitive if, and only if, $\mathcal{L}_c(X,\mathbb{K})$ is $G$-prime.
\end{theorem}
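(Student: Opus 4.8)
The plan is to prove both implications by contrapositive, using the correspondence $\mathcal{I}/\mathcal{S}$ from Remark~\ref{P1-rem:SubsetsToIdealsAndBack} together with the characterization of $G$-primeness of a commutative ring in terms of non-vanishing of products of invariant ideals. First I would record the elementary fact that for a commutative ring $\mathcal{L}_c(X,\mathbb{K})$, two ideals $I,J$ satisfy $IJ=\{0\}$ if and only if $\mathcal{S}(I)\cap\mathcal{S}(J)=\emptyset$: indeed, $\Supp(fg)=\Supp(f)\cap\Supp(g)$ for functions, and $\mathcal{S}(I)\cap\mathcal{S}(J)\neq\emptyset$ means there are $f\in I$, $g\in J$ and a point $x$ with $f(x)\neq 0\neq g(x)$, so that $fg\neq 0$ in $IJ$; conversely if the supports are disjoint then every product $fg$ with $f\in I$, $g\in J$ vanishes identically, and since $IJ$ is generated by such products, $IJ=\{0\}$. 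I would also recall from Remark~\ref{P1-rem:SubsetsToIdealsAndBack}(a) that $\mathcal{I}$ restricts to a bijection between open $G$-invariant subsets of $X$ and $G$-invariant ideals of $\mathcal{L}_c(X,\mathbb{K})$, with inverse $\mathcal{S}$.

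For the direction ``$\theta$ topologically transitive $\Rightarrow \mathcal{L}_c(X,\mathbb{K})$ is $G$-prime'', suppose $I,J$ are $G$-invariant ideals with $IJ=\{0\}$ and $I\neq\{0\}$; I must show $J=\{0\}$. By the bijection, $U:=\mathcal{S}(I)$ and $V:=\mathcal{S}(J)$ are open $G$-invariant subsets with $U$ nonempty and, by the support lemma, $U\cap V=\emptyset$. Suppose toward a contradiction that $J\neq\{0\}$, so $V\neq\emptyset$. Topological transitivity gives some $g\in G$ with $\theta_g(V\cap X_{g^{-1}})\cap U\neq\emptyset$. But $V$ is $G$-invariant, so $\theta_g(V\cap X_{g^{-1}})\subseteq V$, whence $V\cap U\neq\emptyset$, a contradiction. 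Hence $V=\emptyset$, so $J=\mathcal{I}(\emptyset)=\{0\}$, proving $G$-primeness.

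For the converse, I argue the contrapositive: assume $\theta$ is \emph{not} topologically transitive and produce nonzero $G$-invariant ideals with zero product. Failure of transitivity yields nonempty open sets $U_0, V_0$ such that $\theta_g(U_0\cap X_{g^{-1}})\cap V_0=\emptyset$ for all $g\in G$. The natural candidates are the $G$-invariant saturations $U:=\bigcup_{g\in G}\theta_g(U_0\cap X_{g^{-1}})$ and $V:=\bigcup_{g\in G}\theta_g(V_0\cap X_{g^{-1}})$; these are open (each $\theta_g$ is a homeomorphism between open sets) and $G$-invariant (a short computation using the composition axiom (iv) of Definition~\ref{P1-def:D1} and $X_g\subseteq X_{r(g)}$), and they contain $U_0$, $V_0$ respectively since $U_0\subseteq X_{r(e)}$ for the relevant identities $e$ with $\theta_e=\mathrm{id}$, so both are nonempty. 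The key point — and the main obstacle — is to check that $U\cap V=\emptyset$; this needs the groupoid analogue of the standard group computation: if $\theta_g(x)=\theta_h(y)$ with $x\in U_0$, $y\in V_0$, then $z:=\theta_h(y)=\theta_g(x)$, and applying $\theta_{g^{-1}}$ one gets $\theta_{g^{-1}h}(y)=x\in U_0\cap X_{\ldots}$, contradicting the failure-of-transitivity hypothesis with the composable morphism $g^{-1}h$; the care required is in verifying the composability of $g^{-1}h$ and that $(g^{-1}h, y)$ lands in the right domains, using $s(g)=r(h)$ which holds because $\theta_g(x)$ and $\theta_h(y)$ being equal forces both to lie in $X_{r(g)}\cap X_{r(h)}$, and one arranges $r(g)=r(h)$ by first restricting $x,y$ to the fiber of the common object. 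Once $U\cap V=\emptyset$ is established, set $I:=\mathcal{I}(U)$ and $J:=\mathcal{I}(V)$: these are nonzero $G$-invariant ideals by Remark~\ref{P1-rem:SubsetsToIdealsAndBack}(a), and $IJ=\{0\}$ by the support lemma, so $\mathcal{L}_c(X,\mathbb{K})$ is not $G$-prime, completing the contrapositive. I would also remark that, alternatively, once topological transitivity is phrased as ``every nonempty open $G$-invariant set is dense'' (which it is equivalent to, by the saturation argument), the statement follows from Corollary~\ref{P1-cor:MS}-style reasoning applied fiber-wise, but the direct argument above is cleaner.
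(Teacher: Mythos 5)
Your proof is correct and follows essentially the same route as the paper's: both directions rest on the correspondence between $G$-invariant open subsets of $X$ and $G$-invariant ideals of $\mathcal{L}_c(X,\mathbb{K})$ (with $IJ=\{0\}$ equivalent to disjointness of $\mathcal{S}(I)$ and $\mathcal{S}(J)$, using that $\mathbb{K}$ is a field), and on the $G$-invariant saturation $\bigcup_{g}\theta_g(\,\cdot\,\cap X_{g^{-1}})$ together with the composability bookkeeping forced by $X=\bigsqcup_{e\in G_0}X_e$ (your one slip, writing $s(g)=r(h)$ where the disjointness of the $X_e$'s actually yields $r(g)=r(h)$, is immaterial since your justification is the correct one). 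The only structural difference is that you swap which implication is argued by contrapositive, so the disjointness argument for the two saturations appears in your backward direction, whereas the paper instead unwinds a point of $W\cap Z$; these are the same calculation read in opposite directions.
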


\begin{proof}
We first show the ``only if'' statement by contrapositivity.
Suppose that $\mathcal{L}_c(X,\mathbb{K})$ is not $G$-prime.
There are nonzero $G$-invariant ideals $I_1,I_2$ of $\mathcal{L}_c(X,\mathbb{K})$ such that $I_1 I_2 = \{0\}$.
By Remark~\ref{P1-rem:SubsetsToIdealsAndBack} and the fact that $\mathbb{K}$ is a field, there are nonempty open $G$-invariant subsets $U_1,U_2$ of $X$ such that $I_1=\mathcal{I}(U_1)$,  $I_2=\mathcal{I}(U_2)$, and $U_1 \cap U_2 = \emptyset$.
Hence,
$\theta_g(U_1 \cap X_{g^{-1}})\cap U_2 \subseteq U_1 \cap U_2 = \emptyset$, for every $g\in G$.
This shows that $\theta$ is not topologically transitive.

Now we show the ``if'' statement.
Suppose that $\mathcal{L}_c(X,\mathbb{K})$ is $G$-prime.
Let $U,V$ be nonempty open subsets of $X$.
Define $W:= \bigcup_{g\in G} \theta_g(U \cap X_{g^{-1}})$ and $Z:=\bigcup_{g\in G} \theta_g(V \cap X_{g^{-1}}).$
Clearly, $W$ and $Z$ are open, and they are also $G$-invariant.
Indeed, for any $h\in G$ we have
$$\theta_h (W \cap X_{h^{-1}}) = \theta_h \left( \left(\bigcup_{g\in G} \theta_g \left(U \cap X_{g^{-1}} \right) \right) \cap X_{h^{-1}} \right) = \bigcup_{g\in G} \theta_h \left(\theta_g \left(U \cap X_{g^{-1}} \right) \cap X_{h^{-1}} \right). $$

Note that $\theta_g(U \cap X_{g^{-1}}) \cap X_{h^{-1}} \subseteq X_g \cap X_{h^{-1}}  \subseteq X_{r(g)} \cap X_{s(h)} = \emptyset $ if $s(h) \neq r(g).$ Therefore, take $g \in G$ such that $s(h)=r(g),$ and notice that
$$\begin{aligned}
\theta_h \left(\theta_g \left(U \cap X_{g^{-1}} \right) \cap X_{h^{-1}} \right)  
& \subseteq \theta_h \left(\theta_g \left(\theta_{g^{-1}} \left(X_{h^{-1}}\cap X_g \right) \cap U \right) \right) 
& = \theta_{hg} \left(U \cap \theta_{g^{-1}} \left(X_{h^{-1}}\cap X_g \right) \right) \\
& \subseteq \theta_{hg} \left( U \cap X_{g^{-1}h^{-1}} \right) \subseteq W.
\end{aligned}$$
This shows that $W$ is $G$-invariant, and similarly one can show that $Z$ is $G$-invariant. 
By Remark~\ref{P1-rem:SubsetsToIdealsAndBack},  $I_1:=\mathcal{I}(W)$ and $I_2:=\mathcal{I}(Z)$ are nonzero $G$-invariant ideals of $\mathcal{L}_c(X,\mathbb{K})$.
Hence, by $G$-primeness of $\mathcal{L}_c(X,\mathbb{K})$, we get that $I_1I_2 \neq \{0\}$ and thus $W \cap Z \neq \emptyset$.

Choose some $z \in W \cap Z.$ There are $g,h\in G$ such that $z \in \theta_g(U \cap X_{g^{-1}}) \cap \theta_h(V \cap X_{h^{-1}}).$ Since $X_g\cap X_h \subseteq X_{r(g)}\cap X_{s(h^{-1})},$ we have that $s(h^{-1})=r(g).$ By 
Definition~\ref{P1-def:TopPartAction},
$$\begin{aligned}
\theta_{h^{-1}}(z) & \in \theta_{h^{-1}}(\theta_g(U \cap X_{g^{-1}})\cap X_h) \cap V
\subseteq \theta_{h^{-1}} \left(\theta_g \left(\theta_{g^{-1}} \left(X_h \cap X_g \right) \cap U \right) \right) \cap V \\
& = \theta_{h^{-1}g} \left(U \cap \theta_{g^{-1}} \left(X_h \cap X_g \right) \right) \cap V 
\subseteq \theta_{h^{-1}g} \left(U \cap X_{g^{-1}h} \right) \cap V,
\end{aligned}$$ 
and this shows that $\theta$ is topologically transitive.
\end{proof}

Next, we characterize  topological freeness of $\theta$
in terms of properties of the associated partial skew groupoid ring.
Note that if $F\subseteq X$ is closed and $G$-invariant, then by 
Remark~\ref{P1-rem:SubsetsToIdealsAndBack}, 
$\mathcal{J}(F)$ is a $G$-invariant ideal of $\mathcal{L}_c(X,\mathbb{K})$.
Thus, $\frac{\mathcal{L}_c(X,\mathbb{K})}{\mathcal{J}(F)} \rtimes_{\overline{\alpha}}G$ is well-defined (cf.~Proposition~\ref{P1-prop:quotientSkew}).

\begin{theorem}\label{P1-thm:TF}
Let $\mathbb{K}$ be a field,
let $G$ be a groupoid, 
let $X$ be a locally compact, Hausdorff, zero-dimensional space, and let $\theta=(X_g,\theta_g)_{g \in G}$ be a topological partial action of $G$ on $X$,
such that $X_g$ is clopen, for every $g \in G$, and $X=\bigsqcup_{e \in G_0}X_e.$ 
Furthermore, let $\alpha$ be the induced partial action of $G$ on
$\mathcal{L}_c(X,\mathbb{K})$ (see
Remark~\ref{P1-rem:InducedAlgebraicAction}).
Suppose that $F \subseteq X$ is closed and $G$-invariant. Then, $\theta$ is topologically free on $F$
 if, and only if, $\bigoplus_{e \in G_0} \overline{D}_e\delta_e$ is a maximal commutative subring of $\frac{\mathcal{L}_c(X,\mathbb{K})}{\mathcal{J}(F)} \rtimes_{\overline{\alpha}}G.$
\end{theorem}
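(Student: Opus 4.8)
The statement follows the pattern already established in Section~\ref{P1-Sec:Algebraic}: we want to reduce the claim to Proposition~\ref{P1-prop:max-com-int-prop} (applied to the quotient partial action $\overline{\alpha}$ on $\mathcal{L}_c(X,\mathbb{K})/\mathcal{J}(F)$) by identifying the quotient ring with a function algebra on a topological space. The first step is to observe that $\mathcal{J}(F) = \{f : f|_F = 0\}$ by Remark~\ref{P1-rem:SubsetsToIdealsAndBack}~(c), and that restriction of functions gives an isomorphism $\mathcal{L}_c(X,\mathbb{K})/\mathcal{J}(F) \cong \mathcal{L}_c(F,\mathbb{K})$ of $\mathbb{K}$-algebras, where $F$ is again locally compact, Hausdorff and zero-dimensional (being closed in such a space). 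One must check that this quotient is the algebra of \emph{compactly supported} locally constant functions on $F$; this uses that $F$ is clopen in the relevant local pieces since $X = \bigsqcup_{e\in G_0} X_e$ and each $X_g$ is clopen, so supports stay clopen and compact. Under this identification $\overline{D}_e = (D_e + \mathcal{J}(F))/\mathcal{J}(F)$ corresponds to $\{f \in \mathcal{L}_c(F,\mathbb{K}) : \Supp(f) \subseteq X_e \cap F\}$, and the quotient partial action $\overline{\alpha}$ is precisely the algebraic partial action induced (in the sense of Remark~\ref{P1-rem:InducedAlgebraicAction}) by the restricted topological partial action $\theta|_F = (X_g \cap F, \theta_g|_{\cdot})_{g\in G}$ of $G$ on $F$. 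Here one uses that $F$ is $G$-invariant, so $\theta_g$ restricts to a homeomorphism $X_{g^{-1}}\cap F \to X_g \cap F$ for each $g$, and $F = \bigsqcup_{e\in G_0}(X_e \cap F)$.

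The second step is to invoke Proposition~\ref{P1-prop:max-com-int-prop} with the commutative ring $\mathcal{L}_c(F,\mathbb{K})$ and the partial action $\theta|_F$ (equivalently $\overline{\alpha}$): $\bigoplus_{e\in G_0}\overline{D}_e\delta_e$ is maximal commutative in $\frac{\mathcal{L}_c(X,\mathbb{K})}{\mathcal{J}(F)}\rtimes_{\overline{\alpha}}G$ if and only if $\overline{\alpha}$ has the intersection property. So it remains to show that $\overline{\alpha}$ has the intersection property if and only if $\theta$ is topologically free on $F$ — i.e., that the induced algebraic partial action of $G$ on $\mathcal{L}_c(F,\mathbb{K})$ has the intersection property exactly when $\theta|_F$ is topologically free (on all of $F$). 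This is the genuine content of the theorem.

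\textbf{The main step.} For the equivalence ``$\theta|_F$ topologically free $\iff$ the induced action on $\mathcal{L}_c(F,\mathbb{K})$ has the intersection property'', I would argue as follows. For the contrapositive of one direction: if $\theta|_F$ is not topologically free, there exist $u \in G_0$ and $t \in G_u^u \setminus \{u\}$ with $\Int_F\{x \in X_{t^{-1}}\cap F : \theta_t(x) = x\} \neq \emptyset$; pick a nonempty compact open $V \subseteq F$ inside this interior (using zero-dimensionality), take $f := 1_V \in \mathcal{L}_c(F,\mathbb{K})$, and consider the element $f\delta_u - f\delta_t$. Since $f$ is supported on the fixed-point set of $\theta_t$, one checks $a\delta_e \cdot (f\delta_u - f\delta_t) \cdot b\delta_h = 0$ after applying $P_0$ — more precisely the ideal it generates meets $\bigoplus_{e}\overline{D}_e\delta_e$ trivially, exactly as in the proof of the (ii)$\Rightarrow$(i) direction of Proposition~\ref{P1-prop:max-com-int-prop} (where $a_g\delta_g \in C_S(A)$ plays the role of the ``fixed'' term). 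Conversely, if $\theta|_F$ is topologically free, take a nonzero ideal $I$ and a nonzero element $y = \sum_{t\in \Supp(y)} a_t\delta_t$ of minimal support; following the standard Steinberg-algebra/partial-crossed-product argument one reduces to $\Supp(y) \subseteq G_{u_1}^{u_1}\cup\dots$, multiplies on both sides by indicator functions of small compact open sets avoiding the fixed-point sets of the non-identity isotropy elements appearing in $\Supp(y)$ (possible by topological freeness), and shrinks the support until $y$ lies in $\bigoplus_e \overline{D}_e\delta_e$. I would likely cite or mirror \cite[Lemma~3.2.1]{TBeuter2018} and the argument of Lemma~\ref{P1-lem:some-obs}~(v) to handle the isotropy-group reduction cleanly.

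\textbf{Expected obstacle.} The routine part is the identification $\mathcal{L}_c(X,\mathbb{K})/\mathcal{J}(F)\cong\mathcal{L}_c(F,\mathbb{K})$ and the matching of $\overline{\alpha}$ with $\theta|_F$; care is only needed to verify compact-support and clopen conditions on $F$. The real work is the intersection-property $\iff$ topological-freeness equivalence on $F$: the delicate point is the ``separation'' argument — given an element of minimal support whose support contains several non-identity isotropy elements $t_1,\dots,t_n$ (possibly attached to different units), one must produce a single compact open set whose indicator, when multiplied in, kills all the $\delta_{t_i}$-terms simultaneously while keeping a $\delta_{u}$-term nonzero. This requires carefully using that each $\{x : \theta_{t_i}(x) = x\}$ has empty interior in $F$, so that a finite union of such sets still has dense complement, and then choosing a nonempty compact open set in the intersection of the complements with the relevant fibre $X_{r(t_i)}\cap F$. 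Keeping track of which $\theta_{t_i}$ are defined on which fibre (the groupoid, as opposed to group, bookkeeping) is where most of the attention will go.
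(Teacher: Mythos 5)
Your proposal is correct in outline, but it takes a genuinely different (and longer) route than the paper. The paper proves the equivalence ``topological freeness on $F$ $\Leftrightarrow$ maximal commutativity of $\bigoplus_{e\in G_0}\overline{D}_e\delta_e$'' \emph{directly}: for the forward direction it takes a single element $a=\sum_g(a_g+\mathcal{J}(F))\delta_g$ of the centralizer, tests it against $(1_K+\mathcal{J}(F))\delta_{r(t)}$ for small compact open $K$, and derives a contradiction in the two cases $s(t)\neq r(t)$ and $s(t)=r(t)$; for the converse it exhibits $(1_K+\mathcal{J}(F))\delta_{t_0^{-1}}$ as a non-diagonal element of the centralizer. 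It never mentions the intersection property inside this proof, and it works with cosets via the observation that $f+\mathcal{J}(F)=g+\mathcal{J}(F)$ iff $f\lvert_F=g\lvert_F$, rather than through your (correct, but not needed) identification $\mathcal{L}_c(X,\mathbb{K})/\mathcal{J}(F)\cong\mathcal{L}_c(F,\mathbb{K})$. Your route instead converts the goal, via Proposition~\ref{P1-prop:max-com-int-prop}, into ``topological freeness on $F$ $\Leftrightarrow$ intersection property of $\overline{\alpha}$'', and this forces you to prove ``topologically free $\Rightarrow$ intersection property'' from scratch by the minimal-support/separation argument. That argument does go through — the delicate point you flag is real, and it is resolved precisely because each $X_{t_i^{-1}}$ is \emph{clopen}, so each fixed-point set $\{x\in X_{t_i^{-1}}\cap F:\theta_{t_i}(x)=x\}$ is closed in $F$ with empty interior, and a finite union of such sets is still nowhere dense — but it is strictly more work: maximal commutativity is a one-element-at-a-time condition, whereas the intersection property requires controlling an entire ideal, and the paper's ordering of the logic (topological freeness $\Leftrightarrow$ maximal commutativity first, then Proposition~\ref{P1-prop:max-com-int-prop} to reach the intersection property in Theorem~\ref{P1-thm:TE2}) lets it avoid the separation argument altogether. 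Your easy direction (``not topologically free $\Rightarrow$ not intersection property'') is in fact just the composition of the paper's ``if'' direction with Proposition~\ref{P1-prop:max-com-int-prop}~(ii)$\Rightarrow$(i). So: no gap, but you are re-deriving a harder statement than the theorem asks for.
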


\begin{proof}
We first show the ``only if'' statement.
Suppose that $\theta$ is topologically free on $F$.
Let $a = \sum_{g \in H} (a_g +\mathcal{J}(F)) \delta_g \in \frac{\mathcal{L}_c(X,\mathbb{K})}{\mathcal{J}(F)} \rtimes_{\overline{\alpha}}G $
be such that $ad=da$, for all $d \in \bigoplus_{e \in G_0} \overline{D}_e\delta_e$, and $a_g \notin \mathcal{J}(F)$ for all $g \in H.$ 
Seeking a contradiction, suppose that there is some $t\in H\setminus G_0$.
For any compact open subset $K$ of $X_{r(t)}$, using that $a (1_K + \mathcal{J}(F))\delta_{r(t)}= (1_K + \mathcal{J}(F))\delta_{r(t)} a$, we get that
\begin{equation}\label{P1-eq:TopFreeMaxCommCalc}
\sum_{\substack{g \in H \\ s(g) = r(t)}}\overline{\alpha}_g(\overline{\alpha}_{g^{-1}}(a_g+\mathcal{J}(F))(1_K+\mathcal{J}(F)))\delta_g= \sum_{\substack{g \in H \\ r(g) =r(t)}} (1_K+\mathcal{J}(F))(a_g+\mathcal{J}(F)) \delta_g.
\end{equation}

We also notice that if
$f+\mathcal{J}(F)=g+\mathcal{J}(F),$ then $(g-f)\lvert_F =0$ and $g\lvert_F=f\lvert_F$.

\textbf{Case 1} ($s(t) \neq r(t)$):
Let $x_0 \in F \cap X_t$ be such that $a_t(x_0)\neq 0.$
Using that $X$ is locally compact, Hausdorff and zero-dimensional, there exists a compact open set $K$ such that $x_0 \in K \subseteq X_t\subseteq X_{r(t)}.$ 
By comparing the left-hand side with the right-hand side of \eqref{P1-eq:TopFreeMaxCommCalc}, since $s(t) \neq r(t)$,  we get that
$0+\mathcal{J}(F)= a_t1_K+\mathcal{J}(F).$ Thus, $ a_t1_K \in \mathcal{J}(F)$ and
$a_t1_K\lvert_F =0,$
which is a contradiction, since $x_0 \in F$ and $a_t(x_0)1_K(x_0)=a_t(x_0) \neq 0.$

\textbf{Case 2} ($s(t) = r(t)$):
Let $z \in F \cap X_t$ be such that $a_t(z)\neq 0.$ Then, there is an open set $A$ such that $z \in A\subseteq X_t $ and $a_t(A) = \{a_t(z)\}$.
We have that $A\cap F\subseteq X_t \cap F$ is open. Notice that $t\neq r(t)$. Hence, by assumption, there is $y \in A\cap F$ such that $\theta_{t^{-1}}(y) \neq y.$
Using that $X$ is Hausdorff, there exist disjoint open sets $V, W$ such that
$y \in V$ and $\theta_{t^{-1}}(y) \in W.$ There is also a compact open set $K$ such that
$y \in K \subseteq V \cap X_t\subseteq V \cap X_{r(t)}.$ 
By comparing the left-hand side with the right-hand side of \eqref{P1-eq:TopFreeMaxCommCalc}, we get that
$\alpha_t(\alpha_{t^{-1}}(a_t)1_K)+\mathcal{J}(F)= a_t1_K+\mathcal{J}(F)$ which yields
$a_t(1_K\circ \theta_{t^{-1}})+\mathcal{J}(F)= a_t1_K+\mathcal{J}(F).$ Finally,
$a_t(1_K\circ \theta_{t^{-1}}-1_K) \in \mathcal{J}(F)$ and $(a_t(1_K\circ \theta_{t^{-1}}-1_K))\lvert_F = 0.$ This is a contradiction, since $y \in A \cap F$ and $$a_t(1_K\circ \theta_{t^{-1}}-1_K)(y)= a_t(y)(1_K\circ \theta_{t^{-1}}(y)-1_K(y))=a_t(y)(0-1) = -a_t(y) = -a_t(z) \neq 0.$$

Now we show the ``if'' statement.
Suppose that $\theta$ is not topologically free on $F.$ Then, there exist $u_0 \in G_0,$ $t_0 \in G_{u_0}^{u_0} \setminus\{u_0\}$ and an open subset $V \subseteq X$ such that $V \cap F \subseteq X_{t_0^{-1}}\cap F$ and $\theta_{t_0}(x)=x,$ for all $x \in V \cap F.$
Using that $X$ is locally compact, Hausdorff and zero-dimensional, there exists a compact open subset $K$ such that $x \in K \subseteq V \subseteq X_{t_0^{-1}}.$ Therefore, $1_K \in D_{t_0^{-1}}.$

Finally, we show that $(1_K+\mathcal{J}(F))\delta_{t_0^{-1}}$ commutes with every element of $\bigoplus_{e \in G_0} \overline{D}_e\delta_e.$
Let $b= \sum_{e \in G_0}(f_e+ \mathcal{J}(F))\delta_e \in \bigoplus_{e \in G_0} \overline{D}_e\delta_e.$ 
We get that
$$\begin{aligned}
& ((1_K+\mathcal{J}(F))\delta_{t_0^{-1}})b-b(1_K+\mathcal{J}(F))\delta_{t_0^{-1}}= \\ 
& (\alpha_{t_0^{-1}}(\alpha_{t_0}(1_K)f_{r(t_0)})+\mathcal{J}(F))\delta_{t_0^{-1}r(t_0)}-(f_{s(t_0)} 1_K +\mathcal{J}(F)) \delta_{s(t_0)t_0^{-1}} = \\ &(\alpha_{t_0^{-1}}(\alpha_{t_0}(1_K)f_{u_0})+\mathcal{J}(F))\delta_{t_0^{-1}} -(f_{u_0}1_K +\mathcal{J}(F))\delta_{t_0^{-1}} = \\ 
& (\alpha_{t_0^{-1}}((1_K \circ \theta_{t_0^{-1}})f_{u_0})+\mathcal{J}(F))\delta_{t_0^{-1}} - (f_{u_0}1_K +\mathcal{J}(F))\delta_{t_0^{-1}} = \\ & (1_K((f_{u_0}\circ \theta_{t_0}) -f_{u_0}) +\mathcal{J}(F))\delta_{t_0^{-1}}. 
\end{aligned}$$

Let $x \in F.$ If $x \in V \cap F,$ then $\theta_{t_0}(x)=x$ and
$((f_{u_0}\circ \theta_{t_0}) -f_{u_0})(x)=0.$
If $x \in F \setminus V,$ then
$1_K(x)= 0,$ because $K \subseteq V.$
Hence, $(1_K+\mathcal{J}(F))\delta_{t_0^{-1}}b-b(1_K+\mathcal{J}(F))\delta_{t_0^{-1}} = 0 +\mathcal{J}(F).$
\end{proof}

By Theorem~\ref{P1-thm:AE1} and Theorem~\ref{P1-thm:TF} we get the following.

\begin{corolario}\label{P1-cor-top-residual}
$\theta$ is topologically free on every closed $G$-invariant subset of $X$ if, and only if, the induced partial action $\alpha$ of $G$ on $\mathcal{L}_c(X,\mathbb{K})$ has the residual intersection property.
\end{corolario}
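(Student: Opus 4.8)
The plan is to derive the equivalence by chaining Theorem~\ref{P1-thm:TF} with Theorem~\ref{P1-thm:AE1}, using the bijection between the closed $G$-invariant subsets of $X$ and the $G$-invariant ideals of $\mathcal{L}_c(X,\mathbb{K})$ recorded in Proposition~\ref{P1-prop:corresp2}~(ii) (equivalently, Remark~\ref{P1-rem:SubsetsToIdealsAndBack}) to make the two universal quantifiers line up.

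First I would check that the induced partial action $\alpha=(D_g,\alpha_g)_{g\in G}$ of Remark~\ref{P1-rem:InducedAlgebraicAction} meets the standing hypotheses of Section~\ref{P1-Sec:Algebraic}, so that Theorem~\ref{P1-thm:AE1} applies: the ring $R:=\mathcal{L}_c(X,\mathbb{K})$ is commutative; each $D_g=\{f\in\mathcal{L}_c(X,\mathbb{K}):\Supp(f)\subseteq X_g\}$ is $s$-unital, since given finitely many elements of $D_g$ the union of their supports is a compact open subset $K\subseteq X_g$ and $1_K$ is an $s$-unit for that set; and $R=\bigoplus_{e\in G_0}D_e$ because $X=\bigsqcup_{e\in G_0}X_e$. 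Next I would record that $F\mapsto\mathcal{J}(F)$ restricts to a bijection from the closed $G$-invariant subsets of $X$ onto the $G$-invariant ideals of $R$, and that under this identification $\overline{D}_e=(D_e+\mathcal{J}(F))/\mathcal{J}(F)$; hence the subring $\bigoplus_{e\in G_0}\overline{D}_e\delta_e$ and the quotient partial skew groupoid ring $\frac{\mathcal{L}_c(X,\mathbb{K})}{\mathcal{J}(F)}\rtimes_{\overline{\alpha}}G$ appearing in Theorem~\ref{P1-thm:TF} are exactly the objects appearing in Theorem~\ref{P1-thm:AE1}~(i) for the ideal $J=\mathcal{J}(F)$.

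With this in place, I would run the chain of equivalences. By Theorem~\ref{P1-thm:TF}, $\theta$ is topologically free on every closed $G$-invariant subset of $X$ if, and only if, for every such $F$ the subring $\bigoplus_{e\in G_0}\overline{D}_e\delta_e$ is maximal commutative in $\frac{\mathcal{L}_c(X,\mathbb{K})}{\mathcal{J}(F)}\rtimes_{\overline{\alpha}}G$; since $F\mapsto\mathcal{J}(F)$ exhausts the $G$-invariant ideals of $\mathcal{L}_c(X,\mathbb{K})$, this is in turn equivalent to $\bigoplus_{e\in G_0}\overline{D}_e\delta_e$ being maximal commutative in $\frac{\mathcal{L}_c(X,\mathbb{K})}{J}\rtimes_{\overline{\alpha}}G$ for every $G$-invariant ideal $J$; and, by Theorem~\ref{P1-thm:AE1} applied to the commutative ring $\mathcal{L}_c(X,\mathbb{K})$, this last condition is equivalent to $\alpha$ having the residual intersection property.

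There is no serious obstacle here, since the corollary is a formal consequence of results already established. The only point that needs a moment of attention is that Theorem~\ref{P1-thm:TF} is phrased for a single fixed closed $G$-invariant set whereas the corresponding clause of Theorem~\ref{P1-thm:AE1} quantifies over all $G$-invariant ideals; the bijection of Proposition~\ref{P1-prop:corresp2}~(ii) is precisely what reconciles the two ranges of quantification, and confirming that the symbol $\overline{D}_e$ denotes the same homogeneous component in both theorems (i.e. that $J=\mathcal{J}(F)$ is indeed the ideal producing the quotient action of Theorem~\ref{P1-thm:TF}) is all that is required to close the argument.
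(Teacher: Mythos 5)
Your proposal is correct and follows essentially the same route as the paper, which derives the corollary directly by combining Theorem~\ref{P1-thm:TF} with Theorem~\ref{P1-thm:AE1}. The extra details you supply --- verifying the standing $s$-unitality and decomposition hypotheses, and invoking the bijection $F\mapsto\mathcal{J}(F)$ of Proposition~\ref{P1-prop:corresp2} to align the quantification over closed $G$-invariant subsets with that over $G$-invariant ideals --- are exactly the implicit steps the paper's one-line proof relies on.
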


\subsection{Steinberg algebras and partial skew groupoid rings}\label{P1-substeinberg}

In this section, we point out that the partial skew groupoid ring associated with $\theta$ is isomorphic to the Steinberg algebra of the induced transformation groupoid. We also identify correspondences between topological properties of $\theta$ and properties of the associated transformation groupoid.

The \emph{transformation groupoid associated with $\theta$} is defined as the set
$$G \rtimes_{\theta}X:=\{(t,x): t \in G \,\ \text{and} \,\ x \in X_{t^{-1}} \},$$
and 
$(G \rtimes_{\theta}X)^2:=\{((v,y),(t,x)): 
(v,y),(t,x) \in G \rtimes_{\theta}X, \,\
(v,t)\in G^2 \,\ \text{and} \,\ \theta_t(x)=y \}.$
The product in $G \rtimes_{\theta}X$ is given by $(v,y)(t,x):=(vt,x)$
for all
$((v,y),(t,x)) \in (G \rtimes_{\theta}X)^2$.
For $(t,x) \in G \rtimes_{\theta}X$, we define
$(t,x)^{-1}:=(t^{-1}, \theta_t(x)).$ 

We equip $G \rtimes_{\theta}X$ with the topology induced by the product topology on $G \times X.$ Since $G$ is discrete and $X$ is Hausdorff, $G \rtimes_{\theta}X$ is Hausdorff. Notice that the inversion and product operations are continuous, because $\theta_t$ is continuous for every $t \in G,$
and that
$(G \rtimes_{\theta}X)_0=\{ (e,x): e \in G_0 \,\ \text{and} \,\ x \in X_e\}.$
Using that $X= \bigsqcup_{e \in G_0}X_e,$ there is a homeomorphism 
\begin{align}\label{P1-map-rho}
    \rho:  X \ni x \longmapsto (e, x) \in (G \rtimes_{\theta} X)_0.
\end{align}
Hence, $(G \rtimes_{\theta} X)_0$ is locally compact, Hausdorff and zero-dimensional.

For each $(g,x) \in G \rtimes_{\theta}X,$ the 
source in given by $d(g,x):=(s(g),x),$ the 
range is given by $t(g,x):=(r(g),\theta_g(x))$ and its restriction $\{g\}\times X_{g^{-1}} \to \{r(g)\}\times X_g$ is a homeomorphism. In particular, $G \rtimes_{\theta}X$ is \'{e}tale. Therefore, $G \rtimes_{\theta}X$ is ample and Hausdorff.
Recall that the Steinberg algebra,
$A_{\mathcal{R}}(G \rtimes_{\theta}X),$ is the set of all locally constant functions $f:G \rtimes_{\theta}X \to \mathcal{R}$ with compact support (see \cite{Beuter2018} for more details).

\begin{theorem}\label{P1-Steinberg-iso}
Let $\mathcal{R}$ be a commutative unital ring, let $G$ be a groupoid, let $X$ be a locally compact, Hausdorff, zero-dimensional space, and let $\theta = (X_g, \theta_g)_{g \in G}$ be a partial action of $G$ on $X$, such that
$X_g$ is clopen, for every $g \in G$, and $X=\bigsqcup_{e \in G_0}X_e.$ 
Furthermore, let $\alpha$ be the induced partial action of $G$ on
$\mathcal{L}_c(X,\mathcal{R})$ (see
Remark~\ref{P1-rem:InducedAlgebraicAction}).
Then, $\mathcal{L}_c(X,\mathcal{R}) \rtimes_{\alpha}G$ and $A_{\mathcal{R}}(G\rtimes_{\theta}X)$ are isomorphic as $\mathcal{R}$-algebras.
\end{theorem}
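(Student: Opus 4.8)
The plan is to construct an explicit $\mathcal{R}$-algebra isomorphism $\Phi : \mathcal{L}_c(X,\mathcal{R}) \rtimes_{\alpha}G \to A_{\mathcal{R}}(G\rtimes_{\theta}X)$ by describing where each homogeneous component $D_g\delta_g$ goes, and then checking multiplicativity on products of homogeneous elements. For each $g\in G$, the characteristic functions $1_K$ of compact open sets $K\subseteq X_g$ span $D_g$ over $\mathcal{R}$ (since $X$ is zero-dimensional, locally compact, Hausdorff and $X_g$ is clopen). The natural receptacle for $1_K\delta_g$ should be the characteristic function of the compact open bisection $\{g\}\times \theta_{g^{-1}}(K) \subseteq G\rtimes_\theta X$; note $\theta_{g^{-1}}(K)\subseteq X_{g^{-1}}$ and the set $\{g\}\times\theta_{g^{-1}}(K)$ is indeed compact open in $G\rtimes_\theta X$ by the topology induced from $G\times X$ with $G$ discrete. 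So I would define $\Phi(1_K\delta_g) := 1_{\{g\}\times\theta_{g^{-1}}(K)}$ and extend $\mathcal{R}$-linearly; equivalently, for $f\in D_g$ one sets $\Phi(f\delta_g)$ to be the function supported on $\{g\}\times X_{g^{-1}}$ with value $(f\delta_g)(g,x) := f(\theta_g(x))$ at $(g,x)$, which is locally constant with compact support. First I would check this is well-defined and $\mathcal{R}$-linear, that it lands in $A_{\mathcal{R}}(G\rtimes_\theta X)$, and that it is additive across the direct sum decomposition $\mathcal{L}_c(X,\mathcal{R})\rtimes_\alpha G = \bigoplus_{g\in G} D_g\delta_g$.

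Next I would verify that $\Phi$ is bijective. Surjectivity: every element of $A_{\mathcal{R}}(G\rtimes_\theta X)$ is a finite $\mathcal{R}$-linear combination of characteristic functions of compact open bisections contained in a single "slice" $\{g\}\times X_{g^{-1}}$ (because $G$ is discrete, a compact subset of $G\rtimes_\theta X$ meets only finitely many slices, and within a slice the compact open subsets of $\{g\}\times X_{g^{-1}}$ correspond to compact open subsets of $X_{g^{-1}}$), and each such generator is hit by $\Phi$. Injectivity follows because $\Phi$ respects the grading — the $G$-grading on the domain versus the grading of $A_{\mathcal{R}}(G\rtimes_\theta X)$ by slices — and on each slice $\Phi$ restricts to the standard isomorphism $D_g \cong \mathcal{L}_c(X_{g^{-1}},\mathcal{R}) \cong \mathcal{L}_c(\{g\}\times X_{g^{-1}},\mathcal{R})$ given by precomposition with the homeomorphism $\theta_g$. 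Since $\Phi$ carries the $g$-component onto the $g$-slice bijectively, and these decompose the respective rings as $\mathcal{R}$-modules, $\Phi$ is an $\mathcal{R}$-module isomorphism.

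The main work is multiplicativity, and this is the step I expect to be the principal (if only mildly tedious) obstacle. It suffices to check $\Phi\big((f\delta_g)(h\delta_h)\big) = \Phi(f\delta_g)\cdot\Phi(h\delta_h)$ for $f\in D_g$, $h'\in D_h$ (I will write $h'$ to avoid clashing with the groupoid element $h$). On the left, by Definition~\ref{P1-def:D3}, $(f\delta_g)(h'\delta_h) = \alpha_g(\alpha_{g^{-1}}(f)h')\,\delta_{gh}$ when $(g,h)\in G^2$, and $0$ otherwise; applying $\Phi$ gives a function on the slice $\{gh\}\times X_{(gh)^{-1}}$ whose value at $(gh,x)$ is $\alpha_g(\alpha_{g^{-1}}(f)h')(\theta_{gh}(x)) = f(\theta_{gh}(x))\,h'(\theta_h(x))$, using $\alpha_g(k)=k\circ\theta_{g^{-1}}$ on the appropriate domains together with $\theta_g\circ\theta_h = \theta_{gh}$ on $\theta_h^{-1}(X_{g^{-1}}\cap X_h)$. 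On the right, the convolution product in $A_{\mathcal{R}}(G\rtimes_\theta X)$ evaluated at $(gh,x)$ is $\sum_{(v,y)(t,z)=(gh,x)} \Phi(f\delta_g)(v,y)\,\Phi(h'\delta_h)(t,z)$; since $\Phi(f\delta_g)$ is supported on the $g$-slice and $\Phi(h'\delta_h)$ on the $h$-slice, the only possibly nonzero term has $(v,t)=(g,h)$, $z = x$, $y = \theta_h(x)$, giving $f(\theta_g(\theta_h(x)))\,h'(\theta_h(x)) = f(\theta_{gh}(x))\,h'(\theta_h(x))$, matching the left side; and when $(g,h)\notin G^2$ both sides vanish because the slices cannot compose. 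Finally I would note $\Phi$ sends the (s-)unit structure appropriately — in particular $\sum_{e\in G_0} 1_{X_e}\delta_e$ (where defined) to the indicator of the unit space — completing the proof that $\Phi$ is an $\mathcal{R}$-algebra isomorphism. The only care needed throughout is bookkeeping of domains: checking that $\alpha_{g^{-1}}(f)h'$ genuinely lies in $D_{g^{-1}}\cap D_h$ so that the composite $\theta_g\circ\theta_h$ formula applies, which is exactly the content of axioms (iii)–(iv) of Definition~\ref{P1-def:D1} for $\theta$.
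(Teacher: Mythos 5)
Your proposal is correct and follows essentially the same route as the paper, which simply declares the proof ``analogous to the proof of \cite[Theorem~3.2]{Beuter2018}'': the map $f\delta_g \mapsto \big((g,x)\mapsto f(\theta_g(x))\big)$ supported on the slice $\{g\}\times X_{g^{-1}}$ is exactly the groupoid version of the isomorphism used there, and your verification of linearity, bijectivity via the slice decomposition, and multiplicativity via the convolution formula (with the domain bookkeeping coming from axioms (iii)--(iv) of Definition~\ref{P1-def:D1}) is the intended argument.
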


\begin{proof}
It is analogous to the proof of \cite[Theorem~3.2]{Beuter2018}. 
\end{proof}

\begin{obs}
Consider the inverse semigroup $S=G\cup\{z\}$ (cf. Remark~\ref{P2-InverseSemigroups}) and let $\theta':=(X'_t, \theta'_t)_{t \in S}$ be a topological partial action of $S$ on $X$ such that $X'_t:=X_t$ and $\theta'_t:=\theta_t,$ for every $t \in G,$ and $X'_{z}:=\emptyset.$ Then, the transformation groupoid $G \rtimes_{\theta}X$ corresponds exactly to the groupoid of germs $S \ltimes X$ as defined in \cite[Definition~4.1.5]{TBeuter2018}, and hence Theorem~\ref{P1-Steinberg-iso} can be obtained from \cite[Theorem~4.3.4]{TBeuter2018}.
\end{obs}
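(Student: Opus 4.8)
The plan is to exhibit an explicit isomorphism of topological groupoids $\Phi \colon S \ltimes X \to G \rtimes_\theta X$ and then to deduce Theorem~\ref{P1-Steinberg-iso} by combining this isomorphism with Remark~\ref{P2-InverseSemigroups} and \cite[Theorem~4.3.4]{TBeuter2018}. First I would unwind the structure of the inverse semigroup $S = G \cup \{z\}$. Its idempotents are $E(S) = G_0 \cup \{z\}$, and a direct computation shows that its natural partial order is trivial on $G$: for $g,h \in G$ one has $g \le h$ if and only if $g = h$ (using that $ge$ equals $g$ when $s(g)=e$ and equals $z$ otherwise), while $z$ is the least element. Since the only pairs $(t,x)$ with $x \in X'_{t^{-1}}$ occur for $t \in G$ (because $X'_z = \emptyset$), every germ of the action $\theta'$ is represented by some $(g,x)$ with $g \in G$ and $x \in X'_{g^{-1}} = X_{g^{-1}}$, which already suggests the candidate map $\Phi([g,x]) := (g,x)$.

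Next I would verify that the germ equivalence of \cite[Definition~4.1.5]{TBeuter2018} collapses to equality of the $G$-coordinate, so that $\Phi$ is a well-defined bijection. Recall that $(g,x) \sim (h,x)$ means there is $v \in S$ with $v \le g$, $v \le h$ and $x \in X'_{v^{-1}}$. As $v = z$ is excluded by $X'_z = \emptyset$, and the triviality of the order forces any nonzero $v \le g,h$ to satisfy $v = g = h$, we conclude that $[g,x] = [h,x]$ if and only if $g = h$. Hence $\Phi$ is a well-defined bijection onto $G \rtimes_\theta X = \{(g,x) : g \in G,\ x \in X_{g^{-1}}\}$. For the unit spaces I would use $X = \bigsqcup_{e \in G_0} X_e$ to see that the germs $[e,x]$ with $e \in G_0$ and $x \in X_e$ are pairwise distinct and reassemble into a copy of $X$, matching $(G \rtimes_\theta X)_0$ through the homeomorphism $\rho$ of \eqref{P1-map-rho}.

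Then I would check that $\Phi$ is both a groupoid homomorphism and a homeomorphism. Source, range and inversion transport correctly because in $S \ltimes X$ the source of $[g,x]$ is $[s(g),x]$, the range is $[r(g),\theta_g(x)]$ and the inverse is $[g^{-1},\theta_g(x)]$, matching the maps $d$, $t$ and $(\cdot)^{-1}$ of $G \rtimes_\theta X$. For multiplication, composability of $[v,y]$ and $[t,x]$ requires $y = \theta_t(x)$ together with $vt \ne z$, i.e. $(v,t) \in G^2$, which is exactly the composability condition defining $(G \rtimes_\theta X)^2$; and then $[v,\theta_t(x)][t,x] = [vt,x]$ is sent to $(vt,x)$. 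The germ-groupoid topology has basic open sets $\{[g,x] : x \in U\}$ for open $U \subseteq X_{g^{-1}}$, which $\Phi$ carries bijectively onto the basic open sets $\{g\} \times U$ of the étale groupoid $G \rtimes_\theta X$ (with $G$ discrete); since each $X_{g^{-1}}$ is clopen, this makes both $\Phi$ and $\Phi^{-1}$ open, so $\Phi$ is a homeomorphism.

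I expect the main obstacle to be reconciling the conventions of \cite{TBeuter2018} — in particular the indexing of domains by $t^{-1}$ versus $t^{-1}t$, and the precise forms of the germ relation and of the source and range maps — so that the set bijection is genuinely structure-preserving; the disjointness $X = \bigsqcup_{e \in G_0} X_e$ and the triviality of the natural order on $G$ are exactly what force all of these to match on the nose. Granting the isomorphism $G \rtimes_\theta X \cong S \ltimes X$, the Steinberg algebras $A_{\mathcal{R}}(G \rtimes_\theta X)$ and $A_{\mathcal{R}}(S \ltimes X)$ are isomorphic; combining \cite[Theorem~4.3.4]{TBeuter2018} (which realises the partial skew inverse semigroup ring as $A_{\mathcal{R}}(S \ltimes X)$) with the ring isomorphism $\mathcal{L}_c(X,\mathcal{R}) \rtimes_\alpha G \cong \mathcal{L}_c(X,\mathcal{R}) \rtimes_{\alpha'} S$ of Remark~\ref{P2-InverseSemigroups} then recovers Theorem~\ref{P1-Steinberg-iso}.
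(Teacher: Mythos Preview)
The paper treats this statement as a bare observation with no accompanying proof: it simply asserts the identification $G \rtimes_\theta X \cong S \ltimes X$ and cites \cite[Theorem~4.3.4]{TBeuter2018} as an alternative route to Theorem~\ref{P1-Steinberg-iso}. There is nothing to compare against beyond the claim itself.

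Your proposal is a correct and careful fleshing-out of exactly what the remark leaves implicit. The key points---that $E(S)=G_0\cup\{z\}$, that the natural partial order on $S$ is trivial on $G$ (so the germ relation collapses to equality of the $G$-coordinate once $X'_z=\emptyset$ rules out $z$), and that the basic open bisections $\{[g,x]:x\in U\}$ correspond to $\{g\}\times U$---are all accurate and together yield the claimed isomorphism of ample Hausdorff groupoids. Your final step, combining the Steinberg-algebra isomorphism with Remark~\ref{P2-InverseSemigroups} and \cite[Theorem~4.3.4]{TBeuter2018}, is precisely the deduction the paper intends. The only caveat, which you already flag, is to align the domain-indexing and germ conventions with those in \cite{TBeuter2018}; this is purely notational and does not affect the argument.
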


In the next definition, we recall some standard notions regarding topological groupoids. Throughout the remainder of this section, let $\mathcal{G}$ be a topological groupoid, let $\mathcal{G}_0$ denote the unit space of $\mathcal{G},$ let $s$ denote the \emph{source map} and let $r$ denote the \emph{range map}.
A subset $D$ of $\mathcal{G}_0$ is \emph{invariant} if 
$r(g) \in D$ whenever $g \in \mathcal{G}$ and $s(g) \in D.$
If $D$ is an invariant subset of $\mathcal{G}_0$, then we define
$\mathcal{G}_D:=\{g \in \mathcal{G}: s(g) \in D, r(g) \in D\}.$
Notice that $\mathcal{G}_D$ is a full subgroupoid of $\mathcal{G}$ and that $(\mathcal{G}_D)_0=D.$

\begin{definition}
Let $\mathcal{G}$ be a topological groupoid and write $\Iso(\mathcal{G}):=\{g \in \mathcal{G}: s(g)=r(g)\}.$
\begin{enumerate}[{\rm (a)}]
    \item $\mathcal{G}$ is said to be \emph{effective}, if $\mathcal{G}_0=\Int(\Iso(\mathcal{G}));$
    \item $\mathcal{G}$ is said to be \emph{strongly effective}, if for every nonempty invariant subset $D$ of $\mathcal{G}_0,$ the groupoid $\mathcal{G}_D$ is effective;
    \item $\mathcal{G}$ is said to be \emph{minimal}, if $\mathcal{G}_0$ has no nontrivial open invariant subsets;
    \item An \'{e}tale groupoid $\mathcal{G}$ is said to be \emph{topologically transitive}, if
    $s^{-1}(U)\cap r^{-1}(V)\neq \emptyset$
    whenever $U,V$ are nonempty open subsets of $\mathcal{G}_0$.
\end{enumerate}
\end{definition}

 The homeomorphism $\rho$, defined in \eqref{P1-map-rho}, also appears in the context of transformation groupoids induced by partial actions of groups (see e.g. \cite[Section~2.1]{TBeuter2018}). 

\begin{proposition}\label{P1-prop:openinv}
The homeomorphism $\rho,$ defined in
\eqref{P1-map-rho},
yields a bijection between the set of open (resp. closed) $G$-invariant subsets of $X$ and the set of open (resp. closed) invariant subsets of $(G \rtimes_{\theta}X)_0$.
\end{proposition}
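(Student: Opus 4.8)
The plan is to exploit that $\rho$ is already known to be a homeomorphism of $X$ onto $(G\rtimes_{\theta}X)_0$, so that it automatically restricts to a bijection between the open subsets of $X$ and the open subsets of $(G\rtimes_{\theta}X)_0$, and likewise between the closed subsets of the two spaces. Consequently, the only substantial point to establish is the following equivalence: for an arbitrary subset $M\subseteq X$, one has that $M$ is $G$-invariant if, and only if, $\rho(M)$ is an invariant subset of $(G\rtimes_{\theta}X)_0$. Granting this, the proposition follows by combining the equivalence with the bijections on open (resp.\ closed) sets, noting that every invariant $D\subseteq(G\rtimes_{\theta}X)_0$ can be written as $D=\rho(\rho^{-1}(D))$ with $\rho^{-1}(D)$ a $G$-invariant subset of $X$.

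To prove the equivalence, I would unwind both notions of invariance in terms of the transformation groupoid. Recall that the elements of $G\rtimes_{\theta}X$ are the pairs $(g,x)$ with $x\in X_{g^{-1}}$, that its source and range maps are $d(g,x)=(s(g),x)$ and $t(g,x)=(r(g),\theta_g(x))$, and that $\rho(y)=(e,y)$ whenever $y\in X_e$. Since $x\in X_{g^{-1}}\subseteq X_{s(g)}$ we get $d(g,x)=\rho(x)$, and since $\theta_g(x)\in X_g\subseteq X_{r(g)}$ we get $t(g,x)=\rho(\theta_g(x))$; that is, $d$ and $t$ intertwine with $\rho$ in the obvious way. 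Hence, for $M\subseteq X$ and $(g,x)\in G\rtimes_{\theta}X$, injectivity of $\rho$ gives $d(g,x)\in\rho(M)$ if, and only if, $x\in X_{g^{-1}}\cap M$ (the membership $x\in X_{g^{-1}}$ being automatic), and $t(g,x)\in\rho(M)$ if, and only if, $\theta_g(x)\in M$. Therefore $\rho(M)$ is invariant, i.e.\ $t(\gamma)\in\rho(M)$ whenever $\gamma\in G\rtimes_{\theta}X$ with $d(\gamma)\in\rho(M)$, precisely when $\theta_g(x)\in M$ for all $g\in G$ and all $x\in X_{g^{-1}}\cap M$, which is exactly the condition $\theta_g(X_{g^{-1}}\cap M)\subseteq M$ for all $g\in G$, i.e.\ that $M$ is $G$-invariant.

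It then remains only to assemble the bijections. If $U\subseteq X$ is open and $G$-invariant, then $\rho(U)$ is open (as $\rho$ is an open map) and invariant (by the equivalence just proved); conversely, if $D\subseteq(G\rtimes_{\theta}X)_0$ is open and invariant, then $\rho^{-1}(D)$ is open (as $\rho$ is continuous) and $G$-invariant (again by the equivalence, since $\rho(\rho^{-1}(D))=D$), and $\rho(\rho^{-1}(D))=D$; thus $U\mapsto\rho(U)$ is the asserted bijection on open invariant sets, and the verbatim argument with ``open'' replaced by ``closed'' handles the closed case. I do not expect any genuine obstacle here: the proof is a direct translation of definitions, and the only step demanding a little care is checking that the source and range maps of the transformation groupoid are compatible with $\rho$ exactly as indicated, after which both invariance conditions reduce to the single quantified statement $\theta_g(X_{g^{-1}}\cap M)\subseteq M$.
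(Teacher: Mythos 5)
Your proposal is correct and follows essentially the same route as the paper: both proofs unwind the source and range maps of $G\rtimes_{\theta}X$, observe that $d(g,x)=\rho(x)$ and $t(g,x)=\rho(\theta_g(x))$, and check that the two invariance conditions correspond under $\rho$ (the paper verifies the two implications separately, while you package them as a single equivalence, but the content is identical). The open/closed bookkeeping via the homeomorphism property of $\rho$ is the same in both.
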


\begin{proof}
Let $F$ be a $G$-invariant subset of $X.$ We claim that $\rho(F)$ is an invariant subset of $(G \rtimes_{\theta}X)_0.$ Take $(g,x) \in G \rtimes_{\theta}X$ such that $d(g,x)=(s(g),x) \in \rho(F).$ Notice that $t(g,x)=(r(g),\theta_g(x)) \in \rho(F),$ 
because $\theta_g(x) \in F\cap X_g \subseteq F\cap X_{r(g)},$
and $\rho(\theta_g(x))=(r(g),\theta_g(x))$.

Let $D$ be an invariant subset of $(G \rtimes_{\theta}X)_0$.
Take $g \in G.$ 
Let $x \in \rho^{-1}(D) \cap X_{g^{-1}} \subseteq \rho^{-1}(D) \cap X_{s(g)}.$ Then, $\rho(x)=(s(g),x) \in D.$ Using that $D$ is invariant,
$\rho(\theta_g(x)) = (r(g), \theta_g(x)) \in D.$
Thus, $\theta_g(x) \in \rho^{-1}(D).$
This shows that $\theta_g(\rho^{-1}(D) \cap X_{g^{-1}})\subseteq \rho^{-1}(D).$ 
\end{proof}

\begin{proposition}\label{P1-prop:CTS}
The following assertions hold:
\begin{enumerate}[{\rm (i)}]
    \item $\theta$ is minimal if, and only if, $G \rtimes_{\theta}X$ is minimal.
    \item $\theta$ is topologically transitive if, and only if, $G \rtimes_{\theta}X$ is topologically transitive.
    \item Let $F$ be a closed $G$-invariant subset of $X.$ Then, $\theta$ is topologically free on $F$ if, and only if, $(G \rtimes_{\theta}X)_{\rho(F)}$ is effective.
\end{enumerate}
\end{proposition}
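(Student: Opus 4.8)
The plan is to reduce all three equivalences to the correspondence of Proposition~\ref{P1-prop:openinv} together with a direct computation involving the source and range maps of $G \rtimes_{\theta} X$, namely $d(g,x)=(s(g),x)$ and $t(g,x)=(r(g),\theta_g(x))$. Recall from the preceding discussion that $G\rtimes_\theta X$ is ample and Hausdorff, that $\rho$ from \eqref{P1-map-rho} is a homeomorphism of $X$ onto $(G\rtimes_\theta X)_0$, and that a pair $(g,x)$ lies in $G\rtimes_\theta X$ exactly when $x\in X_{g^{-1}}\subseteq X_{s(g)}$, in which case $\theta_g(x)\in X_g\subseteq X_{r(g)}$.

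For (i), I would simply invoke Proposition~\ref{P1-prop:openinv}: it yields a bijection between the open $G$-invariant subsets of $X$ and the open invariant subsets of $(G\rtimes_\theta X)_0$ that carries $\emptyset$ to $\emptyset$ and $X$ to $(G\rtimes_\theta X)_0$. Hence $X$ has no nontrivial open $G$-invariant subset if, and only if, $(G\rtimes_\theta X)_0$ has no nontrivial open invariant subset, which is exactly the assertion. For (ii), using that $G$ is discrete and $X=\bigsqcup_{e\in G_0}X_e$, a short computation gives, for any open $U,V\subseteq X$,
\[ d^{-1}(\rho(U))\cap t^{-1}(\rho(V))=\{(g,x)\in G\rtimes_\theta X: x\in U\cap X_{g^{-1}}\text{ and }\theta_g(x)\in V\}. \]
Since $\rho$ is a homeomorphism onto the unit space, every nonempty open subset of $(G\rtimes_\theta X)_0$ has the form $\rho(U)$ for some nonempty open $U\subseteq X$, and $G\rtimes_\theta X$ is \'etale. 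Thus the displayed set is nonempty for suitable $U,V$ precisely when there is $g\in G$ with $\theta_g(U\cap X_{g^{-1}})\cap V\neq\emptyset$, so topological transitivity of $G\rtimes_\theta X$ and of $\theta$ coincide.

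For (iii), let $F$ be closed and $G$-invariant; by Proposition~\ref{P1-prop:openinv}, $\rho(F)$ is a closed invariant subset of $(G\rtimes_\theta X)_0$, so $(G\rtimes_\theta X)_{\rho(F)}$ is defined. Using $G$-invariance of $F$, one identifies $(G\rtimes_\theta X)_{\rho(F)}=\{(g,x):x\in F\cap X_{g^{-1}}\}$, with unit space $\rho(F)$ and
\[ \Iso\bigl((G\rtimes_\theta X)_{\rho(F)}\bigr)=\bigsqcup_{\substack{g\in G\\ s(g)=r(g)}}\{g\}\times S_g,\qquad S_g:=\{x\in F\cap X_{g^{-1}}:\theta_g(x)=x\}. \]
Because $G$ is discrete and each $X_g$ is clopen, every slice $\{g\}\times(F\cap X_{g^{-1}})$ is open in $(G\rtimes_\theta X)_{\rho(F)}$ and $F\cap X_{g^{-1}}$ is open in $F$; hence the interior (in the subspace topology) of the isotropy bundle decomposes fibrewise as $\bigsqcup_{s(g)=r(g)}\{g\}\times\Int_F(S_g)$. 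As $(G\rtimes_\theta X)_{\rho(F)}$ is \'etale, its unit space is open, so effectiveness amounts to $\Int_F(S_g)=\emptyset$ for every $g$ with $s(g)=r(g)$ and $g\notin G_0$; letting $u$ range over $G_0$ and $t$ over $G_u^u\setminus\{u\}$, this is exactly topological freeness of $\theta$ on $F$.

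The routine content lies in the set-theoretic identities above; the one step that genuinely needs care is the last one in (iii) — matching the interior of the isotropy bundle, computed in the subspace topology of $(G\rtimes_\theta X)_{\rho(F)}$, with the fibrewise relative interiors $\Int_F(S_g)$. This uses precisely that $G$ is discrete and that the $X_g$ are clopen, so that the slices are open and the relative interior in $F\cap X_{g^{-1}}$ agrees with the one in $F$. I expect this bookkeeping to be the main (and essentially only) obstacle.
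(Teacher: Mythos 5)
Your proposal is correct and follows essentially the same route as the paper: (i) is read off from Proposition~\ref{P1-prop:openinv}, (ii) amounts to the identity $d^{-1}(\rho(U))\cap t^{-1}(\rho(V))=\{(g,x): x\in U\cap X_{g^{-1}},\ \theta_g(x)\in V\}$ (which the paper verifies elementwise), and (iii) rests on the fibrewise decomposition of $\Int\bigl(\Iso((G\rtimes_{\theta}X)_{\rho(F)})\bigr)$ over the open slices $\{g\}\times(F\cap X_{g^{-1}})$, which is exactly the content of the paper's two-inclusion argument with basic open sets $\{g\}\times U$. The step you flag as delicate — matching the subspace interior with $\Int_F(S_g)$ via openness of $F\cap X_{g^{-1}}$ in $F$ — is handled correctly.
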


\begin{proof} 
(i) It follows from Proposition~\ref{P1-prop:openinv}.\smallbreak

(ii)
We first show the ``if'' statement.
Suppose that $G \rtimes_{\theta}X$ is topologically transitive.
Let $U,V$ be nonempty open subsets of $X.$ Then, $\rho(U)$ and $\rho(V)$ are nonempty open subsets of $(G \rtimes_{\theta}X)_0.$ 
By assumption, there exists $(g, x) \in d^{-1}(\rho(U)) \cap t^{-1}(\rho(V)).$ Hence, $x \in X_{g^{-1}}$ and $d(g,x)=(s(g),x)= \rho(x) \in \rho(U).$ Furthermore, $t(g,x)=(r(g),\theta_g(x))=\rho(\theta_g(x)) \in \rho(V).$ Thus, $\theta_g(x) \in \theta_g(U\cap X_{g^{-1}})\cap V \neq \emptyset.$\smallbreak

Now we show the ``only if'' statement.
Suppose that $\theta$ is topologically transitive.
Let $A,B$ be nonempty open subsets of $(G \rtimes_{\theta}X)_0.$ Then, $\rho^{-1}(A)$ and $\rho^{-1}(B)$ are nonempty open subsets of $X.$
By assumption,
there exists $g \in G$ such that $\theta_g(\rho^{-1}(A)\cap X_{g^{-1}})\cap \rho^{-1}(B) \neq \emptyset.$ Let $y \in \theta_g(\rho^{-1}(A)\cap X_{g^{-1}})\cap \rho^{-1}(B).$ Then, $\theta_{g^{-1}}(y) \in \rho^{-1}(A)\cap X_{g^{-1}}$ and $d(g, \theta_{g^{-1}}(y)) =\rho(\theta_{g^{-1}}(y)) \in A.$ Furhermore, $t(g, \theta_{g^{-1}}(y)) =\rho(y) \in B.$ Hence, $(g, \theta_{g^{-1}}(y)) \in d^{-1}(A) \cap t^{-1}(B).$\smallbreak

(iii)
Suppose that $F$ is nonempty, for otherwise there is nothing to prove.

We first show the ``only if'' statement.
Suppose that $\theta$ is topologically free on $F.$ We will show that $\Int(\Iso((G \rtimes_{\theta}X)_{\rho(F)})= \rho(F).$
\\
$(\subseteq)$ Let $(g,x) \in \Int(\Iso((G \rtimes_{\theta}X)_{\rho(F)}).$ Then, there is 
an open set $U$ of $X_{g^{-1}}$
such that $(g,x) \in (\{g\} \times U)\cap (G \rtimes_{\theta}X)_{\rho(F)} \subseteq \Iso((G \rtimes_{\theta}X)_{\rho(F)}).$ Let $(g,y) \in (\{g\} \times U)\cap (G \rtimes_{\theta}X)_{\rho(F)}.$ Then,
$d(g,y)=t(g,y) \in \rho(F),$ that is, $s(g)=r(g)$ and $y=\theta_g(y) \in F.$ In particular, $x \in U \cap F \subseteq \{y \in X_{g^{-1}}\cap F: \theta_g(y)=y \}$
and $x \in \Int_F\{y \in X_{g^{-1}}\cap F:\theta_g(y)=y\}.$ Using that $\theta$ is topologically free on $F,$ we have that $g \in G_0$ and $(g, x) \in \rho(F).$
\\
$(\supseteq)$ Let $(e,x) \in \rho(F).$ Observe that $(e,x) \in (\{e\}\times \{X_e\}) \cap (G \rtimes_{\theta}X)_{\rho(F)} \subseteq \Iso((G \rtimes_{\theta}X)_{\rho(F)}).$ Therefore, $\rho(F) \subseteq \Int(\Iso((G \rtimes_{\theta}X)_{\rho(F)}))$ and we conclude that $(G \rtimes_{\theta}X)_{\rho(F)}$ is effective.\smallbreak

Now we show the ``if'' statement by contrapositivity.
Suppose that $\theta$ is not topologically free on $F.$ Then, there exist $g \in G\setminus G_0$ and some $x \in \Int_F\{y \in X_{g^{-1}}\cap F: \theta_g(y)=y\}.$ Thus, there is an open set $U \subseteq X_{g^{-1}}$ such that $x \in U \cap F \subseteq \{y \in X_{g^{-1}}\cap F: \theta_g(y)=y\}.$ Let $z \in U \cap F.$ Then, $\theta_g(z)=z,$ and $z \in (X_{r(g)}\cap X_{s(g)}) \cap F.$ Therefore, $s(g)=r(g)$ and $r(g,z)=d(g,z)= \rho(z) \in \rho(F).$ We notice that $(g,z) \in \Iso((G \rtimes_{\theta}X)_{\rho(F)}).$
Using that $F$ is $G$-invariant,
$(g,x) \in \{g\}\times(U \cap F) = (\{g\}\times U)\cap (G \rtimes_{\theta}X)_{\rho(F)} \subseteq \Iso((G \rtimes_{\theta}X)_{\rho(F)})$ and we have that $(g,x) \in \Int(\Iso((G \rtimes_{\theta}X)_{\rho(F)})),$ but $(g,x) \notin 
\rho(F),$ 
because $g \notin G_0.$ Hence, $(G \rtimes_{\theta}X)_{\rho(F)}$ is not effective.
\end{proof}

\begin{obs}
Note that by \cite[Corollary~5.5]{Steinberg2016} and \cite[Proposition~3.3]{Steinberg2019} the statements (i) and (ii) in Proposition~\ref{P1-prop:CTS} are already known for transformation groupoids induced by global actions of groups. We also point out that Corollary~\ref{P1-cor:coref} follows from \cite[Proposition~4.2.1]{TBeuter2018}.
\end{obs}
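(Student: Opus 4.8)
The plan is to justify both assertions by \emph{identification} rather than by fresh arguments: each claim reduces to recognizing that our transformation groupoid $G \rtimes_\theta X$ coincides with a construction already treated in the cited references, after which the stated equivalences are read off directly.

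First I would treat the assertion about statements (i) and (ii) of Proposition~\ref{P1-prop:CTS}. Specialize to the case where $G$ has a single object, so that $G$ is an ordinary group and the decomposition $X = \bigsqcup_{e \in G_0} X_e$ is trivial; a global action of $G$ is then precisely a global topological action of the group $G$ on $X$. Under the identification $x \mapsto (e,x)$ of \eqref{P1-map-rho}, the groupoid $G \rtimes_\theta X$ of the present paper agrees with the usual transformation groupoid $G \ltimes X$ of the group action: the unit space, the source and range maps, the \'{e}tale topology inherited from $G \times X$, and the composition $(v,\theta_t(x))(t,x) = (vt,x)$ all coincide. Once this identification is spelled out, statement (i) becomes verbatim the equivalence ``$\theta$ is minimal if and only if $G \ltimes X$ is minimal'' of \cite[Corollary~5.5]{Steinberg2016}, and statement (ii) becomes the equivalence ``$\theta$ is topologically transitive if and only if $G \ltimes X$ is topologically transitive'' of \cite[Proposition~3.3]{Steinberg2019}. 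No additional computation is required.

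Next I would treat the assertion about Corollary~\ref{P1-cor:coref}, which characterizes topological freeness of $\theta$ on every closed $G$-invariant subset of $X$ in terms of strong effectiveness of $G \rtimes_\theta X$. The route is to transport this through the transformation groupoid and match it with \cite[Proposition~4.2.1]{TBeuter2018}, stated there in the language of groupoids of germs. Concretely, by Proposition~\ref{P1-prop:openinv} the homeomorphism $\rho$ of \eqref{P1-map-rho} gives a bijection $F \mapsto \rho(F)$ between closed $G$-invariant subsets of $X$ and closed invariant subsets of $(G \rtimes_\theta X)_0$, and by Proposition~\ref{P1-prop:CTS}~(iii) topological freeness of $\theta$ on $F$ is equivalent to effectiveness of $(G \rtimes_\theta X)_{\rho(F)}$. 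Since strong effectiveness is by definition effectiveness of the restriction to \emph{every} nonempty invariant subset of the unit space, composing these two equivalences yields exactly the statement that $\theta$ is topologically free on every closed $G$-invariant subset of $X$ if and only if $G \rtimes_\theta X$ is strongly effective, which is the content provided by \cite[Proposition~4.2.1]{TBeuter2018}.

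The main obstacle is not a deep argument but the bookkeeping that makes these identifications airtight. For the group case, one must confirm that the partial-action construction of $G \rtimes_\theta X$, when $G$ is a discrete group acting globally, really reproduces the \'{e}tale transformation groupoid to which \cite[Corollary~5.5]{Steinberg2016} and \cite[Proposition~3.3]{Steinberg2019} are applied, so that the relevant topological-dynamical notions refer to the same object. For Corollary~\ref{P1-cor:coref}, the delicate point is the quantifier matching: one must ensure that ``every closed $G$-invariant subset $F$ of $X$'' corresponds under $\rho$ precisely to ``every nonempty invariant subset $D$ of $(G \rtimes_\theta X)_0$'' (treating the empty subset separately, as in the proof of Proposition~\ref{P1-prop:CTS}~(iii)), so that the two formulations of strong effectiveness agree term for term with \cite[Proposition~4.2.1]{TBeuter2018}.
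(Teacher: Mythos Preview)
Your treatment of the first assertion (statements (i) and (ii) of Proposition~\ref{P1-prop:CTS}) is fine: specializing to a discrete group with a global action identifies $G \rtimes_\theta X$ with the standard transformation groupoid, and the cited results of Steinberg then apply verbatim.

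However, you have misread the second assertion. Corollary~\ref{P1-cor:coref} is the statement that $\theta$ is topologically free if, and only if, $G \rtimes_\theta X$ is \emph{effective} --- not the ``every closed $G$-invariant subset / strongly effective'' version, which is Corollary~\ref{P1-cor:corsef}. Your entire second paragraph argues for the wrong corollary. The remark is simply pointing out that the equivalence between topological freeness of $\theta$ and effectiveness of the associated groupoid is already recorded in \cite[Proposition~4.2.1]{TBeuter2018}; this is the $F = X$ case of Proposition~\ref{P1-prop:CTS}~(iii), and no appeal to Proposition~\ref{P1-prop:openinv}, to restrictions over closed invariant subsets, or to any quantifier-matching is needed.
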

\begin{corolario}\label{P1-cor:coref}
$\theta$ is topologically free if, and only if, $G \rtimes_{\theta}X$ is effective.
\end{corolario}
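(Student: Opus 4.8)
The plan is to derive Corollary~\ref{P1-cor:coref} as the special case $F = X$ of Proposition~\ref{P1-prop:CTS}~(iii). First I would observe that $X$ itself is trivially a closed $G$-invariant subset of $X$: it is closed, and $\theta_g(X_{g^{-1}}\cap X)=\theta_g(X_{g^{-1}})=X_g\subseteq X$ for every $g\in G$, so the invariance condition holds. Hence Proposition~\ref{P1-prop:CTS}~(iii) applies with $F=X$ and tells us that $\theta$ is topologically free on $X$ if, and only if, $(G\rtimes_\theta X)_{\rho(X)}$ is effective.

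Next I would reconcile the two notions of ``topologically free'' in play. By definition, $\theta$ is topologically free on the subset $X$ means that for all $u\in G_0$ and $t\in G_u^u\setminus\{u\}$ one has $\Int_X\{x\in X_{t^{-1}}\cap X:\theta_t(x)=x\}=\emptyset$; since $X_{t^{-1}}\cap X = X_{t^{-1}}$ and the subspace topology of $X$ in $X$ is just the topology of $X$, the interior $\Int_X$ coincides with $\Int$. Thus ``$\theta$ topologically free on $X$'' is literally the statement ``$\theta$ topologically free''. On the other side, I would note that $\rho:X\to (G\rtimes_\theta X)_0$ is a homeomorphism onto the full unit space (by \eqref{P1-map-rho}), so $\rho(X)=(G\rtimes_\theta X)_0$, and therefore $(G\rtimes_\theta X)_{\rho(X)} = (G\rtimes_\theta X)_{(G\rtimes_\theta X)_0} = G\rtimes_\theta X$ itself. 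Hence ``$(G\rtimes_\theta X)_{\rho(X)}$ is effective'' is exactly ``$G\rtimes_\theta X$ is effective''. Combining these two identifications with Proposition~\ref{P1-prop:CTS}~(iii) yields the claim.

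There is essentially no obstacle here; the only point requiring a line of care is the identification $(G\rtimes_\theta X)_D = G\rtimes_\theta X$ when $D$ is the whole unit space, which is immediate from the definition $\mathcal{G}_D=\{g\in\mathcal{G}: s(g)\in D,\ r(g)\in D\}$ once one recalls that $d$ and $t$ map $G\rtimes_\theta X$ into $(G\rtimes_\theta X)_0=\rho(X)$. Alternatively, for readers who prefer not to invoke the full force of Proposition~\ref{P1-prop:CTS}~(iii), one could give a direct one-paragraph argument: $\Iso(G\rtimes_\theta X)=\{(g,x):s(g)=r(g),\ \theta_g(x)=x\}$, its interior contains a point $(g,x)$ with $g\notin G_0$ precisely when some basic open set $\{g\}\times U$ (with $U\subseteq X_{g^{-1}}$ open) lies inside $\Iso$, i.e. precisely when $\Int\{x\in X_{g^{-1}}:\theta_g(x)=x\}\neq\emptyset$ for some $g\in G_{s(g)}^{s(g)}\setminus\{s(g)\}$ — which is exactly the failure of topological freeness — while $(G\rtimes_\theta X)_0$ is always contained in $\Int(\Iso(G\rtimes_\theta X))$ since $(G\rtimes_\theta X)_0=\rho(X)=\bigsqcup_{e\in G_0}\{e\}\times X_e$ is open in $\Iso(G\rtimes_\theta X)$. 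Either route completes the proof.
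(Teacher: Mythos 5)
Your proposal is correct and follows exactly the route the paper intends: the corollary is stated without proof as the special case $F=X$ of Proposition~\ref{P1-prop:CTS}~(iii), using that $X$ is closed and $G$-invariant, that topological freeness on $X$ is topological freeness, and that $(G\rtimes_\theta X)_{\rho(X)}=G\rtimes_\theta X$. The extra direct argument you sketch is also sound but not needed.
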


\begin{corolario}\label{P1-cor:corsef}
$\theta$ is topologically free on every closed $G$-invariant subset of $X$ if, and only if, $G \rtimes_{\theta}X$ is strongly effective.
\end{corolario}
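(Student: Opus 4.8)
The plan is to deduce the equivalence from Proposition~\ref{P1-prop:CTS}(iii) together with the correspondence $\rho$ of Proposition~\ref{P1-prop:openinv}. Granting, for a closed $G$-invariant $F\subseteq X$, that $\theta$ is topologically free on $F$ precisely when $(G\rtimes_{\theta}X)_{\rho(F)}$ is effective, the only remaining point is that ``strongly effective'' is defined by quantifying over \emph{all} nonempty invariant subsets of the unit space, not just the closed ones; I will bridge that gap with a short closure argument in one direction.

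First I would treat the direction ``$G\rtimes_{\theta}X$ strongly effective $\Rightarrow$ $\theta$ topologically free on every closed $G$-invariant subset of $X$''. Let $F\subseteq X$ be closed and $G$-invariant; the case $F=\emptyset$ is vacuous, so assume $F\neq\emptyset$. By Proposition~\ref{P1-prop:openinv}, $\rho(F)$ is a closed --- in particular nonempty and invariant --- subset of $(G\rtimes_{\theta}X)_0$, so strong effectiveness gives that $(G\rtimes_{\theta}X)_{\rho(F)}$ is effective, and then Proposition~\ref{P1-prop:CTS}(iii) yields that $\theta$ is topologically free on $F$.

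For the converse, assume $\theta$ is topologically free on every closed $G$-invariant subset of $X$, and let $D$ be an arbitrary nonempty invariant subset of $(G\rtimes_{\theta}X)_0$; the goal is to show that $(G\rtimes_{\theta}X)_D$ is effective. Put $Y:=\rho^{-1}(D)$. Using $d(g,x)=\rho(x)$ and $t(g,x)=\rho(\theta_g(x))$ one checks at once that $Y$ is a $G$-invariant subset of $X$; since each $\theta_g$ is a homeomorphism with open domain $X_{g^{-1}}$, the closure $\overline{Y}$ is again $G$-invariant, it is of course closed, and $\rho(\overline{Y})=\overline{D}$ because $\rho$ is a homeomorphism. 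By hypothesis $\theta$ is topologically free on $\overline{Y}$, and since the fixed-point set $\{x\in X_{t^{-1}}:\theta_t(x)=x\}$ of each $\theta_t$ is closed, a routine density argument upgrades this to topological freeness of $\theta$ on $Y$. Finally, the proof of Proposition~\ref{P1-prop:CTS}(iii) uses only the $G$-invariance of the subset involved and not its closedness, so it applies verbatim to $Y$ and shows that $(G\rtimes_{\theta}X)_{\rho(Y)}=(G\rtimes_{\theta}X)_D$ is effective. As $D$ was arbitrary, $G\rtimes_{\theta}X$ is strongly effective. The only place I expect to need genuine care is this passage from closed $G$-invariant subsets to arbitrary invariant subsets of the unit space --- the step ``topologically free on $\overline{Y}$ $\Rightarrow$ topologically free on $Y$'' and the observation that Proposition~\ref{P1-prop:CTS}(iii) survives the removal of the closedness hypothesis; the remainder is a purely formal combination of the two cited propositions.
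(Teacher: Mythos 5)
Your proof is correct, and its skeleton---combining Proposition~\ref{P1-prop:openinv} with Proposition~\ref{P1-prop:CTS}(iii)---is exactly the route the paper intends; the corollary is stated there without proof, as an immediate consequence of those two results. The genuinely valuable part of your write-up is the bridge between closed invariant subsets and arbitrary invariant subsets of the unit space. Since the paper's definition of \emph{strongly effective} quantifies over \emph{all} nonempty invariant subsets $D$ of the unit space (unlike \cite{Clark2019}, where only closed invariant subsets are used), the ``immediate'' deduction from Propositions~\ref{P1-prop:openinv} and~\ref{P1-prop:CTS}(iii) only yields effectiveness of $(G\rtimes_\theta X)_D$ for closed invariant $D$, and your closure argument fills exactly this gap. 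I checked the three auxiliary claims it rests on and they all hold: $\overline{Y}$ is again $G$-invariant because $\overline{Y}\cap X_{g^{-1}}\subseteq\overline{Y\cap X_{g^{-1}}}$ for the open set $X_{g^{-1}}$ and each $\theta_g$ is continuous; topological freeness on $\overline{Y}$ descends to $Y$ because each fixed-point set $\{x\in X_{t^{-1}}:\theta_t(x)=x\}$ is closed in $X_{t^{-1}}$ and a relatively open subset of $Y$ contained in it forces a nonempty relatively open subset of $\overline{Y}$ contained in it; and the proof of Proposition~\ref{P1-prop:CTS}(iii) uses only $G$-invariance of $F$, never its closedness. So your argument is not merely correct but more careful than the paper's implicit one, and as a by-product it shows that for these transformation groupoids the two variants of ``strongly effective'' (all invariant subsets versus closed invariant subsets) coincide.
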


\subsection{Applications to partial skew groupoid rings induced by  topological partial actions}\label{P1-subapplications}

In this section, we let $\alpha = (D_g, \alpha_g)_{g \in G}$ be the partial action of $G$ on $\mathcal{L}_c(X,\mathbb{K})$ induced by $\theta$ (see Remark~\ref{P1-rem:InducedAlgebraicAction}). 
We aim to characterize primeness, simplicity, and when every ideal of the skew groupoid ring $\mathcal{L}_c(X,\mathbb{K}) \rtimes_{\alpha}G$ is $G$-graded using the topological properties of the partial action $\theta.$ Using that $\mathcal{L}_c(X,\mathbb{K}) \rtimes_{\alpha}G$ and the Steinberg algebra of the transformation groupoid, $A_{\mathbb{K}}(G\rtimes_{\theta}X)$, are isomorphic (see Theorem \ref{P1-Steinberg-iso}), we are able to recover some known results.

\begin{theorem}\label{P1-thm:TE1}
Let $\mathbb{K}$ be a field,
let $G$ be a groupoid, 
let $X$ be a locally compact, Hausdorff, zero-dimensional space, and let $\theta=(X_g,\theta_g)_{g \in G}$ be a topological partial action of $G$ on $X$,
such that $X_g$ is clopen, for every $g \in G$, and $X=\bigsqcup_{e \in G_0}X_e.$ 
Furthermore, let $\alpha$ be the induced partial action of $G$ on
$\mathcal{L}_c(X,\mathbb{K})$ (see
Remark~\ref{P1-rem:InducedAlgebraicAction}).
The following statements are equivalent:
\begin{enumerate}[{\rm (i)}]
    \item $\theta$ is topologically free on every open $G$-invariant subset of $X;$
    \item $\theta$ is topologically free on every closed $G$-invariant subset of $X;$ 
    \item $\bigoplus_{e \in G_0} \overline{D}_e\delta_e$ is a maximal commutative subring of $\frac{\mathcal{L}_c(X,\mathbb{K})}{\mathcal{J}(F)} \rtimes_{\overline{\alpha}}G$ for every closed $G$-invariant subset $F$ of $X;$
    \item $\alpha$ has the residual intersection property;
    \item Every ideal of $\mathcal{L}_c(X,\mathbb{K}) \rtimes_{\alpha}G$ is $G$-graded;
    \item The transformation groupoid $G \rtimes_{\theta}X$ is strongly effective.
\end{enumerate}
\end{theorem}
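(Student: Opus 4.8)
The plan is to separate the six statements into a block of equivalences that is essentially formal, given what has already been established, and a single genuinely topological link. First I would show that (ii)--(vi) are mutually equivalent, and then weave (i) into the cycle via (i)$\Leftrightarrow$(ii).

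For the block (ii)--(vi): by Proposition~\ref{P1-prop:corresp2}(ii) the assignment $F\mapsto\mathcal{J}(F)$ is a bijection from the closed $G$-invariant subsets of $X$ onto the $G$-invariant ideals of $\mathcal{L}_c(X,\mathbb{K})$, so ``for every closed $G$-invariant $F$'' and ``for every $G$-invariant ideal $J=\mathcal{J}(F)$'' (with its associated quotient partial action $\overline{\alpha}$) are interchangeable quantifiers. With this dictionary, Theorem~\ref{P1-thm:TF}, applied to each closed $G$-invariant $F$, yields (ii)$\Leftrightarrow$(iii); Theorem~\ref{P1-thm:AE1}, applied to the commutative $s$-unital ring $R=\mathcal{L}_c(X,\mathbb{K})=\bigoplus_{e\in G_0}D_e$, yields (iii)$\Leftrightarrow$(iv)$\Leftrightarrow$(v); Corollary~\ref{P1-cor-top-residual} yields (ii)$\Leftrightarrow$(iv) (a consistency check, strictly redundant given the previous step); and Corollary~\ref{P1-cor:corsef} yields (ii)$\Leftrightarrow$(vi), which one may phrase through the transformation groupoid by means of the isomorphism $\mathcal{L}_c(X,\mathbb{K})\rtimes_{\alpha}G\cong A_{\mathbb{K}}(G\rtimes_{\theta}X)$ of Theorem~\ref{P1-Steinberg-iso}. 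Hence (ii), (iii), (iv), (v), (vi) are all equivalent.

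For (ii)$\Rightarrow$(i) I would argue by contraposition. If $\theta$ fails to be topologically free on some open $G$-invariant $U$, fix $u\in G_0$, $t\in G_u^u\setminus\{u\}$ and a nonempty set $W$, open in $U$ and hence open in $X$, with $W\subseteq U\cap X_{t^{-1}}$ and $\theta_t|_W=\id$. Put $F:=\overline{\Orb(W)}$, where $\Orb(W):=\bigcup_{g\in G}\theta_g(W\cap X_{g^{-1}})$. The orbit $\Orb(W)$ is open and $G$-invariant, and since every $X_g$ is clopen one checks that $\theta_g(\overline{V}\cap X_{g^{-1}})=\overline{\theta_g(V\cap X_{g^{-1}})}$ for any $G$-invariant $V$, whence $F$ is closed and $G$-invariant. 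Since $s(t)=u$ we have $W\subseteq X_{t^{-1}}\subseteq X_u$, and $\theta_u=\id$, so $W\subseteq\Orb(W)\subseteq F$; moreover $W$ is open in $F$ because it is open in $X$, and $\theta_t$ is the identity on $W\subseteq X_{t^{-1}}\cap F$. Therefore $\theta$ is not topologically free on the closed $G$-invariant set $F$, and (ii) fails.

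The implication (i)$\Rightarrow$(ii) is the crux, and the only point where zero-dimensionality (together with local compactness) of $X$ is genuinely needed: one must promote topological freeness on all \emph{open} $G$-invariant subsets to topological freeness on all \emph{closed} $G$-invariant subsets. I would route this through the transformation groupoid $\mathcal{G}:=G\rtimes_{\theta}X$. By Proposition~\ref{P1-prop:openinv} the homeomorphism $\rho$ matches open (respectively closed) $G$-invariant subsets of $X$ with open (respectively closed) invariant subsets of $\mathcal{G}_0$, and the argument in the proof of Proposition~\ref{P1-prop:CTS}(iii) applies verbatim to an arbitrary $G$-invariant subset $Y$, giving that $\theta$ is topologically free on $Y$ if and only if $\mathcal{G}_{\rho(Y)}$ is effective; thus (i) reads ``$\mathcal{G}_D$ is effective for every open invariant $D\subseteq\mathcal{G}_0$'' and (ii) reads ``$\mathcal{G}_D$ is effective for every closed invariant $D$''. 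It then remains to show that, for the ample Hausdorff groupoid $\mathcal{G}$, effectiveness over every open invariant set forces effectiveness over an arbitrary closed invariant set $C$: one works inside a compact open bisection $W'$ of $\mathcal{G}$ and the partial homeomorphism $\varphi$ of $\mathcal{G}_0$ that it induces, using that $\Fix(\varphi)$ is closed and that, since the domain of $\varphi$ is open, isotropy witnessed over $C$ propagates first to $\overline{C}$ and then to a suitable open invariant set on which (i) can be invoked, with zero-dimensionality supplying the requisite compact open sets. I expect exactly this transfer from open invariant to closed invariant sets --- through bisections, closures, and clopen bump sets --- to be the main obstacle; once it is in hand, combining it with the block of equivalences above closes the cycle and finishes the proof.
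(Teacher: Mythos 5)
Your treatment of the block (ii)--(vi) is exactly the paper's: the authors likewise obtain these equivalences by quoting Theorem~\ref{P1-thm:TF}, Corollary~\ref{P1-cor-top-residual}, Theorem~\ref{P1-thm:AE1} and Corollary~\ref{P1-cor:corsef}, so that part of your proposal is fine. Your proof of (ii)$\Rightarrow$(i) is also correct, though the orbit-closure construction is unnecessary: if topological freeness fails on an open $G$-invariant set $U$, the witnessing set is already open in $X$ (because $U$ is open), so topological freeness fails on the closed $G$-invariant set $X$ itself.

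The genuine gap is (i)$\Rightarrow$(ii), which you explicitly leave unfinished (``once it is in hand\ldots''). You are right that this is where all the difficulty sits, but the situation is worse than an incomplete step: the implication is false as stated, so the open-to-closed transfer you hope to extract from bisections and clopen bump sets cannot exist. The paper's own Example~\ref{P1-ex:exnrip} is a counterexample: for $\Z$ acting by translation on $\Z\cup\{\infty\}$, the only open $\Z$-invariant subsets are $\emptyset$, $\Z$ and $\Z\cup\{\infty\}$, and the action is topologically free on each of them (the fixed-point set of $\theta_n$ for $n\neq 0$ is $\{\infty\}$, which has empty interior in $\Z\cup\{\infty\}$ and does not meet $\Z$), yet the action is not topologically free on the closed invariant set $\{\infty\}$. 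Equivalently, the transformation groupoid $\Z\rtimes_\theta(\Z\cup\{\infty\})$ is effective over every open invariant subset of its unit space but is not strongly effective. For what it is worth, the paper's own proof offers no argument here either --- it simply declares the equivalence of (i) and (ii) ``straightforward'' --- so your instinct to isolate this step as the crux was sound; it just cannot be closed, and item (i) of the theorem should be treated with suspicion, while the remaining equivalences (ii)$\Leftrightarrow\cdots\Leftrightarrow$(vi) are unaffected.
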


\begin{proof}
The equivalence between (i) and (ii) is straightforward.
The other equivalences are established through
Theorem~\ref{P1-thm:TF},
Corollary~\ref{P1-cor-top-residual},
Theorem~\ref{P1-thm:AE1}, and Corollary~\ref{P1-cor:corsef}
\end{proof}

By combining
Theorem~\ref{P1-thm:TE1},
Proposition~\ref{P1-prop:corresp2} and
Theorem~\ref{P1-thm:bijpri}, we get the following result which 
partially generalizes
\cite[Corollary~3.7]{Clark2019}.

\begin{corolario}\label{P1-cor:bijtop}
$\theta$ is topologically free on every closed $G$-invariant subset of $X$ if, and only if, there is a one-to-one correspondence
(given by
$\Gamma:=\mathcal{I}^{-1} \circ \fr$)
between the ideals of $\mathcal{L}_c(X,\mathbb{K}) \rtimes_{\alpha}G$ and the open $G$-invariant subsets of $X$.
\end{corolario}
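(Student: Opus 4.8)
The plan is to observe that this corollary is a purely formal consequence of three facts already in hand. First, Theorem~\ref{P1-thm:TE1} ties topological freeness of $\theta$ on every closed $G$-invariant subset of $X$ to the residual intersection property of $\alpha$, equivalently to every ideal of $\mathcal{L}_c(X,\mathbb{K})\rtimes_{\alpha}G$ being $G$-graded. Second, Theorem~\ref{P1-thm:bijpri} says that the latter holds precisely when the map $\fr$ of \eqref{P1-map-fr} is a bijection between the ideals of $\mathcal{L}_c(X,\mathbb{K})\rtimes_{\alpha}G$ and the $G$-invariant ideals of $\mathcal{L}_c(X,\mathbb{K})$. Third, Proposition~\ref{P1-prop:corresp2}~(i) together with Remark~\ref{P1-rem:SubsetsToIdealsAndBack}~(a) give that $\mathcal{I}$ of \eqref{P1-map-I} restricts to a bijection from the open $G$-invariant subsets of $X$ onto the $G$-invariant ideals of $\mathcal{L}_c(X,\mathbb{K})$ --- and, crucially, this last bijection holds unconditionally, with no hypothesis on $\theta$. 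Hence $\mathcal{I}^{-1}$ is a well-defined bijection on $G$-invariant ideals, and $\Gamma=\mathcal{I}^{-1}\circ\fr$ makes sense, since $\fr$ takes values among the $G$-invariant ideals.

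For the forward implication, I would suppose that $\theta$ is topologically free on every closed $G$-invariant subset of $X$. By Theorem~\ref{P1-thm:TE1}, $\alpha$ then has the residual intersection property, so by Theorem~\ref{P1-thm:bijpri} the map $\fr$ is a bijection. Composing with the bijection $\mathcal{I}^{-1}$ shows that $\Gamma=\mathcal{I}^{-1}\circ\fr$ is a bijection between the ideals of $\mathcal{L}_c(X,\mathbb{K})\rtimes_{\alpha}G$ and the open $G$-invariant subsets of $X$.

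Conversely, I would assume that $\Gamma$ is a bijection. Since $\mathcal{I}^{-1}$ is a bijection (independently of $\theta$), so is $\mathcal{I}$, and therefore $\fr=\mathcal{I}\circ\Gamma$ is a composite of bijections, hence itself a bijection. By Theorem~\ref{P1-thm:bijpri}, every ideal of $\mathcal{L}_c(X,\mathbb{K})\rtimes_{\alpha}G$ is then $G$-graded, and by Theorem~\ref{P1-thm:TE1} this forces $\theta$ to be topologically free on every closed $G$-invariant subset of $X$.

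I do not expect any genuine obstacle here: the content has all been pushed into Theorems~\ref{P1-thm:TE1} and~\ref{P1-thm:bijpri}. The only points requiring a moment's care are bookkeeping ones --- checking that the domains and codomains of $\fr$, $\mathcal{I}$ and $\mathcal{I}^{-1}$ line up so that $\mathcal{I}^{-1}\circ\fr$ is defined on \emph{all} ideals of the partial skew groupoid ring, and recording that the bijectivity of $\mathcal{I}$ is a standing fact (Stone-type duality for $\mathcal{L}_c(X,\mathbb{K})$) that does not itself depend on any freeness hypothesis, so that the two ``if and only if'' directions transfer cleanly between $\fr$ and $\Gamma$.
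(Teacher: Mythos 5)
Your argument is correct and is exactly the paper's proof: the corollary is stated there as an immediate combination of Theorem~\ref{P1-thm:TE1}, Proposition~\ref{P1-prop:corresp2} and Theorem~\ref{P1-thm:bijpri}, which is precisely the chain you spell out. Your only addition is the (worthwhile) bookkeeping that $\mathcal{I}$ restricts to an unconditional bijection onto the $G$-invariant ideals, so that bijectivity transfers between $\fr$ and $\Gamma$ in both directions.
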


A result similar to Theorem~\ref{P1-thm:TE2} below was proved in \cite[Theorem~1.3]{Lundstrom2012} for a groupoid dynamical system in the context of global actions. 

\begin{theorem}\label{P1-thm:TE2}
Let $\mathbb{K}$ be a field,
let $G$ be a groupoid, 
let $X$ be a locally compact, Hausdorff, zero-dimensional space, and let $\theta=(X_g,\theta_g)_{g \in G}$ be a topological partial action of $G$ on $X$,
such that $X_g$ is clopen, for every $g \in G$, and $X=\bigsqcup_{e \in G_0}X_e.$ 
Furthermore, let $\alpha$ be the induced partial action of $G$ on
$\mathcal{L}_c(X,\mathbb{K})$ (see
Remark~\ref{P1-rem:InducedAlgebraicAction}).
The following statements are equivalent:
\begin{enumerate}[{\rm (i)}]
    \item $\theta$ is topologically free;
    \item $\bigoplus_{e \in G_0} D_e\delta_e$ is a maximal commutative subring of $\mathcal{L}_c(X,\mathbb{K}) \rtimes_{\alpha}G;$ 
    \item $\alpha$ has the intersection property;
    \item The transformation groupoid $G \rtimes_{\theta}X$ is effective.
\end{enumerate}
\end{theorem}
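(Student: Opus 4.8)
The plan is to obtain this theorem by assembling three results that are already available, rather than proving anything from scratch; the four conditions split into three pairs of equivalences, each essentially a special case of an earlier statement. First, (i)$\Leftrightarrow$(iv) is exactly Corollary~\ref{P1-cor:coref}, so there is nothing to do there. Second, since $X=\bigsqcup_{e\in G_0}X_e$ with each $X_e$ clopen, the ring $R:=\mathcal{L}_c(X,\mathbb{K})$ is commutative, each $D_g$ is $s$-unital (for $f\in D_g$ the set $\Supp(f)$ is compact open, so $1_{\Supp(f)}\in D_g$ is an $s$-unit for $f$), and $R=\bigoplus_{e\in G_0}D_e$; hence the standing hypotheses of Section~\ref{P1-Sec:Algebraic} hold for this $R$, and Proposition~\ref{P1-prop:max-com-int-prop} applied to it gives the equivalence of (ii) and (iii) at once.

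The remaining link, (i)$\Leftrightarrow$(ii), will be obtained as the degenerate instance $F=X$ of Theorem~\ref{P1-thm:TF}. Indeed, $X$ is closed and $G$-invariant, and $\mathcal{J}(X)=\mathcal{I}(\emptyset)=\{f\in\mathcal{L}_c(X,\mathbb{K}):f\equiv 0\}=\{0\}$, so $\tfrac{\mathcal{L}_c(X,\mathbb{K})}{\mathcal{J}(X)}=\mathcal{L}_c(X,\mathbb{K})$, $\overline{\alpha}=\alpha$, and $\overline{D}_e=D_e$ for every $e\in G_0$. Moreover ``topologically free on $X$'' unwinds, via $\Int_X=\Int$, to the condition that $\Int\{x\in X_{t^{-1}}:\theta_t(x)=x\}=\emptyset$ for all $u\in G_0$ and $t\in G_u^u\setminus\{u\}$, which is precisely topological freeness of $\theta$. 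Thus Theorem~\ref{P1-thm:TF} with $F=X$ literally asserts (i)$\Leftrightarrow$(ii). Chaining (iv)$\Leftrightarrow$(i)$\Leftrightarrow$(ii)$\Leftrightarrow$(iii) then finishes the proof.

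The only point requiring care — and it is the ``main obstacle'' only in a very mild sense — is verifying that this specialization of Theorem~\ref{P1-thm:TF} really does collapse as claimed: that $\mathcal{J}(X)=\{0\}$ reduces the quotient partial skew groupoid ring to $\mathcal{L}_c(X,\mathbb{K})\rtimes_\alpha G$ itself, and that the relative interior $\Int_X$ coincides with $\Int$. Should one prefer a self-contained argument, (iii)$\Rightarrow$(i) and (i)$\Rightarrow$(ii) can be proved directly: for the former, if $\theta$ is not topologically free one mimics the ``if'' direction of Theorem~\ref{P1-thm:TF} with $F=X$ to produce a nonzero $1_K\delta_{t_0^{-1}}$ in $C_S\!\left(\bigoplus_{e\in G_0}D_e\delta_e\right)\setminus\bigoplus_{e\in G_0}D_e\delta_e$, and then the proof of Proposition~\ref{P1-prop:max-com-int-prop} shows the ideal generated by $1_K\delta_e-1_K\delta_{t_0^{-1}}$ meets $\bigoplus_{e\in G_0}D_e\delta_e$ trivially, contradicting the intersection property; for the latter one runs the minimal-support argument from the proof of Proposition~\ref{P1-prop:max-com-int-prop} verbatim. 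Routing through the already-established Theorem~\ref{P1-thm:TF} and Corollary~\ref{P1-cor:coref} is, however, shorter and cleaner, and it parallels the proof of Theorem~\ref{P1-thm:TE1}.
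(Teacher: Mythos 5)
Your proposal is correct and follows exactly the paper's own route: (i)$\Leftrightarrow$(ii) from Theorem~\ref{P1-thm:TF} with $F=X$, (ii)$\Leftrightarrow$(iii) from Proposition~\ref{P1-prop:max-com-int-prop}, and (i)$\Leftrightarrow$(iv) from Corollary~\ref{P1-cor:coref}. The extra verifications you include (that the standing $s$-unital hypotheses hold and that the $F=X$ specialization collapses as expected) are sound and only make explicit what the paper leaves implicit.
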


\begin{proof}
The equivalence between (i) and (ii) follows from Theorem~\ref{P1-thm:TF} by considering $F=X.$ The equivalence between (ii) and (iii) follows from Proposition~\ref{P1-prop:max-com-int-prop}, and Corollary~\ref{P1-cor:coref} implies the equivalence between (i) and (iv).
\end{proof}

The following example shows
that the condition of being topologically free on every closed $G$-invariant subset is stronger than 
topological freeness.
In particular, the residual intersection property is a stronger condition than the intersection property.

\begin{example}[\cite{Sims2020}]\label{P1-ex:exnrip}
Let $\theta$ be an action of $\Z$ on the Alexandroff compactification $\Z\cup \{\infty\}$ such that, for all $n \in \Z,$ 
$\theta_n: \Z\cup \{\infty\} \ni x \longmapsto x+n \in \Z\cup \{\infty\}.$
Notice that $\Fix(\theta_n)=\{\infty\}$, for all $n \in \Z.$ 
Hence, $\theta$ is topologically free, because $\Int\{x \in \Z\cup \{\infty\}:\theta_n(x)=x\}=\Int\{\infty\}=\emptyset,$ for all $n \in \Z.$ Nevertheless, $\{\infty\}$ is a closed $\Z$-invariant subset of $\Z\cup \{\infty\}$ such that $\Int_{\{\infty\}}\{x \in \{\infty\}:\theta_n(x)=x\}=\Int_{\{\infty\}}\{\infty\}=\{\infty \}.$ Thus, $\theta$ in not topologically free on $\{\infty\}.$ 
\end{example}

\begin{proposition}\label{P1-prop:TopGradedsSimplePrime}
The following assertions hold:
\begin{enumerate}[{\rm(i)}]
    \item $\mathcal{L}_c(X,\mathbb{K})\rtimes_{\alpha}G$ is graded simple if, and only if, $\theta$ is minimal;
    \item $\mathcal{L}_c(X,\mathbb{K})\rtimes_{\alpha}G$ is graded prime if, and only if, $\theta$ is topologically transitive.
\end{enumerate}
\end{proposition}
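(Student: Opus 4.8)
The plan is to derive both equivalences by composing results already proved in the paper, so that essentially no new argument is required. The key point is that the partial action $\alpha$ on $R:=\mathcal{L}_c(X,\mathbb{K})$ induced by $\theta$ satisfies the standing hypotheses of Section~\ref{P1-Sec:Algebraic}: each $D_g=\{f\in\mathcal{L}_c(X,\mathbb{K}):\Supp(f)\subseteq X_g\}$ is $s$-unital, since characteristic functions of compact open subsets of $X_g$ serve as $s$-units, and $\mathcal{L}_c(X,\mathbb{K})=\bigoplus_{e\in G_0}D_e$ by Remark~\ref{P1-rem:InducedAlgebraicAction}. Hence Propositions~\ref{P1-prop:GradedsSimpleGSimple} and~\ref{P1-prop:GradedPrimeGPrime}, as well as Corollary~\ref{P1-cor:MS} and Theorem~\ref{P1-thm:TGP}, are all applicable with this $R$.

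For part (i), I would chain the equivalences as follows: $\mathcal{L}_c(X,\mathbb{K})\rtimes_{\alpha}G$ is graded simple if, and only if, $\mathcal{L}_c(X,\mathbb{K})$ is $G$-simple, by Proposition~\ref{P1-prop:GradedsSimpleGSimple}; and $\mathcal{L}_c(X,\mathbb{K})$ is $G$-simple if, and only if, $\theta$ is minimal, by Corollary~\ref{P1-cor:MS}. Composing the two equivalences yields the statement. For part (ii), the structure is identical: $\mathcal{L}_c(X,\mathbb{K})\rtimes_{\alpha}G$ is graded prime if, and only if, $\mathcal{L}_c(X,\mathbb{K})$ is $G$-prime, by Proposition~\ref{P1-prop:GradedPrimeGPrime}; and $\mathcal{L}_c(X,\mathbb{K})$ is $G$-prime if, and only if, $\theta$ is topologically transitive, by Theorem~\ref{P1-thm:TGP}. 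Composing again gives (ii).

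Since both items reduce to concatenating already-established equivalences, there is no genuine obstacle in the proof; the only point that deserves a line of verification is that the hypotheses of the cited results hold in the present setting, and this is guaranteed by the assumptions of this section (that $\mathbb{K}$ is a field, that each $X_g$ is clopen, and that $X=\bigsqcup_{e\in G_0}X_e$). I would therefore keep the write-up to a few lines, simply invoking the four results above in the indicated order.
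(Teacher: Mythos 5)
Your proposal is correct and follows exactly the same route as the paper: part (i) is obtained by combining Proposition~\ref{P1-prop:GradedsSimpleGSimple} with Corollary~\ref{P1-cor:MS}, and part (ii) by combining Proposition~\ref{P1-prop:GradedPrimeGPrime} with Theorem~\ref{P1-thm:TGP}. Your additional remark verifying that the standing hypotheses of Section~\ref{P1-Sec:Algebraic} hold for the induced action $\alpha$ is a sensible (if implicit in the paper) sanity check.
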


\begin{proof} 
(i) It follows from Proposition~\ref{P1-prop:GradedsSimpleGSimple} and Corollary~\ref{P1-cor:MS}. 

(ii) It follows from Proposition~\ref{P1-prop:GradedPrimeGPrime} and Theorem~\ref{P1-thm:TGP}.
\end{proof}

In the next theorem, we will recover particular cases of \cite[Theorem~4.3, Theorem~4.5]{Steinberg2019} and \cite[Theorem~4.1]{Brown2014}.

\begin{theorem}\label{P1-thm:TPS}
Let $\mathbb{K}$ be a field,
let $G$ be a groupoid, 
let $X$ be a locally compact, Hausdorff, zero-dimensional space, and let $\theta=(X_g,\theta_g)_{g \in G}$ be a topological partial action of $G$ on $X$,
such that $X_g$ is clopen, for every $g \in G$, and $X=\bigsqcup_{e \in G_0}X_e.$ 
Furthermore, let $\alpha$ be the induced partial action of $G$ on
$\mathcal{L}_c(X,\mathbb{K})$ (see
Remark~\ref{P1-rem:InducedAlgebraicAction}).
The following assertions hold.
\begin{enumerate}[{\rm (i)}]
    \item If $\mathcal{L}_c(X,\mathbb{K})\rtimes_{\alpha}G$ is prime, then $\theta$ is topologically transitive.
    \item If $\theta$ is topologically free and topologically transitive, then $\mathcal{L}_c(X,\mathbb{K})\rtimes_{\alpha}G$ is prime.
    \item $\theta$ is minimal and topologically free if, and only if,  $\mathcal{L}_c(X,\mathbb{K}) \rtimes_{\alpha}G$ is simple.
\end{enumerate}
\end{theorem}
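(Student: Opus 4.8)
The plan is to prove each of the three assertions by translating the ring-theoretic statements about $\mathcal{L}_c(X,\mathbb{K})\rtimes_\alpha G$ into the corresponding $G$-properties of $\mathcal{L}_c(X,\mathbb{K})$ via the results of Section~\ref{P1-Sec:Algebraic}, and then using the dictionary between those $G$-properties and the topological-dynamical properties of $\theta$ established earlier in Section~\ref{P1-subtop}. Concretely, the three ingredients I would invoke are: Theorem~\ref{P1-thm:AP2}, which relates primeness/simplicity of the partial skew groupoid ring to $G$-primeness/$G$-simplicity of $R$ together with the intersection property; Theorem~\ref{P1-thm:TGP} and Corollary~\ref{P1-cor:MS}, which identify $G$-primeness of $\mathcal{L}_c(X,\mathbb{K})$ with topological transitivity of $\theta$ and $G$-simplicity with minimality; and Theorem~\ref{P1-thm:TE2}, which identifies the intersection property of $\alpha$ with topological freeness of $\theta$.

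For part (i): if $\mathcal{L}_c(X,\mathbb{K})\rtimes_\alpha G$ is prime, then by Theorem~\ref{P1-thm:AP2}~(i) the ring $\mathcal{L}_c(X,\mathbb{K})$ is $G$-prime, and hence by Theorem~\ref{P1-thm:TGP} the partial action $\theta$ is topologically transitive. For part (ii): assume $\theta$ is topologically free and topologically transitive. Topological freeness gives, via Theorem~\ref{P1-thm:TE2}, that $\alpha$ has the intersection property; topological transitivity gives, via Theorem~\ref{P1-thm:TGP}, that $\mathcal{L}_c(X,\mathbb{K})$ is $G$-prime. Now Theorem~\ref{P1-thm:AP2}~(ii) applies and yields that $\mathcal{L}_c(X,\mathbb{K})\rtimes_\alpha G$ is prime. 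For part (iii): by Theorem~\ref{P1-thm:AP2}~(iii), $\mathcal{L}_c(X,\mathbb{K})\rtimes_\alpha G$ is simple if and only if $\mathcal{L}_c(X,\mathbb{K})$ is $G$-simple and $\alpha$ has the intersection property; by Corollary~\ref{P1-cor:MS} the first condition is equivalent to minimality of $\theta$, and by Theorem~\ref{P1-thm:TE2} the second is equivalent to topological freeness of $\theta$. Combining these equivalences gives the desired biconditional.

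The proof is therefore essentially a bookkeeping argument assembling already-proved equivalences, so I do not anticipate a genuine obstacle; the only point requiring a little care is making sure the hypotheses of Theorem~\ref{P1-thm:AP2} are met — namely that $D_g$ is $s$-unital for every $g\in G$ and that $\mathcal{L}_c(X,\mathbb{K})=\bigoplus_{e\in G_0}D_e$ — but both follow from the standing assumptions on $\theta$ (in particular $X=\bigsqcup_{e\in G_0}X_e$ and each $X_g$ clopen) together with Remark~\ref{P1-rem:InducedAlgebraicAction}, where it is noted that the induced $\alpha$ has unital $D_g$ and the required direct-sum decomposition. The proof below records this assembly.

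\begin{proof}
Recall from Remark~\ref{P1-rem:InducedAlgebraicAction} that $\alpha=(D_g,\alpha_g)_{g\in G}$ satisfies $\mathcal{L}_c(X,\mathbb{K})=\bigoplus_{e\in G_0}D_e$, and that each $D_g$ is a (unital, hence $s$-unital) ideal; thus the hypotheses of Theorems~\ref{P1-thm:AP2} and \ref{P1-thm:TE2} are in force.

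(i) Suppose that $\mathcal{L}_c(X,\mathbb{K})\rtimes_{\alpha}G$ is prime. By Theorem~\ref{P1-thm:AP2}~(i), $\mathcal{L}_c(X,\mathbb{K})$ is $G$-prime, and hence by Theorem~\ref{P1-thm:TGP}, $\theta$ is topologically transitive.

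(ii) Suppose that $\theta$ is topologically free and topologically transitive. By Theorem~\ref{P1-thm:TE2}, $\alpha$ has the intersection property, and by Theorem~\ref{P1-thm:TGP}, $\mathcal{L}_c(X,\mathbb{K})$ is $G$-prime. Theorem~\ref{P1-thm:AP2}~(ii) now implies that $\mathcal{L}_c(X,\mathbb{K})\rtimes_{\alpha}G$ is prime.

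(iii) By Theorem~\ref{P1-thm:AP2}~(iii), $\mathcal{L}_c(X,\mathbb{K})\rtimes_{\alpha}G$ is simple if, and only if, $\mathcal{L}_c(X,\mathbb{K})$ is $G$-simple and $\alpha$ has the intersection property. By Corollary~\ref{P1-cor:MS}, $\mathcal{L}_c(X,\mathbb{K})$ is $G$-simple if, and only if, $\theta$ is minimal, and by Theorem~\ref{P1-thm:TE2}, $\alpha$ has the intersection property if, and only if, $\theta$ is topologically free. Combining these, $\mathcal{L}_c(X,\mathbb{K})\rtimes_{\alpha}G$ is simple if, and only if, $\theta$ is minimal and topologically free.
\end{proof}
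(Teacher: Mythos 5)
Your proof is correct and is essentially the paper's own argument: the paper likewise derives the theorem by combining Corollary~\ref{P1-cor:MS}, Theorem~\ref{P1-thm:TGP} and Theorem~\ref{P1-thm:AP2}, with Theorem~\ref{P1-thm:TE2} supplying the translation between topological freeness and the intersection property exactly as you do. One small correction: the ideals $D_g$ need not be unital when $X_g$ fails to be compact, but they are $s$-unital (the indicator function of the compact open set $\Supp(f)$ is an $s$-unit for $f\in D_g$), which is all that the cited theorems require.
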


\begin{proof}
The proof follows from
Corollary~\ref{P1-cor:MS}, Theorem~\ref{P1-thm:TGP} and Theorem~\ref{P1-thm:AP2}.
\end{proof}

In \cite[Theorem~1.3]{Oinert2013},
a result similar to Theorem~\ref{P1-thm:TPS} (iii) was proved for groupoid dynamical systems. 
We finish this section by specializing to the case when $G$ is not just a groupoid, but in fact a torsion-free group.

\begin{proposition}
Let $H$ be a torsion-free group. Then, $\mathcal{L}_c(X,\mathbb{K})\rtimes_{\alpha}H$ is prime if, and only if, $\theta$ is topologically transitive.
\end{proposition}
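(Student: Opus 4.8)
The plan is to treat the two implications separately. The implication ``$\mathcal{L}_c(X,\mathbb{K})\rtimes_{\alpha}H$ prime $\Rightarrow$ $\theta$ topologically transitive'' requires nothing new: a torsion-free group is in particular a groupoid, so this is exactly Theorem~\ref{P1-thm:TPS}~(i).

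For the converse, suppose $\theta$ is topologically transitive. By Theorem~\ref{P1-thm:TGP} this means precisely that $R:=\mathcal{L}_c(X,\mathbb{K})$ is $H$-prime. Since $H_0=\{1_H\}$ we have $R=D_{1_H}$, and the $H$-grading on $S:=R\rtimes_{\alpha}H$ is the usual partial skew group ring grading with principal component $R\delta_{1_H}\cong R$. Thus the task reduces to the purely ring-theoretic statement: \emph{a partial skew group ring $R\rtimes_{\alpha}H$ of an $H$-prime ring $R$ by a torsion-free group $H$ is prime}. I would stress that this is not a consequence of Theorem~\ref{P1-thm:AP2}~(ii): in this generality $\alpha$ need not have the intersection property --- equivalently, by Theorem~\ref{P1-thm:TE2}, $\theta$ need not be topologically free --- since $H$ may act on $X$ through a non-injective homomorphism, so that $\theta_h=\id_X$ for some $h\neq 1_H$ while $\theta$ is still topologically transitive (even minimal). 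Torsion-freeness of $H$ is therefore genuinely used, and is the heart of the matter.

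The natural way to prove the reduced statement is a Passman-style minimal-support argument. Assume $I,J$ are nonzero ideals of $S$ with $IJ=\{0\}$, pick $0\neq a\in I$ and $0\neq b\in J$ of minimal support cardinality, and --- using $s$-units together with left and right multiplication by $\delta_g$'s, exactly as in the proofs of Lemma~\ref{P1-lem:inv} and Lemma~\ref{P1-lem:some-obs} --- normalise so that $1_H\in\Supp(a)\cap\Supp(b)$. Commuting $a$ against all $r\delta_{1_H}$ ($r\in R$) produces elements of $I$ whose $1_H$-component vanishes, hence of strictly smaller support; by minimality these are $0$, so each homogeneous component of $a$ centralises $R\delta_{1_H}$, and likewise for $b$. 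One then has to derive a contradiction from $aSb=\{0\}$ together with $H$-primeness of $R$; this is precisely the Montgomery--Passman ``$\Delta$-method'' step, and here torsion-freeness of $H$ is what makes it go through --- no nonidentity element of $H$ has finite order, which blocks the cancellation among homogeneous contributions that would otherwise obstruct primeness, and $H$-primeness of $R$ can then be applied to a suitable ``diagonal'' piece (whose $P_0$-image generates, as in Lemma~\ref{P1-lem:inv2}, a nonzero $H$-invariant ideal of $R$) to conclude $aSb\neq\{0\}$. This parallels Connell's theorem \cite{Connell1963}, where $R[H]$ is prime if, and only if, $R$ is prime and $H$ has no nontrivial finite normal subgroup --- automatic when $H$ is torsion-free --- now in the presence of a nontrivial partial action.

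The main obstacle is exactly this last step, and in particular running the $\Delta$-method for a \emph{partial} rather than global group action: the homogeneous components $D_h\delta_h$ are not free $R$-modules, $\alpha_h$ is only defined on $D_{h^{-1}}$, and the ideals $D_h$ and their $s$-units must be tracked consistently throughout the reduction. If one prefers not to redo this, the reduced statement may instead be quoted from the literature on prime group-graded rings and partial crossed products --- e.g.\ \cite{Lannstrom2021}, which the paper already invokes for the analogous primeness results in Theorem~\ref{P1-thm:AP2} and Example~\ref{P1-ex:expnpi} --- and then the proposition follows immediately from Theorem~\ref{P1-thm:TPS}~(i), Theorem~\ref{P1-thm:TGP}, and that reference.
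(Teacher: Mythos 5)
Your fallback route is exactly the paper's proof: the paper simply invokes \cite[Theorem~13.5]{Lannstrom2021} (for $H$ torsion-free, $\mathcal{L}_c(X,\mathbb{K})\rtimes_{\alpha}H$ is prime if, and only if, $\mathcal{L}_c(X,\mathbb{K})$ is $H$-prime) and then applies Theorem~\ref{P1-thm:TGP}; your observation that Theorem~\ref{P1-thm:AP2}~(ii) is not applicable because the intersection property may fail is correct and is precisely why the external input is needed. Your primary route --- a self-contained Passman-style $\Delta$-method argument --- is a genuinely different and more ambitious approach, but as written it is only a plan: the normalisation and minimal-support reduction are plausible, yet the decisive step (deriving a contradiction from $aSb=\{0\}$ using torsion-freeness and $H$-primeness, in the partial-action setting where the components $D_h\delta_h$ are not free and $\alpha_h$ is only partially defined) is exactly the part you flag as the obstacle and do not carry out. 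So only the citation version of your argument constitutes a complete proof, and that version coincides with the paper's.
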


\begin{proof}
By \cite[Theorem~13.5]{Lannstrom2021}, if $H$ is torsion-free, then $\mathcal{L}_c(X,\mathbb{K})\rtimes_{\alpha}H$ is prime if, and only if, $\mathcal{L}_c(X,\mathbb{K})$ is $H$-prime.
The desired conclusion now follows from Theorem~\ref{P1-thm:TGP}.
\end{proof}

\section{Applications to ultragraphs via labelled spaces}\label{P1-Sec:Ultragraphs}

Throughout this section, $\mathcal{G}$ denotes an ultragraph. In \cite[Definition~2.7]{Castro2021}, a normal labelled space $(\mathcal{E}_{\mathcal{G}}, \mathcal{L}_{\mathcal{G}}, \mathcal{B})$ associated with $\mathcal{G}$ was defined and, in \cite[Section~3]{Castro2020}, a topological partial action $\varphi$ of the free group generated by the labels of the edges on the tight spectrum $\textsf{T}$ of a labelled space was defined. The description of 
$\varphi$
when the labelled space is associated with an ultragraph was given in \cite[Section~4.1]{Castro2021}, in which it was also shown that the associated partial skew group ring coincides with the ultragraph Leavitt path algebra of $\mathcal G$, and with the partial skew group ring associated with 
certain 
non-topological partial actions
\cite{Goncalves2020}. 

One of the purposes of this section is to prove that 
$\varphi$ is topologically free on every closed $\mathbb{F}$-invariant subset of $\textsf{T}$ if, and only if, the ultragraph $\mathcal{G}$ satisfies Condition (K). 
By letting $\alpha$ be the algebraic partial action induced by $\varphi,$ we will show that every ideal of $\mathcal{L}_c(\textsf{T}, \mathbb{K}) \rtimes_{\alpha}\mathbb{F}$ is $\mathbb{F}$-graded if, and only if, $\mathcal{G}$ satisfies Condition (K), a result that is also new in the context of Leavitt path algebras of graphs. A version of this theorem was proved in the context of $C^*$-algebras of Boolean dynamical systems. See \cite[Theorem~6.3]{Carlsen2022} and \cite[Proposition~6.1]{Kang2022}.

\subsection{Preliminaries}
Following \cite{Boava2017}, \cite{Castro2021}, \cite{Castro2020} and, mainly \cite{Boava2021}, we repeat some basic definitions and properties concerning labelled spaces, ultragraphs via labelled spaces and the aforementioned topological partial action $\varphi$.  

\subsubsection{Labelled spaces}\label{P1-subsect.labelled.spaces}

Let $\mathcal{E}=(\mathcal{E}^0, \mathcal{E}^1, r, s)$ be a (directed) graph. The elements of $\mathcal{E}^0$ are denominated \emph{vertices} and the elements of $\mathcal{E}^1$ are denominated \emph{edges}. If $\mathcal{E}^0$ and $\mathcal{E}^1$ are countable, then we say that the graph $\mathcal{E}$ is \emph{countable}. 

A sequence of edges $\lambda_1\lambda_2\ldots \lambda_n$ such that $r(\lambda_i)=s(\lambda_{i+1})$ for all $i \in \{1, \ldots, n-1\}$ is called a \emph{path} of length $n$. We denote by $\mathcal{E}^n$ the set of paths of length $n$ and define $\mathcal{E}^*:=\cup_{n \geq 0} \mathcal{E}^n.$ An infinite sequence of edges $\lambda_1\lambda_2\ldots$ such that $r(\lambda_i)=s(\lambda_{i+1})$ for all $i \geq 1$ is called an \emph{infinite path}. We denote by $\mathcal{E}^{\infty}$ the set of infinite paths. We define vertices to be paths of length $0.$

 Let $\mathcal{E}=(\mathcal{E}^0,\mathcal{E}^1, r,s)$ be a graph and let $\mathcal{A}$ be an \emph{alphabet}, that is a nonempty set whose elements are called \emph{letters}. A \emph{labelled graph} $(\mathcal{E}, \mathcal{L})$ consists of a graph $\mathcal{E}$ and a surjective \emph{labelling map} $\mathcal{L}:\mathcal{E}^1\to \mathcal{A}.$

$\mathcal{A}^{\ast}$ denotes the set of all finite \emph{words} over $\mathcal{A}$, including the \emph{empty word} $\omega,$ and $\mathcal{A}^{\infty}$ denotes the set of all infinite words over $\mathcal{A}$. 	We consider $\mathcal{A}^{\ast}$ as a monoid whose binary operation is given by concatenation. 

We may extend the labelling map $\mathcal{L}$ to $\mathcal{L}:\mathcal{E}^n\to\mathcal{A}^{\ast}$ and $\mathcal{L}:\mathcal{E}^{\infty}\to\mathcal{A}^{\infty}.$ Denote the set of \emph{labelled paths $\alpha$ of length $|\alpha|=n$} by  $\mathcal{L}^n:=\mathcal{L}(\mathcal{E}^n)$ and the set 
of \emph{infinite labelled paths} by $\mathcal{L}^{\infty}:=\mathcal{L}(\mathcal{E}^{\infty}).$ Recall that $\omega$ is considered a labelled path with $|\omega|=0$ and we define $\mathcal{L}^{\geq 1}:=\cup_{n\geq 1}\mathcal{L}^n$, $\mathcal{L}^*:=\{\omega\}\cup\mathcal{L}^{\geq 1}$ and $\mathcal{L}^{\leqslant \infty}:=\mathcal{L}^*\cup \mathcal{L}^{\infty}.$ 

If $\alpha,\beta$ are labelled paths such that $\beta=\alpha\beta'$ for some labelled path $\beta',$ then we say that $\alpha$ is a \emph{beginning} of $\beta.$
Denote the set of all infinite words all of whose beginnings appear as finite labelled paths by
$\overline{\mathcal{L}^{\infty}}:=\{\alpha \in \mathcal{A}^{\infty} : \alpha_{1,n} \in \mathcal{L}^*, \text{ for every } n \in \mathbb{N}\}$ and define $\overline{\mathcal{L}^{\leqslant \infty}}:= \mathcal{L}^* \cup  \overline{\mathcal{L}^{\infty}}.$
We let $\mathbb{P}(X)$ denote the power set of $X$.

\begin{definition}[{\cite[Definition~2.8]{Boava2017}}] 
For $\alpha\in\mathcal{L}^*$ and $A\in \mathbb{P}(\mathcal{E}^0)$, the \emph{relative range of $\alpha$ with respect to} $A$, denoted by $r(A,\alpha)$, is the set
$$r(A,\alpha):=\{r(\lambda): \lambda \in \mathcal{E}^*, \mathcal{L}(\lambda)=\alpha, s(\lambda) \in A\},$$
if $\alpha \in \mathcal{L}^{\geq 1}$, and $r(A,\omega):=A$, if $\alpha=\omega.$
The \emph{range of $\alpha$}, denoted by $r(\alpha)$, is the set $r(\alpha):=r(\mathcal{E}^0,\alpha).$
\end{definition}
Note that $r(\omega)=\mathcal{E}^0$ and, if $\alpha\in\mathcal{L}^{\geq 1}$, then  $r(\alpha)=\{r(\lambda)\in\mathcal{E}^0\ :\ \mathcal{L}(\lambda)=\alpha\}$.

\begin{definition}[{\cite[Definition~2.9]{Boava2017}}] 
Let $(\mathcal{E}, \mathcal{L})$ be a labelled graph and  $\mathcal{B} \subseteq \mathbb{P}(\mathcal{E}^0).$ We say that $\mathcal{B}$ is \emph{closed under relative ranges}, if $r(A,\alpha) \in \mathcal{B}$, for all $A \in \mathcal{B}$ and $\alpha \in \mathcal{L}^{\geq 1}.$ If additionally $\mathcal{B}$ is closed under finite intersections and finite unions, and contains $r(\alpha)$, for every $\alpha \in \mathcal{L}^{\geq 1},$ then we say that $\mathcal{B}$ is \emph{accommodating} for $(\mathcal{E}, \mathcal{L})$, and in that case $(\mathcal{E}, \mathcal{L}, \mathcal{B})$ is called a \emph{labelled space}.
\end{definition}

A labelled space $(\mathcal{E}, \mathcal{L},\mathcal{B})$ is \emph{weakly left-resolving} if $r(A\cap B,\alpha)=r(A,\alpha)\cap r(B,\alpha)$, for all $A,B\in\mathcal{B}$ and  $\alpha\in\mathcal{L}^{\geq 1}$. We say that a weakly left-resolving labelled space $(\mathcal{E}, \mathcal{L},\mathcal{B})$ is \emph{normal}, if $\mathcal{B}$ is closed under relative complements.

For $\alpha\in\mathcal{L}^*$, define $\mathcal{B}_{\alpha}:=\mathcal{B}\cap \mathbb{P}(r(\alpha))=\{A\in\mathcal{B} : A\subseteq r(\alpha)\}.$ In the case of a normal labelled space, $\mathcal{B}_{\alpha}$ is a Boolean algebra, for each $\alpha\in\mathcal{L}^*$.

By \cite[Proposition~3.4]{Boava2017}, there is an inverse semigroup $S$, with zero element $0$, associated with the normal labelled space $(\mathcal{E}, \mathcal{L},\mathcal{B})$, whose semilattice of idempotents is 
\[E(S)=\{(\alpha, A, \alpha) \ : \ \alpha \in \mathcal{L}^* \ \mbox{and} \ A \in \mathcal{B}_{\alpha} \}\cup\{0\}.\]
 
The set of all tight filters in $E(S)$ is denoted by $\textsf{T}$, which is called the \emph{tight spectrum} (see \cite[Section 12]{Exel2008}). For each $e\in E(S)$, define
$V_e:=\{\xi \in \textsf{T} : e \in \xi\}.$ The subsets $$V_{e:e_1,\ldots,e_n}:= V_e\cap V_{e_1}^c \cap \cdots \cap V_{e_n}^c=\{\xi \in \textsf{T} : e \in \xi,e_1 \notin \xi,\ldots,e_n \notin \xi \},$$
with $\{e_1,\ldots,e_n\}$ a finite (possibly empty) subset of $E(S)$, form a basis for the topology on $\textsf{T}$ (see \cite{Lawson2012} and \cite[Corollary~3.3]{Castro2020}). If $E(S)$ is countable, then $\textsf{T}$ is second-countable and therefore metrizable. See \cite[Sections~2.4-2.5]{Boava2021}, for more details.

Let $(\mathcal{E}, \mathcal{L},\mathcal{B})$ be a weakly-left resolving labelled space. Let $\alpha\in\overline{\mathcal{L}^{\leqslant \infty}}$  and let $\{\mathcal{F}_n\}_{0\leq n\leq|\alpha|}$ (where ${0 \leq n \leq |\alpha|}$ means that $0 \leq n < \infty$ for $\alpha \in \overline{\mathcal{L}^{\infty}}$) be a family such that $\mathcal{F}_n$ is a filter in $\mathcal{B}_{\alpha_{1,n}}$ for every $n>0$, and $\mathcal{F}_0$ is either a filter in $\mathcal{B}$ or $\mathcal{F}_0=\emptyset$. The family $\{\mathcal{F}_n\}_{0\leq n\leq|\alpha|}$ is  a  \emph{complete family for} $\alpha$, if
$\mathcal{F}_n = \{A\in \mathcal{B}_{\alpha_{1,n}} \ : \ r(A,\alpha_{n+1})\in\mathcal{F}_{n+1}\}$
whenever $0\leq n < |\alpha|$.

\begin{theorem}[{\cite[Theorem~4.13]{Boava2017}}]\label{P1-corresp-filters}
	Let $(\mathcal{E}, \mathcal{L},\mathcal{B})$ be a weakly left-resolving labelled space and $S$   its associated inverse semigroup. 
	There is a bijection
	between filters in $E(S)$ and pairs $(\alpha, \{\mathcal{F}_n\}_{0\leq n\leq|\alpha|})$, where $\alpha\in\overline{\mathcal{L}^{\leqslant \infty}}$ and $\{\mathcal{F}_n\}_{0\leq n\leq|\alpha|}$ is a complete family for $\alpha$.
\end{theorem}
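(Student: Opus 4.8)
The plan is to exhibit explicit maps in both directions and to verify that they are mutually inverse. Recall that $E(S) = \{(\alpha, A, \alpha) : \alpha \in \mathcal{L}^*,\ A \in \mathcal{B}_{\alpha}\} \cup \{0\}$, ordered by $(\alpha, A, \alpha) \leqslant (\beta, B, \beta)$ if and only if $\beta$ is a beginning of $\alpha$, say $\alpha = \beta\gamma$, and $A \subseteq r(B,\gamma)$; equivalently, the meet of two idempotents whose words are comparable is $(\beta, B, \beta)(\beta\gamma, C, \beta\gamma) = (\beta\gamma,\, r(B,\gamma) \cap C,\, \beta\gamma)$, which equals $0$ precisely when $r(B,\gamma) \cap C = \emptyset$. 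Throughout one uses the composition law $r(r(A,\mu),\nu) = r(A,\mu\nu)$ for relative ranges.

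\textbf{From filters to pairs.} Given a filter $\xi$ in $E(S)$, I would first observe that $W_{\xi} := \{\beta \in \mathcal{L}^* : (\beta, B, \beta) \in \xi \text{ for some } B\}$ is a chain under the ``beginning of'' relation: if $(\beta, B, \beta)$ and $(\beta', B', \beta')$ lie in $\xi$, then, $\xi$ being downward directed, some $(\gamma, C, \gamma) \in \xi$ lies below both, so $\beta$ and $\beta'$ are both beginnings of $\gamma$, hence comparable. Let $\alpha$ be the join of $W_{\xi}$ (its longest element if $W_{\xi}$ is finite, otherwise the unique infinite word all of whose finite beginnings lie in $W_{\xi}$); since every $\beta \in W_{\xi}$ is a labelled path and beginnings of labelled paths are labelled paths, $\alpha \in \overline{\mathcal{L}^{\leqslant \infty}}$. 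For $0 \leq n \leq |\alpha|$ put $\mathcal{F}_n := \{A \in \mathcal{B}_{\alpha_{1,n}} : (\alpha_{1,n}, A, \alpha_{1,n}) \in \xi\}$. Then I would check: (i) each $\mathcal{F}_n$ is upward closed in $\mathcal{B}_{\alpha_{1,n}}$ and closed under intersection, using that $\xi$ is upward closed and closed under meets together with $(\alpha_{1,n}, A, \alpha_{1,n})(\alpha_{1,n}, A', \alpha_{1,n}) = (\alpha_{1,n}, A \cap A', \alpha_{1,n})$; in particular $\mathcal{F}_n$ never contains $\emptyset$, so it is an honest filter once it is nonempty, and for $n \geq 1$ it is nonempty because $(\alpha_{1,n}, r(\alpha_{1,n}), \alpha_{1,n}) \geqslant (\beta, B, \beta)$ for any $\beta \in W_{\xi}$ with $|\beta| > n$, hence lies in $\xi$; (ii) the completeness identity $\mathcal{F}_n = \{A \in \mathcal{B}_{\alpha_{1,n}} : r(A, \alpha_{n+1}) \in \mathcal{F}_{n+1}\}$, where ``$\supseteq$'' follows from $(\alpha_{1,n+1}, r(A,\alpha_{n+1}), \alpha_{1,n+1}) \leqslant (\alpha_{1,n}, A, \alpha_{1,n})$ and upward closure, and ``$\subseteq$'' follows by multiplying $(\alpha_{1,n}, A, \alpha_{1,n}) \in \xi$ with a witness $(\alpha_{1,n+1}, C, \alpha_{1,n+1}) \in \xi$ and again using upward closure.

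\textbf{From pairs to filters, and mutual inversion.} Given a complete family $(\alpha, \{\mathcal{F}_n\})$, set $\xi := \{(\alpha_{1,n}, A, \alpha_{1,n}) : 0 \leq n \leq |\alpha|,\ A \in \mathcal{F}_n\}$. To show $\xi$ is a filter, the main device is the iterated completeness relation $\mathcal{F}_m = \{A \in \mathcal{B}_{\alpha_{1,m}} : r(A, \alpha_{m+1,n}) \in \mathcal{F}_n\}$ for $m \leq n$, obtained from the one-step relation by induction together with $r(r(A, \alpha_{m+1,\ell}), \alpha_{\ell+1,n}) = r(A, \alpha_{m+1,n})$. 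With it, upward closure of $\xi$ reduces to upward closure of each $\mathcal{F}_m$, and closure of $\xi$ under meets reduces, via $(\alpha_{1,m}, A, \alpha_{1,m})(\alpha_{1,n}, B, \alpha_{1,n}) = (\alpha_{1,n}, r(A, \alpha_{m+1,n}) \cap B, \alpha_{1,n})$ for $m \leq n$, to $\mathcal{F}_n$ being closed under intersection; and $0 \notin \xi$ since members of filters in $\mathcal{B}_{\alpha_{1,n}}$ are nonempty. The two constructions are then mutually inverse by unwinding definitions: starting from a complete family, the chain of words occurring in the associated $\xi$ is $\{\alpha_{1,n} : \mathcal{F}_n \neq \emptyset\}$, whose join is again $\alpha$, and the $n$-th slice of $\xi$ is again $\mathcal{F}_n$; starting from a filter $\xi$, every idempotent of $\xi$ has word a beginning of the extracted word, hence is recovered, and each slice is recovered by construction.

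\textbf{Main obstacle.} The real content is bookkeeping-heavy rather than deep: one must pin down the product and order formulas on $E(S)$ once and apply them consistently, and one must treat the degenerate level $n = 0$ with care, since $\mathcal{B}$ need not contain $r(\omega) = \mathcal{E}^0$, so $\mathcal{F}_0$ may genuinely be empty (which is precisely why the definition of a complete family permits $\mathcal{F}_0 = \emptyset$), and correspondingly the extremal case $\alpha = \omega$ (where $\mathcal{F}_0$ is a nonempty filter in $\mathcal{B}$ and $\xi = \{(\omega, A, \omega) : A \in \mathcal{F}_0\}$) has to be handled separately.
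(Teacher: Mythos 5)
The paper offers no proof of this statement---it is quoted from \cite[Theorem~4.13]{Boava2017} (with the remark immediately afterwards that the original was misstated and later corrected in \cite[Theorem~2.4]{Castro2020})---so there is no internal argument to compare against; your reconstruction is correct and follows the same route as the cited source: extract the word $\alpha$ as the join of the chain of words occurring in the filter, slice the filter level by level into the families $\mathcal{F}_n$, and verify the completeness identity and its converse from the order and product formulas on $E(S)$. The degenerate behaviour you flag at level $n=0$ (and for $\alpha=\omega$ with $\mathcal{F}_0=\emptyset$) is precisely the point on which the original statement was imprecise, so it is right to single it out.
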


\begin{obs}
Theorem~\ref{P1-corresp-filters} was originally proved, but misstated, in \cite[Theorem~4.13]{Boava2017}. A comment about this later appeared in \cite[Theorem~2.4]{Castro2020}.
\end{obs}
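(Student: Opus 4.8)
The statement is a bibliographical remark rather than a mathematical claim, so strictly speaking there is no theorem here to prove; what is needed is a justification of the two historical assertions it makes. The plan is therefore expository. First I would recall the precise formulation of Theorem~\ref{P1-corresp-filters} as recorded above, and then retrieve the formulation as it appears in \cite[Theorem~4.13]{Boava2017}, so that the two can be compared verbatim. The claim that the result was \emph{proved} is settled by observing that the argument given in \cite{Boava2017} is itself correct; the claim that it was \emph{misstated} must be supported by isolating the exact point at which the original statement differs from the corrected one.

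The key step is to locate that discrepancy. Guided by the care taken in the preliminaries above to separate $\mathcal{L}^{\leqslant \infty} = \mathcal{L}^* \cup \mathcal{L}^{\infty}$ from $\overline{\mathcal{L}^{\leqslant \infty}} = \mathcal{L}^* \cup \overline{\mathcal{L}^{\infty}}$, I expect the misstatement to lie precisely in the indexing set of the label component $\alpha$ of the pair $(\alpha, \{\mathcal{F}_n\}_{0 \leq n \leq |\alpha|})$. The correct bijection, as stated here, ranges $\alpha$ over $\overline{\mathcal{L}^{\leqslant \infty}}$, that is, over finite labelled paths together with those infinite words all of whose beginnings appear as finite labelled paths, whereas the original version inadvertently ranged $\alpha$ over $\mathcal{L}^{\leqslant \infty}$, which is strictly smaller whenever some infinite word in $\overline{\mathcal{L}^{\infty}}$ fails to lie in $\mathcal{L}^{\infty}$. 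To confirm this I would exhibit, or point to, a complete family $\{\mathcal{F}_n\}$ whose associated label is such a word, yielding a filter in $E(S)$ that is not accounted for by the original statement.

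Finally, I would record that this inaccuracy was already noticed and corrected in the literature, citing \cite[Theorem~2.4]{Castro2020} for the amended formulation. Since the correction amounts to replacing one indexing set by the other while the underlying proof is left untouched, there is no genuine mathematical obstacle here; the only real work is the careful textual comparison of the two statements together with the verification that the barred set $\overline{\mathcal{L}^{\leqslant \infty}}$ is in fact the one for which the correspondence is a bijection.
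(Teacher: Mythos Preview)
Your opening observation is exactly right: this is a bibliographical remark, not a theorem, and the paper accordingly provides no proof whatsoever for it --- the remark simply sits there as a historical note with no accompanying justification. Your instinct that there is nothing to prove matches the paper's treatment precisely.

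Your extended plan --- retrieving the two formulations, isolating the discrepancy in the indexing set $\mathcal{L}^{\leqslant\infty}$ versus $\overline{\mathcal{L}^{\leqslant\infty}}$, and exhibiting a filter witnessing the gap --- is a plausible and well-reasoned reconstruction of what the misstatement likely is, and it is consistent with the care the preliminaries take in distinguishing these two sets. But this goes well beyond what the paper itself does: the paper offers no such comparison, no identification of the specific error, and no example. If you were writing a referee report or a survey you might want this level of detail, but for the purposes of matching the paper's own treatment, your first sentence already suffices.
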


We say that a filter $\xi$ in $E(S)$ is of \emph{finite type}, if it is associated with a pair $(\alpha, \{\mathcal{F}_n\}_{0 \leq n \leq |\alpha|})$ where $|\alpha|<\infty,$ and of \emph{infinite type} otherwise. The filter $\xi$ is denoted by $\xia$ and the filters in the complete family will be denoted by $\xi^{\alpha}_n$ (or just $\xi_n$). To be precise $\xi^{\alpha}_n=\{A\in\mathcal{B} \ : \ (\alpha_{1,n},A,\alpha_{1,n}) \in \xia\}.$

\begin{obs}[{\cite[Remark~2.6]{Castro2020}}]\label{P1-obs-2.6}
For a filter $\xi^\alpha$ in $E(S)$ and an element $(\beta,A,\beta)\in E(S)$, we have that $(\beta,A,\beta)\in\xi^{\alpha}$ if, and only if, $\beta$ is a beginning of $\alpha$ and $A\in\xi^{\alpha}_{|\beta|}$.
\end{obs}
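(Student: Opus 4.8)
The plan is to read the statement off the explicit form of the correspondence in Theorem~\ref{P1-corresp-filters}, using only elementary properties of filters in $E(S)$. Recall from \cite[Proposition~3.4]{Boava2017} that in $E(S)$ one has $(\gamma,C,\gamma)\leq(\beta,B,\beta)$ exactly when $\beta$ is a beginning of $\gamma$, say $\gamma=\beta\gamma'$, and $C\subseteq r(B,\gamma')$. Hence any nonzero common lower bound $(\delta,D,\delta)$ of two idempotents $(\beta,A,\beta)$ and $(\beta',A',\beta')$ has both $\beta$ and $\beta'$ as beginnings of $\delta$, so $\beta$ and $\beta'$ are comparable for the ``is a beginning of'' relation, and they coincide as soon as $|\beta|=|\beta'|$. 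Since a filter is nonempty, upward closed, downward directed and omits $0$, the set $L(\eta):=\{\gamma\in\mathcal{L}^* : (\gamma,C,\gamma)\in\eta\text{ for some }C\}$ attached to a filter $\eta$ is therefore a chain under ``is a beginning of''.

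The ``if'' direction is immediate from the defining formula $\xi^{\alpha}_n=\{A\in\mathcal{B} : (\alpha_{1,n},A,\alpha_{1,n})\in\xi^{\alpha}\}$: if $\beta$ is a beginning of $\alpha$ then $\beta=\alpha_{1,|\beta|}$, and $A\in\xi^{\alpha}_{|\beta|}$ means precisely $(\beta,A,\beta)\in\xi^{\alpha}$. For the ``only if'' direction, suppose $(\beta,A,\beta)\in\xi^{\alpha}$ and put $n:=|\beta|$; we may assume $n\geq1$, the case $n=0$ being trivial. By Theorem~\ref{P1-corresp-filters} and the construction behind it (see \cite[Theorem~4.13]{Boava2017} and \cite[Theorem~2.4]{Castro2020}), the labelled path $\alpha$ is the join of the chain $L(\xi^{\alpha})$ and $\{\xi^{\alpha}_m\}_{0\leq m\leq|\alpha|}$ is a complete family for $\alpha$; in particular $\beta\in L(\xi^{\alpha})$ forces $\beta$ to be a beginning of $\alpha$, so $n\leq|\alpha|$ and $\beta=\alpha_{1,n}$. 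One can also argue the equality $\beta=\alpha_{1,n}$ by hand once $n\leq|\alpha|$ is known: then $\xi^{\alpha}_n$ is a nonempty filter in $\mathcal{B}_{\alpha_{1,n}}$, so $(\alpha_{1,n},C,\alpha_{1,n})\in\xi^{\alpha}$ for some $C$, and a common lower bound of $(\beta,A,\beta)$ and $(\alpha_{1,n},C,\alpha_{1,n})$ inside $\xi^{\alpha}$ shows that $\beta$ and $\alpha_{1,n}$ are beginnings of one common labelled path, whence $\beta=\alpha_{1,n}$ by equality of lengths. Finally $A\in\xi^{\alpha}_n=\xi^{\alpha}_{|\beta|}$ by the defining formula for $\xi^{\alpha}_n$, and we are done.

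The single delicate point is establishing $n\leq|\alpha|$ in the finite type case, i.e. that no labelled path strictly longer than $\alpha$ can occur in $\xi^{\alpha}$; this is exactly the content of ``$\alpha$ is the maximum of the chain $L(\xi^{\alpha})$'' in the construction of Theorem~\ref{P1-corresp-filters}. I expect this bookkeeping to be the main, and fairly mild, obstacle — everything else is a direct translation between the filter $\xi^{\alpha}$ and its associated pair $(\alpha,\{\xi^{\alpha}_n\}_n)$ supplied by Theorem~\ref{P1-corresp-filters}.
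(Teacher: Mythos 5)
Your argument is correct, but note that the paper itself offers no proof of this statement at all: it is recorded as a remark imported verbatim from \cite[Remark~2.6]{Castro2020}, so there is nothing internal to compare against. What you have written is a legitimate self-contained derivation. The ``if'' direction is indeed immediate from the displayed definition $\xi^{\alpha}_n=\{A\in\mathcal{B} : (\alpha_{1,n},A,\alpha_{1,n})\in\xi^{\alpha}\}$ together with $\beta=\alpha_{1,|\beta|}$. For the ``only if'' direction, your two ingredients are exactly the right ones: the natural partial order on $E(S)$ forces any two labelled paths occurring in a downward directed filter to be comparable under ``is a beginning of'', so $L(\xi^{\alpha})$ is a chain; and the correspondence of Theorem~\ref{P1-corresp-filters} is constructed precisely so that $\alpha$ is the join of that chain, which gives $|\beta|\leq|\alpha|$ and then $\beta=\alpha_{1,|\beta|}$ by your common-lower-bound argument. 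You are right to flag the inequality $|\beta|\leq|\alpha|$ as the one point that genuinely depends on the construction behind \cite[Theorem~4.13]{Boava2017} rather than on the bare statement of Theorem~\ref{P1-corresp-filters}; since the remark is itself borrowed from \cite{Castro2020}, leaning on that construction is entirely appropriate, and I see no gap.
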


Let $\beta \in \overline{\mathcal{L}^{\leqslant \infty}}.$ Denote by $\textsf{T}_{\beta}$ the set of all tight filters in $E(S)$ associated with the word $\beta$. For labelled paths $\alpha \in \mathcal{L}^{\geq 1}$ and $\beta \in \overline{\mathcal{L}^{\leqslant \infty}},$ consider $\textsf{T}_{(\alpha)\beta}:=\{\xi \in \textsf{T}_{\beta}: r(\alpha) \in \xi_0 \}.$

\begin{obs} Let $\alpha \in \mathcal{L}^{\geq 1}$ and $\beta \in \overline{\mathcal{L}^{\leqslant \infty}}.$ By \cite[Remark~2.10]{Castro2020}, if $\xi^{\beta} \in \textsf{T}_{(\alpha)\beta},$ then $\alpha \beta \in \overline{\mathcal{L}^{\leqslant \infty}}.$
\end{obs}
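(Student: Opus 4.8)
The plan is to deduce this from the correspondence in Theorem~\ref{P1-corresp-filters} together with the standard composition rule for relative ranges; since the statement is exactly \cite[Remark~2.10]{Castro2020}, the goal here is really just to recall why it holds. Let $\{\xi^{\beta}_n\}_{0 \leq n \leq |\beta|}$ be the complete family for $\beta$ corresponding to the filter $\xi^{\beta}$, so that, by construction, $\xi^{\beta} \in \textsf{T}_{(\alpha)\beta}$ says precisely that $r(\alpha) \in \xi^{\beta}_0$. I would also use the elementary identity $r(A,\gamma\gamma') = r(r(A,\gamma),\gamma')$ for $A \subseteq \mathcal{E}^0$ and labelled paths $\gamma,\gamma'$, which follows by factoring any path labelled by $\gamma\gamma'$ as a prefix labelled by $\gamma$ followed by a suffix labelled by $\gamma'$.

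The main step is an induction on $n$ showing that $A_n := r(r(\alpha),\beta_{1,n})$ belongs to $\xi^{\beta}_n$ for every $n$ with $0 \leq n \leq |\beta|$. The case $n=0$ is the hypothesis $r(\alpha) \in \xi^{\beta}_0$ (here $\mathcal{B}_{\beta_{1,0}} = \mathcal{B}_{\omega} = \mathcal{B}$ and $r(\alpha) \in \mathcal{B}$). For the inductive step, if $A_n \in \xi^{\beta}_n$ and $n < |\beta|$, then the defining property of a complete family, namely $\xi^{\beta}_n = \{A \in \mathcal{B}_{\beta_{1,n}} : r(A,\beta_{n+1}) \in \xi^{\beta}_{n+1}\}$, forces $r(A_n,\beta_{n+1}) \in \xi^{\beta}_{n+1}$, and by the composition rule this last set equals $r(r(\alpha),\beta_{1,n+1}) = A_{n+1}$. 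Since each $\xi^{\beta}_n$ is a proper filter in the Boolean algebra $\mathcal{B}_{\beta_{1,n}}$, it cannot contain the empty set, so $A_n \neq \emptyset$ for every $n$.

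Finally I would convert nonemptiness of the $A_n$ into the existence of labelled paths: $A_n = r(r(\alpha),\beta_{1,n}) \neq \emptyset$ yields a path $\lambda$ with $\mathcal{L}(\lambda) = \beta_{1,n}$ and $s(\lambda) \in r(\alpha)$, and then a path $\tau$ with $\mathcal{L}(\tau) = \alpha$ and $r(\tau) = s(\lambda)$, so that $\tau\lambda$ realizes the word $\alpha\beta_{1,n}$ and hence $\alpha\beta_{1,n} \in \mathcal{L}^{*}$. Since all beginnings of $\alpha\beta$ of length at most $|\alpha|$ are beginnings of $\alpha$ (hence in $\mathcal{L}^{*}$) and all longer beginnings are of the form $\alpha\beta_{1,n}$, it follows that $\alpha\beta \in \mathcal{L}^{*}$ when $\beta$ is finite and $\alpha\beta \in \overline{\mathcal{L}^{\infty}}$ when $\beta$ is infinite; either way $\alpha\beta \in \overline{\mathcal{L}^{\leqslant \infty}}$. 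I do not expect a genuine obstacle here: the only points needing a little care are applying the complete-family identity in the correct direction and checking that each $A_n$ lies in the corresponding $\mathcal{B}_{\beta_{1,n}}$, both immediate from the closure properties of $\mathcal{B}$ recorded above.
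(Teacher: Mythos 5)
Your argument is correct. Note that the paper itself offers no proof of this remark: it is stated purely as a citation of Remark~2.10 of de Castro--van Wyk, so there is no internal argument to compare against. What you have written is a valid self-contained reconstruction of that cited fact, and it is the natural one: the hypothesis $\xi^{\beta}\in\textsf{T}_{(\alpha)\beta}$ is exactly $r(\alpha)\in\xi^{\beta}_{0}$, the identity $r(A,\gamma\gamma')=r(r(A,\gamma),\gamma')$ together with the defining recursion of a complete family pushes $A_n=r(r(\alpha),\beta_{1,n})=r(\alpha\beta_{1,n})$ into $\xi^{\beta}_{n}$ for every $n$, and properness of the filters gives $A_n\neq\emptyset$, i.e.\ $\alpha\beta_{1,n}\in\mathcal{L}^{*}$ for all $n$, which is precisely the definition of $\alpha\beta\in\overline{\mathcal{L}^{\leqslant\infty}}$ in both the finite and infinite cases. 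The two points you flag as needing care (membership $A_n\in\mathcal{B}_{\beta_{1,n}}$, which follows from $\mathcal{B}$ being closed under relative ranges and $r(A,\gamma)\subseteq r(\gamma)$, and the direction of the complete-family identity) are indeed the only delicate ones, and you handle both correctly; the only unstated convention you rely on is that filters are proper, which is standard in the tight-spectrum framework and is what guarantees $\emptyset\notin\xi^{\beta}_{n}$.
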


In particular, for $\alpha \in \mathcal{L}^*,$ denote the set of all filters in $E(S)$ whose associated labelled path $\beta$ can be ``glued''
 to $\alpha$ by the disjoint union: 
$$\bigsqcup_{\beta}\textsf{T}_{(\alpha)\beta}:=\bigcup\{\textsf{T}_{(\alpha)\beta} : \beta \in \overline{\mathcal{L}^{\leqslant \infty}} 
\text{ and } \alpha \beta \in \overline{\mathcal{L}^{\leqslant \infty}}\}.$$
Furthermore, denote the set of all filters in $E(S)$ whose associated word begins with $\alpha$ by the disjoint union:
$$\bigsqcup_{\beta}\textsf{T}_{\alpha\beta}:=\bigcup\{\textsf{T}_{\alpha \beta} : \beta \in \overline{\mathcal{L}^{\leqslant \infty}} 
\text{ and } \alpha \beta \in \overline{\mathcal{L}^{\leqslant \infty}}\}.$$

\begin{lema}[{\cite[Lemma~3.4]{Castro2020}}]\label{P1-lemma-T1} Let $(\mathcal{E}, \mathcal{L},\mathcal{B})$ be a weakly-left resolving labelled space.
Fix $\alpha \in \mathcal{L}^{\geq 1}.$ Then, $V_{(\alpha,r(\alpha),\alpha)} = \bigsqcup_{\beta}\textsf{T}_{\alpha\beta},$ and $V_{(\omega,r(\alpha),\omega)} = \bigsqcup_{\beta}\textsf{T}_{(\alpha)\beta}.$
\end{lema}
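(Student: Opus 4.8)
The plan is to prove each of the two identities by unwinding the definitions of $V_e$ and of $\textsf{T}_{\beta}$, using Theorem~\ref{P1-corresp-filters} to attach a word to each tight filter and then invoking Remark~\ref{P1-obs-2.6}. Fix a tight filter $\xi\in\textsf{T}$; by Theorem~\ref{P1-corresp-filters} it corresponds to a unique pair $(\gamma,\{\xi_n\}_{0\le n\le|\gamma|})$ with $\gamma\in\overline{\mathcal{L}^{\leqslant\infty}}$, and we write $\xi=\xi^{\gamma}$.

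For the first identity, note that $(\alpha,r(\alpha),\alpha)\in E(S)$ because $r(\alpha)\in\mathcal{B}_{\alpha}$, so Remark~\ref{P1-obs-2.6} gives that $\xi\in V_{(\alpha,r(\alpha),\alpha)}$ if and only if $\alpha$ is a beginning of $\gamma$ and $r(\alpha)\in\xi_{|\alpha|}$. The second condition is automatic: since $|\alpha|\ge 1$, the set $\xi_{|\alpha|}$ is a filter in $\mathcal{B}_{\alpha}$, hence nonempty and upward closed, and $r(\alpha)$ is the greatest element of $\mathcal{B}_{\alpha}$, so $r(\alpha)\in\xi_{|\alpha|}$. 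Hence $\xi\in V_{(\alpha,r(\alpha),\alpha)}$ if and only if $\gamma=\alpha\beta$ for some word $\beta$, that is, $\xi\in\textsf{T}_{\alpha\beta}$. Such a $\beta$ is an admissible index of the disjoint union: $\alpha\beta=\gamma\in\overline{\mathcal{L}^{\leqslant\infty}}$, and $\beta\in\overline{\mathcal{L}^{\leqslant\infty}}$ as well, because every beginning of $\alpha\beta$ lies in $\mathcal{L}^{*}$ and the label of any path $\lambda_{1}\cdots\lambda_{|\alpha|+n}$ restricts to the label of the path $\lambda_{|\alpha|+1}\cdots\lambda_{|\alpha|+n}$. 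The reverse inclusion is immediate: if $\xi\in\textsf{T}_{\alpha\beta}$ for an admissible $\beta$ then $\gamma=\alpha\beta$, $\alpha$ is a beginning of $\gamma$, and the computation above yields $\xi\in V_{(\alpha,r(\alpha),\alpha)}$. This establishes $V_{(\alpha,r(\alpha),\alpha)}=\bigsqcup_{\beta}\textsf{T}_{\alpha\beta}$.

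For the second identity, $(\omega,r(\alpha),\omega)\in E(S)$ since $r(\alpha)\in\mathcal{B}=\mathcal{B}_{\omega}$. As $\omega$ is a beginning of every word, Remark~\ref{P1-obs-2.6} yields $\xi\in V_{(\omega,r(\alpha),\omega)}$ if and only if $r(\alpha)\in\xi_{0}$, which by definition means $\xi\in\textsf{T}_{(\alpha)\gamma}$. Moreover, once $r(\alpha)\in\xi_{0}$ holds, \cite[Remark~2.10]{Castro2020} guarantees that $\alpha\gamma\in\overline{\mathcal{L}^{\leqslant\infty}}$, so $\gamma$ is an admissible index and $\xi\in\bigsqcup_{\beta}\textsf{T}_{(\alpha)\beta}$. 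Conversely, if $\xi\in\textsf{T}_{(\alpha)\beta}$ for an admissible $\beta$, then $\gamma=\beta$ and $r(\alpha)\in\xi_{0}$, whence $\xi\in V_{(\omega,r(\alpha),\omega)}$. This establishes $V_{(\omega,r(\alpha),\omega)}=\bigsqcup_{\beta}\textsf{T}_{(\alpha)\beta}$.

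The only slightly delicate points are recognising that membership of the top element $r(\alpha)$ in any filter of $\mathcal{B}_{\alpha}$ is automatic --- which is what reduces the two-part criterion of Remark~\ref{P1-obs-2.6} to the single requirement that $\alpha$ be a beginning of $\gamma$ --- and checking that the words $\beta$ indexing the two disjoint unions are admissible, which for the first identity is a short path-realizability observation and for the second identity is exactly \cite[Remark~2.10]{Castro2020}. The rest is routine bookkeeping.
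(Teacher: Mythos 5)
Your argument is correct: Remark~\ref{P1-obs-2.6} reduces membership in $V_{(\alpha,r(\alpha),\alpha)}$ (resp.\ $V_{(\omega,r(\alpha),\omega)}$) to the conditions you state, the observation that $r(\alpha)$ is the top element of $\mathcal{B}_{\alpha}$ and hence lies in every filter $\xi_{|\alpha|}$ is exactly the point that collapses the two-part criterion, and your admissibility checks for the index $\beta$ (the tail-of-a-path observation for the first identity, \cite[Remark~2.10]{Castro2020} for the second) are what is needed for the right-hand sides to be well-formed. The paper itself does not prove this lemma --- it imports it verbatim as \cite[Lemma~3.4]{Castro2020} --- so there is no in-paper argument to compare against; your direct unwinding of the definitions is a sound, self-contained substitute.
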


\subsubsection{Ultragraphs via labelled spaces}

Ultragraph $C^{\ast}$-algebras were defined in \cite{Tomforde2003}. We now recall some definitions and results about ultragraphs and labelled spaces associated with ultragraphs.

\begin{definition}[{\cite[Definition~2.1]{Goncalves2017}}]
An \emph{ultragraph} is a quadruple $\mathcal{G}=(G^0, \mathcal{G}^1, r,s)$ consisting of two countable sets $G^0, \mathcal{G}^1$, a map $s:\mathcal{G}^1 \to G^0$, and a map $r:\mathcal{G}^1 \to \mathbb{P}(G^0)\setminus \{\emptyset\}$, where $\mathbb{P}(G^0)$ is the power set of $G^0$.
\end{definition}

\begin{definition}[{\cite[Definition~2.3]{Goncalves2017},\cite[Definition~2.6]{Castro2021}}]
Let $\mathcal{G}$ be an ultragraph. Define $\mathcal{G}^0$ to be the smallest subset of $\mathbb{P}(G^0)$ that contains $\{v\}$, for all $v\in G^0$, contains $r(e)$, for all $e\in \mathcal{G}^1$, contains $\emptyset$, and is closed under finite unions and finite intersections. Elements of $\mathcal{G}^0$ are called \emph{generalized vertices}.
\end{definition}

A finite path $\alpha \in \mathcal{G}^*$ with $|\alpha|>0$ is called a \textit{loop}, if $s(\alpha)\in r(\alpha)$.  A loop $\alpha$ is said to be \emph{based at $A \in \mathcal{G}^0$},
if $s(\alpha) \in A.$ A loop $\alpha=\alpha_1\ldots\alpha_n$ is a \textit{simple loop}, if $s(\alpha_i)\neq s(\alpha_1)$ for $i \neq 1.$ An \textit{exit} for a loop $\alpha=\alpha_1\ldots\alpha_n$ is either of the following:
\begin{enumerate}
    \item an edge $e\in \mathcal{G}^1$ such that there exists an $i$ for which $s(e)\in r(\alpha_i)$,  but $e\neq  \alpha_{i+1}$,
    \item a sink $w$ such that $w \in r(\alpha_i),$ for some $i$.
\end{enumerate}

\begin{definition}[{\cite[Definition~2.6]{Castro2021}}]
The \emph{accommodating family $\mathcal{B}$ associated with $\mathcal{G}$} is the smallest family of subsets of $G^0$ that contains $\mathcal{G}^0$ and is closed under relative complements, finite unions and finite intersections.
\end{definition}

Next, we recall how ultragraphs are realized as labelled graphs.

\begin{definition}[{\cite[Definition~2.7]{Castro2021}}]
Fix an ultragraph $\mathcal{G}=(G^0, \mathcal{G}^1, r,s)$. Let $\mathcal{E}_{\mathcal{G}}=(\mathcal{E}_{\mathcal{G}}^0, \mathcal{E}_{\mathcal{G}}^1,r^{\prime},s^{\prime})$, where $\mathcal{E}_{\mathcal{G}}^0=G^0$, $\mathcal{E}_{\mathcal{G}}^1=\{(e,\upsilon):e\in\mathcal{G}^1, \upsilon \in r(e)\}$ and define  $r^{\prime}(e,\upsilon):=\upsilon$ and $s^{\prime}(e,\upsilon):=s(e)$. Set $\mathcal{A}=\mathcal{E}_{\mathcal{G}}^1$, let $\mathcal{B}$ be the accommodating family of $\mathcal{G}$, and define $\mathcal{L}_{\mathcal{G}}:\mathcal{E}_{\mathcal{G}}^1 \to \mathcal{A}$ by $\mathcal{L}_{\mathcal{G}}(e,\upsilon):=e$. Then,
$(\mathcal{E}_{\mathcal{G}}, \mathcal{L}_{\mathcal{G}}, \mathcal{B})$ is the normal labelled space associated with $\mathcal{G}$.
\end{definition}

\noindent Notice that labelled paths correspond to paths on the ultragraph and that $\mathcal{L}^{\infty}_{\mathcal{G}}=\overline{\mathcal{L}^{\infty}_{\mathcal{G}}}.$

\begin{obs}[{\cite[Remark~3.1]{Castro2021}}]
Notice that, for all $A\subseteq G^0$ and $e\in\mathcal{G}^1$, we have that $r(A,e)=r(e)$, if $s(e)\in A$, and $r(A,e)=\emptyset$ otherwise. In particular, $r(\{s(e)\},e)=r(e)$.
\end{obs}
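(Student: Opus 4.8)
The plan is to prove the statement by directly unwinding the definition of the relative range in the normal labelled space $(\mathcal{E}_{\mathcal{G}}, \mathcal{L}_{\mathcal{G}}, \mathcal{B})$ associated with $\mathcal{G}$. First I would observe that, since $r(f)\neq\emptyset$ for every $f\in\mathcal{G}^1$, the label set satisfies $\mathcal{L}^1=\mathcal{L}_{\mathcal{G}}(\mathcal{E}_{\mathcal{G}}^1)=\mathcal{G}^1$, so each $e\in\mathcal{G}^1$ is genuinely a labelled path of length one and the defining formula $r(A,e)=\{r'(\lambda) : \lambda\in\mathcal{E}_{\mathcal{G}}^1,\ \mathcal{L}_{\mathcal{G}}(\lambda)=e,\ s'(\lambda)\in A\}$ is available.

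The key step is to describe the set of edges $\lambda\in\mathcal{E}_{\mathcal{G}}^1$ over which this range is taken. By the construction of $\mathcal{E}_{\mathcal{G}}$, every such $\lambda$ has the form $\lambda=(f,\upsilon)$ with $f\in\mathcal{G}^1$ and $\upsilon\in r(f)$, and $\mathcal{L}_{\mathcal{G}}(f,\upsilon)=f$; hence $\mathcal{L}_{\mathcal{G}}(\lambda)=e$ holds precisely when $f=e$, and in that case $s'(\lambda)=s'(e,\upsilon)=s(e)$ regardless of $\upsilon$. I would then split into two cases. If $s(e)\in A$, then every edge $(e,\upsilon)$ with $\upsilon\in r(e)$ meets all the conditions, and since $r'(e,\upsilon)=\upsilon$ one gets $r(A,e)=\{\upsilon : \upsilon\in r(e)\}=r(e)$. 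If $s(e)\notin A$, then no admissible $\lambda$ satisfies $s'(\lambda)\in A$, so the set is empty and $r(A,e)=\emptyset$. Finally, specializing $A=\{s(e)\}$, which obviously contains $s(e)$, yields the announced particular case $r(\{s(e)\},e)=r(e)$.

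I do not anticipate a real obstacle: the argument is a routine definition-chase. The only point that needs care is purely notational, namely keeping apart the three roles of the symbol $r$ — the ultragraph range map $r:\mathcal{G}^1\to\mathbb{P}(G^0)\setminus\{\emptyset\}$, the range map $r'$ of the graph $\mathcal{E}_{\mathcal{G}}$, and the relative-range operation $r(\cdot,\cdot)$ on the labelled space — and substituting correctly the explicit data $\mathcal{E}_{\mathcal{G}}^0$, $\mathcal{E}_{\mathcal{G}}^1$, $r'$, $s'$, $\mathcal{L}_{\mathcal{G}}$ from the definition of the labelled space associated with $\mathcal{G}$.
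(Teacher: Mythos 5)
Your proposal is correct: the paper states this remark without proof (citing \cite[Remark~3.1]{Castro2021}), and the argument is exactly the definition-unwinding you describe, using that every $\lambda\in\mathcal{E}_{\mathcal{G}}^1$ with label $e$ has the form $(e,\upsilon)$ with $\upsilon\in r(e)$, $s'(e,\upsilon)=s(e)$ and $r'(e,\upsilon)=\upsilon$. Your preliminary observation that $r(f)\neq\emptyset$ guarantees each $e\in\mathcal{G}^1$ actually occurs as a label is a sensible extra check, and the case split on whether $s(e)\in A$ settles both assertions.
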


\begin{lema}[{\cite[Lemma~3.2]{Castro2021}}]
Let $\alpha$ be a path in $\mathcal{G}$ such that $|\alpha|\geq 1$, and let $\{\mathcal{F}_n\}_{n=0}^{|\alpha|}$ be a complete family of filters for $\alpha$. If $0\leq n <|\alpha|$, then $\mathcal{F}_n=\uparrow_{\mathcal{B}_{\alpha_n}}\{s(\alpha_{n+1})\}$.
\end{lema}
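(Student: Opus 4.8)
The plan is to unwind the definition of a complete family for $\alpha$ and feed into it the description of the relative range of a single ultragraph edge given in \cite[Remark~3.1]{Castro2021}. First I would record an elementary fact about ranges of ultragraph paths. Since an edge of $\mathcal{E}_{\mathcal{G}}$ labelled by $\alpha_k$ is exactly a pair $(\alpha_k,\upsilon)$ with $\upsilon\in r(\alpha_k)$, and since $\alpha$ is a genuine path in $\mathcal{G}$ (so $s(\alpha_{k+1})\in r(\alpha_k)$ for all $k$), a labelled path in $\mathcal{E}_{\mathcal{G}}$ with label $\alpha_{1,m}$ must be $(\alpha_1,s(\alpha_2))(\alpha_2,s(\alpha_3))\cdots(\alpha_{m-1},s(\alpha_m))(\alpha_m,\upsilon)$ for some $\upsilon\in r(\alpha_m)$, and conversely each $\upsilon\in r(\alpha_m)$ gives such a path ending at $\upsilon$. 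Hence $r(\alpha_{1,m})=r(\alpha_m)$ for every $1\le m\le|\alpha|$, so $\mathcal{B}_{\alpha_{1,m}}=\mathcal{B}_{\alpha_m}$ with top element $r(\alpha_m)$; for $m=0$ one has $\mathcal{B}_{\alpha_{1,0}}=\mathcal{B}_{\omega}=\mathcal{B}\cap\mathbb{P}(r(\omega))=\mathcal{B}$.

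Now fix $n$ with $0\le n<|\alpha|$. By definition of a complete family, $\mathcal{F}_n=\{A\in\mathcal{B}_{\alpha_{1,n}}\,:\,r(A,\alpha_{n+1})\in\mathcal{F}_{n+1}\}$. For $A\in\mathcal{B}_{\alpha_{1,n}}$ we have $A\subseteq G^0$, so \cite[Remark~3.1]{Castro2021} gives $r(A,\alpha_{n+1})=r(\alpha_{n+1})$ if $s(\alpha_{n+1})\in A$, and $r(A,\alpha_{n+1})=\emptyset$ otherwise. Since $\mathcal{F}_{n+1}$ is a filter, it is nonempty and upward closed, hence it contains the top element $r(\alpha_{n+1})$ of $\mathcal{B}_{\alpha_{1,n+1}}$; being a proper filter, it does not contain $\emptyset$. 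Thus $r(A,\alpha_{n+1})\in\mathcal{F}_{n+1}$ if and only if $s(\alpha_{n+1})\in A$, and therefore $\mathcal{F}_n=\{A\in\mathcal{B}_{\alpha_{1,n}}\,:\,s(\alpha_{n+1})\in A\}$. Finally, $s(\alpha_{n+1})\in r(\alpha_n)=r(\alpha_{1,n})$ shows that $\{s(\alpha_{n+1})\}\in\mathcal{G}^0\subseteq\mathcal{B}$ lies in $\mathbb{P}(r(\alpha_{1,n}))$ and hence in $\mathcal{B}_{\alpha_{1,n}}$, so the condition $s(\alpha_{n+1})\in A$ is precisely $\{s(\alpha_{n+1})\}\subseteq A$, i.e.\ $A\in\,\uparrow_{\mathcal{B}_{\alpha_{1,n}}}\{s(\alpha_{n+1})\}$. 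Using $\mathcal{B}_{\alpha_{1,n}}=\mathcal{B}_{\alpha_n}$ (and $\mathcal{B}_{\alpha_{1,0}}=\mathcal{B}$ when $n=0$) this gives $\mathcal{F}_n=\,\uparrow_{\mathcal{B}_{\alpha_n}}\{s(\alpha_{n+1})\}$, as desired. In particular, the alternative $\mathcal{F}_0=\emptyset$ permitted by the definition of a complete family cannot occur when $|\alpha|\ge1$.

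I do not expect a real obstacle here; the proof is a direct unwinding of definitions. The two points deserving a little care are: (a) the collapse $r(\alpha_{1,m})=r(\alpha_m)$ for genuine ultragraph paths, which is what makes $\mathcal{B}_{\alpha_{1,n}}$ agree with $\mathcal{B}_{\alpha_n}$ and what guarantees $r(\alpha_{n+1})\in\mathcal{F}_{n+1}$; and (b) invoking explicitly that a filter is proper, so that the value $\emptyset$ taken by $r(A,\alpha_{n+1})$ is excluded from $\mathcal{F}_{n+1}$. Both are immediate from \cite[Remark~3.1]{Castro2021} and the basic properties of the labelled space $(\mathcal{E}_{\mathcal{G}},\mathcal{L}_{\mathcal{G}},\mathcal{B})$ recalled above.
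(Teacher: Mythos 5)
The paper does not actually prove this lemma; it is imported verbatim from \cite[Lemma~3.2]{Castro2021}, so there is no in-paper argument to compare against. Your proof is correct and is the standard one: the key points — that $r(\alpha_{1,m})=r(\alpha_m)$ for a genuine ultragraph path, that $r(A,\alpha_{n+1})$ only takes the two values $r(\alpha_{n+1})$ and $\emptyset$, and that a filter contains the top element of $\mathcal{B}_{\alpha_{1,n+1}}$ but not $\emptyset$ — are exactly what is needed, and your closing observation that $\mathcal{F}_0=\emptyset$ is impossible when $|\alpha|\geq 1$ is a correct (if unrequired) bonus.
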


\begin{lema}[{\cite[Lemma~3.4]{Castro2021}\label{P1-lema-family-inf}}] 
Let $\alpha$ be an infinite path in $\mathcal{G}$. Then $\{\mathcal{F}_n\}_{n=0}^\infty:=\{\uparrow_{\mathcal{B}_{\alpha_n}}\{s(\alpha_{n+1})\}\}_{n=0}^{\infty}$ is the only complete family of filters (in particular, ultrafilters) for $\alpha$.
\end{lema}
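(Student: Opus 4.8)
The plan is to establish the statement in two halves: that the displayed family is a complete family for $\alpha$ all of whose members are ultrafilters, and that it is the unique complete family for $\alpha$. Before starting I would record two facts special to the ultragraph labelled space. First, the relative range of a labelled path depends only on its last edge: by the iterative definition of relative range together with the ultragraph range formula recalled above (\cite[Remark~3.1]{Castro2021}), $r(\alpha_{1,n})=r(\alpha_n)$, so the Boolean algebra $\mathcal{B}_{\alpha_{1,n}}=\mathcal{B}\cap\mathbb{P}(r(\alpha_n))$ is precisely what the notation $\mathcal{B}_{\alpha_n}$ abbreviates, with the convention $\mathcal{B}_{\alpha_0}:=\mathcal{B}$ since $r(\omega)=G^0$. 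Second, from \cite[Remark~3.1]{Castro2021}, for $A\subseteq G^0$ and $e\in\mathcal{G}^1$ one has $r(A,e)=r(e)$ when $s(e)\in A$ and $r(A,e)=\emptyset$ otherwise.

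For the first half I would set $\mathcal{F}_n:=\uparrow_{\mathcal{B}_{\alpha_n}}\{s(\alpha_{n+1})\}$, which is legitimate because $s(\alpha_{n+1})\in r(\alpha_n)$ (as $\alpha$ is a path) and $\{s(\alpha_{n+1})\}\in\mathcal{G}^0\subseteq\mathcal{B}$. Since the only subsets of a singleton $\{v\}\subseteq G^0$ are $\emptyset$ and $\{v\}$, both lying in $\mathcal{B}$, the element $\{s(\alpha_{n+1})\}$ is an atom of $\mathcal{B}_{\alpha_n}$, so its principal up-set $\mathcal{F}_n$ is an ultrafilter; this is exactly what gives the parenthetical ``in particular, ultrafilters''. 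Then I would check the completeness relation $\mathcal{F}_n=\{A\in\mathcal{B}_{\alpha_n}:r(A,\alpha_{n+1})\in\mathcal{F}_{n+1}\}$ directly using the range formula: if $s(\alpha_{n+1})\in A$ then $r(A,\alpha_{n+1})=r(\alpha_{n+1})$ is the top element of $\mathcal{B}_{\alpha_{n+1}}$, which lies in $\mathcal{F}_{n+1}$ because $s(\alpha_{n+2})\in r(\alpha_{n+1})$; while if $s(\alpha_{n+1})\notin A$ then $r(A,\alpha_{n+1})=\emptyset\notin\mathcal{F}_{n+1}$, a filter never containing $\emptyset$. This shows $\{\mathcal{F}_n\}_{n\geq 0}$ is a complete family for $\alpha$.

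For the second half, let $\{\mathcal{F}'_n\}_{n\geq 0}$ be any complete family for $\alpha$. I would fix $m\geq 0$, pick any $N>m$, and observe that the truncation $\{\mathcal{F}'_n\}_{n=0}^{N}$ is a complete family for the finite path $\alpha_{1,N}$: the completeness relations for $0\leq n<N$ are inherited from the infinite family, and each $\mathcal{F}'_n$ is a filter in the appropriate $\mathcal{B}_{(\alpha_{1,N})_{1,n}}$. The finite-path lemma \cite[Lemma~3.2]{Castro2021} recalled just above then gives $\mathcal{F}'_m=\uparrow_{\mathcal{B}_{\alpha_m}}\{s(\alpha_{m+1})\}$, since $m<N=|\alpha_{1,N}|$. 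As $m$ is arbitrary, $\{\mathcal{F}'_n\}_{n\geq 0}$ coincides with $\{\mathcal{F}_n\}_{n\geq 0}$, proving uniqueness.

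I do not anticipate a serious obstacle: the argument is essentially bookkeeping on top of the finite-path case. The points that require care are the identification $\mathcal{B}_{\alpha_{1,n}}=\mathcal{B}_{\alpha_n}$, which is peculiar to the ultragraph setting and justifies the notation, the atom observation that promotes ``filter'' to ``ultrafilter'', and the fact that \cite[Lemma~3.2]{Castro2021} constrains only the filters of index strictly below the path length — which is precisely why the uniqueness step must pass to finite truncations of arbitrarily large length rather than to a single fixed finite beginning of $\alpha$.
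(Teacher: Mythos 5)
Your proof is correct. Note that the paper does not prove this lemma at all --- it is imported verbatim as \cite[Lemma~3.4]{Castro2021} --- so there is no internal argument to compare against; judged on its own, your two-step argument (direct verification that the principal up-sets of the atoms $\{s(\alpha_{n+1})\}$ form a complete family of ultrafilters, plus uniqueness by truncating an arbitrary complete family to finite paths of unbounded length and invoking the finite-path Lemma~3.2) is sound, and you correctly isolate the two ultragraph-specific facts that make it work, namely $r(\alpha_{1,n})=r(\alpha_n)$ and the dichotomy $r(A,e)\in\{r(e),\emptyset\}$.
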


\begin{obs}\label{P1-obs-unique-filter}
By \cite[Proposition~3.5, Proposition~3.6]{Castro2021},
for each infinite path $\alpha$ in $\mathcal{G}$, there is a unique element $\xi^{\alpha} \in \textsf{T}$ whose associated word is $\alpha$. And, a filter of infinite type is completely described by the infinite path associated to it.
\end{obs}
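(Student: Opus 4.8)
The plan is to deduce both assertions from the abstract filter/word correspondence recalled in Theorem~\ref{P1-corresp-filters}, fed with the uniqueness statement of Lemma~\ref{P1-lema-family-inf}.

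First I would recall that, by Theorem~\ref{P1-corresp-filters}, filters in $E(S)$ are in bijection with pairs $(\beta,\{\mathcal{F}_n\}_{0\leq n\leq|\beta|})$, where $\beta\in\overline{\mathcal{L}^{\leqslant\infty}}$ and $\{\mathcal{F}_n\}$ is a complete family for $\beta$, and that a filter is of infinite type exactly when its word $\beta$ is infinite. Since $\mathcal{G}$ is an ultragraph we have $\mathcal{L}^{\infty}_{\mathcal{G}}=\overline{\mathcal{L}^{\infty}_{\mathcal{G}}}$, so the infinite words arising here are precisely the infinite paths of $\mathcal{G}$. Now fix an infinite path $\alpha$. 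By Lemma~\ref{P1-lema-family-inf} there is exactly one complete family for $\alpha$, namely $\{\uparrow_{\mathcal{B}_{\alpha_n}}\{s(\alpha_{n+1})\}\}_{n=0}^{\infty}$, and each of its members is an ultrafilter. Plugging the pair $(\alpha,\{\mathcal{F}_n\})$ into the bijection produces a single filter $\xi^{\alpha}$ in $E(S)$ whose associated word is $\alpha$; moreover this already disposes of the second assertion, since any infinite-type filter corresponds to a pair whose complete family is uniquely pinned down by its word, so the filter is completely determined by its associated infinite path.

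The substantive point that remains is to check that $\xi^{\alpha}$ is \emph{tight}, i.e.\ that $\xi^{\alpha}\in\textsf{T}$ and not merely a filter in $E(S)$. Here I would use that tightness of an infinite-type filter is controlled by the members $\mathcal{F}_n$ of its complete family: when every $\mathcal{F}_n$ is an ultrafilter, which is the case for the family of Lemma~\ref{P1-lema-family-inf}, the filter lies in the closure of the set of ultrafilters of $E(S)$ and is therefore tight. Concretely I would verify the defining covering condition for tightness against the basic open sets $V_{e:e_1,\ldots,e_n}$, using Remark~\ref{P1-obs-2.6} to rephrase membership $(\beta,A,\beta)\in\xi^{\alpha}$ as ``$\beta$ is a beginning of $\alpha$ and $A\in\xi^{\alpha}_{|\beta|}$'', and Lemma~\ref{P1-lemma-T1} to identify the relevant cylinder sets; the ultrafilter property of each $\mathcal{F}_n$ is precisely what lets one split a finite cover. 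I expect this tightness verification to be the main obstacle, the abstract bijection and the uniqueness of the complete family being essentially bookkeeping.

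Finally I would assemble the pieces: existence of $\xi^{\alpha}$ comes from the bijection applied to the unique complete family of Lemma~\ref{P1-lema-family-inf}; uniqueness of $\xi^{\alpha}$ among tight filters with word $\alpha$ holds because any such filter yields, via Theorem~\ref{P1-corresp-filters}, a complete family for $\alpha$, of which there is only one; and the statement that an infinite-type filter is completely described by its associated infinite path is a restatement of that uniqueness. A shorter alternative is simply to invoke \cite[Proposition~3.5, Proposition~3.6]{Castro2021}, where exactly this is established for ultragraph labelled spaces.
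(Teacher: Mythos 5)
Your reconstruction is correct and matches the route of the paper, which justifies this remark purely by citing \cite[Propositions~3.5--3.6]{Castro2021}: existence and uniqueness of the filter follow from Theorem~\ref{P1-corresp-filters} together with the uniqueness of the complete family in Lemma~\ref{P1-lema-family-inf}, and tightness follows because a filter of infinite type whose complete family consists of ultrafilters is itself an ultrafilter of $E(S)$, hence tight. The only imprecision is cosmetic: one concludes tightness directly from the ultrafilter property rather than via a separate covering-condition check, but the ingredients you name are exactly those used in the cited propositions.
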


\subsubsection{The topological partial action on $\textsf{T}$}
 
A partial action of the free group generated by the alphabet $\mathcal{A}$ on the tight spectrum $\textsf{T}$ of a labelled space was defined in \cite[Section~3]{Castro2020}. A description of that partial action, when the labelled space is associated with an ultragraph, was given in \cite[Section~4.1]{Castro2021}. For the convenience of the reader, we recall the characterization from \cite[Section~4.1]{Castro2021} in the following two paragraphs.

Fix a weakly-left resolving labelled space $(\mathcal{E}, \mathcal{L}, \mathcal{B})$ and let $\mathbb{F}$ be the free group generated by $\mathcal{A}$ (identifying the identity of $\mathbb{F}$ with $\omega$). Then, by \cite[Proposition~3.12]{Castro2020}, for every $t\in \mathbb{F}$ there is a compact open set $V_t\subseteq \textsf{T}$ and a homeomorphism $\varphi_t:V_{t^{-1}}\to V_t$ such that 
\begin{equation} \label{P1-eqn:labelled.top.partial.action}
\varphi=(\{V_t\}_{t\in\mathbb{F}},\{\varphi_t\}_{t\in\mathbb{F}})    
\end{equation}
is a topological partial action of $\mathbb{F}$ on $\textsf{T}$. In particular, $V_{\omega}:=\textsf{T}$ and if 
$\alpha,\beta\in \mathcal{L}^*$, then $V_\alpha:=V_{(\alpha,r(\alpha),\alpha)}, V_{\alpha^{-1}}:=V_{(\omega,r(\alpha),\omega)}$, and $V_{(\alpha\beta^{-1})^{-1}}= \varphi_{\beta^{-1}}^{-1}(V_{\alpha^{-1}})$, with $V_{(\alpha\beta^{-1})^{-1}}\neq\emptyset$ if, and only if, $r(\alpha)\cap r(\beta)\neq\emptyset$ \cite[Lemma~3.10]{Castro2020}. 

For the labelled space associated with an ultragraph, we can intuitively describe the map $\varphi_{\alpha\beta^{-1}}$, for $\alpha,\beta\in\mathcal{L}^*$, as cutting $\beta$ from the beginning of an element $\xi\in V_{\alpha^{-1}\beta}$ and then gluing $\alpha$ in front of it. For filters of finite type we just have to take into account the last filter of the corresponding complete family. In most cases, the last filter is kept fixed, unless the empty word is involved. If $\beta$ is the labelled path associated with an element $\xi\in V_{\alpha^{-1}\beta}$ then, by cutting $\beta$, we get the filter associated with the pair $(\omega,\usetr{\xi_{|\beta|}}{\mathcal{B}})$ and, by gluing $\alpha$ afterwards, we get the filter associated with the pair $(\alpha,\mathcal{F})$, where $\mathcal{F}=\{A\cap r(\alpha) : A\in \usetr{\xi_{|\beta|}}{\mathcal{B}}\}$.

Let $\mathbb{F} \rtimes_{\varphi} \textsf{T}=\{(t,\xi)\in \mathbb{F} \times \textsf{T} : \xi\in V_{t}\}$
denote the transformation groupoid associated with the partial action $\varphi$, as defined in \cite{Abadie2004}. 

The following results were proven in \cite{Castro2020} and \cite{Boava2021} for the partial action of the free group generated by the alphabet $\mathcal{A}$ on the tight spectrum $\textsf{T}$ of a normal  labelled space (not necessarily associated with an ultragraph).

\begin{obs}\label{P1-lemma-orb}
Let $t \in \mathbb{F}$ be in reduced form. By \cite[Lemma~3.11]{Castro2020}, if $t \notin \{\omega\}\cup\{\alpha:\alpha \in \mathcal{L}^{\geq 1}\}\cup\{\alpha^{-1}:\alpha \in \mathcal{L}^{\geq 1}\}\cup\{\alpha\beta^{-1}:\alpha, \beta \in \mathcal{L}^{\geq 1}, r(\alpha)\cap r(\beta) \neq \emptyset\},$ then $V_t=V_{t^{-1}}=\emptyset.$ 
\end{obs}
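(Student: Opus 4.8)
The plan is to reduce the statement to \cite[Lemma~3.11]{Castro2020}, which is essentially what its phrasing already announces. The only thing to check is that the hypotheses are met: the labelled space $(\mathcal{E}_{\mathcal{G}},\mathcal{L}_{\mathcal{G}},\mathcal{B})$ attached to the ultragraph $\mathcal{G}$ is normal, hence weakly left-resolving, so \cite[Lemma~3.11]{Castro2020} applies verbatim to the partial action $\varphi=(\{V_t\}_{t\in\mathbb{F}},\{\varphi_t\}_{t\in\mathbb{F}})$ of \eqref{P1-eqn:labelled.top.partial.action}, and the specialization to ultragraphs affects neither the inverse semigroup $S$ nor the domains $V_t$. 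With that observation the reference closes the argument.

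For the reader who wants to see why the cited lemma holds, here is the route I would take. The partial action $\varphi$ of the free group $\mathbb{F}$ on $\textsf{T}$ is the one induced, in the standard way, by the canonical action of the inverse semigroup $S$ of the labelled space on its tight spectrum, together with the assignment $(\alpha,A,\beta)\mapsto\alpha\beta^{-1}$ that records the ``degree'' in $\mathbb{F}$ of an element of $S$. Since every nonzero element of $S$ has precisely the form $(\alpha,A,\beta)$ with $\alpha,\beta\in\mathcal{L}^{*}$ and $\emptyset\neq A\subseteq r(\alpha)\cap r(\beta)$, the domain $V_{t^{-1}}$ of $\varphi_t$ — which is the union of the basic sets $V_e$ over the idempotents $e=u^{*}u$ coming from those $u\in S$ of degree $t$ — can be nonempty only when the reduced form of $t$ equals such an $\alpha\beta^{-1}$. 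Allowing $\alpha$ or $\beta$ to be the empty word recovers exactly the four displayed cases: $t=\omega$, $t\in\mathcal{L}^{\geq 1}$, $t^{-1}\in\mathcal{L}^{\geq 1}$, and $t=\alpha\beta^{-1}$ with $\alpha,\beta\in\mathcal{L}^{\geq 1}$ and $r(\alpha)\cap r(\beta)\neq\emptyset$. As this set is stable under $t\mapsto t^{-1}$ (because $(\alpha\beta^{-1})^{-1}=\beta\alpha^{-1}$), any reduced word lying outside it has $V_t=V_{t^{-1}}=\emptyset$. A more hands-on variant avoids $S$: build $V_t$ by composing the basic homeomorphisms $\varphi_a,\varphi_{a^{-1}}$ ($a\in\mathcal{A}$), using Lemma~\ref{P1-lemma-T1} and the identity $\varphi_g(V_{g^{-1}}\cap V_h)=V_g\cap V_{gh}$, and check that if the reduced form of $t$ is not $\alpha\beta^{-1}$ for labelled paths $\alpha,\beta$ then one of the intersections occurring in the composition is forced to be empty.

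The genuinely nontrivial part — and the reason I would quote \cite[Lemma~3.11]{Castro2020} rather than reprove it — is exactly this last verification: showing that a reduced word not of the displayed shape really does have empty domain, e.g.\ that $V_{a^{-1}b}=\emptyset$ for distinct letters $a,b$, since the ``cut a beginning, glue a beginning'' description of $\varphi$ leaves no room for such a degree (equivalently, $S$ has no nonzero element of degree $a^{-1}b$). Carrying this out for an arbitrary reduced word amounts to re-establishing \cite[Lemma~3.11]{Castro2020} in full, so the economical choice here is to cite it.
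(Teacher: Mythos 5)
Your proposal is correct and takes the same route as the paper, which offers no proof beyond the citation of \cite[Lemma~3.11]{Castro2020}: the remark is a direct application of that lemma, and your check that the normal labelled space of an ultragraph is weakly left-resolving (so the lemma applies verbatim) is exactly what is needed. Your additional sketch of why the cited lemma holds, via the degree map $(\alpha,A,\beta)\mapsto\alpha\beta^{-1}$ on the inverse semigroup $S$, is accurate and a reasonable bonus, but goes beyond what the paper records.
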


\begin{obs}\label{P1-obs-unicidade} By \cite[Remark~6.11]{Castro2020},
if $\xi^{\alpha} \in  \Fix(t)$ for some $t \in \mathbb{F}\setminus\{\omega\},$ then there must exist $\beta\in\mathcal{L}^*$ and $\gamma\in\mathcal{L}^{\geq 1}$ such that the associated labelled path of $\xi^{\alpha}$ is $\alpha = \beta\gamma^{\infty}$ and $t$ is either $\beta\gamma\beta^{-1}$ or $\beta\gamma^{-1}\beta^{-1}$.
\end{obs}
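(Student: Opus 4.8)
The plan is to first pin down the possible forms of $t$ using Remark~\ref{P1-lemma-orb}, then translate the fixed-point equation $\varphi_t(\xi^{\alpha})=\xi^{\alpha}$ into an equation between labelled paths by means of the ``cut-and-glue'' description of $\varphi$ recalled above, and finally solve that word equation in $\overline{\mathcal{L}^{\leqslant\infty}}$. Since $\xi^{\alpha}\in\Fix(t)$ forces $\xi^{\alpha}\in V_t$, hence $V_t\neq\emptyset$, Remark~\ref{P1-lemma-orb} tells us that the reduced form of $t$ is one of $\mu$, $\mu^{-1}$, or $\mu\nu^{-1}$ with $\mu,\nu\in\mathcal{L}^{\geq 1}$ and, in the last case, $r(\mu)\cap r(\nu)\neq\emptyset$. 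As $\varphi_{t^{-1}}=\varphi_t^{-1}$ one has $\Fix(t)=\Fix(t^{-1})$, so it is enough to treat $t=\mu$ and $t=\mu\nu^{-1}$ with $|\mu|<|\nu|$; the cases $t=\mu^{-1}$ and $|\mu|>|\nu|$ then follow by replacing $t$ with $t^{-1}$.

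For $t=\mu$, Lemma~\ref{P1-lemma-T1} shows that $\xi^{\alpha}\in V_{t^{-1}}\cap V_t$ forces the labelled path of $\xi^{\alpha}$ to begin with $\mu$, and the ``glue $\mu$ in front'' description gives that $\varphi_{\mu}(\xi^{\alpha})$ has labelled path $\mu\alpha$; thus $\varphi_{\mu}(\xi^{\alpha})=\xi^{\alpha}$ yields $\alpha=\mu\alpha$, which forces $\alpha$ to be infinite and periodic of period $\mu$, i.e.\ $\alpha=\mu^{\infty}$. Taking $\beta=\omega$ and $\gamma=\mu$ gives $\alpha=\beta\gamma^{\infty}$ and $t=\beta\gamma\beta^{-1}$ (and $t=\beta\gamma^{-1}\beta^{-1}$ in the case $t=\mu^{-1}$). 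For $t=\mu\nu^{-1}$ with $|\mu|<|\nu|$, membership of $\xi^{\alpha}$ in $V_{t^{-1}}\cap V_t$ forces $\alpha$ to begin with both $\mu$ and $\nu$, so $\mu$ is a beginning of $\nu$; writing $\nu=\mu\delta$ with $\delta\in\mathcal{L}^{\geq 1}$ and $\alpha=\nu\alpha'=\mu\delta\alpha'$, the ``cut $\nu$, glue $\mu$'' description gives that $\varphi_t(\xi^{\alpha})$ has labelled path $\mu\alpha'$, so $\varphi_t(\xi^{\alpha})=\xi^{\alpha}$ yields $\mu\alpha'=\mu\delta\alpha'$, i.e.\ $\alpha'=\delta\alpha'$, hence $\alpha'=\delta^{\infty}$ and $\alpha=\mu\delta^{\infty}$. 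Taking $\beta=\mu$ and $\gamma=\delta$ gives $\alpha=\beta\gamma^{\infty}$ and $t=\mu\nu^{-1}=\mu\delta^{-1}\mu^{-1}=\beta\gamma^{-1}\beta^{-1}$. The subcase $|\mu|=|\nu|$ cannot occur, since $t\neq\omega$ forces $\mu\neq\nu$ while both would be the length-$|\mu|$ beginning of $\alpha$.

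The conceptual content here is modest; the effort is bookkeeping, and I expect the points requiring care to be: ensuring the forms in Remark~\ref{P1-lemma-orb} are used in their \emph{reduced} form, applying the cut-and-glue description of $\varphi_{\mu\nu^{-1}}$ over the correct domain $V_{t^{-1}}$, and remembering that the words $\beta\gamma^{\pm 1}\beta^{-1}$ obtained need not be reduced, only equal to $t$ in $\mathbb{F}$. The one genuinely delicate step is the passage from equality of the labelled paths of $\varphi_t(\xi^{\alpha})$ and $\xi^{\alpha}$ to equality of the two filters themselves; for a finite-type filter this would require comparing the whole complete family of filters, but it is moot here because each word equation above already forces $\alpha$ to be infinite, and then by Remark~\ref{P1-obs-unique-filter} the filter $\xi^{\alpha}$ is completely determined by its word.
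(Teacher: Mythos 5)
Your argument is correct, but note that the paper does not actually prove this remark: it simply cites \cite[Remark~6.11]{Castro2020}, where the statement is established for the tight spectrum of an arbitrary normal weakly left-resolving labelled space. What you have written is a self-contained reconstruction from the ingredients the paper recalls, and it hangs together: membership in $V_t\cap V_{t^{-1}}$ combined with Remark~\ref{P1-lemma-orb} and Lemma~\ref{P1-lemma-T1} pins $t$ down to $\mu$, $\mu^{-1}$ or $\mu\nu^{-1}$ and forces $\alpha$ to begin with the relevant positive words; equality of the filters forces equality of the associated words (this is the easy direction of the bijection of Theorem~\ref{P1-corresp-filters}, so your worry at the end about recovering the filter from its word is indeed moot); and the word equations $\alpha=\mu\alpha$, resp.\ $\alpha'=\delta\alpha'$, admit only the periodic infinite solutions. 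Your handling of the degenerate subcases ($|\mu|=|\nu|$ excluded because $t\neq\omega$; the remaining cases obtained by passing to $t^{-1}$, using $\Fix(t)=\Fix(t^{-1})$) is also fine. The one place where your proof leans on something the paper states only informally is the ``cut-and-glue'' description of $\varphi_{\mu\nu^{-1}}$, which the paper explicitly labels as an intuitive description; to make the step ``$\varphi_{\mu\nu^{-1}}(\xi^{\alpha})$ has associated word $\mu\alpha'$'' airtight one must invoke the precise formulas of \cite[Section~3]{Castro2020} (or \cite[Section~4.1]{Castro2021}), which is exactly what the cited Remark~6.11 does. So your route is the expected one, just carried out by hand rather than quoted.
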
 

\begin{lema}[{\cite[Lemma~8.9]{Boava2021}}]\label{P1-lemma-8.9}
Let $(t,\xi) \in \Iso(\F\rtimes_\varphi \textsf{T})$ with $t=\beta\gamma\beta^{-1}$ or $t=\beta\gamma^{-1}\beta^{-1}$, where $\beta\gamma^{\infty}$ is the labelled path associated with $\xi$. 
Then, for every neighborhood $U\subseteq \textsf{T}$ of $\xi$, there exists $k>1$  and  $B\in\xi_{|\beta\gamma^k|}$ such that
$ \xi\in V_{(\beta\gamma^k,B,\beta\gamma^k)}\subseteq U. $
\end{lema}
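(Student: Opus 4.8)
The plan is to shrink $U$ to a basic open neighborhood of $\xi$ and then fit a cylinder $V_{(\beta\gamma^k,B,\beta\gamma^k)}$ inside it, with $k$ taken large and $B$ taken as small as possible. Since the labelled path associated with $\xi$ is the infinite word $\beta\gamma^{\infty}$, the filter $\xi$ is of infinite type, so by Lemma~\ref{P1-lema-family-inf} each member $\xi_n$ of its complete family is a principal ultrafilter; concretely, for $n=|\beta\gamma^k|$ the $(n{+}1)$-st letter of $\beta\gamma^{\infty}$ is the first letter of $\gamma$, whence $\xi_n=\{\,C\in\mathcal{B}_{\beta\gamma^k}\ :\ s(\gamma)\in C\,\}$. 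This explicit description is the structural input that drives the argument: every membership question for $\xi_{|\beta\gamma^k|}$ reduces to whether the vertex $s(\gamma)$ belongs to a given set.

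First I would use that the sets $V_{f:f_1,\dots,f_j}$ form a basis for the topology on $\textsf{T}$ to choose $f,f_1,\dots,f_j\in E(S)$ with $\xi\in V_{f:f_1,\dots,f_j}\subseteq U$; discarding any $f_i=0$, write $f=(\mu,A,\mu)$ and $f_i=(\mu_i,A_i,\mu_i)$. By Remark~\ref{P1-obs-2.6}, $\mu$ is a beginning of $\beta\gamma^{\infty}$ with $A\in\xi_{|\mu|}$, and for each $i$ either $\mu_i$ is not a beginning of $\beta\gamma^{\infty}$, or $\mu_i$ is a beginning of $\beta\gamma^{\infty}$ and $A_i\notin\xi_{|\mu_i|}$. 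Next I would pick $k\geq 2$ large enough that $m:=|\beta\gamma^k|\geq\max\{|\mu|,|\mu_1|,\dots,|\mu_j|\}$ (possible since $|\gamma|\geq 1$), set $\nu:=\beta\gamma^k$ and $B:=\{s(\gamma)\}$, and note $B\in\mathcal{B}_{\nu}$ (since $\{s(\gamma)\}\in\mathcal{G}^0\subseteq\mathcal{B}$ and $s(\gamma)\in r(\nu)$) and $B\in\xi_m$, so $(\nu,B,\nu)\in\xi$ by Remark~\ref{P1-obs-2.6}, i.e.\ $\xi\in V_{(\nu,B,\nu)}$.

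The core of the proof is to show $V_{(\nu,B,\nu)}\subseteq V_{(\mu,A,\mu)}$ and $V_{(\nu,B,\nu)}\cap V_{f_i}=\emptyset$ for every $i$; granting this, $V_{(\nu,B,\nu)}\subseteq V_{f:f_1,\dots,f_j}\subseteq U$, finishing the proof with this $k$ and $B$. So take $\eta\in V_{(\nu,B,\nu)}$ and let $w$ be the word of $\eta$; then $\nu$ is a beginning of $w$ and $B\in\eta_m$. Since $\mu$, and any $\mu_i$ that is a beginning of $\beta\gamma^{\infty}$, has length at most $m$, and $\nu$ begins both $\beta\gamma^{\infty}$ and $w$, iterating the complete-family relation together with the identity $r(C,\sigma\sigma')=r(r(C,\sigma),\sigma')$ gives, for every $C$ in the appropriate Boolean algebra, the equivalences $C\in\eta_{|\mu|}\iff r(C,\nu_{|\mu|+1,m})\in\eta_m$ and $C\in\xi_{|\mu|}\iff r(C,\nu_{|\mu|+1,m})\in\xi_m$, and similarly with $\mu$ replaced by any $\mu_i$ that begins $\beta\gamma^{\infty}$. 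From $A\in\xi_{|\mu|}$ we get $r(A,\nu_{|\mu|+1,m})\in\xi_m$, hence $s(\gamma)\in r(A,\nu_{|\mu|+1,m})$, i.e.\ $B\subseteq r(A,\nu_{|\mu|+1,m})$; as $\eta_m$ is a filter in $\mathcal{B}_{\nu}$ containing $B$ and $r(A,\nu_{|\mu|+1,m})\in\mathcal{B}_{\nu}$ (closure of $\mathcal{B}$ under relative ranges and monotonicity), it contains $r(A,\nu_{|\mu|+1,m})$, so $A\in\eta_{|\mu|}$ and $\eta\in V_{(\mu,A,\mu)}$. For the $f_i$: if $\mu_i$ is not a beginning of $\beta\gamma^{\infty}$ then, since $|\mu_i|\leq m$, it is not a beginning of $\nu$, hence not of $w$, so $f_i\notin\eta$; if instead $\mu_i$ begins $\beta\gamma^{\infty}$ with $A_i\notin\xi_{|\mu_i|}$, then $r(A_i,\nu_{|\mu_i|+1,m})\notin\xi_m$, so $s(\gamma)\notin r(A_i,\nu_{|\mu_i|+1,m})$ and $B\cap r(A_i,\nu_{|\mu_i|+1,m})=\emptyset$; were $r(A_i,\nu_{|\mu_i|+1,m})$ in the filter $\eta_m$, intersecting it with $B\in\eta_m$ would place $\emptyset$ in $\eta_m$, which is impossible, so $A_i\notin\eta_{|\mu_i|}$ and $f_i\notin\eta$. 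The cases $|\mu|=m$ or $|\mu_i|=m$ run identically with the empty intermediate word.

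There is no hard idea here beyond the explicit form of the complete family of an infinite-type filter; the work is bookkeeping — applying the complete-family equivalences in the correct direction, using the relative-range composition and monotonicity identities, and choosing $k$ large enough that $\beta\gamma^k$ is at least as long as every $\mu_i$, so that the incompatibility of each $f_i$ with $\xi$ persists throughout $V_{(\nu,B,\nu)}$. The step I expect to require the most care in a full write-up is the last one, where the single-vertex description $\xi_m=\{C:s(\gamma)\in C\}$ is used to convert ``$A_i\notin\xi_{|\mu_i|}$'' into the emptiness of an intersection inside the filter $\eta_m$.
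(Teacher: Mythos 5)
The paper itself gives no proof of this lemma --- it is imported verbatim from \cite[Lemma~8.9]{Boava2021} --- so there is nothing in-paper to compare against; judged on its own, your argument is correct and complete. The ingredients you use (reduction of $U$ to a basic open set $V_{f:f_1,\dots,f_j}$, Lemma~\ref{P1-lema-family-inf} to identify $\xi_{|\beta\gamma^k|}$ as the principal ultrafilter generated by $\{s(\gamma)\}$, the choice $B=\{s(\gamma)\}$ with $k$ large enough that $|\beta\gamma^k|$ dominates every $|\mu_i|$, and the transport of membership and non-membership along the complete-family recursion via $r(C,\sigma\sigma')=r(r(C,\sigma),\sigma')$) are exactly the right ones, and the steps you defer --- upward closure of the filter $\eta_m$, properness excluding $\emptyset$, and Remark~\ref{P1-obs-2.6} to convert filter membership into membership of $(\mu,A,\mu)$ --- all check out.
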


\subsection{Applications to ultragraphs, their associated partial actions and partial skew groupoid rings}

Let $\mathcal{G}$ be an ultragraph, let $(\mathcal{E}_{\mathcal{G}}, \mathcal{L}_{\mathcal{G}}, \mathcal{B})$ be the normal labelled space associated with $\mathcal{G}$, and let $\varphi$ be the topological partial action of the free group $\mathbb{F}$ generated by the labels of the ultragraph on the tight spectrum $\textsf{T}$ as in \cite[Section~4.1]{Castro2021}. We aim to prove that $\varphi$ is topologically free on every closed $\mathbb{F}$-invariant subset of $\textsf{T}$ if, and only if, $\mathcal{G}$ satisfies Condition (K).

In the context of ultragraphs, Condition (K) was first defined in \cite[Definition~2.4]{Katsura2008}. We present an equivalent definition below.

\begin{definition}[{\cite[Section~4]{Castro2021.2}}]
An ultragraph $\mathcal{G}$ satisfies \emph{Condition (K)}, if for every $v \in G^0,$ there is either no simple loop based at $v$ or at least two simple loops based at $v.$
\end{definition}

The following definition is based on \cite[Definition~37.20]{Exel2017}.

\begin{definition}
Let $\gamma$ be a loop in $\mathcal{G}.$ 
\begin{enumerate}[{\rm (a)}]
    \item $\gamma$ is said to be \emph{recurrent}, if there exists another loop $\rho$ in $\mathcal{G}$ such that $s(\rho)=s(\gamma)$ and
    $ \gamma \rho \gamma^{\infty} \neq \gamma^{\infty}.$
    \item $\gamma$ is said to be \emph{transitory}, if it is not recurrent.
\end{enumerate}
\end{definition}

\begin{proposition}\label{P1-prop:KR}
Every loop in 
$\mathcal{G}$ is recurrent if, and only if, $\mathcal{G}$ satisfies Condition (K).
\end{proposition}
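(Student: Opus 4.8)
The statement is an equivalence between a purely combinatorial condition on loops (recurrent/transitory) and Condition (K) (no vertex has exactly one simple loop based at it). I will prove both implications by contraposition, working directly at the level of paths in the ultragraph $\mathcal{G}$.

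\emph{Condition (K) $\Rightarrow$ every loop recurrent.} Suppose $\mathcal{G}$ satisfies Condition (K) and let $\gamma$ be a loop, based at a vertex $v=s(\gamma)\in r(\gamma)$. Since the trivial loop has length zero and is excluded, $\gamma$ contains a simple loop $\mu$ based at $v$ (traverse $\gamma$ until the source of an edge equals $v=s(\gamma)$ for the first time; the initial segment is a simple loop based at $v$). By Condition (K) there is a second simple loop $\nu\neq\mu$ based at $v$. The idea is to build the auxiliary loop $\rho$ demanded by the definition of recurrence using $\nu$ (or a suitable concatenation involving $\nu$) so that $\gamma\rho\gamma^{\infty}$ and $\gamma^{\infty}$ differ. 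Concretely, I would take $\rho$ to be a loop based at $v$ whose first edge differs from the first edge of $\gamma$ — this is possible because $\mu\ne\nu$ forces the two simple loops to diverge at some vertex reachable along a loop based at $v$, and one can reroute to produce a loop based at $v$ that begins differently from $\gamma$. Then comparing $\gamma\rho\gamma^\infty$ with $\gamma^\infty$ edge by edge, the discrepancy first appears at the position where $\rho$ departs from $\gamma$, giving $\gamma\rho\gamma^\infty\neq\gamma^\infty$; hence $\gamma$ is recurrent.

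\emph{Every loop recurrent $\Rightarrow$ Condition (K).} By contraposition, suppose Condition (K) fails: there is a vertex $v\in G^0$ with exactly one simple loop $\mu$ based at $v$. Let $\gamma:=\mu$. I claim $\gamma$ is transitory. Suppose, for contradiction, that $\gamma$ is recurrent, so there is a loop $\rho$ with $s(\rho)=v$ and $\gamma\rho\gamma^\infty\neq\gamma^\infty$. Since $\rho$ is a loop based at $v$, it contains a simple loop based at $v$, which by hypothesis must equal $\mu=\gamma$; peeling off this copy of $\gamma$ from the front of $\rho$ and iterating (each time the remaining path either reaches $v$ again, forcing another copy of $\gamma$, or never returns to $v$, which is impossible since $\rho$ ends at $v=s(\rho)$), one shows $\rho=\gamma^{k}$ for some $k\geq 1$. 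But then $\gamma\rho\gamma^\infty=\gamma^{k+1}\gamma^\infty=\gamma^\infty$, contradicting $\gamma\rho\gamma^\infty\neq\gamma^\infty$. Hence $\gamma$ is transitory, so not every loop is recurrent.

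\textbf{Main obstacle.} The delicate point is the structural decomposition argument: showing that a loop $\rho$ based at $v$ whose \emph{only} simple loop based at $v$ is $\gamma$ must in fact be a power $\gamma^{k}$. This requires care because $\rho$ may pass through $v$ several times and may wander through $r(\gamma)$ (a generalized vertex, i.e. a set of vertices) before returning; I will need the precise definition of loop, simple loop, and "based at" for ultragraphs, together with the observation that each return of $\rho$ to $s(\gamma)$ carves out a simple loop based at $v$, which is forced to be $\gamma$. Conversely, in the forward direction, producing a loop $\rho$ that genuinely perturbs $\gamma^\infty$ from a second simple loop $\nu\neq\mu$ requires checking that $\nu$ and $\mu$ cannot be "parallel in disguise" — but since they are distinct simple loops based at the same $v$, their edge sequences must differ at some position, and I will exploit exactly that first position of difference to guarantee $\gamma\rho\gamma^\infty\neq\gamma^\infty$. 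Everything else is bookkeeping with finite paths.
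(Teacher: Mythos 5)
Your second half (``every loop recurrent $\Rightarrow$ Condition (K)'', done by contraposition) is essentially the paper's own argument for that direction: decompose a loop $\rho$ based at $v$ into simple loops based at $v$ by cutting at every edge whose source is $v$, note that if each piece equals the unique simple loop $\gamma$ then $\rho=\gamma^{k}$ and $\gamma\rho\gamma^{\infty}=\gamma^{\infty}$, and conclude. That part is correct, and the ``peeling'' you flag as delicate is exactly the decomposition the paper uses.

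The forward direction, however, contains a step that would fail. You propose to ``take $\rho$ to be a loop based at $v$ whose first edge differs from the first edge of $\gamma$,'' arguing that two distinct simple loops $\mu\neq\nu$ based at $v$ can be rerouted to achieve this. They cannot in general: distinct simple loops based at $v$ may share their first edge, and indeed every loop based at $v$ may be forced to begin with the same edge. Concretely, take vertices $v,w$, one edge $e$ with $s(e)=v$ and $w\in r(e)$, and two edges $f\neq g$ with $s(f)=s(g)=w$ and $v\in r(f)\cap r(g)$. Then $ef$ and $eg$ are the only simple loops based at $v$ and every loop based at $v$ starts with $e$, so no $\rho$ with a different first edge exists. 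The repair is to drop the first-edge requirement entirely, which is what the paper does: write $\gamma=\gamma^{(1)}\cdots\gamma^{(n)}$ as a concatenation of simple loops based at $v$, take $\rho$ to be either some $\gamma^{(k)}\neq\gamma^{(1)}$ (if one exists) or, failing that, a simple loop $\delta\neq\gamma^{(1)}$ supplied by Condition (K), and compare $\rho\gamma^{\infty}$ with $\gamma^{\infty}$ position by position. The two words must disagree: either $\rho$ and $\gamma^{(1)}$ differ at some index $\le\min(|\rho|,|\gamma^{(1)}|)$, or one is a proper prefix of the other, and then simplicity decides, since the edge of $\gamma^{\infty}$ immediately following a complete simple loop based at $v$ has source $v$ while an interior edge of a simple loop based at $v$ has source $\neq v$. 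Your comparison idea (``the discrepancy first appears where $\rho$ departs'') is right in spirit, but the construction of $\rho$ you give does not deliver it; with the choice $\rho=\nu$ and the prefix case handled via simplicity, the argument closes and coincides with the paper's.
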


\begin{proof}
We first show the ''only if'' statement.
Suppose that every loop in 
$\mathcal{G}$ is recurrent. Let $v \in G^0$ be a vertex and let $\alpha$ be a simple loop based at $v.$ In particular, $s(\alpha)=v.$ By assumption, there is a loop $\beta$ such that $\alpha^{\infty} \neq \alpha \beta \alpha^{\infty}.$ Observe that $\beta=\gamma^{(1)} \ldots \gamma^{(n)}$ is such that $\gamma^{(j)}$ is a simple loop based at $s(\alpha)$, for every $j \in \{1,\ldots,n \}.$ Since $\alpha^{\infty} \neq \alpha \beta \alpha^{\infty},$ there exists $k \in \{1,\ldots,n\}$ such that $\gamma^{(k)}$ is distinct from $\alpha.$ Thus, $\mathcal{G}$ satisfies Condition (K). \smallbreak

Now we show the ''if'' statement.
Suppose that $\mathcal{G}$ satisfies Condition (K). Let $\alpha$ be a loop in $\mathcal{G}$ and notice that $\alpha=\gamma^{(1)}\ldots\gamma^{(n)}$ is such that $\gamma^{(j)}$ is a simple loop based at $s(\alpha),$ for every $j \in \{1,\ldots,n\}.$ If there is $k \in \{1,\ldots,n\}$ such that $\gamma^{(k)} \neq \gamma^{(1)},$ then let $\beta:=\gamma^{(k)}$. Note that, $\alpha\beta\alpha^{\infty}\neq \alpha^{\infty}.$ If $\alpha=\gamma^{(1)}\gamma^{(1)}\ldots\gamma^{(1)},$ then by Condition (K), there exists a simple loop $\delta$ based at $s(\alpha)$ and distinct from $\gamma^{(1)}.$ Therefore, $\alpha\delta\alpha^{\infty}\neq \alpha^{\infty},$ and hence $\alpha$ is recurrent.
\end{proof}

\begin{lema}\label{P1-lemma-T2}
Let  $\alpha \in \mathcal{L}_{\mathcal{G}}^{\geq 1}$ and $\beta \in \mathcal{L}_{\mathcal{G}}^{\infty}$ be such that $\alpha\beta \in \mathcal{L}_{\mathcal{G}}^{\infty}$ and $\xi^{\beta} \in \textsf{T}_{\beta}.$ Then $\xi^{\beta} \in \textsf{T}_{(\alpha)\beta}.$
\end{lema}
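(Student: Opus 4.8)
The plan is to reduce the statement to the single combinatorial fact $s(\beta_1)\in r(\alpha)$ and then read that fact off directly from the hypothesis $\alpha\beta\in\mathcal{L}_{\mathcal{G}}^{\infty}$.

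First I would unwind the definitions. Since $\textsf{T}_{(\alpha)\beta}=\{\xi\in\textsf{T}_{\beta}\ :\ r(\alpha)\in\xi_0\}$ and we are already given that $\xi^{\beta}\in\textsf{T}_{\beta}$, it suffices to prove $r(\alpha)\in\xi^{\beta}_0$. Because $\beta$ is an infinite path, Lemma~\ref{P1-lema-family-inf} (see also Remark~\ref{P1-obs-unique-filter}) tells us that the complete family of filters attached to $\beta$ is the unique family $\{\uparrow_{\mathcal{B}_{\beta_{1,n}}}\{s(\beta_{n+1})\}\}_{n\geq 0}$; in particular, since $\beta_{1,0}=\omega$ and $\mathcal{B}_{\omega}=\mathcal{B}\cap\mathbb{P}(r(\omega))=\mathcal{B}$, we obtain $\xi^{\beta}_0=\uparrow_{\mathcal{B}}\{s(\beta_1)\}=\{A\in\mathcal{B}\ :\ s(\beta_1)\in A\}$. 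As $r(\alpha)\in\mathcal{B}$ (the accommodating family contains the ranges of all labelled paths of positive length), the condition $r(\alpha)\in\xi^{\beta}_0$ is literally $s(\beta_1)\in r(\alpha)$. One could equivalently phrase $r(\alpha)\in\xi^{\beta}_0$ as $(\omega,r(\alpha),\omega)\in\xi^{\beta}$ via Remark~\ref{P1-obs-2.6}, i.e.\ $\xi^{\beta}\in V_{(\omega,r(\alpha),\omega)}$, and then invoke Lemma~\ref{P1-lemma-T1} together with the pairwise disjointness of the sets $\textsf{T}_{\beta'}$; but every route bottoms out at the same inequality.

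Second, I would deduce $s(\beta_1)\in r(\alpha)$ from $\alpha\beta\in\mathcal{L}_{\mathcal{G}}^{\infty}$. Since $\alpha\beta$ is an infinite labelled path, its beginning $\alpha\beta_1$ of length $|\alpha|+1$ is a finite labelled path in $\mathcal{E}_{\mathcal{G}}$, so there is an edge path $(\alpha_1,w_1)\cdots(\alpha_{|\alpha|},w_{|\alpha|})(\beta_1,w_{|\alpha|+1})$ with label $\alpha\beta_1$, where $w_i\in r(\alpha_i)$, $w_{|\alpha|+1}\in r(\beta_1)$, and consecutive edges are composable. Composability of the last two edges forces $w_{|\alpha|}=r^{\prime}(\alpha_{|\alpha|},w_{|\alpha|})=s^{\prime}(\beta_1,w_{|\alpha|+1})=s(\beta_1)$, while the prefix $(\alpha_1,w_1)\cdots(\alpha_{|\alpha|},w_{|\alpha|})$ is an edge path with label $\alpha$, so its range $w_{|\alpha|}$ lies in $r(\alpha)=r(\mathcal{E}_{\mathcal{G}}^{0},\alpha)$. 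Hence $s(\beta_1)=w_{|\alpha|}\in r(\alpha)$, which by the previous paragraph gives $r(\alpha)\in\xi^{\beta}_0$ and therefore $\xi^{\beta}\in\textsf{T}_{(\alpha)\beta}$. (Alternatively one can shortcut this via the ultragraph identity $r(\alpha)=r(\alpha_{|\alpha|})$ and the edge-compatibility $s(\beta_1)\in r(\alpha_{|\alpha|})$.) I do not expect a genuine obstacle here: the whole argument is a bookkeeping exercise in the filter/word dictionary of Theorem~\ref{P1-corresp-filters}, and the only points that need a little care are (i) pinning down the index-$0$ filter, i.e.\ noting $\mathcal{B}_{\beta_{1,0}}=\mathcal{B}_{\omega}=\mathcal{B}$ so that $\xi^{\beta}_0$ really is all of $\uparrow_{\mathcal{B}}\{s(\beta_1)\}$, and (ii) correctly identifying $r(\alpha)$ in the ultragraph labelled graph (unwinding $\mathcal{E}_{\mathcal{G}}^{1}=\{(e,\upsilon):\upsilon\in r(e)\}$ shows it is the range of the final edge of $\alpha$).
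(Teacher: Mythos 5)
Your argument is correct and follows essentially the same route as the paper's proof: reduce to showing $r(\alpha)\in\xi^{\beta}_0$, identify $\xi^{\beta}_0=\uparrow_{\mathcal{B}}\{s(\beta_1)\}$ via Lemma~\ref{P1-lema-family-inf}, and extract $s(\beta_1)\in r(\alpha)$ from $\alpha\beta\in\mathcal{L}_{\mathcal{G}}^{\infty}$. You simply spell out the edge-path bookkeeping that the paper leaves implicit.
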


\begin{proof}
By the definition of $\textsf{T}_{(\alpha)\beta},$ we only need to prove that $r(\alpha) \in \xi^{\beta}_0.$ Since $\beta \in \mathcal{L}_{\mathcal{G}}^{\infty},$ Lemma~\ref{P1-lema-family-inf} implies that $\xi^{\beta}_0=  \, \uparrow_{\mathcal{B}_{\omega}}\{s(\beta_1)\}=  \, \uparrow_{\mathcal{B}}\{s(\beta_1)\}.$ By assumption, $\alpha\beta \in \mathcal{L}_{\mathcal{G}}^{\infty}.$ Therefore, $s(\beta_1) \in r(\alpha)$ and $r(\alpha) \in \, \uparrow_{\mathcal{B}}\{s(\beta_1)\}.$
\end{proof}

Proposition~\ref{P1-prop:prop2} and Proposition~\ref{P1-prop:prop1} 
below are inspired by techniques from \cite[Proposition~37.21]{Exel2017} and \cite[Chapter~12]{Exel1999}, and will be used to prove the main result of this section.

\begin{proposition}\label{P1-prop:prop2}
Let $t \in \mathbb{F}\setminus\{\omega\}$ and $\xi^{\alpha} \in \Fix(t)$ be such that $\alpha = \beta\gamma^{\infty}$ and that $t$ is either $\beta\gamma\beta^{-1}$ or $\beta\gamma^{-1}\beta^{-1}$ (see Remark~\ref{P1-obs-unicidade}).  Then, $\xi^{\alpha}$ is an isolated point in $\Orb(\xi^{\alpha})$ if, and only if, $\gamma$ is a transitory loop.
\end{proposition}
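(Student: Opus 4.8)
The plan is to analyze the orbit $\Orb(\xi^\alpha)$ under the transformation groupoid $\F \rtimes_\varphi \T$ and describe explicitly which tight filters lie in it. Write $\alpha = \beta\gamma^\infty$ with $|\gamma| = \ell \geq 1$. First I would observe, using Remark~\ref{P1-obs-unique-filter}, that every filter of infinite type in $\Orb(\xi^\alpha)$ is uniquely determined by its associated infinite labelled path; hence understanding the orbit amounts to understanding which infinite paths arise by applying the partial homeomorphisms $\varphi_s$ ($s \in \F$) to $\xi^\alpha$. By the intuitive ``cut-and-glue'' description of $\varphi$ recalled before Remark~\ref{P1-lemma-orb} (cut a prefix in $\mathcal{L}^*$, glue a new prefix in $\mathcal{L}^*$), the labelled path of a point $\varphi_s(\xi^\alpha)$ is obtained by deleting a finite beginning of $\beta\gamma^\infty$ and prepending some labelled path; since $\alpha$ is eventually periodic with period $\gamma$, every path in the orbit is of the form $\delta\gamma^\infty$ for some $\delta \in \mathcal{L}^*$ with the appropriate range condition. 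This reduces the whole question to a combinatorial statement about these ``tails''.

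Next I would split into the two directions. For the ``if'' direction, suppose $\gamma$ is transitory, i.e.\ there is \emph{no} loop $\rho$ with $s(\rho) = s(\gamma)$ and $\gamma\rho\gamma^\infty \neq \gamma^\infty$. I would show that this forces the orbit, intersected with a suitable basic neighborhood $V_{(\beta\gamma^k, B, \beta\gamma^k)}$ of $\xi^\alpha$ (as produced by Lemma~\ref{P1-lemma-8.9}), to be the single point $\{\xi^\alpha\}$: any other filter of infinite type lying in that neighborhood would have to agree with $\xi^\alpha$ on the prefix $\beta\gamma^k$ and then continue along some path, and the only possibility compatible with being in the orbit and not being $\xi^\alpha$ would correspond precisely to a loop $\rho$ at $s(\gamma)$ with $\gamma\rho\gamma^\infty \neq \gamma^\infty$ — which transitoriness forbids. (Filters of finite type in the orbit can be excluded from a small enough neighborhood using the basic open sets $V_{e:e_1,\ldots,e_n}$.) Conversely, for the ``only if'' direction, I would prove the contrapositive: if $\gamma$ is recurrent, pick a witnessing loop $\rho$ at $s(\gamma)$ with $\gamma\rho\gamma^\infty \neq \gamma^\infty$. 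Then for each $n$ the filter $\xi^{(n)}$ with associated path $\beta\gamma^n\rho\gamma^\infty$ lies in $\Orb(\xi^\alpha)$ — it is reached from $\xi^\alpha$ by cutting and re-gluing as above — and these filters converge to $\xi^\alpha$ (their paths agree with $\beta\gamma^\infty$ on longer and longer prefixes, so in the topology generated by the $V_{(\cdot)}$'s they approach $\xi^\alpha$) while being distinct from $\xi^\alpha$ for every $n$, since $\gamma^n\rho\gamma^\infty \neq \gamma^\infty$. Hence $\xi^\alpha$ is not isolated in its orbit.

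I expect the main obstacle to be making the ``cut-and-glue'' description of the orbit fully rigorous at the level of complete families of filters, and in particular verifying the range/compatibility conditions (that $\beta\gamma^n\rho\gamma^\infty \in \overline{\mathcal{L}_{\mathcal{G}}^\infty}$ and that $r(\alpha)$-type membership conditions hold so that the relevant $\varphi_s$ is actually defined on $\xi^\alpha$); here Lemma~\ref{P1-lemma-T2}, Lemma~\ref{P1-lemma-T1} and Remark~\ref{P1-lemma-orb} should do the bookkeeping. The other delicate point is the separation argument excluding finite-type filters (and wrong-tail infinite-type filters) from a small neighborhood of $\xi^\alpha$; this is where Lemma~\ref{P1-lemma-8.9} is essential, as it provides neighborhoods of $\xi^\alpha$ of the controlled form $V_{(\beta\gamma^k,B,\beta\gamma^k)}$ inside which the only orbit points are the ones whose paths start with $\beta\gamma^k$, after which the analysis becomes purely about tails of the form $\gamma^m\rho\gamma^\infty$ versus $\gamma^\infty$.
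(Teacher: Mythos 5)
Your proposal is correct and follows essentially the same route as the paper: both directions hinge on the neighborhood basis $V_{(\beta\gamma^k,B,\beta\gamma^k)}$ supplied by Lemma~\ref{P1-lemma-8.9}, the cut-and-glue description of the orbit together with Remark~\ref{P1-lemma-orb} to see that every orbit point has an eventually $\gamma$-periodic path, uniqueness of infinite-type filters (Remark~\ref{P1-obs-unique-filter}), and, for the recurrent case, the witness filters with paths $\beta\gamma^k\rho\gamma^{\infty}$ placed in the orbit via Lemmas~\ref{P1-lemma-T1} and~\ref{P1-lemma-T2}. The only cosmetic differences are that you phrase the recurrent direction as a convergent sequence rather than a single point contradicting a fixed neighborhood, and you separately worry about finite-type filters, which cannot occur in the orbit since $\varphi_s$ only cuts and glues finite prefixes.
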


\begin{proof}
We first show the ``if'' statement.
Suppose that $\gamma$ is a transitory loop and let $U$ be an open set such that $\xi^{\alpha} \in U.$ Since $\xi^{\alpha} \in \Fix(t),$ we have that $(t,\xi^{\alpha}) \in \Iso(\F\rtimes_\varphi\textsf{T}).$ By Lemma~\ref{P1-lemma-8.9}, there exists $k>1$  and  $B\in\xi^{\alpha}_{|\beta\gamma^k|}$ such that
$ \xi^{\alpha}\in V_{(\beta\gamma^k,B,\beta\gamma^k)}\subseteq U. $

We claim that $V_{(\beta\gamma^k,B,\beta\gamma^k)} \cap \Orb(\xi^{\alpha})=\{\xi^{\alpha}\}.$ Let $\eta^{\delta} \in V_{(\beta\gamma^k,B,\beta\gamma^k)} \cap \Orb(\xi^{\alpha}).$
Using that $\eta^{\delta} \in \Orb(\xi^{\alpha}),$ there exists $s \in \mathbb{F}$ such that $\xi^{\alpha} \in V_{s^{-1}}$ and $\eta^{\delta}= \varphi_s(\xi^{\alpha}).$ Since $V_{s^{-1}}\neq \emptyset,$ Remark~\ref{P1-lemma-orb} implies that $s \in \{\omega\}\cup\{\alpha':\alpha' \in \mathcal{L}_{\mathcal{G}}^{\geq 1}\}\cup\{\alpha'^{-1}:\alpha' \in \mathcal{L}_{\mathcal{G}}^{\geq 1}\}\cup\{\alpha'\beta'^{-1}:\alpha', \beta' \in \mathcal{L}_{\mathcal{G}}^{\geq 1}, r(\alpha')\cap r(\beta') \neq \emptyset\}.$ Since $\alpha = \beta\gamma^{\infty}$ and by the definition of the partial action $\varphi,$ the labelled path $\delta$ associated to $\eta^\delta$ is eventually periodic with period component $\gamma.$

By assumption $\eta^{\delta} \in V_{(\beta\gamma^k,B,\beta\gamma^k)},$ and we have that $(\beta\gamma^k,B,\beta\gamma^k) \in \eta^{\delta}.$  By Remark~\ref{P1-obs-2.6}, $\beta\gamma^k$ is a beginning of $\delta.$
Using that $\delta$ eventually returns to $\gamma$, and that $\gamma$ is not recurrent, we have that
$\delta = \beta \gamma^{\infty}=\alpha.$
Therefore, $\eta$ is associated with the word $\alpha$, and hence, by Remark~\ref{P1-obs-unique-filter}, $\eta^{\alpha} = \xi^{\alpha}.$\smallbreak

Now we show the ``only if'' statement.
Suppose that $\xi^{\alpha}$ is an isolated point in $\Orb(\xi^{\alpha}).$ 
There exists an open set $U \subseteq \textsf{T}$ such that 
$U \cap \Orb(\xi^{\alpha})=\{\xi^{\alpha}\}.$
By Lemma~\ref{P1-lemma-8.9}, there exist $k>1$  and  $B\in\xi^{\alpha}_{|\beta\gamma^k|}$ such that
$ \xi^{\alpha}\in V_{(\beta\gamma^k,B,\beta\gamma^k)}\subseteq U. $
Hence,
$V_{(\beta\gamma^k,B,\beta\gamma^k)} \cap \Orb(\xi^{\alpha})= \{\xi^{\alpha}\}.$

Seeking a contradiction, suppose that $\gamma$ is a recurrent loop. Then, there exists a loop $\rho$ such that $s(\rho)=s(\gamma)$ and $\gamma \rho \gamma^{\infty} \neq \gamma^{\infty}.$ 
Define
$\delta := \beta \gamma^k \rho \gamma^{\infty}.$
By 
Remark~\ref{P1-obs-unique-filter}, there is a unique element $\eta^{\delta}$ associated with $\delta.$ Notice that, since $\beta\gamma^k$ is a beginning of $\delta$ and $s(\rho)=s(\gamma),$ by Lemma~\ref{P1-lema-family-inf}, the elements of the complete family of ultrafilters associated with $\alpha$ and $\delta$ satisfy
$\xi^{\alpha}_n=\eta^{\delta}_n,$ for all $n \in \{1,\ldots, |\beta \gamma^k|\}.$ In particular, since $B \in \xi^{\alpha}_{|\beta \gamma^k|} =\eta^{\delta}_{|\beta \gamma^k|},$ by Remark~\ref{P1-obs-2.6}, 
$(\beta \gamma^k,B,\beta \gamma^k) \in \eta^{\delta}.$ Hence, $\eta^{\delta} \in V_{(\beta \gamma^k,B,\beta \gamma^k)}.$

We claim that $\eta^{\delta} \in \Orb(\xi^{\alpha}).$ Let $s= \beta \gamma^k \rho \beta^{-1}.$ Observe that $\xi^{\alpha} \in V_{s^{-1}},$ because $V_{s^{-1}}= \varphi_{\beta^{-1}}^{-1}(V_{(\beta \gamma^k \rho)^{-1}})$ and $\varphi_{\beta^{-1}}(\xi^{\alpha})=\mu^{\gamma^{\infty}} \in \textsf{T}_{\gamma^{\infty}}.$ Since $s(\gamma)=s(\rho) \in r(\rho),$ then $\beta \gamma^k \rho \gamma^{\infty} \in \mathcal{L}_{\mathcal{G}}^{\infty}$ and Lemma~\ref{P1-lemma-T2} implies that $\mu^{\gamma^{\infty}} \in \textsf{T}_{(\beta \gamma^k \rho)\gamma^{\infty}}$. Thus, by Lemma~\ref{P1-lemma-T1}, 
$\mu^{\gamma^{\infty}} \in \textsf{T}_{(\beta \gamma^k \rho)\gamma^{\infty}} \subseteq V_{(\beta \gamma^k \rho)^{-1}} $ and $\xi^{\alpha} = \varphi_{\beta^{-1}}^{-1}(\mu^{\gamma^{\infty}}) \in V_{s^{-1}}.$ By Remark~\ref{P1-obs-unique-filter}, there is a unique element associated with the word $\delta.$ Hence, $\varphi_s(\xi^{\alpha})= \eta^{\delta}$ and  we conclude that $\eta^{\delta} \in V_{(\beta \gamma^k,B,\beta \gamma^k)} \cap \Orb(\xi^{\alpha}).$ This is a contradiction.
\end{proof} 

\begin{corolario}\label{P1-cor:corprop2}
Let $t \in \mathbb{F}\setminus\{\omega\}$ and $\xi^{\alpha} \in \Fix(t)$ be such that $\alpha = \beta\gamma^{\infty}$ and that $t$ is either $\beta\gamma\beta^{-1}$ or $\beta\gamma^{-1}\beta^{-1}$ (see Remark~\ref{P1-obs-unicidade}).  Then, $\xi^{\alpha}$ is not an isolated point in $\Orb(\xi^{\alpha})$ if, and only if, $\gamma$ is a recurrent loop.
\end{corolario}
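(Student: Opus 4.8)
The plan is to observe that Corollary~\ref{P1-cor:corprop2} is nothing more than the contrapositive formulation of Proposition~\ref{P1-prop:prop2}, combined with the definition of a transitory loop. Recall that a loop $\gamma$ is by definition \emph{transitory} precisely when it is not recurrent. Hence the statement ``$\gamma$ is recurrent'' is logically equivalent to the negation of ``$\gamma$ is transitory''.

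First I would invoke Proposition~\ref{P1-prop:prop2}, which gives the equivalence: $\xi^{\alpha}$ is an isolated point in $\Orb(\xi^{\alpha})$ if, and only if, $\gamma$ is transitory. Negating both sides of this biconditional yields: $\xi^{\alpha}$ is \emph{not} an isolated point in $\Orb(\xi^{\alpha})$ if, and only if, $\gamma$ is \emph{not} transitory. Then I would simply rewrite ``$\gamma$ is not transitory'' as ``$\gamma$ is recurrent'', using the definition, which completes the argument.

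There is no real obstacle here, since the work has entirely been done in Proposition~\ref{P1-prop:prop2}; the only thing to be careful about is that the two notions are genuinely complementary (every loop is either recurrent or transitory and not both), which is immediate from the definition given just before Proposition~\ref{P1-prop:prop2}. Concretely, the proof reads:

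\begin{proof}
By the definition preceding Proposition~\ref{P1-prop:prop2}, a loop is transitory if, and only if, it is not recurrent. Hence, by Proposition~\ref{P1-prop:prop2}, $\xi^{\alpha}$ is not an isolated point in $\Orb(\xi^{\alpha})$ if, and only if, $\gamma$ is not transitory, which holds if, and only if, $\gamma$ is recurrent.
\end{proof}
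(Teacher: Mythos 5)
Your proof is correct and matches the paper's intent exactly: the corollary is stated without proof precisely because it is the negation of both sides of the biconditional in Proposition~\ref{P1-prop:prop2}, together with the definition that a transitory loop is one that is not recurrent. Nothing further is needed.
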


\begin{proposition}\label{P1-prop:prop1}
The partial action $\varphi$ of $\mathbb{F}$ on $\textsf{T}$ is topologically free on every closed $\mathbb{F}$-invariant subset of $\textsf{T}$ if, and only if, for every $t \in \mathbb{F} \setminus \{\omega\}$ and $\xi \in \Fix(t),$  $\xi$ is not an isolated point in $\Orb(\xi).$
\end{proposition}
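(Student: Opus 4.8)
The plan is to derive the equivalence from two ingredients: a soft Hausdorff fact relating isolated points of a set to isolated points of its closure, and the structural fact that for each $t\in\mathbb{F}\setminus\{\omega\}$ the fixed-point set $\Fix(t)$ of $\varphi_t$ is either empty or a single point of $\textsf{T}$. Granting these, both implications are short. First I would note that $\textsf{T}$ is Hausdorff and that, for every $\xi\in\textsf{T}$, the orbit closure $F:=\overline{\Orb(\xi)}$ is a closed $\mathbb{F}$-invariant subset of $\textsf{T}$: $\Orb(\xi)$ is plainly $\mathbb{F}$-invariant, and invariance passes to the closure because each $V_t$ is open and each $\varphi_t$ is continuous (approximate a point of $V_{g^{-1}}\cap F$ by a net in $V_{g^{-1}}\cap\Orb(\xi)$ and push it forward by $\varphi_g$).

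For the forward implication, suppose $\varphi$ is topologically free on every closed $\mathbb{F}$-invariant subset, fix $t\in\mathbb{F}\setminus\{\omega\}$ and $\xi\in\Fix(t)$, and put $F:=\overline{\Orb(\xi)}$. Assume for contradiction that $\xi$ is isolated in $\Orb(\xi)$, say $U\cap\Orb(\xi)=\{\xi\}$ for some open $U\subseteq\textsf{T}$. A one-line Hausdorff argument then shows $U\cap F=\{\xi\}$ as well: if $\eta\in U\cap F$ with $\eta\neq\xi$, separating $\eta$ and $\xi$ by disjoint open sets contradicts $\eta\in\overline{\Orb(\xi)}$. Hence $\{\xi\}$ is open in $F$; since $\xi\in\Fix(t)\cap F$, this forces $\xi\in\Int_F\{x\in V_{t^{-1}}\cap F:\varphi_t(x)=x\}$, contradicting topological freeness of $\varphi$ on the closed $\mathbb{F}$-invariant set $F$. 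So $\xi$ is not isolated in $\Orb(\xi)$.

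For the reverse implication, suppose every $\xi\in\Fix(t)$ (for every $t\in\mathbb{F}\setminus\{\omega\}$) fails to be isolated in $\Orb(\xi)$, and let $F$ be a closed $\mathbb{F}$-invariant subset and $t\in\mathbb{F}\setminus\{\omega\}$. We must show $\Int_F(\Fix(t)\cap F)=\emptyset$. If $\Fix(t)\cap F=\emptyset$ this is trivial; otherwise pick $\xi\in\Fix(t)\cap F$, and by the structural fact $\Fix(t)=\{\xi\}$, so $\Fix(t)\cap F=\{\xi\}$ with $\xi\in F$. Since $F$ is $\mathbb{F}$-invariant and $\xi\in F$ we have $\Orb(\xi)\subseteq F$, and by hypothesis every neighbourhood of $\xi$ meets $\Orb(\xi)\setminus\{\xi\}\subseteq F\setminus\{\xi\}$; thus $\{\xi\}$ is not open in $F$ and $\Int_F(\Fix(t)\cap F)=\emptyset$. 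As $F$ and $t$ were arbitrary, $\varphi$ is topologically free on every closed $\mathbb{F}$-invariant subset.

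The step I expect to be the main obstacle is the structural fact. By Remark~\ref{P1-obs-unicidade}, any $\xi^{\alpha}\in\Fix(t)$ has $\alpha=\beta\gamma^{\infty}$ with $t=\beta\gamma^{\pm 1}\beta^{-1}$, and by Remark~\ref{P1-obs-unique-filter} such a filter is determined by the word $\beta\gamma^{\infty}$; so it suffices to prove that $\beta_1\gamma_1^{\epsilon_1}\beta_1^{-1}=\beta_2\gamma_2^{\epsilon_2}\beta_2^{-1}=t$ (with $\beta_i\in\mathcal{L}^*$, $\gamma_i$ loops, $\epsilon_i\in\{1,-1\}$) forces $\beta_1\gamma_1^{\infty}=\beta_2\gamma_2^{\infty}$ in $\overline{\mathcal{L}^{\infty}}$. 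Here I would use elementary free-group facts: $t$ is conjugate to $\gamma_1^{\epsilon_1}$ and to $\gamma_2^{\epsilon_2}$, the cyclically reduced form of an element of a free group is unique up to cyclic rotation, and a nonempty positive word (such as $\gamma_i$) is never conjugate to a negative word; this gives $\epsilon_1=\epsilon_2$, and after replacing $t$ by $t^{-1}$ if necessary we may assume $\epsilon_i=1$. Then $\gamma_2=v\gamma_1v^{-1}$ with $v=\beta_2^{-1}\beta_1$ and $\gamma_1,\gamma_2$ cyclically reduced, so $v=q\gamma_{1,0}^{j}$ for a cyclic splitting $\gamma_1=pq$, $\gamma_2=qp$, some $j\in\mathbb{Z}$, and the primitive root $\gamma_{1,0}$ of $\gamma_1$; a short computation gives $\beta_1\gamma_1^{\infty}=\beta_2 q\gamma_1^{\infty}=\beta_2(qp)^{\infty}=\beta_2\gamma_2^{\infty}$. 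By Remark~\ref{P1-obs-unique-filter}, $\xi^{\alpha_1}=\xi^{\alpha_2}$, so $\Fix(t)$ is empty or a singleton. (Alternatively one can bypass this computation and route both implications through Condition (K) via Remark~\ref{P1-obs-unicidade}, Corollary~\ref{P1-cor:corprop2} and Proposition~\ref{P1-prop:KR}, but the singleton argument is the most self-contained.)
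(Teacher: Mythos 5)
Your proof is correct and follows essentially the same route as the paper's: both directions hinge on taking $F=\overline{\Orb(\xi)}$ as the relevant closed $\mathbb{F}$-invariant set, a Hausdorff separation argument to pass from isolation in $\Orb(\xi)$ to isolation in its closure, and the fact (via Remarks~\ref{P1-obs-unicidade} and \ref{P1-obs-unique-filter}) that $\Fix(t)$ is at most a singleton. The only real difference is that you supply an explicit free-group argument for that singleton fact, which the paper uses implicitly when it asserts $\Int_C(\Fix(t)\cap C)=\{\xi^{\alpha}\}$ --- a worthwhile addition, since that is the least transparent step of the published proof.
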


\begin{proof}
We first show the ``if'' statement by contrapositivity.
Suppose that $\varphi$ is not topologically free on every closed $\mathbb{F}$-invariant subset of $\textsf{T}.$ Then, there exist a closed $\mathbb{F}$-invariant subset $C \subseteq \textsf{T}$ and an element $t \in \mathbb{F} \setminus \{\omega\}$  such that $\Int_C(\Fix(t) \cap C) \neq \emptyset.$

Let $\xi^{\alpha} \in \Int_C(\Fix(t) \cap C).$ Since $\xi^{\alpha} \in \Fix(t),$ by Remark~\ref{P1-obs-unicidade}, there exist $\beta\in\mathcal{L}^*$ and $\gamma\in\mathcal{L}^{\geq 1}$, such that the associated labelled path of $\xi^{\alpha}$ is $\alpha = \beta\gamma^{\infty}$, and $t$ is either $\beta\gamma\beta^{-1}$ or $\beta\gamma^{-1}\beta^{-1}$.
By Remark~\ref{P1-obs-unique-filter}, since $\alpha$ is an infinite path in $\mathcal{G},$ $\xi^{\alpha}$ is the unique element in $\textsf{T}$ whose associated word is $\alpha$. Therefore,
$\Int_C(\Fix(t)\cap C) =\{\xi^{\alpha}\}.$
Notice that
$$\xi^{\alpha} \in \Orb(\xi^{\alpha})= \bigcup_{t \in \mathbb{F}} \varphi_t(\{\xi^{\alpha}\}\cap V_{t^{-1}}) \subseteq \bigcup_{t \in \mathbb{F}} \varphi_t(C \cap V_{t^{-1}}) 
\subseteq C. $$
Thus, $\xi^{\alpha}$ is an isolated point in $\Orb(\xi^{\alpha}).$\smallbreak

Now we show the ``only if'' statement by contrapositivity.
Suppose that there exist 
$t \in \mathbb{F} \setminus \{\omega\}$
 and $\xi \in \Fix(t),$ such that $\xi$ is an isolated point in $\Orb(\xi).$ Then, there exist an open subset $V$ of $\textsf{T}$ such that
$\Orb(\xi) \cap V = \{\xi\}.$
We claim that
$\overline{\Orb(\xi)} \cap V=\{\xi\}.$ Seeking a contradiction, suppose that there is $\eta \in \overline{\Orb(\xi)} \cap V$ such that $\eta \neq \xi.$ Then
$\eta \in V \setminus \{\xi\},$ 
the latter being an open set. But,
$V \setminus \{\xi\} \cap \Orb(\xi)=\emptyset$ and this is a contradiction, since $\eta \in \overline{\Orb(\xi)}.$
Therefore, $C=\overline{\Orb(\xi)}$ is a closed $\mathbb{F}$-invariant subset of $\textsf{T}$ such that $\xi \in \Int_C(\Fix(t) \cap C). $ Hence, $\varphi$ is not topologically free on $C.$
\end{proof}

\begin{proposition}\label{P1-prop:tloop}
The partial action $\varphi$ of $\mathbb{F}$ on $\textsf{T}$ is topologically free on every closed $\mathbb{F}$-invariant subset of $\textsf{T}$ if, and only if, every loop in 
$\mathcal{G}$ is recurrent.
\end{proposition}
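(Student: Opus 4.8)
The plan is to prove Proposition~\ref{P1-prop:tloop} by chaining together the characterizations already established in this subsection. Specifically, Proposition~\ref{P1-prop:prop1} shows that $\varphi$ is topologically free on every closed $\mathbb{F}$-invariant subset of $\textsf{T}$ if, and only if, for every $t \in \mathbb{F}\setminus\{\omega\}$ and every $\xi \in \Fix(t)$, the point $\xi$ is not isolated in $\Orb(\xi)$. So it suffices to show that this latter condition is equivalent to ``every loop in $\mathcal{G}$ is recurrent''.

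First I would handle the forward direction (topological freeness on closed invariant subsets $\Rightarrow$ every loop recurrent) by contraposition. Suppose some loop $\gamma$ in $\mathcal{G}$ is transitory; I want to produce a fixed point that is isolated in its orbit. Put $\xi := \xi^{\gamma^{\infty}}$, the unique tight filter of infinite type associated with the infinite path $\gamma^{\infty}$ (this exists and is unique by Remark~\ref{P1-obs-unique-filter}). Taking $\beta = \omega$ in Remark~\ref{P1-obs-unicidade}, the element $t := \gamma\gamma^{-1}$... more carefully, $t := \gamma$ regarded via $\beta\gamma\beta^{-1}$ with $\beta = \omega$, i.e. $t = \gamma \in \mathbb{F}$, satisfies $\xi \in \Fix(t)$ and $t \neq \omega$. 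Then Proposition~\ref{P1-prop:prop2} applies with this $\alpha = \gamma^{\infty}$, $\beta = \omega$: since $\gamma$ is transitory, $\xi$ is an isolated point in $\Orb(\xi)$. By Proposition~\ref{P1-prop:prop1}, $\varphi$ is not topologically free on every closed $\mathbb{F}$-invariant subset of $\textsf{T}$, as desired.

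For the converse (every loop recurrent $\Rightarrow$ topological freeness on closed invariant subsets), again I would argue by contraposition via Proposition~\ref{P1-prop:prop1}. Suppose there is $t \in \mathbb{F}\setminus\{\omega\}$ and $\xi \in \Fix(t)$ with $\xi$ isolated in $\Orb(\xi)$. By Remark~\ref{P1-obs-unicidade}, the labelled path associated with $\xi$ has the form $\alpha = \beta\gamma^{\infty}$ for some $\beta \in \mathcal{L}^*$, $\gamma \in \mathcal{L}^{\geq 1}$, and $t$ is either $\beta\gamma\beta^{-1}$ or $\beta\gamma^{-1}\beta^{-1}$. Now Corollary~\ref{P1-cor:corprop2} (equivalently Proposition~\ref{P1-prop:prop2}): since $\xi$ is isolated in $\Orb(\xi)$, the loop $\gamma$ must be transitory, i.e. not recurrent. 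Hence not every loop in $\mathcal{G}$ is recurrent. This completes the equivalence.

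The main subtlety to get right is bookkeeping in the forward direction: ensuring that $\gamma^{\infty}$ is genuinely an infinite path in $\mathcal{G}$ (it is, since $\gamma$ being a loop means $s(\gamma) \in r(\gamma)$, so $\gamma^{\infty} \in \mathcal{L}_{\mathcal{G}}^{\infty}$), that the fixed point $\xi^{\gamma^{\infty}}$ is well-defined and unique (Remark~\ref{P1-obs-unique-filter}), and that $t$ is correctly placed in the $\beta = \omega$ case of Remark~\ref{P1-obs-unicidade} so that Proposition~\ref{P1-prop:prop2} is literally applicable. Once these hypotheses are verified, both directions are immediate consequences of Propositions~\ref{P1-prop:prop1} and~\ref{P1-prop:prop2} (resp. Corollary~\ref{P1-cor:corprop2}), so there is no genuine obstacle beyond careful citation of the preceding results. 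One could alternatively phrase the whole argument in one line as: combine Proposition~\ref{P1-prop:prop1} with Proposition~\ref{P1-prop:prop2}/Corollary~\ref{P1-cor:corprop2}, noting that ``some loop is transitory'' is equivalent to ``some fixed point of some nontrivial group element is isolated in its orbit'' — the nontrivial content being that any such isolated fixed point, by Remark~\ref{P1-obs-unicidade}, comes from a loop of the required eventually-periodic shape, and conversely any transitory loop yields such a fixed point by taking $\beta = \omega$.
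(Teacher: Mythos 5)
Your proposal is correct and follows essentially the same route as the paper: both directions are obtained by contraposition, combining Proposition~\ref{P1-prop:prop1} with Proposition~\ref{P1-prop:prop2} (via Remark~\ref{P1-obs-unicidade} in one direction and the fixed point $\xi^{\gamma^{\infty}} \in \Fix(\gamma)$ in the other). The bookkeeping you flag (that $\gamma^{\infty}$ is a genuine infinite path and that $t=\gamma$ fits the $\beta=\omega$ case) is exactly what the paper implicitly relies on, so there is no gap.
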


\begin{proof}
We first show the ``if'' statement by contrapositivity.
Suppose that there is a closed $\mathbb{F}$-invariant subset on which $\varphi$ is not topologically free. By Proposition~\ref{P1-prop:prop1}, there exists $t \in \mathbb{F} \setminus \{\omega\}$ such that $\xi^{\alpha} \in \Fix(t)$ is a isolated point in $\Orb(\xi^{\alpha}).$ By Remark~\ref{P1-obs-unicidade}, there exist $\beta\in\mathcal{L}^*$, $\gamma\in\mathcal{L}^{\geq 1}$ such that the associated labelled path of $\xi^{\alpha}$ is $\alpha = \beta\gamma^{\infty}$ and $t$ is either $\beta\gamma\beta^{-1}$ or $\beta\gamma^{-1}\beta^{-1}$. By Proposition~\ref{P1-prop:prop2}, $\gamma$ is a transitory loop.\smallbreak

Now we show the ``only if'' statement by contrapositivity.
Suppose that $\mathcal{G}$ has a transitory loop $\gamma.$ Set $\alpha:= \gamma^{\infty}$ and notice that $\xi^{\alpha} \in \Fix(\gamma).$ By Proposition~\ref{P1-prop:prop2}, $\xi^{\alpha}$ is an isolated point in $\Orb(\xi^{\alpha}),$ and, by Proposition~\ref{P1-prop:prop1}, 
$\varphi$ is not topologically free on every closed $\mathbb{F}$-invariant subset of $\textsf{T}.$
\end{proof}

Let $\mathbb{K}$ be a field and let $\alpha$ be the partial action of $\mathbb{F}$ on $\mathcal{L}_c(\textsf{T}, \mathbb{K})$ induced by the topological partial action $\varphi.$
The natural $\mathbb{F}$-grading on the partial skew group ring $\mathcal{L}_c(\textsf{T},\mathbb{K}) \rtimes_{\alpha}\mathbb{F}$ is  given by
$\mathcal{L}_c(\textsf{T},\mathbb{K}) \rtimes_{\alpha}\mathbb{F} = \bigoplus_{t\in \mathbb{F}} \mathcal{L}_c(V_t,\mathbb{K})\delta_t$. Using the map $\alpha\beta^{-1} \mapsto |\alpha|-|\beta|$, for $\alpha\beta^{-1}\in \mathbb{F}$ in reduced from, and the fact that the partial action $\varphi$ is semi-saturated \cite[Proposition 3.12]{Castro2020}, it follows that $\mathcal{L}_c(\textsf{T},R) \rtimes_{\alpha}\mathbb{F}$ has a $\Z$-grading with homogeneous component of degree $n\in \Z$ given by 
$D_n:=\Span_{\mathbb{K}}\{f_{\alpha\beta^{-1}}\delta_{\alpha\beta^{-1}} : \alpha,\beta\in\mathcal{L}^* \ \mbox{and} \ |\alpha|-|\beta|=n \}. $ See \cite[Section~4]{Boava2021}, for more details.

\begin{theorem}\label{P1-thm:ultgraded}
Let $\mathbb{K}$ be a field and let $\mathcal{G}$ be an ultragraph. 
Then, every ideal of $\mathcal{L}_c(\textsf{T},\mathbb{K}) \rtimes_{\alpha}\mathbb{F}$ is $\mathbb{F}$-graded if, and only if, every ideal of $\mathcal{L}_c(\textsf{T},\mathbb{K}) \rtimes_{\alpha}\mathbb{F}$ is $\Z$-graded.
\end{theorem}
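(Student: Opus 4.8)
The plan is to exploit the fact that the $\Z$-grading is a \emph{coarsening} of the $\mathbb{F}$-grading, obtained by pushing forward along the group homomorphism $\deg : \mathbb{F} \to \Z$ that sends a reduced word $\alpha\beta^{-1}$ to $|\alpha| - |\beta|$. Concretely, $D_n = \bigoplus_{t \in \deg^{-1}(n)} \mathcal{L}_c(V_t,\mathbb{K})\delta_t$, so every $\mathbb{F}$-homogeneous component $\mathcal{L}_c(V_t,\mathbb{K})\delta_t$ lies inside the single $\Z$-homogeneous component $D_{\deg(t)}$. The ``only if'' direction is then essentially formal: if an ideal $I$ decomposes as $I = \bigoplus_{t\in\mathbb{F}}(I \cap \mathcal{L}_c(V_t,\mathbb{K})\delta_t)$, then collecting terms by degree gives $I = \bigoplus_{n\in\Z}(I \cap D_n)$, since each $\mathbb{F}$-component sits in exactly one $\Z$-component. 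So any $\mathbb{F}$-graded ideal is automatically $\Z$-graded, and the hypothesis that \emph{every} ideal is $\mathbb{F}$-graded immediately yields that every ideal is $\Z$-graded.

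For the ``if'' direction — assume every ideal is $\Z$-graded and deduce that every ideal is $\mathbb{F}$-graded — I would first reduce to showing that each $\Z$-homogeneous component $D_n$ is itself suitably controlled. Given an ideal $I$ which we know is $\Z$-graded, $I = \bigoplus_{n\in\Z}(I \cap D_n)$, it suffices to show that each piece $I \cap D_n$ is $\mathbb{F}$-graded as a subspace, i.e. that $I \cap D_n = \bigoplus_{t \in \deg^{-1}(n)}\bigl((I\cap D_n) \cap \mathcal{L}_c(V_t,\mathbb{K})\delta_t\bigr)$. The key observation is that within a fixed degree $n$, the distinct group elements $t$ with $\deg(t)=n$ have \emph{disjoint} or otherwise separable supports $V_t$ in $\textsf{T}$; the structure of the reduced-word form $\alpha\beta^{-1}$ with $|\alpha|-|\beta|=n$, together with Lemma~\ref{P1-lemma-orb}, constrains which $V_t$ can be nonempty, and one expects that an element of $D_n$ lying in an ideal can be ``projected'' onto each $\mathcal{L}_c(V_t,\mathbb{K})\delta_t$ by multiplying on the left and right by appropriate characteristic functions $1_{V_e}\delta_e$ of idempotent-supported pieces, which stay inside the ideal. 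This is the same kind of cutting-down argument used throughout Section~\ref{P1-Sec:Algebraic} (e.g. in the proof of Proposition~\ref{P1-prop:psj}), now applied at the level of the finer grading inside one coarse component.

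I expect the main obstacle to be precisely this last point: verifying that the finer $\mathbb{F}$-homogeneous summands inside a single $\Z$-component $D_n$ can be separated by multiplication by elements of $\bigoplus_{e\in G_0}D_e\delta_e$ (or by idempotents of $\mathcal{L}_c(\textsf{T},\mathbb{K})$) that lie in — or act on — the ideal. Unlike the $G_0$-supported "diagonal" situation handled in Lemma~\ref{P1-lem:some-obs}, here the relevant group elements $\alpha\beta^{-1}$ all have the same nonzero degree, so one cannot simply use an $s$-unit to isolate a component; instead one must use the combinatorics of the tight spectrum, specifically that the sets $V_{\alpha\beta^{-1}}$ for different pairs with $|\alpha|-|\beta|=n$ are controlled (being intersections/translates of basic compact-open sets $V_{(\alpha,r(\alpha),\alpha)}$), together with semi-saturatedness of $\varphi$ (\cite[Proposition~3.12]{Castro2020}) which forces each such $V_t$ to be of the form $\varphi_{\beta^{-1}}^{-1}$ applied to a cylinder, hence clopen and amenable to characteristic-function projections. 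Once that separation lemma is in place, the theorem follows by assembling the pieces degree by degree.
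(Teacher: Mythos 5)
Your ``only if'' direction is fine and is indeed purely formal: each $\mathbb{F}$-homogeneous component $\mathcal{L}_c(V_t,\mathbb{K})\delta_t$ sits inside the single $\Z$-component $D_{\deg(t)}$, so any $\mathbb{F}$-graded ideal is automatically $\Z$-graded. The gap is in the ``if'' direction, and it sits exactly where you say you expect an obstacle: the claim that $I\cap D_n$ splits as $\bigoplus_{\deg(t)=n}\bigl(I\cap \mathcal{L}_c(V_t,\mathbb{K})\delta_t\bigr)$ is never established, and the mechanism you propose for it does not work as stated. The sets $V_t$ with $\deg(t)=n$ are not disjoint or otherwise separable by clopen cuts (for instance $V_{\alpha\beta^{-1}}\subseteq V_{\alpha}\subseteq V_{\alpha'}$ whenever $\alpha'$ is a beginning of $\alpha$, and all of these can occur in a single degree), and left/right multiplication by diagonal idempotents $1_K\delta_e$ cannot distinguish $f\delta_t$ from $f'\delta_{t'}$ when $\theta_t$ and $\theta_{t'}$ agree on a clopen set --- which is precisely the failure of topological freeness that occurs when $\mathcal{G}$ has a transitory loop (compare the element $a_g\delta_e-a_g\delta_g$ in the proof of Proposition~\ref{P1-prop:max-com-int-prop}, which generates an ideal meeting the diagonal trivially). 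Note also that your sketch never uses the global hypothesis that \emph{every} ideal is $\Z$-graded: as written it would prove the much stronger unconditional statement that each individual $\Z$-graded ideal is $\mathbb{F}$-graded, and nothing in the proposal converts the hypothesis into the dynamical or combinatorial information needed to justify the separation.

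The paper's proof takes an entirely different route, and the external input it uses is essential: via the $\Z$-graded isomorphism of $\mathcal{L}_c(\textsf{T},\mathbb{K})\rtimes_{\alpha}\mathbb{F}$ with the ultragraph Leavitt path algebra $L_{\mathbb{K}}(\mathcal{G})$ from \cite{Boava2021}, the hypothesis ``every ideal is $\Z$-graded'' is equivalent, by \cite[Theorem~4.3]{Imanfar2020}, to Condition~(K); Propositions~\ref{P1-prop:KR} and~\ref{P1-prop:tloop} translate Condition~(K) into topological freeness of $\varphi$ on every closed $\mathbb{F}$-invariant subset of $\textsf{T}$; and Theorem~\ref{P1-thm:TE1} identifies that with every ideal being $\mathbb{F}$-graded. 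To repair your argument you would need a substitute for this chain --- in particular, a way to extract Condition~(K), or the residual intersection property, from the $\Z$-gradedness of all ideals --- before any component separation inside $D_n$ can be justified; the combinatorics of the tight spectrum alone (Lemma~\ref{P1-lemma-orb}, semi-saturatedness) does not suffice.
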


\begin{proof}
By \cite[Theorem~5.6]{Boava2021}, there is a $\Z$-graded isomorphism between $\mathcal{L}_c(\textsf{T},\mathbb{K}) \rtimes_{\alpha}\mathbb{F}$ and the Leavitt path algebra, $L_{\mathbb{K}}(\mathcal{E_{\mathcal{G}}},\mathcal{L}_{\mathcal{G}},\mathcal{B}),$ of the labelled space  $(\mathcal{E_{\mathcal{G}}},\mathcal{L}_{\mathcal{G}},\mathcal{B}).$ Moreover, $L_{\mathbb{K}}(\mathcal{E_{\mathcal{G}}},\mathcal{L}_{\mathcal{G}},\mathcal{B})$ is isomorphic to the ultragraph Leavitt path algebra $L_{\mathbb{K}}(\mathcal{G})$, see \cite[Example~7.2]{Boava2021}. Finally, by \cite[Theorem~4.3]{Imanfar2020}, since $\mathbb{K}$ is a field, every ideal of $L_{\mathbb{K}}(\mathcal{G})$ is $\Z$-graded if, and only if, the ultragraph $\mathcal{G}$ satisfies Condition$(K).$ Thus, every ideal of $\mathcal{L}_c(\textsf{T},\mathbb{K}) \rtimes_{\alpha}\mathbb{F}$ is $\Z$-graded if, and only if, $\mathcal{G}$ satisfies Condition (K). Proposition~\ref{P1-prop:KR} and Proposition~\ref{P1-prop:tloop} imply that $\mathcal{G}$ satisfies Condition (K) if, and only if, $\varphi$ is topologically free on every closed $\mathbb{F}$-invariant subset of $\textsf{T}.$ By Theorem~\ref{P1-thm:TE1}, this is equivalent to every ideal of $\mathcal{L}_c(\textsf{T}, \mathbb{K}) \rtimes_{\alpha}\mathbb{F}$ being $\mathbb{F}$-graded.
\end{proof}

Now we state the main result of this section, partially generalizing \cite[Proposition~9.1]{Boava2021}.

\begin{theorem}\label{P1-thm:TFinal}
Let $\mathbb{K}$ be a field, let $\mathcal{G}$ be an ultragraph, let $(\mathcal{E}_{\mathcal{G}}, \mathcal{L}_{\mathcal{G}}, \mathcal{B})$ be the normal labelled space associated with $\mathcal{G},$ 
let $\varphi$ be the topological partial action of the free group $\mathbb{F}$ generated by the labels of the ultragraph on the tight spectrum $\textsf{T}$, 
and let $\alpha$ be the partial action of $\mathbb{F}$ on $\mathcal{L}_c(\textsf{T}, \mathbb{K})$ induced by the topological partial action $\varphi.$ The following statements are equivalent:
\begin{enumerate}[{\rm (i)}]
    \item $\overline{D}_0\delta_0$ is a maximal commutative subring of $\frac{\mathcal{L}_c(\textsf{T}, \mathbb{K})}{\mathcal{J}(F)} \rtimes_{\overline{\alpha}}\mathbb{F},$ for every closed $\mathbb{F}$-invariant subset $F$ of $\textsf{T};$ 
    \item $\alpha$ has the residual intersection property;
    \item Every ideal of $\mathcal{L}_c(\textsf{T}, \mathbb{K}) \rtimes_{\alpha}\mathbb{F}$ is $\mathbb{F}$-graded;
    \item Every ideal of $\mathcal{L}_c(\textsf{T},\mathbb{K}) \rtimes_{\alpha}\mathbb{F}$ is $\Z$-graded;
    \item $\varphi$ is topologically free on every closed $\mathbb{F}$-invariant subset of $\textsf{T};$ 
    \item $\varphi$ is topologically free on every open $\mathbb{F}$-invariant subset of $\textsf{T};$ 
    \item $\mathbb{F} \rtimes_{\varphi}\textsf{T}$ is strongly effective;
    \item Every loop in
    $\mathcal{G}$ is recurrent;
    \item $\mathcal{G}$ satisfies Condition (K).
\end{enumerate}
\end{theorem}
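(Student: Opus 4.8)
The plan is to recognise Theorem~\ref{P1-thm:TFinal} as an orchestration of results already established in the paper, rather than as something to be proved from scratch. The first observation is that the quadruple $(\mathbb{F},\varphi,\textsf{T})$ fits the standing hypotheses of Section~\ref{P1-Sec:TopDynamics}: $\mathbb{F}$ is a discrete groupoid with a single object, $\textsf{T}$ is locally compact, Hausdorff and zero-dimensional (as the tight spectrum recalled in Section~\ref{P1-subsect.labelled.spaces} it embeds in a Cantor cube), each $V_t$ is compact open and hence clopen, and since $\mathbb{F}_0$ is a singleton the decomposition $\textsf{T}=\bigsqcup_{e\in\mathbb{F}_0}V_e$ holds trivially, with $\bigoplus_{e\in\mathbb{F}_0}\overline{D}_e\delta_e=\overline{D}_0\delta_0$. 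Consequently Theorem~\ref{P1-thm:TE1} applies verbatim with $G=\mathbb{F}$, $X=\textsf{T}$, $\theta=\varphi$, and immediately yields the equivalence of the statements (i), (ii), (iii), (v), (vi) and (vii).

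Next I would fold in the remaining three statements using the intermediate results of Section~\ref{P1-Sec:Ultragraphs}. Theorem~\ref{P1-thm:ultgraded} gives (iii) $\Leftrightarrow$ (iv), so that being graded by $\Z$ is equivalent to being graded by $\mathbb{F}$; Proposition~\ref{P1-prop:tloop} gives (v) $\Leftrightarrow$ (viii), translating topological freeness on closed invariant sets into recurrence of every loop of $\mathcal{G}$; and Proposition~\ref{P1-prop:KR} gives (viii) $\Leftrightarrow$ (ix), identifying recurrence of all loops with Condition~(K). Chaining these three equivalences with the block coming from Theorem~\ref{P1-thm:TE1} shows that all nine statements are mutually equivalent; because every link is already a biconditional, no separate cycle of implications needs to be set up — a direct concatenation of the cited equivalences suffices.

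The only points that demand genuine care are therefore bookkeeping ones: first, verifying that the hypotheses of Theorem~\ref{P1-thm:TE1} really are met (in particular zero-dimensionality of $\textsf{T}$ and clopenness of the $V_t$, both immediate from the basis description of the tight spectrum); and second, making explicit that (iii) and (iv) differ only in the grading group, with the bridge being exactly Theorem~\ref{P1-thm:ultgraded} via the $\Z$-graded isomorphism $\mathcal{L}_c(\textsf{T},\mathbb{K})\rtimes_\alpha\mathbb{F}\cong L_{\mathbb{K}}(\mathcal{G})$ and \cite[Theorem~4.3]{Imanfar2020}. I do not expect a substantive obstacle at this stage, since the real dynamical work has already been done in Propositions~\ref{P1-prop:prop2} and~\ref{P1-prop:prop1} (the isolated-point/orbit analysis tying recurrence to topological freeness) together with the Steinberg-algebra identification of Theorem~\ref{P1-Steinberg-iso}; the task here is purely to assemble the cited equivalences into a single statement.
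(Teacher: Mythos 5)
Your proposal is correct and follows essentially the same route as the paper: the paper's proof likewise obtains the equivalence of (i), (ii), (iii), (v), (vi) and (vii) directly from Theorem~\ref{P1-thm:TE1} and then adjoins (iv), (viii) and (ix) via Theorem~\ref{P1-thm:ultgraded}, Proposition~\ref{P1-prop:tloop} and Proposition~\ref{P1-prop:KR}. Your extra care in checking that $\textsf{T}$ and the sets $V_t$ satisfy the standing hypotheses of Section~\ref{P1-Sec:TopDynamics} is a welcome addition but does not change the argument.
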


\begin{proof}
The equivalences between (i), (ii), (iii), (v), (vi), and (vii) follow from Theorem~\ref{P1-thm:TE1}. The other equivalences follow from Proposition~\ref{P1-prop:KR}, Proposition~\ref{P1-prop:tloop} and Theorem~\ref{P1-thm:ultgraded}.
\end{proof}

\begin{obs}
We point out that the equivalences between 
(iv), (vii), and (viii) in the above theorem are already known in the context of Leavitt path algebras of ultragraphs (see \cite[Theorem~4.3]{Imanfar2020}), but the other equivalences are new even in the context of graphs. \end{obs}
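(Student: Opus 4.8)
The plan is to substantiate the remark in two stages, matching its two assertions: first, that the restricted equivalence of statements (iv), (vii), (viii) is already a consequence of published work; and second, that every remaining equivalence in Theorem~\ref{P1-thm:TFinal} is genuinely new, even when the ultragraph $\mathcal{G}$ is an ordinary directed graph. Since the claim is in part bibliographic, the ``proof'' is really a careful attribution: for the known part I would pin down the exact prior theorems that supply each equivalence, and for the new part I would isolate precisely which auxiliary results of the present paper are responsible, so that the dividing line between ``known'' and ``new'' is drawn unambiguously.

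For the known part, I would route all three of (iv), (vii), (viii) through Condition (K) as a common hub. The $\Z$-graded isomorphism $\mathcal{L}_c(\textsf{T},\mathbb{K})\rtimes_\alpha\mathbb{F}\cong L_{\mathbb{K}}(\mathcal{G})$ employed in the proof of Theorem~\ref{P1-thm:ultgraded} (via \cite[Theorem~5.6]{Boava2021} and \cite[Example~7.2]{Boava2021}) transports statement (iv) into the assertion that every ideal of the ultragraph Leavitt path algebra $L_{\mathbb{K}}(\mathcal{G})$ is $\Z$-graded, and by \cite[Theorem~4.3]{Imanfar2020} this is equivalent to Condition (K); this gives the equivalence of (iv) and (ix). The equivalence of (viii) and (ix) is recorded in Proposition~\ref{P1-prop:KR}, whose loop-recurrence formulation is the standard cycle/Condition (K) dictionary familiar from ultragraph theory. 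Finally, the equivalence of (vii) and (ix)---strong effectiveness of the transformation groupoid against Condition (K)---is the groupoid-theoretic form already present in the Steinberg-algebra literature mentioned in the introduction (see \cite{Brown2014,Clark2019,Steinberg2019}). Composing these three equivalences through (ix) yields the mutual equivalence of (iv), (vii), (viii) using only prior results.

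For the novelty claim, I would argue that each remaining equivalence becomes available only through a bridge first established in this paper, and that these bridges---and hence the equivalences---are absent from the graph literature. The equivalence of (iii) and (iv), i.e.\ of $\mathbb{F}$-gradedness with $\Z$-gradedness of ideals, is precisely Theorem~\ref{P1-thm:ultgraded}; the purely algebraic conditions (i) maximal commutativity of the canonical diagonal and (ii) the residual intersection property enter only through Proposition~\ref{P1-prop:max-com-int-prop} and Theorem~\ref{P1-thm:bijpri} (equivalently Theorem~\ref{P1-thm:AE1}); and the dynamical conditions (v) and (vi) on closed, respectively open, invariant subsets enter through Theorem~\ref{P1-thm:TF} and Theorem~\ref{P1-thm:TE1} (with the loop translation via Proposition~\ref{P1-prop:tloop}). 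To certify novelty in the graph case, I would specialize $\mathcal{G}$ to a directed graph and observe that none of these characterizations---maximal commutativity of the diagonal, the residual intersection property, the coincidence of the fine group grading with the $\Z$-grading at the level of ideals, or topological freeness restricted to invariant subsets---appears in the standard references on graph Leavitt path algebras, where Condition (K) is classically tied only to the $\Z$-graded-ideal and strong-effectiveness conditions.

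The main obstacle is that the second assertion is a \emph{negative} existential statement: it claims the absence of a result from the prior literature, which is not susceptible to a formal mathematical derivation. I would therefore treat it not as a theorem to be deduced but as a documented attribution, supporting it by exhibiting, for the known equivalences, the exact earlier theorems that supply them, and, for the new equivalences, identifying the specific results of the present paper (Theorems~\ref{P1-thm:bijpri}, \ref{P1-thm:TF}, \ref{P1-thm:TE1}, \ref{P1-thm:ultgraded} and Proposition~\ref{P1-prop:max-com-int-prop}) through which they are obtained, while checking that these intermediate statements have no counterpart in the cited graph-algebra sources. The delicate point is simply precision: stating clearly which equivalence is attributed to which reference, so that the boundary between what is inherited from \cite{Imanfar2020} and the Steinberg-algebra literature and what is contributed here is beyond dispute.
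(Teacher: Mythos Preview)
The statement you address is a bibliographic remark, and the paper offers no proof of it whatsoever; it stands as an authors' attribution immediately following Theorem~\ref{P1-thm:TFinal}. Your elaboration therefore goes well beyond what the paper provides, and it is essentially correct: you route the ``known'' triad (iv)--(vii)--(viii) through Condition~(K) as a hub and you identify, for each of the remaining equivalences, the specific result of the present paper that supplies it.

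One discrepancy worth noting: the paper cites only \cite[Theorem~4.3]{Imanfar2020} for the known equivalences, whereas you attribute (vii)$\Leftrightarrow$(ix) to the Steinberg-algebra literature \cite{Brown2014,Clark2019,Steinberg2019}. Your attribution is arguably the more precise one, since the paper's own invocation of \cite{Imanfar2020} (in the proof of Theorem~\ref{P1-thm:ultgraded}) extracts only (iv)$\Leftrightarrow$(ix); the strong-effectiveness characterization indeed lives in the groupoid-algebra sources rather than in Imanfar's paper on ultragraph Leavitt path algebras. Similarly, you treat Proposition~\ref{P1-prop:KR} as ``known'' folklore, which is fair given that it is elementary combinatorics, though strictly speaking it is proved in the present paper. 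Your closing point---that the novelty assertion is a negative existential about the literature and hence not amenable to formal proof---is exactly right, and the paper itself makes no attempt to substantiate it beyond the bare claim.
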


%\section*{Acknowledgement}
%The third named author was financed in part by the
%Coordenação de Aperfeiçoamento de Pessoal de Nível Superior - Brasil (CAPES) - Finance Code 001.

\addcontentsline{toc}{section}{References}

\end{document}